\renewcommand{\cite}{\citep}
\newtheorem{Theorem}{Theorem}
\newtheorem{Lemma}[Theorem]{Lemma}
\newtheorem{Remark}[Theorem]{Remark}
\newtheorem{Definition}{Definition} 
\newtheorem{Problem}{Problem} 
\def\bR{\mathbb{R}}
\def\CP{\clearpage{\thispagestyle{empty}\cleardoublepage}}
\newcommand{\C}{\mathbb{C}}
\def\a{{\alpha(\cdot,\cdot)}}
\def\an{{\alpha_n(\cdot,\cdot)}}
\def\ai{{\alpha_i(\cdot,\cdot)}}
\def\ao{{\alpha_1(\cdot,\cdot)}}
\def\at{{\alpha_2(\cdot,\cdot)}}
\def\b{{\beta(\cdot,\cdot)}}
\def\bn{{\beta_n(\cdot,\cdot)}}
\def\bi{{\beta_i(\cdot,\cdot)}}
\def\bo{{\beta_1(\cdot,\cdot)}}
\def\bt{{\beta_2(\cdot,\cdot)}}
\def\t{\tau}
\def\e{\epsilon}
\def\s{\sigma}
\def\xs{_\s[x]_\gamma^{\alpha, \beta}}
\def\LI{{_aI_t^{\a}}}
\def\RI{{_tI_b^{\a}}}
\def\LDa{{_aD_t^{\a}}}
\def\LDan{{_aD_t^{\an}}}
\def\LDb{{_aD_t^{\b}}}
\def\RDa{{_tD_b^{\a}}}
\def\RDan{{_tD_b^{\an}}}
\def\RDb{{_tD_b^{\b}}}
\def\RDbn{{_tD_b^{\bn}}}
\def\LC{{^C_aD_t^{\a}}}
\def\LCan{{^C_aD_t^{\an}}}
\def\LCao{{^C_aD_t^{\ao}}}
\def\LCat{{^C_aD_t^{\at}}}
\def\RCa{{^C_tD_b^{\a}}}
\def\RCan{{^C_tD_b^{\an}}}
\def\RCb{{^C_tD_b^{\b}}}
\def\RCbn{{^C_tD_b^{\bn}}}
\def\RCbo{{^C_tD_b^{\bo}}}
\def\RCbt{{^C_tD_b^{\bt}}}
\def\DC{{^CD_\gamma^{\a,\b}}}
\def\DCi{{^CD_{\gamma^i}^{\ai,\bi}}}
\def\LIan{{_aI_t^{\an}}}
\def\RIan{{_tI_b^{\an}}}
\def\LDan{{_aD_t^{\an}}}
\def\RDan{{_tD_b^{\an}}}
\def\RDbn{{_tD_b^{\bn}}}
\def\ati{{\alpha(t)}}
\def\akn{{\alpha_k(t_k)}}
\def\Dak{{\alpha'_k(t_k)}}
\def\LCI{{^C_aD_t^{\ati}}}
\def\RCI{{^C_tD_b^{\ati}}}
\def\LCII{{^C_a\mathcal{D}_t^{\ati}}}
\def\RCII{{^C_t\mathcal{D}_b^{\ati}}}
\def\LCIII{{^C_a\mathbb{D}_t^{\ati}}}
\def\RCIII{{^C_t\mathbb{D}_b^{\ati}}}
\def\PLCI{{^C_{a_k}D_{t_k}^{\akn}}}
\def\PRCI{{^C_{t_k}D_{b_k}^{\akn}}}
\def\PLCII{{^C_{a_k}\mathcal{D}_{t_k}^{\akn}}}
\def\PRCII{{^C_{t_k}\mathcal{D}_{b_k}^{\akn}}}
\def\PLCIII{{^C_{a_k}\mathbb{D}_{t_k}^{\akn}}}
\def\PRCIII{{^C_{t_k}\mathbb{D}_{b_k}^{\akn}}}
\def\C{\left(^{n-\akn}_{\quad p}\right)}
\def\D{\left(^{1-\akn}_{\quad p}\right)}
\def\DS{\displaystyle}
\begin{document}

\author{Ricardo Almeida \and Dina Tavares \and \\ Delfim F. M. Torres}

\title{The Variable-Order Fractional Calculus of Variations}

\date{June 17, 2018}

\maketitle

% ---------------------------------------------------

\frontmatter

\chapter*{Preface}

This book intends to deepen the study of the fractional calculus, giving special emphasis to variable-order operators.

Fractional calculus is a recent field of mathematical analysis and it is a generalization of integer differential calculus, involving derivatives and
integrals of real or complex order \cite{Pref:Kilbas,Pref:Podlubny}. The first note about this ideia of differentiation, for non-integer numbers, dates back to
1695, with a famous correspondence between Leibniz and L'H\^opital. In a letter, L'H\^opital asked Leibniz about the possibility of the order
$n$ in the notation $d^ny/dx^n$, for the $n$th derivative of the function $y$, to be a non-integer, $n=1/2$. Since then, several mathematicians
investigated this approach, like Lacroix, Fourier, Liouville, Riemann, Letnikov, Gr\"unwald, Caputo, and contributed to the grown development of
this field. Currently, this is one of the most intensively developing areas of mathematical analysis as a result of its numerous applications. The first
book devoted to the fractional calculus was published by Oldham and Spanier in 1974, where the authors systematized the main ideas, methods and
applications about this field \cite{Pref:Mainardi}.

In the recent years, fractional calculus has attracted the attention of many mathematicians, but also some researchers in other areas like physics,
chemistry and engineering. As it is well known, several physical phenomena are often better described by fractional derivatives \cite{Pref:Herr,Pref:Od2012a,Pref:Zheng}. This is mainly due to the fact that fractional operators take into consideration the evolution of the system, by taking the global correlation, and not only local characteristics. Moreover, integer-order calculus sometimes contradict the experimental results and therefore derivatives of fractional order may be more suitable \cite{Pref:Hilfer}.

In 1993, Samko and Ross devoted themselves to investigate operators when the order $\alpha$ is not a constant during the process, but variable on time: $\alpha(t)$ \cite{Pref:Samko:Ross}. An interesting recent generalization of the theory of fractional calculus is developed to allow the fractional order of the
derivative to be non-constant, depending on time \cite{Pref:Chen:Liu,Pref:Od2012c,Pref:Od2013}. With this approach of variable-order fractional calculus, the non-local
properties are more evident and numerous applications have been found in physics, mechanics, control and signal processing \cite{Pref:Coimbra2,Pref:Ingman,Pref:Od2013a,Pref:Ostal,Pref:Ramirez,Pref:Rapaic,Pref:Valerio}.

Although there are many definitions of fractional derivative, the most commonly used are the Riemann--Liouville, the Caputo, and the Gr\"unwald-Letnikov derivatives. For more about the development of fractional calculus, we suggest \cite{Pref:Samko:Kilbas}, \cite{Pref:Samko:Ross}, \cite{Pref:Podlubny},  \cite{Pref:Kilbas} or \cite{Pref:Mainardi}.

One difficult issue that usually arises when dealing with such fractional operators, is the extreme difficulty in solving analytically such problems \cite{Pref:Atangana,Pref:Zhuang}. Thus, in most cases, we do not know the exact solution for the problem and one needs to seek a numerical approximation. Several numerical methods can be found in the literature, typically applying some discretization over time or replacing the fractional operators by a proper decomposition \cite{Pref:Atangana,Pref:Zhuang}.

Recently, new approximation formulas were given for fractional constant order operators, with the advantage that higher-order derivatives are not
required to obtain a good accuracy of the method \cite{Pref:Atana:Janev,Pref:Pooseh2012,Pref:Pooseh2013a}. These decompositions only depend on integer-order derivatives, and by replacing the fractional operators that appear in the problem by them, one leaves the fractional context ending up in the presence of a standard problem, where numerous tools are available to solve them \cite{Pref:book2:FCV}.

The first goal of this book is to extend such decompositions to Caputo fractional problems of variable-order.  
For three types of Caputo derivatives with variable-order, we obtain approximation formulas for the fractional operators and respective upper bounds for the errors.

Then, we focus our attention on a special operator introduced by Malinowska and Torres: 
the combined Caputo fractional derivative, which is an extension
of the left and the right fractional Caputo derivatives \cite{Pref:Malin:Tor2010}. Considering $\alpha,\beta \in (0, 1)$ and $\gamma \in [0,1]$, the combined
Caputo fractional derivative operator $^CD_\gamma^{\alpha,\beta}$ is a convex combination of the left and the right Caputo fractional derivatives, defined
by $$^CD_\gamma^{\alpha,\beta}=\gamma \, _a^CD_t^\alpha + (1-\gamma) \, _t^CD_b^\beta.$$
We consider this fractional operator with variable fractional order, i.e., 
the combined Caputo fractional derivative of variable-order:
$$
^{C}D_{\gamma}^{\a,\b}x(t)=\gamma_1 \, \LC x(t)+\gamma_2 \, {^C_tD_b^{\b}} x(t),
$$ 
where $\gamma =\left(\gamma_1,\gamma_2 \right)\in [0,1]^2$, with $\gamma_1$ and $\gamma_2$ not both zero.
With this fractional operator, we study different types of fractional calculus of variations problems, where the Lagrangian depends on the referred derivative. 

The calculus of variations is a mathematical subject that appeared formally in the XVII century, with the solution to the bachistochrone problem, that deals
with the extremization (minimization or maximization) of functionals \cite{Pref:Brunt}. Usually, functionals are given by an integral that involves one or more functions or/and its derivatives. This branch of mathematics has proved to be relevant because of the numerous applications existing in real situations.

The fractional variational calculus is a recent mathematical field that consists in minimizing or maximizing functionals that depend on fractional operators
(integrals or/and derivatives). This subject was introduced by Riewe in 1996, where the author generalizes the classical calculus of variations, by using fractional derivatives, and allows to obtain conservations laws with nonconservative forces such as friction \cite{Pref:Riewe96,Pref:Riewe97}. Later appeared several works on various aspects of the fractional calculus of variations and involving different fractional operators, like the Riemann--Liouville, the Caputo, the Gru\"nwald--Letnikov, the Weyl, the Marchaud or the Hadamard fractional derivatives  \cite{Pref:Agrawal2002,Pref:Almeidadelay,Pref:Askari,Pref:Atanackovic,Pref:Baleanu1,Pref:Fraser,Pref:Georgieva,Pref:Jaraddelay}.
For the state of the art of the fractional calculus of variations, we refer the readers to the books \cite{Pref:book:FCV,Pref:book2:FCV,Pref:book:MOT}.

Specifically, here we study some problems of the calculus of variations with integrands depending on the independent variable $t$, an arbitrary function $x$
and a fractional derivative $^CD_\gamma^{\a,\b}x$. The endpoint of the cost integral, as well the terminal state, are considered to be free. The fractional problem of the calculus of variations consists in finding the maximizers or minimizers to the functional
$$\mathcal{J}(x,T)=\int_a^T L\left(t, x(t), \DC x(t)\right)dt+\phi(T,x(T)),$$
where $\DC x(t)$ stands for the combined Caputo fractional derivative of variable fractional order, subject to the boundary condition $x(a)=x_a$.
For all variational problems presented here, we establish necessary optimality conditions and transversality optimality conditions.

The book is organized in two parts, as follows. In the first part, we review the basic concepts of fractional calculus (Chapter~\ref{PartI_FC}) and of the fractional calculus of variations (Chapter~\ref{PartI_FCV}). In Chapter~\ref{PartI_FC}, we start with a brief overview about fractional calculus and an introduction to the theory of some special functions in fractional calculus. Then, we recall several fractional operators (integrals and derivatives)
definitions and some properties of the considered fractional derivatives and integrals are introduced. In the end of this chapter, we review integration
by parts formulas for different operators. Chapter~\ref{PartI_FCV} presents a short introduction to the classical calculus of variations and review different variational problems, like the isoperimetric problems or problems with variable endpoints. In the end of this chapter, we introduce the theory of the fractional calculus of variations and some fractional variational problems with variable-order.

In the second part, we systematize some new recent results on variable-order fractional calculus 
of \cite{Pref:Tavares2016,Pref:Tavares2015,Pref:Tavares2017,Pref:Tavares_4,Pref:Tavares_5}.
In Chapter~\ref{PartII_Expan}, considering three types of fractional Caputo derivatives of variable-order, we present new approximation formulas for those
fractional derivatives and prove upper bound formulas for the errors. In Chapter~\ref{PartII_FCV}, we introduce the combined Caputo fractional derivative
of variable-order and corresponding higher-order operators. Some properties are also given. Then, we prove fractional Euler--Lagrange equations for several types of fractional problems of the calculus of variations, with or without constraints.

\begingroup
\renewcommand{\addcontentsline}[3]{}

\endgroup

\clearpage{\thispagestyle{empty}\cleardoublepage}

% ----------------------------------------------------

\chapter*{Acknowledgements}

This work was supported by Portuguese funds through the 
\emph{Center for Research and Development in Mathematics and Applications} (CIDMA), 
and the \emph{Portuguese Foundation for Science and Technology} (FCT), 
within project UID/MAT/04106/2013. 

\bigskip

Any comments or suggestions related to the material here contained
are more than welcome, and may be submitted by post
or by electronic mail to the authors:

\medskip

\begin{flushleft}
Ricardo Almeida \verb!<ricardo.almeida@ua.pt>!\\
Center for Research and Development in Mathematics and Applications\\
Department of Mathematics, University of Aveiro\\
3810-193 Aveiro, Portugal\\[0.3cm]
Dina Tavares \verb!<dtavares@ipleiria.pt>!\\
ESECS, Polytechnic Institute of Leiria\\
2410--272 Leiria, Portugal\\[0.3cm]
Delfim F. M. Torres \verb!<delfim@ua.pt>!\\
Center for Research and Development in Mathematics and Applications\\
Department of Mathematics, University of Aveiro\\
3810-193 Aveiro, Portugal
\end{flushleft}

% ----------------------------------------------------

\clearpage{\thispagestyle{empty}\cleardoublepage}

% ----------------------------------------------------

\tableofcontents

% ---------------------------------------

\mainmatter

%================================================================
\chapter{Fractional calculus}
\label{PartI_FC}

In this chapter, a brief introduction to the theory of fractional calculus
is presented. We start with a historical perspective of the theory, with a
strong connection with the development of classical calculus (Section~
\ref{sec:history}).
Then, in Section~\ref{sec:specFun}, we review some definitions and properties
about a few special functions that will be needed. We end with a review on
fractional integrals and fractional derivatives of noninteger order and with
some formulas of integration by parts, involving fractional operators (Section~\ref{sec:FID}).

The content of this chapter  can be found in some classical books on fractional
calculus, for example \cite{Cap1:Kilbas,Cap1:Podlubny,Cap1:Samko:Kilbas,Cap1:book:FCV,Cap1:book2:FCV}.

%------------------------------

\section{Historical perspective}
\label{sec:history}
Fractional Calculus (FC) is considered as a branch of mathematical analysis
which deals with the investigation and applications of integrals and
derivatives of arbitrary order. Therefore, FC is an extension of the
integer-order calculus that considers integrals and derivatives of any
real or complex order \cite{Cap1:Kilbas,Cap1:Samko:Kilbas}, i.e., unify and generalize
the notions of integer-order differentiation and $n$-fold integration.

FC was born in 1695 with a letter that L'H\^opital wrote to Leibniz,
where the derivative of order $1/2$ is suggested \cite{Cap1:Oldham}.
After Leibniz had introduced in his publications the notation for the $n$th
derivative of a function $y$,
$$ \frac{d^ny}{dx^n},$$
L'H\^opital wrote a letter to Leibniz to ask him about the possibility of a
derivative of integer order to be extended in order to have a meaning when the
order is a fraction: "What if $n$ be $1/2$?" \cite{Cap1:Ross}.
In his answer, dated on 30 September 1695, Leibniz replied that "This is an
apparent paradox from which, one day, useful consequences will be drawn" and,
today, we know it is truth.
Then, Leibniz still wrote about derivatives of general order and in 1730, Euler investigated the result of the derivative when the order $n$ is a fraction.
But, only in 1819, with Lacroix, appeared the first definition of fractional
derivative based on the expression for the $n$th derivative of the power function.
Considering $y=x^m$, with $m$ a positive integer, Lacroix developed the $n$th
derivative
$$
\frac{d^ny}{dx^n}=\frac{m !}{(m-n) !}\, \,x^{m-n},\quad m\geq n,
$$
and using the definition of Gamma function, for the generalized factorial, he got
$$
\frac{d^ny}{dx^n}=\frac{\Gamma(m+1)}{\Gamma(m-n+1)} \, x^{m-n}.
$$
Lacroix also studied the following example, for $n=\frac{1}{2}$ and $m=1$:
\begin{equation}
\label{orederonehalf}
\frac{d^{1/2} x}{dx^{1/2}}=\frac{\Gamma(2)}{\Gamma(3/2)} \, x^{\frac{1}{2}}= \frac{2\sqrt{x}}{\sqrt{\pi}}.
\end{equation}
Since then, many mathematicians, like Fourier, Abel, Riemann, Liouville, among
others, contributed to the development of this subject. One of the first
applications of fractional calculus appear in 1823 by Niels Abel, through the
solution of an integral equation of the form
$$
\int_0^t (t-\t)^{-\alpha}x(\t) d\t=k,
$$
used in the formulation of the Tautochrone problem \cite{Cap1:Abel,Cap1:Ross}.
\index{tautochrone problem}

Different forms of fractional operators have been introduced along
time, like the Riemann--Liouville, the Gr\"unwald-Letnikov, the Weyl, the Caputo,
the Marchaud or the Hadamard fractional derivatives 
\cite{Cap1:Oliveira,Cap1:Kilbas,Cap1:Podlubny,Cap1:Oldham}.
The first approach is the Riemann-Liouville, which is based on iterating the
classical integral operator $n$ times and then considering the Cauchy's formula
where $n!$ is replaced by the Gamma function and hence the fractional integral
of noninteger order is defined. Then, using this operator, 
some of the fractional derivatives mentioned above are defined.

During three centuries, FC was developed but as a pure theoretical subject
of mathematics.
In recent times, FC had an increasing of importance due to its applications
in various fields, not only in mathematics, but also in physics, mechanics,
engineering, chemistry, biology, finance, and others areas of science
\cite{Cap1:Herr,Cap1:Li,Cap1:Mainardi,Cap1:Hilfer,Cap1:Od_2013b,Cap1:Pinto,Cap1:Sie,Cap1:Sun}.
In some of these applications, many real world phenomena are better described
by noninteger order derivatives, if we compare with the usual integer-order
calculus.
In fact, fractional order derivatives have unique characteristics that may model
certain dynamics more efficiently. Firstly, we can consider any real order for the
derivatives, and thus we are not restricted to integer order derivatives only;
secondly, they are nonlocal operators, in opposite to the usual derivatives,
thus containing memory. With the memory property, one can take into account the past
of the processes. Signal processing, modeling and control are some areas that
have been the object of more intensive publishing in the last decades.

In most applications of the FC, the order of the derivative is assumed to be
fixed along the process, that is, when determining what is the order $\alpha >0$
such that the solution of the fractional differential equation
$D^{\alpha}y(t)=f(t, y(t))$ better approaches the experimental data, we consider
the order $\alpha$ to be a fixed constant. Of course, this may not be the best
option, since trajectories are a dynamic process, and the order may vary. More
interesting possibilities arise when one considers the order $\alpha$ of the
fractional integrals and derivatives not constant during the process but to be a
function $\alpha (t)$, depending on time.
Then, we may seek what is the best function $\alpha(\cdot)$ such that the 
variable-order fractional differential equation $D^{\alpha(t)}y(t) = f(t,y(t))$ better
describes the process under study. This approach is very recent. One such fractional calculus
of variable-order was introduced in \cite{Cap1:Samko:Ross}.
Afterwards, several mathematicians obtained important results about 
variable-order fractional calculus, and some applications appeared, like in mechanics,
in the modeling of linear and nonlinear viscoelasticity oscillators and in other
phenomena where the order of the derivative varies with time.
See, for instance, \cite{Cap1:Alm:Torres2013,Cap1:Atana,Cap1:Coimbra1,Cap1:Sheng,Cap1:Od2013,Cap1:Ramirez,Cap1:Samko:1995}.

The most common fractional operators considered in the literature take into account
the past of the process. They are usually called left fractional operators. But in
some cases we may be also interested in the future of the process, and the
computation of $\alpha (t)$ to be influenced by it.
In that case, right fractional derivatives are then considered. Recently, in some
works, the main goal is to develop a theory where both fractional operators are
taken into account.
For that, some combined fractional operators are introduced, like the symmetric
fractional derivative, the Riesz fractional integral and derivative,
the Riesz--Caputo fractional derivative and the combined Caputo fractional
derivative that consists in a linear combination of the left and right fractional
operators.
For studies with fixed fractional order, see
\cite{Cap1:Klimek,Cap1:Malin:Tor2011,Cap1:Malin:Tor2012,Cap1:book:FCV}.

Due to the growing number of applications of fractional calculus in science and engineering, 
numerical approaches are being developed to provide tools for solving such problems.
At present, there are already vast studies on numerical approximate formulas
\cite{Cap1:Li_Chen,Cap1:Kumar}. For example, for numerical modeling
of time fractional diffusion equations, we refer the reader to \cite{Cap1:Fu}.

% ---------------------------------------------

\section{Special functions}
\label{sec:specFun}
Before introducing the basic facts on fractional operators, we recall four
types of functions that are important in Fractional Calculus: the \textit{Gamma},
\textit{Psi}, \textit{Beta} and \textit{Mittag-Leffler} functions. Some properties
of these functions are also recalled.

\begin{Definition}
\index{gamma function}
\label{Gammaf}
The Euler Gamma function is an extension of the factorial function to real
numbers, and it is defined by
$$
\Gamma(t)=\int_0^\infty \t^{t-1}\exp(-\t)\,d\t, \quad t>0.
$$
\end{Definition}

For example, $\Gamma(1) = 1$, $ \Gamma(2) = 1$ and $ \Gamma(3/2)=\frac{\sqrt{\pi}}{2}$.
For positive integers $n$, we get $\Gamma(n) = (n - 1)!$.
We mention that other definitions for the Gamma function exist, and it is
possible to define it for complex numbers, except for the non-positive integers.

The Gamma function is considered the most important Eulerian function used in
fractional calculus, because  it appears in almost every fractional integral
and derivative definitions.
A basic but fundamental property of  $\Gamma$, that we will use later, is
obtained using integration by parts:
$$
\Gamma(t+1)=t\, \Gamma(t).
$$

\begin{Definition}
\index{psi function}
\label{Psif}
The Psi function is the derivative of the logarithm of the Gamma function:
$$
\Psi(t)=\frac{d}{dt}\ln\left(\Gamma(t)\right)=\frac{\Gamma'(t)}{\Gamma(t)}.
$$
\end{Definition}

The follow function is used sometimes for convenience to replace a combination
of Gamma functions.
It is important in FC because it shares a form that is similar to the fractional
derivative or integral of many functions, particularly power functions.

\begin{Definition}
\label{Betaf}
\index{beta function}
The Beta function $B$ is defined by
$$
B(t,u)=\int_0^1 s^{t-1} (1-s)^{u-1}ds, \quad t,u > 0.
$$
\end{Definition}

This function satisfies an important property:
$$B(t,u)=\frac{\Gamma(t)\Gamma(u)}{\Gamma(t+u)}.$$
With this property, it is obvious that the Beta function is symmetric, i.e.,
$$B(t,u)=B(u,t).$$
The next function is a direct generalization of the exponential series and it was defined by the mathematician Mittag-Leffler in 1903 \cite{Cap1:Podlubny}.

\begin{Definition}
\index{Mittag-Leffler function! one parameter}
Let $\alpha > 0$. The function $E_\alpha$ defined by
$$
E_\alpha(t)=\sum_{k=0}^\infty \frac{t^k}{\Gamma(\alpha k+1)}, \quad t \in \mathbb{R},
$$
is called the one parameter Mittag-Leffler function.
\end{Definition}

For $\alpha=1$, this function coincides with the series expansion of $e^t$,
i.e.,
$$
E_1(t)=\sum_{k=0}^\infty \frac{t^k}{\Gamma(k+1)}= \sum_{k=0}^\infty
\frac{t^k}{k!}= e^t.
$$
While linear ordinary differential equations present in general the
exponential function as a solution, the Mitttag-Leffler function occurs
naturally in the solution of fractional order differential equations
\cite{Cap1:Kilbas}. For this reason, in recent times, the Mittag-Leffler
function has become an important function in the theory of the fractional
calculus and its applications.

It is also common to represent the Mittag-Leffler function in two arguments.
This generalization of Mittag-Leffler function was studied by Wiman  in 1905
\cite{Cap1:Mainardi}.

\begin{Definition}
\index{Mittag-Leffler function! two parameters}
The two-parameter function of the Mittag-Leffler type with parameters
$\alpha, \beta >0$ is defined by
$$
E_{\alpha,\beta}(t)=\sum_{k=0}^\infty \frac{t^k}{\Gamma(\alpha k+\beta)},
\quad t \in \mathbb{R}.
$$
\end{Definition}

If $\beta=1$, this function coincides with the classical Mittag-Leffler
function, i.e., $E_{\alpha,1}(t)=E_\alpha(t)$.

% ---------------------------------------------

\section{Fractional integrals and derivatives}
\label{sec:FID}
In this section, we recall some definitions of fractional integral and
fractional differential operators, that includes all we use throughout
this book. In the end, we present some integration by parts formulas
because they have a crucial role in deriving Euler--Lagrange equations.

% ---------------------------------------------

\subsection{Classical operators}

As it was seen in Section~\ref{sec:history}, there are more than one way
to generalize integer-order operations to the non-integer case. Here, we
present several definitions and properties about fractional operators,
omitting some details about the conditions that ensure the existence of
such fractional operators.

In general, the fractional derivatives are defined using fractional integrals.
We present only two fractional integrals operators, but there are several
known forms of the fractional integrals.

Let $x:[a,b]\to \mathbb{R}$ be an integrable function  and $\alpha >0$ a
real number.

Starting with Cauchy's formula for a $n$-fold iterated integral, given by
\begin{equation}
\label{nIntegral}
\begin{split}
_aI_t^n x(t)&=\int_a^t d\tau_1\int_a^{\tau_1}d\tau_2\cdots \int_a^{\tau_{n-1}}
x(\tau_n)d\tau_n\\
& = \frac{1}{(n-1)!}\int_a^t (t-\tau)^{n-1}x(\tau) d\tau,
\end{split}
\end{equation}
where $n \in \mathbb{N}$, Liouville and Riemann defined fractional integration,
generalizing equation~\eqref{nIntegral} to noninteger values of $n$ and using the definition of Gamma function $\Gamma$.
With this, we introduce two important concepts: the left and the right
Riemann--Liouville fractional integrals.

\begin{Definition}
\label{R-LIntegrals}
\index{Riemann--Liouville fractional integral! left}
\index{Riemann--Liouville fractional integral! right}
We define the left and right Riemann--Liouville fractional integrals of order
$\alpha$, respectively, by
$$
{_aI_t}^{\alpha} x(t) = \frac{1}{\Gamma(\alpha)} \int_a^t (t-\t)^{\alpha-1}x(\t)
d\t, \quad t>a
$$
and
$$
{_tI_b}^{\alpha} x(t) = \frac{1}{\Gamma(\alpha)} \int_t^b (\t-t)^{\alpha-1}x(\t)
d\t, \quad t<b.
$$
\end{Definition}

The constants $a$ and $b$ determine, respectively, the lower and upper boundary
of the integral domain.
Additionally, if $x$ is a continuous function, as $\alpha\rightarrow 0$, ${_aI_t}^{\alpha}={_tI_b}^{\alpha}=I$, with $I$ the
identity operator, i.e., ${_aI_t}^{\alpha} x(t)={_tI_b}^{\alpha} x(t) = x(t)$.

We present the second fractional integral operator, introduced by J. Hadamard
in 1892 \cite{Cap1:Kilbas}.

\begin{Definition}
\label{HadIntegrals}
\index{Hadamard fractional integral! left}
\index{Hadamard fractional integral! right}
We define the left and right Hadamard fractional integrals of order $\alpha$,
respectively, by
$$
_aJ_t^{\alpha} x(t) = \frac{1}{\Gamma(\alpha)} \int_a^t \left(\ln \frac{t}{\t} \right)^{\alpha-1} \frac{x(\t)}{\t} d\t, \quad t>a
$$
and
$$
_tJ_b^{\alpha} x(t) = \frac{1}{\Gamma(\alpha)} \int_t^b \left(\ln \frac{\t}{t} \right)^{\alpha-1} \frac{x(\t)}{\t} d\t, \quad t<b.
$$
\end{Definition}

The three most frequently used definitions for fractional derivatives are:
the Gr\"unwald-Letnikov, the Riemann--Liouville  and the Caputo fractional
derivatives \cite{Cap1:Oldham,Cap1:Podlubny}. Other definitions were introduced by
others mathematicians, as for instance Weyl, Fourier, Cauchy, Abel.

Let $x\in AC([a,b];\bR)$ be an absolutely continuous functions on the interval $[a,b]$, and $\alpha$ a positive real number.
Using Definition~\ref{R-LIntegrals} of Riemann--Liouville fractional integrals,
we define the left and the right Riemann--Liouville and Caputo derivatives as
follows.

\begin{Definition}
\label{R-Lderivatives}
\index{Riemann--Liouville fractional derivative! left}
\index{Riemann--Liouville fractional derivative! right}
We define the left and right Riemann--Liouville fractional derivatives of order
$\alpha>0$, respectively, by
\begin{equation*}
\begin{split}
_aD_t^{\alpha} x(t) =&  \left(\frac{d}{dt}\right)^n \,  _aI_t^{n-\alpha} x(t)\\
=& \frac{1}{\Gamma(n-\alpha)} \left( \frac{d}{dt}\right)^n \int_a^t (t-\t)^{n-\alpha-1}x(\t)d\t, \quad t>a
\end{split}
\end{equation*}
and
\begin{equation*}
\begin{split}
_tD_b^{\alpha} x(t) =& \left(-\frac{d}{dt}\right)^n \,  _tI_b^{n-\alpha} x(t)\\
=& \frac{(-1)^n}{\Gamma(n-\alpha)}  \left(\frac{d}{dt}\right)^n \int_t^b (\t-t)^{n-\alpha-1}x(\t)d\t, \quad t<b,
\end{split}
\end{equation*}
where $n=[\alpha]+1$.
\end{Definition}

The following definition was introduced in \cite{Cap1:Caputo}. 
The Caputo fractional derivatives, in general, are more applicable and interesting in fields
like physics and engineering, for its properties like the initial conditions.

\begin{Definition}
\label{CapDer}
\index{Caputo fractional derivative! left}
\index{Caputo fractional derivative! right}
We define the left and right Caputo fractional derivatives of order $\alpha$,
respectively, by
\begin{equation*}
\begin{split}
_a^CD_t^{\alpha} x(t)& =  {_aI_t^{n-\alpha}} \left( \frac{d}{dt}\right)^n x(t) \\
&= \frac{1}{\Gamma(n-\alpha)} \int_a^t (t-\t)^{n-\alpha-1}  x^{(n)}(\t)d\t, \quad t>a
\end{split}
\end{equation*}
and
\begin{equation*}
\begin{split}
_t^CD_b^{\alpha} x(t) &=  {_tI_b^{n-\alpha}} \left(-\frac{d}{dt} \right)^n x(t)\\
&= \frac{(-1)^n}{\Gamma(n-\alpha)} \int_t^b (\t-t)^{n-\alpha-1} x^{(n)}(\t)d\t, \quad t<b,
\end{split}
\end{equation*}
where $n=[\alpha]+1$ if $\alpha\notin \mathbb N$ and
$n=\alpha$ if $\alpha\in\mathbb N$.
\end{Definition}

Obviously, the above defined operators are linear. From these definitions,
it is clear that the Caputo fractional derivative of a constant $C$ is zero,
which is false when we consider the Riemann--Liouville fractional derivative.
If $x(t)=C$, with $C$ a constant, then we get
$$_a^CD_t^{\alpha} x(t) = {_t^CD_b^{\alpha} x(t)}= 0$$
and
$$_aD_t^{\alpha} x(t) =\frac{C\, (t-a)^{-\alpha}}{\Gamma(1-\alpha)},
\quad _tD_b^{\alpha} x(t)
=\frac{C\, (b-t)^{-\alpha}}{\Gamma(1-\alpha)}.$$
For this reason, in some applications, Caputo fractional derivatives seem to
be more natural than the Riemann--Liouville fractional derivatives.

\begin{Remark}
If $\alpha$ goes to $n^{-}$, with $n\in\mathbb{N}$, then the fractional operators introduced above
coincide with the standard derivatives:
$$ _aD_t^{\alpha} = {_a^CD_t^{\alpha}} = \left(\frac{d}{dt}\right)^n$$
and
$$_tD_b^{\alpha} ={_t^CD_b^{\alpha}}=\left(-\frac{d}{dt}\right)^n.$$
\end{Remark}

The Riemann--Liouville fractional integral and differential operators of order
$\alpha>0$ of power functions return power functions, as we can see below.

\begin{Lemma} Let $x$ be the power function $x(t)=(t-a)^\gamma$.
Then, we have
$$_aI_t^{\alpha} x(t) = \frac{\Gamma(\gamma+1)}{\Gamma(\gamma+\alpha+1)}
(t-a)^{\gamma+\alpha}, \quad  \gamma>-1$$
and
$${_aD_t^{\alpha}} x(t)= \frac{\Gamma(\gamma+1)}{\Gamma(\gamma-\alpha+1)}
(t-a)^{\gamma-\alpha}, \quad  \gamma>-1.$$
\end{Lemma}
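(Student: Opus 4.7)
The plan is to treat the two formulas as two separate computations, handling the integral formula first and then leveraging it to derive the derivative formula via the defining relation $_aD_t^{\alpha}={(d/dt)^n}\,_aI_t^{n-\alpha}$.

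For the first identity, I would start directly from Definition~\ref{R-LIntegrals}, writing
\begin{equation*}
_aI_t^{\alpha}x(t)=\frac{1}{\Gamma(\alpha)}\int_a^t(t-\tau)^{\alpha-1}(\tau-a)^{\gamma}\,d\tau,
\end{equation*}
and then perform the change of variables $\tau=a+s(t-a)$, $s\in[0,1]$. This substitution turns $(t-\tau)$ into $(t-a)(1-s)$ and $(\tau-a)$ into $(t-a)s$, and produces a global factor $(t-a)^{\gamma+\alpha}$ outside the integral. The remaining integral is exactly the Beta integral $\int_0^1 s^{\gamma}(1-s)^{\alpha-1}\,ds=B(\gamma+1,\alpha)$, which converges because $\gamma>-1$ and $\alpha>0$. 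Invoking the identity $B(t,u)=\Gamma(t)\Gamma(u)/\Gamma(t+u)$ from Definition~\ref{Betaf} collapses the $\Gamma(\alpha)$ prefactor and yields the claimed expression.

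For the second identity, I would apply Definition~\ref{R-Lderivatives}, namely $_aD_t^{\alpha}x(t)=(d/dt)^n\,_aI_t^{n-\alpha}x(t)$ with $n=[\alpha]+1$. Since $n-\alpha>0$ and $\gamma>-1$, the integral formula just established is applicable with $\alpha$ replaced by $n-\alpha$, giving
\begin{equation*}
{_aI_t^{n-\alpha}}(t-a)^{\gamma}=\frac{\Gamma(\gamma+1)}{\Gamma(\gamma+n-\alpha+1)}(t-a)^{\gamma+n-\alpha}.
\end{equation*}
The remaining step is to differentiate the power function $(t-a)^{\gamma+n-\alpha}$ exactly $n$ times. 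I would do this by induction, using repeatedly $\Gamma(\beta+1)=\beta\Gamma(\beta)$ to package the falling-factorial product into a ratio of Gamma functions, arriving at $\Gamma(\gamma+n-\alpha+1)/\Gamma(\gamma-\alpha+1)\cdot(t-a)^{\gamma-\alpha}$. Multiplying by the constant from the integral step, the factor $\Gamma(\gamma+n-\alpha+1)$ cancels and produces the stated formula.

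The only genuine subtlety, and the step worth being careful about, is justifying the chain of operations when $\gamma-\alpha$ is negative or when $\gamma$ is an integer for which $\Gamma(\gamma-\alpha+1)$ might meet a pole of the Gamma function; the formula is stated under $\gamma>-1$ only, so I would simply note that $\gamma+n-\alpha>-1$ guarantees the intermediate integral converges and that the $n$-fold differentiation is licit termwise. Everything else is a bookkeeping computation with the Gamma-Beta identity, so no structural difficulty arises beyond the change of variables and the Gamma recursion.
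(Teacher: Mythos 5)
Your argument is correct and is essentially the approach the paper itself relies on: this classical lemma is recalled in the text without proof, but your computation --- the substitution $\tau=a+s(t-a)$ reducing the integral to $B(\gamma+1,\alpha)=\Gamma(\gamma+1)\Gamma(\alpha)/\Gamma(\gamma+\alpha+1)$, followed by writing ${_aD_t^{\alpha}}=\left(\tfrac{d}{dt}\right)^{n}{_aI_t^{n-\alpha}}$ and differentiating the resulting power $n$ times --- is exactly the computation the authors carry out for the variable-order analogues (Lemmas~\ref{2_1_Lemma_power} and~\ref{lemma:power}). The subtlety you flag at the end is resolved simply by the convention that $1/\Gamma$ vanishes at nonpositive integers: when $\gamma-\alpha+1$ is a nonpositive integer the intermediate power $(t-a)^{\gamma+n-\alpha}$ is a polynomial of degree less than $n$, so the $n$-fold derivative and the stated right-hand side are both zero.
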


In particular, if we consider $\gamma=1$, $a=0$ and $\alpha=1/2$, then the left
Riemann--Liouville fractional derivative of $x(t)=t$ is $\frac{2\sqrt{t}}{\sqrt{\pi}}$,
the same result \eqref{orederonehalf} as Lacroix obtained in 1819.

Gr\"unwald and Letnikov, respectively in 1867 and 1868, 
returned to the original sources and
started the formulation by the fundamental definition of a
derivative, as a limit,
$$
x^{(1)} (t)=\lim_{h\rightarrow0}{\frac{x(t+h)-x(t)}{h}}
$$
and considering the iteration at the $n$th order derivative formula:
\begin{equation}
\label{HO_natural}
x^{(n)} (t)=\lim_{h\rightarrow0}{\frac{1}{h^n}} \sum_{k=0}^{n}(-1)^k
{n \choose k} x(t-kh), \quad n \in \mathbb{N},
\end{equation}
where ${n \choose k}=\frac{n(n-1)(n-2)...(n-k+1)}{k!}$, with
$n, k \in \mathbb{N}$, the usual notation for the binomial coefficients.
The Gr\"unwald-Letnikov definition of fractional derivative consists in a
generalization of \eqref{HO_natural} to derivatives of arbitrary order
$\alpha$ \cite{Cap1:Podlubny}.

\begin{Definition}
\index{Gr\"unwald-Letnikov fractional derivative}
The $\alpha$th order Gr\"unwald-Let\-nikov fractional derivative of
function $x$ is given by
$$
_a^{GL}D_t^{\alpha} x(t) =  \lim_{h\longrightarrow0}{h^{-\alpha} \sum_{k=0}^{n}(-1)^k
{\alpha \choose k} x(t-kh),}
$$
where ${\alpha \choose k}=\frac{\Gamma (\alpha+1)}{k !  \Gamma(\alpha-k+1)}$
and $nh=t-a$.
\end{Definition}

\begin{Lemma}
Let $x$ be the power function $x(t)=(t-a)^\gamma$, where $\gamma$ is a real
number. Then, for $0<\alpha<1$ and $\gamma>0$, we have
$$
{_a^{GL}D_t^{\alpha}} x(t)=\frac{\Gamma(\gamma+1)}{\Gamma(\gamma-\alpha+1)}
(t-a)^{\gamma-\alpha}.\\
$$
\end{Lemma}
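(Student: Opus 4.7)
The plan is to substitute the power function directly into the Grünwald-Letnikov limit and exploit the homogeneity of the mesh. Since $nh=t-a$, we have $t-kh-a=(n-k)h$ for each $0\le k\le n$, so $x(t-kh)=h^\gamma(n-k)^\gamma$. Inserting this into the definition gives
$$
{_a^{GL}D_t^{\alpha}}\, x(t)=\lim_{h\to 0^{+}} h^{\gamma-\alpha}\sum_{k=0}^{n}(-1)^k {\alpha \choose k}(n-k)^\gamma.
$$
Using $h=(t-a)/n$, the prefactor $h^{\gamma-\alpha}$ rearranges as $(t-a)^{\gamma-\alpha} n^{\alpha-\gamma}$, and the entire question reduces to evaluating the purely combinatorial limit
$$
L:=\lim_{n\to\infty} n^{\alpha-\gamma}\sum_{k=0}^{n}(-1)^k {\alpha \choose k}(n-k)^\gamma.
$$

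To identify $L$, I would rewrite the signed binomial coefficient via the Gamma function as $(-1)^k{\alpha \choose k}=\Gamma(k-\alpha)/(\Gamma(-\alpha)\Gamma(k+1))$ and use the standard Stirling-type estimate $\Gamma(k-\alpha)/\Gamma(k+1)\sim k^{-\alpha-1}$ as $k\to\infty$. With $\tau=kh$ and mesh $h=(t-a)/n$, the sum becomes a Riemann-type sum for
$$
\frac{1}{\Gamma(-\alpha)}\int_{0}^{t-a}\tau^{-\alpha-1}(t-a-\tau)^\gamma\,d\tau.
$$
The substitution $s=\tau/(t-a)$ formally reduces this to $(t-a)^{\gamma-\alpha} B(-\alpha,\gamma+1)$, and using the identity $B(u,v)=\Gamma(u)\Gamma(v)/\Gamma(u+v)$ recalled in Section~\ref{sec:specFun}, the $\Gamma(-\alpha)$ factors cancel, yielding exactly $\Gamma(\gamma+1)/\Gamma(\gamma-\alpha+1)$, as required.

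The main obstacle is that the kernel $\tau^{-\alpha-1}$ is not Lebesgue-integrable at $\tau=0$ (indeed $B(-\alpha,\gamma+1)$ diverges in the naive sense for $0<\alpha<1$), so the Riemann-sum passage above cannot be justified by a direct dominated-convergence argument; one must exploit the alternating signs of $(-1)^k{\alpha\choose k}$ through an Abel summation that regularizes the endpoint singularity before passing to the tail. A much cleaner shortcut, which is legitimate here because the previous lemma has already computed ${_aD_t^{\alpha}}(t-a)^\gamma = \Gamma(\gamma+1)/\Gamma(\gamma-\alpha+1)\,(t-a)^{\gamma-\alpha}$, is to invoke the classical equivalence between the Grünwald-Letnikov and left Riemann-Liouville derivatives on functions that are continuous on $[a,b]$ with integrable weak derivative. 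For $\gamma>0$ the function $(t-a)^\gamma$ meets this hypothesis, so the two definitions agree at every $t\in(a,b]$, and the conclusion follows immediately from the corresponding Riemann-Liouville identity stated earlier.
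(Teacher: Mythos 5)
Your argument is essentially correct, but note that the book states this lemma without proof (it is recalled as a classical fact from the references cited in Section~\ref{sec:history}), so there is no in-text proof to compare against. Of your two routes, only the second one is a complete proof: the direct Riemann-sum passage does fail exactly where you say it does, since the formal limit $\frac{1}{\Gamma(-\alpha)}\int_0^{t-a}\tau^{-\alpha-1}(t-a-\tau)^\gamma\,d\tau$ has a non-integrable kernel at $\tau=0$ for $0<\alpha<1$, and $B(-\alpha,\gamma+1)$ only makes sense by analytic continuation; repairing this requires a summation by parts of the alternating sum, which in effect converts the Gr\"unwald-Letnikov sum into the representation $\frac{x(a)(t-a)^{-\alpha}}{\Gamma(1-\alpha)}+\frac{1}{\Gamma(1-\alpha)}\int_a^t(t-\tau)^{-\alpha}x'(\tau)\,d\tau$, whose integrand \emph{is} integrable here. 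Your shortcut is the standard and legitimate way to conclude: the classical equivalence theorem (e.g.\ in Podlubny's book) states that if $x$ is continuous on $[a,t]$ and $x'$ is integrable there, then the Gr\"unwald-Letnikov and Riemann--Liouville derivatives of order $\alpha\in(0,1)$ coincide; for $x(t)=(t-a)^\gamma$ with $\gamma>0$ one has $x$ continuous and $x'(t)=\gamma(t-a)^{\gamma-1}$ integrable, so the hypothesis holds, and the value then follows from the preceding lemma giving ${_aD_t^{\alpha}}(t-a)^\gamma=\frac{\Gamma(\gamma+1)}{\Gamma(\gamma-\alpha+1)}(t-a)^{\gamma-\alpha}$. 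The only improvement I would ask for is to cite that equivalence theorem explicitly (it is not stated anywhere in this book, only in the cited monographs) and to verify its hypotheses in one line as above, rather than leaving the first, incomplete computation as if it carried part of the weight.
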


\begin{Definition}
\label{HadDerivatives}
\index{Hadamard fractional derivative! left}
\index{Hadamard fractional derivative! right}
We define the left and right Hadamard fractional derivatives of order $\alpha$,
respectively, by
$$
_a\mathsf{D}_t^{\alpha} x(t) =\left(t \frac{d}{dt}\right)^n \frac{1}{\Gamma(n-\alpha)} \int_a^t \left( \ln \frac{t}{\t} \right)^{n-\alpha-1} \frac{x(\t)}{\t} d\t
$$
and
$$
_t\mathsf{D}_b^{\alpha} x(t) = \left(-t \frac{d}{dt}\right)^n \frac{1}{\Gamma(n-\alpha)} \int_t^b \left(\ln \frac{\t}{t} \right)^{n-\alpha-1} \frac{x(\t)}{\t} d\t,
$$
for all $t \in (a,b)$, where $n=[\alpha]+1$.
\end{Definition}

Observe that, for all types of derivative operators, if variable $t$ is
the time-variable, the left fractional derivative of $x$ is interpreted
as a past state of the process, while the right fractional derivative
of $x$ is interpreted as a future state of the process.

% ---------------------------------------------

\subsection{Some properties of the Caputo derivative}

For $\alpha>0$ and $x \in AC ([a,b];\bR)$, the Riemann--Liouville and
Caputo derivatives are related by the following formulas \cite{Cap1:Kilbas}:
\begin{equation}
\label{relRL-Cl}
{_a^CD_t^\alpha}x(t)={_aD_t^\alpha}\left[ x(t)-\sum_{k=0}^{n-1} \frac{x^{(k)}(a)(t-a)^k}{k!} \right]
\end{equation}
and
\begin{equation}
\label{relRL-Cr}
{_t^CD_b^\alpha}x(t)={_tD_b^\alpha}\left[ x(t)-\sum_{k=0}^{n-1} \frac{x^{(k)}(b)(b-t)^k}{k!} \right],
\end{equation}
where $n=[\alpha]+1$ if $\alpha\notin \mathbb N$ and $n=\alpha$ if $\alpha\in\mathbb N$.\\
In particular, when $\alpha \in (0,1)$, the relations \eqref{relRL-Cl} and
\eqref{relRL-Cr} take the form:
\begin{equation}
\label{relRL-Clp}
\begin{array}{ll}
{_a^CD_t^\alpha}x(t)&={_aD_t^\alpha}(x(t)-x(a))\\
&={_aD_t^\alpha}x(t)-\frac{x(a)}{\Gamma(1-\alpha)}(t-a)^{-\alpha}
\end{array}
\end{equation}
and
\begin{equation}
\label{relRL-Crp}
\begin{array}{ll}
{_t^CD_b^\alpha}x(t)&={_tD_b^\alpha}(x(t)-x(b))\\
&={_tD_b^\alpha}x(t)-\frac{x(b)}{\Gamma(1-\alpha)}(b-t)^{-\alpha}.
\end{array}
\end{equation}

It follows from \eqref{relRL-Clp} and \eqref{relRL-Crp} that the left
Riemann--Liouville derivative equals the left Caputo fractional derivative in
the case $x(a)=0$ and the analogue holds for the right derivatives
under the assumption $x(b)=0$.

In Theorem~\ref{composition}, we see that the Caputo fractional derivatives
provides a left inverse operator to the Riemann--Liouville fractional integration 
(cf. Lemma~2.21 in \cite{Cap1:Kilbas}).
\begin{Theorem}
\label{composition}
Let $\alpha>0$ and let $x \in C\left([a,b];\bR^n\right)$. For
Caputo fractional operators, the next rules hold:
$$
_a^CD_t^\alpha \, _aI_t^\alpha x(t) = x(t)
$$
and
$$
 _t^CD_b^\alpha  \, _tI_b^\alpha x(t) = x(t).
$$
\end{Theorem}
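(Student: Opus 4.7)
The plan is to prove the left-sided identity ${}_a^CD_t^\alpha\, {}_aI_t^\alpha x(t) = x(t)$; the right-sided statement will then follow by an entirely symmetric argument, invoking \eqref{relRL-Cr} in place of \eqref{relRL-Cl}. The strategy is to set $y(t) := {}_aI_t^\alpha x(t)$, reduce the Caputo derivative of $y$ to a Riemann--Liouville derivative using the explicit relation \eqref{relRL-Cl}, and finally to use the classical fact that ${}_aD_t^\alpha$ is a left inverse of ${}_aI_t^\alpha$.

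The key technical step is to check that $y^{(k)}(a) = 0$ for every $k = 0, 1, \ldots, n-1$, where $n = [\alpha]+1$. Working directly from Definition~\ref{R-LIntegrals}, I would differentiate under the integral sign $k$ times: since $\alpha - k > 0$ for $k \le [\alpha]$ the kernel $(t-\tau)^{\alpha-k-1}$ remains integrable, the boundary term produced at $\tau = t$ vanishes, and one obtains
$$
y^{(k)}(t) = {}_aI_t^{\alpha - k} x(t), \qquad k = 0, 1, \ldots, n-1.
$$
Because $x$ is continuous on $[a,b]$, each of these integrals tends to $0$ as $t \to a^+$, so all the relevant initial values of $y$ vanish. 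Substituting into \eqref{relRL-Cl} the sum on the right-hand side disappears, and I conclude
$$
{}_a^CD_t^\alpha y(t) = {}_aD_t^\alpha y(t).
$$

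To finish, I would apply the semigroup property of the Riemann--Liouville integrals, ${}_aI_t^{n-\alpha}\,{}_aI_t^\alpha = {}_aI_t^n$ (a consequence of the Beta-function identity of Definition~\ref{Betaf} together with Fubini's theorem), and compute
$$
{}_aD_t^\alpha y(t) = \left(\frac{d}{dt}\right)^n {}_aI_t^{n-\alpha}\,{}_aI_t^\alpha x(t) = \left(\frac{d}{dt}\right)^n {}_aI_t^n x(t) = x(t),
$$
the last equality being the classical fundamental theorem of calculus applied $n$ times to the iterated integral \eqref{nIntegral}.

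The main obstacle is the boundary-vanishing step: for $\alpha \in (0,1)$ we have $n=1$ and the statement $y(a)=0$ is immediate from continuity of $x$, but when $\alpha \ge 1$ one must carefully justify the iterated differentiation under the integral sign and the passage $t \to a^+$ for the kernels $(t-\tau)^{\alpha-k-1}$ with $k$ close to $\alpha$. Once this is in place, the rest of the argument is a clean two-line application of \eqref{relRL-Cl} and the semigroup property.
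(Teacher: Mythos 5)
Your argument is correct, and it is essentially the standard proof: the paper itself does not prove Theorem~\ref{composition} at all, but simply cites Lemma~2.21 of Kilbas--Srivastava--Trujillo, and your route (set $y={}_aI_t^{\alpha}x$, check $y^{(k)}(a)={}_aI_t^{\alpha-k}x(t)\big|_{t=a}=0$ for $k=0,\dots,n-1$, kill the Taylor terms in \eqref{relRL-Cl}, then use the semigroup property ${}_aI_t^{n-\alpha}\,{}_aI_t^{\alpha}={}_aI_t^{n}$ together with \eqref{nIntegral}) is precisely the argument given there, with the right-sided case following symmetrically from \eqref{relRL-Cr}. The only point worth making explicit is one you partially flag: for $x$ merely continuous, $y={}_aI_t^{\alpha}x$ is $(n-1)$-times continuously differentiable with $y^{(n-1)}={}_aI_t^{\alpha-n+1}x$, but $y^{(n)}$ need not exist, so the Caputo derivative of $y$ cannot be taken in the form ${}_aI_t^{n-\alpha}(d/dt)^n y$ of Definition~\ref{CapDer}; one must adopt the regularized expression \eqref{relRL-Cl}, i.e.\ ${}_a^CD_t^{\alpha}y={}_aD_t^{\alpha}\bigl[y(t)-\sum_{k=0}^{n-1}\frac{y^{(k)}(a)}{k!}(t-a)^k\bigr]$, as the working definition (this is exactly how Kilbas et al.\ state the lemma). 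Since your proof in effect uses \eqref{relRL-Cl} as the definition of the left-hand side, the argument goes through; just state that convention rather than treating \eqref{relRL-Cl} as an identity between two independently defined objects, because under the hypothesis $x\in C([a,b];\bR^n)$ the strong form need not exist.
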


The next statement characterizes the composition of the Riemann--Liouville
fractional integration operators with the Caputo fractional differentiation
operators (cf. Lemma~2.22 in \cite{Cap1:Kilbas}).

\begin{Theorem}
Let $\alpha>0$. If $x \in AC^n \left([a,b];\bR\right)$, then
$$
_aI_t^\alpha \, _a^CD_t^\alpha x(t) =x(t) - \sum_{k=0}^{n-1}
\frac{x^{(k)}(a)}{k!}(t-a)^k
$$
and
$$
_tI_b^\alpha \, _t^CD_b^\alpha x(t) =x(t) - \sum_{k=0}^{n-1}
\frac{(-1)^k x^{(k)}(b)}{k!}(b-t)^k,
$$
with $n=[\alpha]+1$ if $\alpha\notin \mathbb N$ and $n=\alpha$ if
$\alpha\in\mathbb N$.\\
In particular, when $\alpha \in (0,1)$, then
$$ _aI_t^\alpha \, _a^CD_t^\alpha x(t) =x(t) - x(a) \quad \mbox{and}
\quad  _tI_b^\alpha \, _t^CD_b^\alpha x(t) =x(t) -x(b).$$
\end{Theorem}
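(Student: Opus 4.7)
The plan is to unwind the Caputo derivative using its definition, collapse the two Riemann--Liouville integrals via the semigroup property, and then recognize what remains as the integral form of Taylor's theorem.

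First I would start from Definition~\ref{CapDer}, which writes $_a^CD_t^\alpha x(t) = {_aI_t^{n-\alpha}}\,x^{(n)}(t)$. Applying $_aI_t^\alpha$ to both sides yields
$$
_aI_t^\alpha \,{_a^CD_t^\alpha} x(t) = {_aI_t^\alpha}\,{_aI_t^{n-\alpha}}\,x^{(n)}(t).
$$
The next step is to invoke the semigroup identity ${_aI_t^\alpha}\,{_aI_t^{n-\alpha}} = {_aI_t^n}$. This is a standard consequence of Fubini's theorem together with the Beta-function identity $B(\alpha,n-\alpha)=\Gamma(\alpha)\Gamma(n-\alpha)/\Gamma(n)$, and the assumption $x\in AC^n([a,b];\bR)$ guarantees that $x^{(n)}\in L^1$ so that the interchange is legitimate.

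Once the two fractional integrals are merged, Cauchy's $n$-fold integral formula \eqref{nIntegral} gives
$$
_aI_t^n\,x^{(n)}(t)=\frac{1}{(n-1)!}\int_a^t (t-\tau)^{n-1} x^{(n)}(\tau)\,d\tau,
$$
which is precisely the Taylor remainder of $x$ around $a$ of order $n$. Iterating integration by parts $n-1$ times (taking $u=(t-\tau)^{n-1-j}$ and $dv=x^{(n-j)}(\tau)\,d\tau$ at the $j$th step), the boundary contributions at $\tau=t$ vanish for each $j<n-1$ and the boundary contributions at $\tau=a$ accumulate the terms $-\frac{x^{(k)}(a)}{k!}(t-a)^k$ for $k=n-1,n-2,\ldots,1$; the final integration produces $x(t)-x(a)$. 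Collecting everything proves the left-sided identity. Hypothesis $x\in AC^n$ is exactly what is needed so that the derivatives $x^{(k)}(a)$ make sense and the repeated integration by parts is valid.

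The right-sided identity is handled analogously, starting from $_t^CD_b^\alpha x(t) = {_tI_b^{n-\alpha}}(-1)^n x^{(n)}(t)$ and using the corresponding right-side semigroup property ${_tI_b^\alpha}\,{_tI_b^{n-\alpha}} = {_tI_b^n}$. The $n$-fold right Riemann--Liouville integral of $(-1)^n x^{(n)}$ again reduces via integration by parts to Taylor's expansion at $b$, but each step introduces a factor of $-1$ from differentiating $(\tau-t)^{n-1-j}$, which is what produces the alternating signs $(-1)^k x^{(k)}(b)$. Finally, the case $\alpha\in(0,1)$ is just $n=1$, so the sum collapses to the single term $x(a)$ (respectively $x(b)$), giving the stated specializations. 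The main obstacle is the bookkeeping of signs and boundary terms in the iterated integration by parts; the semigroup property itself is standard but should be cited or briefly justified.
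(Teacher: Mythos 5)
Your proof is correct. Note that the paper itself offers no proof of this statement: it is quoted as background, with a pointer to Lemma~2.22 of Kilbas, Srivastava and Trujillo. Your route --- rewriting $_a^CD_t^\alpha x = {_aI_t^{n-\alpha}}x^{(n)}$, merging the integrals by the semigroup property ${_aI_t^\alpha}\,{_aI_t^{n-\alpha}}={_aI_t^n}$ (Fubini plus the Beta integral, legitimate since $x^{(n)}\in L^1$ for $x\in AC^n$), and identifying $_aI_t^n x^{(n)}$ with the integral-form Taylor remainder at $a$ (respectively at $b$, where the factor $(-1)^n$ from $\bigl(-\tfrac{d}{dt}\bigr)^n$ combines with the reversed orientation to give the signs $(-1)^k x^{(k)}(b)$) --- is exactly the standard argument behind the cited lemma, and your sign bookkeeping on the right-sided identity checks out; the only case worth a passing remark is $\alpha\in\mathbb N$, where $n-\alpha=0$ makes ${_aI_t^{n-\alpha}}$ the identity and the semigroup step is vacuous.
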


Similarly to the Riemann--Liouville fractional derivative, the Caputo fractional
derivative of a power function yields a power function of the same form.

\begin{Lemma}
Let $\alpha>0$. Then, the following relations hold:
$${_a^CD_t^{\alpha}} (t-a)^\gamma =\frac{\Gamma(\gamma+1)}
{\Gamma(\gamma-\alpha+1)}(t-a)^{\gamma-\alpha}, \quad  \gamma>n-1$$
and
$${_t^CD_b^{\alpha}}(b-t)^\gamma =\frac{\Gamma(\gamma+1)}
{\Gamma(\gamma-\alpha+1)}(b-t)^{\gamma-\alpha}, \quad  \gamma>n-1,$$
with $n=[\alpha]+1$ if $\alpha\notin \mathbb N$ and $n=\alpha$
if $\alpha\in\mathbb N$.
\end{Lemma}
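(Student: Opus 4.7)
The plan is to establish both identities by direct computation from the definitions of the Caputo derivatives, treating first the non-integer case $\alpha \notin \mathbb{N}$ (so $n = [\alpha]+1$) and then observing that the integer case $\alpha \in \mathbb{N}$ reduces to a classical derivative formula.

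For the left identity, I would first compute the ordinary $n$th derivative of $x(t) = (t-a)^\gamma$, obtaining
\[
x^{(n)}(t) = \frac{\Gamma(\gamma+1)}{\Gamma(\gamma-n+1)}(t-a)^{\gamma-n},
\]
which is well-defined precisely because $\gamma > n-1$. Substituting into the definition
\[
{_a^CD_t^{\alpha}}x(t) = \frac{1}{\Gamma(n-\alpha)}\int_a^t (t-\tau)^{n-\alpha-1} x^{(n)}(\tau)\,d\tau,
\]
I would then apply the change of variables $\tau = a + s(t-a)$, which turns the integral into
\[
(t-a)^{\gamma-\alpha}\int_0^1 (1-s)^{n-\alpha-1} s^{\gamma-n}\,ds = (t-a)^{\gamma-\alpha}\,B(\gamma-n+1, n-\alpha).
\]
The Beta integral is convergent since $\gamma - n > -1$ (by hypothesis) and $n-\alpha > 0$ (by definition of $n$). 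Using the identity $B(u,v) = \Gamma(u)\Gamma(v)/\Gamma(u+v)$ from Definition~\ref{Betaf}, the $\Gamma(\gamma-n+1)$ and $\Gamma(n-\alpha)$ factors cancel, leaving the claimed expression.

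For the right identity, the approach is completely parallel: differentiating $(b-t)^\gamma$ produces a factor $(-1)^n$, which is absorbed by the $(-1)^n$ already present in the definition of $_t^CD_b^\alpha$; then the change of variables $\tau = b - s(b-t)$ again reduces the integral to the same Beta function $B(\gamma-n+1, n-\alpha)$, and the Beta--Gamma identity closes the computation. Finally, when $\alpha \in \mathbb{N}$ with $n=\alpha$, the Caputo derivatives coincide with $(d/dt)^n$ and $(-d/dt)^n$ respectively (as noted in the Remark following Definition~\ref{CapDer}), so both formulas reduce to the well-known power rule $\frac{d^\alpha}{dt^\alpha}(t-a)^\gamma = \frac{\Gamma(\gamma+1)}{\Gamma(\gamma-\alpha+1)}(t-a)^{\gamma-\alpha}$.

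No step is genuinely hard; the only point requiring care is verifying that the hypothesis $\gamma > n-1$ simultaneously guarantees (i) the existence of the classical $n$th derivative of the power function as a locally integrable function on $[a,b]$, so the Caputo integral makes sense, and (ii) the convergence of the Beta integral at the endpoint $s=0$. Both convergence conditions reduce to the single inequality $\gamma - n > -1$, which is exactly the assumption made.
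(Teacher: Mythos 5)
Your proof is correct, and it uses exactly the computation the paper itself employs: the statement is quoted in Chapter~1 without proof, but its variable-order analogue (Lemma~\ref{lemma:power}, and its right-sided companion Lemma~\ref{lemma:power:2}) is proved in the paper by precisely your steps --- differentiate the power function $n$ times, change variables $\t-a=s(t-a)$, and evaluate the resulting Beta integral via $B(u,v)=\Gamma(u)\Gamma(v)/\Gamma(u+v)$. Your added remarks on convergence under $\gamma>n-1$ and on the integer case $\alpha=n$ reducing to the classical power rule are accurate and complete the argument.
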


%----------------------------

\subsection{Combined Caputo derivative}
\label{sec:combC_classic}

In this section, we introduce a special operator for our work, the
combined Caputo fractional derivative.
We extend the notion of the Caputo fractional derivative to the fractional
derivative $^CD_\gamma^{\alpha,\beta}$, that involves the left and the right
Caputo fractional derivative, i.e., it combines the past and the future
of the process into one single operator.

This operator was introduced in
\cite{Cap1:Malin:Tor2010}, motivated by the ideia of the symmetric fractional
derivative introduced in \cite{Cap1:Klimek}:

\begin{Definition}
\index{symmetric fractional derivative}
Considering the left and right Riemann--Liouville derivatives, the symmetric
fractional derivative is given by
\begin{equation}
_aD_b^\alpha x(t)=\frac{1}{2}\left[_aD_t^\alpha x(t) +{_tD_b^\alpha}x(t)\right].
\end{equation}
\end{Definition}

Other combined operators were studied. For example, we have the Riesz and
the Riesz--Caputo operators \cite{Cap1:book:FCV}.

\begin{Definition}
\index{Riesz fractional integral}
Let $x:[a,b] \to \bR$ be a function of class $C^1$ and $\alpha \in (0,1)$.
For $t \in [a,b]$, the Riesz fractional integral of order $\alpha$, is
defined by
\begin{equation*}
\begin{split}
_a^RI_b^\alpha x(t)&=\frac{1}{2\Gamma(\alpha)} \int_a^b |t-\t|^{\alpha-1} x(\t)d\t\\
&=\frac{1}{2}\left[_aI_t^\alpha x(t) +{_tI_b^\alpha}x(t)\right],
\end{split}
\end{equation*}
and the Riesz fractional derivative of order $\alpha$, is defined by
\index{Riesz fractional derivative}
\begin{equation*}
\begin{split}
_a^RD_b^\alpha x(t)&=\frac{1}{\Gamma(1-\alpha)} \frac{d}{dt} \int_a^b
|t-\t|^{-\alpha} x(\t)d\t\\
&=\frac{1}{2}\left[_aD_t^\alpha x(t)- {_tD_b^\alpha}x(t) \right].
\end{split}
\end{equation*}
\end{Definition}

\begin{Definition}
\index{Riesz--Caputo fractional derivative}
Considering the left and right Caputo derivatives, Riesz--Caputo fractional
derivative is given by
\begin{equation}
\label{RCder}
\begin{split}
_a^{RC}D_b^{\alpha}x(t)&=\frac{1}{\Gamma(1-\alpha)} \int_a^b |t-\t|^{-\alpha}
\frac{d}{d\t} x(\t)d\t\\
&=\frac{1}{2}\left[_a^CD_t^\alpha x(t)- {_t^CD_b^\alpha}x(t) \right].
\end{split}
\end{equation}
\end{Definition}

\begin{Remark}[\citet{Cap1:book:FCV}]
If $\alpha$ goes to $1$, then the fractional derivatives introduced above coincide
with the standard derivative:
$$ _a^RD_b^{\alpha} = {_a^{RC}D_b^{\alpha}} = \frac{d}{dt}.$$
\end{Remark}

Similarly to the last operator \eqref{RCder}, the combined Caputo derivative is
a convex combination of the left and the right Caputo fractional derivatives.
But, in this operator, we also consider other coefficients 
for the convex combination besides $1/2$.  
Moreover, the orders $\alpha$ and $\beta$ of the left-
and right-sided fractional derivatives can be different.
Therefore, the combined Caputo derivative
is a convex combination of the left Caputo fractional derivative of order $\alpha$
and the right Caputo fractional derivative of order $\beta$.

\begin{Definition}
\label{CombClassical}
\index{combined Caputo fractional derivative! classical}
Let $\alpha,\beta \in (0, 1)$ and $\gamma \in [0,1]$.
The combined Caputo fractional derivative operator $^CD_\gamma^{\alpha,\beta}$
is defined by
\begin{equation}
\label{Defcomb_1}
^CD_\gamma^{\alpha,\beta}=\gamma \, _a^CD_t^\alpha + (1-\gamma) \, _t^CD_b^\beta,
\end{equation}
which acts on $x\in AC\left([a,b];\bR\right)$ in the following way:
$$
^CD_\gamma^{\alpha,\beta}x(t)=\gamma \, _a^CD_t^\alpha x(t) + (1-\gamma)
\, _t^CD_b^\beta x(t).
$$
\end{Definition}

The operator \eqref{Defcomb_1} is obviously linear.
Observe that
$$^CD_0^{\alpha,\beta}={_t^CD_b^{\beta}} \quad \mbox{and} \quad
^CD_1^{\alpha,\beta}={_a^CD_t^{\alpha}}.$$

The symmetric fractional derivative and the Riesz fractional derivative are
useful tools to describe some nonconservative models.
But those types of differentiation do not seem suitable for all kinds of
variational problems because they are based on the Riemann--Liouville fractional derivatives and therefore the possibility that admissible trajectories $x$ have continuous fractional derivatives imply that $x(a) = x(b) = 0$
\cite{Cap1:Samko:Ross}.
For more details about the combined Caputo fractional derivative, 
see \cite{Cap1:Malin:Tor2011,Cap1:Malin:Tor2012,Cap1:Od2012b,Cap1:Malin:Tor2012b}.

% ---------------------------------------------

\subsection{Variable-order operators}
\label{VariableOperators}

Very useful physical applications have given birth to the variable-order
fractional calculus, for example in modeling mechanical behaviors
\cite{Cap1:Fu,Cap1:Sun:Hu}.
Nowadays, variable-order fractional calculus is particularly recognized
as a useful and promising approach in the modelling of diffusion processes,
in order to characterize time-dependent or concentration-dependent anomalous
diffusion, or diffusion processes in inhomogeneous porous media
\cite{Cap1:Sun:Chen:Li}.

Now, we present the fundamental notions of the fractional calculus of
variable-order \cite{Cap1:book:MOT}.
We consider the fractional order of the derivative and of the
integral to be a continuous function of two variables, $\alpha(\cdot,\cdot)$
with domain $[a,b]^2$, taking values on the open interval $(0,1)$.
Let $x:[a,b]\to\bR$ be a function.

First, we recall the generalization of fractional integrals for 
a variable-order $\a$.

\begin{Definition}
\index{Riemann--Liouville fractional integral! left (variable-order)}
\index{Riemann--Liouville fractional integral! right (variable-order)}
The left and right Riemann--Liouville fractional integrals of order $\a$ are defined by
$$
\LI x(t)=\int_a^t \frac{1}{\Gamma(\alpha(t,\t))}(t-\t)^{\alpha(t,\t)-1}x(\t)d\t, \quad t>a
$$
and
$$
\RI x(t)=\int_t^b\frac{1}{\Gamma(\alpha(\t,t))}(\t-t)^{\alpha(\t,t)-1} x(\t)d\t, \quad t<b,
$$
respectively.
\end{Definition}

We remark that, in contrast to the fixed fractional order case,
variable-order fractional integrals are not the inverse operation
of the variable-order fractional derivatives.

For fractional derivatives, we consider two types: the Riemann--Liouville and
the Caputo fractional derivatives.

\begin{Definition}
\index{Riemann--Liouville fractional derivative! left (variable-order)}
\index{Riemann--Liouville fractional derivative! right (variable-order)}
The left and right Riemann--Liouville fractional derivatives of order $\a$ are
defined by
\begin{equation}
\begin{split}
\LDa x(t) = & \frac{d}{dt} \LI x(t)\\
= &\frac{d}{dt} \int_a^t \frac{1}{\Gamma(1-\alpha(t,\t))}(t-\t)^{-\alpha(t,\t)}x(\t)d\t, \quad t>a
\end{split}
\end{equation}
and
\begin{equation}
\begin{split}
\RDa x(t)= & - \frac{d}{dt} \RI x(t)\\
=&\frac{d}{dt}\int_t^b\frac{-1}{\Gamma(1-\alpha(\t,t))}(\t-t)^{-\alpha(\t,t)} x(\t)d\t, \quad t<b
\end{split}
\end{equation}
respectively.
\end{Definition}

Lemma \ref{pf_RL-VO} gives a Riemann--Liouville variable-order fractional integral and fractional
derivative for the power function $x(t)=(t-a)^\gamma$, where we assume that the
fractional order depends only on the first variable:
$\alpha(t, \t):= \overline{\alpha}(t)$, where  $\overline{\alpha}: [a,b]
\rightarrow (0,1)$ is a given function.

\begin{Lemma}
\label{pf_RL-VO}
Let $x$ be the power function $x(t)=(t-a)^\gamma$.
Then, for $\gamma>-1$, we have
$$
_aI_t^{\overline{\alpha}(t)} x(t)= \frac{\Gamma(\gamma+1)}
{\Gamma(\gamma+\overline{\alpha}(t)+1)}(t-a)^{\gamma+\overline{\alpha}(t)}
$$
and
$$\begin{array}{ll}
_aD_t^{\overline{\alpha}(t)} x(t) &=\frac{\Gamma(\gamma+1)}
{\Gamma(\gamma-\overline{\alpha}(t)+1)}(t-a)^{\gamma-\overline{\alpha}(t)}\\
&-\overline{\alpha}^{(1)}(t)\frac{\Gamma(\gamma+1)}
{\Gamma(\gamma-\overline{\alpha}(t)+2)}(t-a)^{\gamma-\overline{\alpha}(t)+1}\\
&\times\left[\ln(t-a)-\Psi(\gamma-\overline{\alpha}(t)+2)
+\Psi(1-\overline{\alpha}(t))\right].\\
\end{array}$$
\end{Lemma}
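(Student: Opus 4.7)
The plan is to verify both formulas by direct calculation, using the Beta--Gamma identity $B(p,q)=\Gamma(p)\Gamma(q)/\Gamma(p+q)$ (Definition~\ref{Betaf}) and logarithmic differentiation as the key tools.

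For the fractional integral, I substitute $x(\tau)=(\tau-a)^\gamma$ into the definition of $_aI_t^{\overline{\alpha}(t)}x(t)$. The assumption $\alpha(t,\tau)=\overline{\alpha}(t)$ makes the order independent of $\tau$, so the factor $1/\Gamma(\overline{\alpha}(t))$ and the exponent $\overline{\alpha}(t)-1$ are constants with respect to the integration variable. The substitution $\tau=a+s(t-a)$ reduces the integral to
$$
\frac{(t-a)^{\overline{\alpha}(t)+\gamma}}{\Gamma(\overline{\alpha}(t))}\int_0^1(1-s)^{\overline{\alpha}(t)-1}s^\gamma\,ds
= \frac{(t-a)^{\overline{\alpha}(t)+\gamma}}{\Gamma(\overline{\alpha}(t))}\,B(\gamma+1,\overline{\alpha}(t)),
$$
which converges thanks to $\gamma>-1$ and $\overline{\alpha}(t)\in(0,1)$. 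The Beta--Gamma identity then cancels $\Gamma(\overline{\alpha}(t))$ and yields the first formula.

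For the fractional derivative, the strategy is to first evaluate the singular integral in closed form and then differentiate. Applying the Beta--Gamma identity exactly as above (with exponent $-\overline{\alpha}(t)$ in place of $\overline{\alpha}(t)-1$) gives
$$
A(t):=\int_a^t(t-\tau)^{-\overline{\alpha}(t)}(\tau-a)^\gamma\,d\tau = \frac{\Gamma(\gamma+1)\Gamma(1-\overline{\alpha}(t))}{\Gamma(\gamma-\overline{\alpha}(t)+2)}(t-a)^{\gamma-\overline{\alpha}(t)+1}.
$$
I next compute $A'(t)$ by logarithmic differentiation of this product. Taking $\ln$ and using $(\ln\Gamma)'=\Psi$ together with the chain rule in $\overline{\alpha}(t)$, the logarithmic derivative $A'(t)/A(t)$ decomposes into four contributions: the pair $(\gamma-\overline{\alpha}(t)+1)/(t-a)-\overline{\alpha}'(t)\ln(t-a)$ from the factor $(t-a)^{\gamma-\overline{\alpha}(t)+1}$; the term $-\overline{\alpha}'(t)\Psi(1-\overline{\alpha}(t))$ from $\Gamma(1-\overline{\alpha}(t))$; and $+\overline{\alpha}'(t)\Psi(\gamma-\overline{\alpha}(t)+2)$ from $\Gamma(\gamma-\overline{\alpha}(t)+2)^{-1}$. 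Reassembling $A'(t)$ and dividing by $\Gamma(1-\overline{\alpha}(t))$, the main contribution $(\gamma-\overline{\alpha}(t)+1)/(t-a)$ combines with the functional equation $\Gamma(\gamma-\overline{\alpha}(t)+2)=(\gamma-\overline{\alpha}(t)+1)\Gamma(\gamma-\overline{\alpha}(t)+1)$ to produce $\Gamma(\gamma+1)(t-a)^{\gamma-\overline{\alpha}(t)}/\Gamma(\gamma-\overline{\alpha}(t)+1)$, which is the first summand of the claim. The remaining three $\overline{\alpha}'(t)$-proportional contributions reassemble, with common factor $-\overline{\alpha}'(t)\Gamma(\gamma+1)(t-a)^{\gamma-\overline{\alpha}(t)+1}/\Gamma(\gamma-\overline{\alpha}(t)+2)$, into the bracket $[\ln(t-a)-\Psi(\gamma-\overline{\alpha}(t)+2)+\Psi(1-\overline{\alpha}(t))]$.

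The main obstacle is the careful bookkeeping of the three Psi-terms: each must be correctly attributed to the Gamma-factor it arises from, the sign flips from differentiating through $1-\overline{\alpha}(t)$ versus $\gamma-\overline{\alpha}(t)+2$ need to be tracked, and the Gamma functional equation must be applied at the right moment to convert the power-rule factor $(\gamma-\overline{\alpha}(t)+1)$ into a shift of the denominator argument in the first summand.
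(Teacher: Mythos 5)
Your computations are correct, and the method is the one the book itself uses for such formulas: Lemma~\ref{pf_RL-VO} is stated in the review chapter without proof, but the analogous power-function formulas for the variable-order Caputo derivatives (Lemma~\ref{2_1_Lemma_power} and Lemma~\ref{lemma:power}) are proved exactly as you do --- substitute $\tau-a=s(t-a)$, recognize the Beta function, pass to Gammas, and then differentiate the closed form, the $\Psi$ terms coming from the $t$-dependence of $\overline{\alpha}$. Your sign bookkeeping for the three $\overline{\alpha}'(t)$ contributions and the use of $\Gamma(\gamma-\overline{\alpha}(t)+2)=(\gamma-\overline{\alpha}(t)+1)\Gamma(\gamma-\overline{\alpha}(t)+1)$ are all right, and the first (integral) formula is immediate as you say. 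One step you should state explicitly: when you ``divide by $\Gamma(1-\overline{\alpha}(t))$'' after differentiating $A(t)$, you are computing ${\frac{1}{\Gamma(1-\overline{\alpha}(t))}\frac{d}{dt}\int_a^t(t-\tau)^{-\overline{\alpha}(t)}x(\tau)\,d\tau}$, i.e.\ the Samko--Ross (``type I'') operator in which the factor $1/\Gamma(1-\overline{\alpha}(t))$ stands \emph{outside} the time derivative; this is the convention under which the stated formula (with the $+\Psi(1-\overline{\alpha}(t))$ term in the bracket) is true, and it agrees with the operator ${_aD_t^{\alpha(t)}}$ used later in the book. If instead one differentiates the expression displayed in the definition immediately preceding the lemma, where $1/\Gamma(1-\alpha(t,\tau))$ sits under the $\frac{d}{dt}$, the extra term $A(t)\,\frac{d}{dt}\bigl[1/\Gamma(1-\overline{\alpha}(t))\bigr] = \overline{\alpha}'(t)\Psi(1-\overline{\alpha}(t))\,A(t)/\Gamma(1-\overline{\alpha}(t))$ cancels your $-\overline{\alpha}'(t)\Psi(1-\overline{\alpha}(t))$ contribution and yields the ``type II'' formula with bracket $\bigl[\ln(t-a)-\Psi(\gamma-\overline{\alpha}(t)+2)\bigr]$, as for ${_a\mathcal{D}_t^{\alpha(t)}}$ in Lemma~\ref{2_1_Lemma_power}. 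So add one sentence fixing which definition of ${_aD_t^{\overline{\alpha}(t)}}$ you are using; with that, the proof is complete.
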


\begin{Definition}
\index{Caputo fractional derivative! left (variable-order)}
\index{Caputo fractional derivative! right (variable-order)}
The left and right Caputo fractional derivatives of order $\a$
are defined by
\begin{equation}
\LC x(t)=\int_a^t\frac{1}{\Gamma(1-\alpha(t,\t))}(t-\t)^{-\alpha(t,\t)}x^{(1)}(\t)d\t, \quad t>a
\end{equation}
and
\begin{equation}
\RCa x(t)=\int_t^b\frac{-1}{\Gamma(1-\alpha(\t,t))}(\t-t)^{-\alpha(\t,t)}x^{(1)}(\t)d\t, \quad t<b,
\end{equation}
respectively.
\end{Definition}
Of course, the fractional derivatives just defined are linear operators.

%--------------------------------

\subsection{Generalized fractional operators}

In this section, we present three definitions of one-dimensional generalized
fractional operators that depend on a general kernel, studied by Odzijewicz,
Malinowska and Torres (see e.g. \cite{Cap1:book:MOT,Cap1:Od2012a,Cap1:Od_2013b}, 
although \cite{Cap1:Agrawal2010} had introduced this generalized fractional operators).

Let $\Delta:=\lbrace (t,\tau) \in \bR^2 : a\leq\tau < t \leq b \rbrace$.

\begin{Definition}
\index{generalized fractional integrals! of Riemann--Liouville}
\label{gfi-RL}
Let $k^\alpha$ be a function defined almost everywhere on $\Delta$ with values in $\bR$.
For all $x:[a,b]\rightarrow\bR$, the generalized fractional integral operator
$K_P$ is defined by
\begin{equation}
\label{GenFIRL}
K_P [x](t) = \lambda \int_a^t k^\alpha(t,\tau) x(\t)d\t + \mu \int_t^b k^\alpha(\tau,t) x(\t)d\t,
\end{equation}
with $P = \langle a, t, b, \lambda, \mu \rangle$, where $\lambda$ and $\mu$ are
real numbers.
\end{Definition}

In particular, if we choose  special cases for the kernel, we can obtain
standard fractional operators or variable-order.

\begin{Remark}
\label{example_kernel} For special chosen kernels $k^\alpha$ and parameters $P$,
the operator $K_P$ can be reduced  to the classical or variable-order
Riemann--Liouville fractional integrals:
\begin{itemize}

\item Let $k^\alpha(t,\t)=\frac{1}{\Gamma(\alpha)}(t-\t)^{\alpha-1}$ and
$0<\alpha<1$. If $P = \langle a, t, b, 1, 0 \rangle$, then
$$K_P [x](t) = \frac{1}{\Gamma(\alpha)} \int_a^t (t-\t)^{\alpha-1} x(\t)d\t
=: {_aI_t^\alpha [x](t)}$$
is the left Riemann--Liouville fractional integral of order $\alpha$;

if $P=\langle a, t, b, 0, 1 \rangle$, then
$$K_P [x](t) = \frac{1}{\Gamma(\alpha)} \int_t^b (\t-t)^{\alpha-1} x(\t)d\t
=: {_tI_b^\alpha [x](t)}$$
is the right Riemann--Liouville fractional integral of order $\alpha$.

\item If $k^\alpha(t,\t)=\frac{1}{\Gamma(\alpha(t,\t))}(t-\t)^{\alpha(t,\t)-1}$
and $P = \langle a, t, b, 1, 0 \rangle$, then
$$K_P [x](t) = \int_a^t \frac{1}{\Gamma(\alpha(t,\t))} (t-\t)^{\alpha(t,\t)-1}
x(\t)d\t=: {_aI_t^\a [x](t)}$$
is the left Riemann--Liouville fractional integral of variable-order $\a$; 

if $P=\langle a, t, b, 0, 1 \rangle$, then
$$K_P [x](t) = \int_t^b \frac{1}{\Gamma(\alpha(t,\t))}  (\t-t)^{\alpha(t,\t)-1}
x(\t)d\t=: {_tI_b^\a [x](t)}$$
is the right Riemann--Liouville fractional integral of variable-order $\a$.
\end{itemize}
\end{Remark}

Some other fractional operators can be obtained with the generalized fractional
integrals, for example, Hadamard, Riesz, Katugampola fractional operators
\cite{Cap1:book:MOT,Cap1:Agrawal2010}.

The following two news operators, the generalized fractional Riemann--Liouville
and Caputo derivatives, are defined as a composition of classical derivatives
and generalized fractional integrals.

\begin{Definition}
\index{generalized fractional derivative! of Riemann--Liouville}
The gene\-ralized fractional derivative of Riemann--Liouville type, denoted
by $A_P$, is
defined by
$$A_P=\frac{d}{dt} \circ K_P.$$
\end{Definition}

\begin{Definition}
\index{generalized fractional derivative! of Caputo}
The generalized fractional derivative of Caputo type, denoted by $B_P$, is
defined by
$$
B_P= K_P \circ  \frac{d}{dt}.
$$
\end{Definition}

Considering $k^\alpha(t,\t)=\frac{1}{\Gamma(1-\alpha)}(t-\t)^{-\alpha}$
with $0<\alpha<1$ and appropriate sets $P$, this two general kernel
operators $A_P$ and $B_P$ can be reduced to the standard Riemann--Liouville
and Caputo fractional derivatives, respectively \cite{Cap1:book:MOT}.

%--------------------------------

\subsection{Integration by parts} 
\label{subsec:IP}

In this section, we summarize formulas of integration by parts because
they are important results to find necessary optimality conditions
when dealing with variational problems.

First, we present the rule of fractional integration by parts 
for the Riemann--Liouville fractional integral.

\begin{Theorem}
\label{thm:FIP:int}
\index{integration by parts! Riemann--Liouville fractional integrals}
Let $0<\alpha<1$, $p \geq 1$, $q \geq 1$ and $1/p+1/q \leq 1+\alpha$.
If $y\in L_p([a,b];\bR)$ and $x\in L_q([a,b];\bR)$, then the following formula
for integration by parts hold:
$$
\int_{a}^{b}y(t) \, _aI_t^{\alpha} x(t)dt =\int_a^b x(t) \, _tI_b^{\alpha}y(t)dt.
$$
\end{Theorem}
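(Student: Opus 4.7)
The plan is to unfold the definition of the left Riemann--Liouville integral on the left-hand side, swap the order of integration via Fubini's theorem over the triangle $\{(t,\tau) : a\le\tau\le t\le b\}$, and then recognize the resulting inner integral as the right Riemann--Liouville fractional integral of $y$.

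Concretely, I would first write
\[
\int_a^b y(t)\,{_aI_t^\alpha}x(t)\,dt
=\frac{1}{\Gamma(\alpha)}\int_a^b y(t)\int_a^t (t-\tau)^{\alpha-1}x(\tau)\,d\tau\,dt,
\]
view this as an integral over the triangular region $a\le\tau\le t\le b$, and interchange the order of integration to obtain
\[
\frac{1}{\Gamma(\alpha)}\int_a^b x(\tau)\int_\tau^b (t-\tau)^{\alpha-1}y(t)\,dt\,d\tau
=\int_a^b x(\tau)\,{_\tau I_b^\alpha}y(\tau)\,d\tau,
\]
which is the desired right-hand side after relabeling. This symmetric swap is the entire content once the swap is legitimate.

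The real work, and the main obstacle, is to justify the application of Fubini's theorem under the hypotheses $y\in L_p$, $x\in L_q$, $1/p+1/q\le 1+\alpha$. I would check absolute integrability of the kernel $(t,\tau)\mapsto y(t)(t-\tau)^{\alpha-1}x(\tau)$ on the triangle. The natural tool is H\"older's inequality combined with a Young/Hardy--Littlewood--Sobolev estimate for the convolution with $|t-\tau|^{\alpha-1}$: writing $1/p'=1-1/p$, the condition $1/p+1/q\le 1+\alpha$ is equivalent to $1/q\le 1/p'+\alpha$, which is precisely the exponent condition ensuring that $_aI_t^\alpha$ maps $L_q$ boundedly into the dual space of $L_p$ (equivalently, ${_tI_b^\alpha}$ maps $L_p$ into the dual of $L_q$). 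Thus both iterated integrals are finite, and Fubini applies.

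Once Fubini is justified the identity follows mechanically, so I would spend essentially all the effort on the boundary case $1/p+1/q=1+\alpha$ (the inequality case being easier by interpolation with the trivial case of continuous functions on a compact interval). Alternatively, if one wants to bypass the sharp Hardy--Littlewood--Sobolev estimate, I would first prove the formula for $x,y\in C([a,b];\mathbb{R})$ (where absolute integrability is obvious from boundedness and the integrability of $(t-\tau)^{\alpha-1}$ on the triangle) and then extend to general $L_p$, $L_q$ data by a density argument, using the continuity of the bilinear form $(x,y)\mapsto \int_a^b y\cdot{_aI_t^\alpha}x\,dt$ guaranteed by the same $L_p$--$L_q$ mapping property of $_aI_t^\alpha$.
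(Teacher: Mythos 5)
The paper itself offers no proof of this theorem: it appears in the preliminary review chapter and is quoted as a classical result (it is the fractional integration-by-parts formula of Samko--Kilbas--Marichev type), so there is no ``paper proof'' to compare against. Your argument is the standard one from the literature and is essentially correct: write out ${_aI_t^{\alpha}}x$, apply Fubini--Tonelli over the triangle $a\le\tau\le t\le b$, and recognize the inner integral as ${_\tau I_b^{\alpha}}y(\tau)$; the only substantive task is the absolute integrability of $y(t)(t-\tau)^{\alpha-1}x(\tau)$, which you correctly reduce, via H\"older and the Hardy--Littlewood mapping theorem for the Riemann--Liouville integral, to the exponent condition $1/q\le 1/p'+\alpha$, i.e.\ $1/p+1/q\le 1+\alpha$. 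Your remark that the strict-inequality case is easier on a compact interval (nestedness of $L_r$ spaces, or density of continuous functions plus boundedness of the bilinear form) is also fine.

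One caveat worth stating explicitly: in the borderline case $1/p+1/q=1+\alpha$ your justification genuinely needs $p>1$ and $q>1$, since the Hardy--Littlewood theorem fails at the endpoints $p=1$ (and dually $q=1$). This is not a defect of your argument but of the statement as reproduced here: the classical formulation adds the proviso ``$p\neq 1$ and $q\neq 1$ when $1/p+1/q=1+\alpha$,'' and your proof recovers exactly that version. If you want the result as literally stated, you should either note this restriction or supply a separate argument for the endpoint case, which in general is not available.
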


For Caputo fractional derivatives, the integration by parts formulas
are presented below \cite{Cap1:Alm:Malin}.

\begin{Theorem}
\label{thm:FIP:cap}
\index{integration by parts! Caputo fractional derivatives}
Let $0<\alpha<1$. The following relations hold:
\begin{equation}
\label{FIP:cap_l}
\int_{a}^{b}y(t) \, _a^CD_t^{\alpha} x(t)dt
=\int_a^b x(t) \, {_tD_b^{\alpha}}y(t)dt
+\left[x(t) \, {_tI_b^{1-\alpha}}y(t) \right]_{t=a}^{t=b}
\end{equation}
and
\begin{equation}
\label{FIP:cap_r}
\int_{a}^{b}y(t) \, _t^CD_b^{\alpha}x(t)dt
=\int_a^b x(t) \, {_aD_t^{\alpha}} y(t)dt
-\left[x(t) \, {_aI_t^{1-\alpha}}y(t)\right]_{t=a}^{t=b}.
\end{equation}
\end{Theorem}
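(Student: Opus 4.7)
The plan is to reduce both identities to the integration by parts formula for Riemann--Liouville fractional integrals (Theorem~\ref{thm:FIP:int}) combined with ordinary integration by parts. The bridge is Definition~\ref{CapDer}, which for $0<\alpha<1$ gives
$$
{_a^CD_t^{\alpha}}x(t) = {_aI_t^{1-\alpha}}\, x^{(1)}(t), \qquad {_t^CD_b^{\alpha}}x(t) = -\,{_tI_b^{1-\alpha}}\, x^{(1)}(t).
$$
This lets me trade the Caputo derivative of $x$ for a Riemann--Liouville integral acting on the ordinary derivative $x^{(1)}$, at which point the left/right roles can be swapped on the kernel.

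For \eqref{FIP:cap_l}, I would first rewrite
$$
\int_a^b y(t)\, {_a^CD_t^{\alpha}}x(t)\, dt = \int_a^b y(t)\, {_aI_t^{1-\alpha}}\, x^{(1)}(t)\, dt,
$$
then apply Theorem~\ref{thm:FIP:int} with order $1-\alpha$ (swapping $y$ and $x^{(1)}$ against the kernel) to obtain $\int_a^b x^{(1)}(t)\, {_tI_b^{1-\alpha}}y(t)\, dt$. A classical integration by parts then produces a boundary term $[x(t)\, {_tI_b^{1-\alpha}}y(t)]_{a}^{b}$ plus $-\int_a^b x(t)\, \frac{d}{dt}[{_tI_b^{1-\alpha}}y(t)]\, dt$. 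Finally, by Definition~\ref{R-Lderivatives} with $n=1$, $\frac{d}{dt}[{_tI_b^{1-\alpha}}y(t)] = -\,{_tD_b^{\alpha}}y(t)$, and the sign flip yields the $+\int_a^b x(t)\, {_tD_b^{\alpha}}y(t)\, dt$ needed in \eqref{FIP:cap_l}.

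Formula \eqref{FIP:cap_r} follows by the mirror-image argument: rewrite ${_t^CD_b^{\alpha}}x$ as $-\,{_tI_b^{1-\alpha}}x^{(1)}$, apply Theorem~\ref{thm:FIP:int} to move ${_tI_b^{1-\alpha}}$ onto $y$ as ${_aI_t^{1-\alpha}}$, integrate by parts once more, and use $\frac{d}{dt}[{_aI_t^{1-\alpha}}y(t)] = {_aD_t^{\alpha}}y(t)$. The overall minus sign from ${_t^CD_b^{\alpha}} = -\,{_tI_b^{1-\alpha}}\frac{d}{dt}$ produces the minus in front of the boundary bracket in \eqref{FIP:cap_r}.

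The main obstacle is the bookkeeping of exponent conditions needed to legitimately invoke Theorem~\ref{thm:FIP:int}: that theorem requires $p,q\geq 1$ with $1/p+1/q\leq 1+(1-\alpha)=2-\alpha$, together with $y\in L_p$ and $x^{(1)}\in L_q$. Under the hypothesis $x\in AC([a,b];\mathbb{R})$ implicit in Definition~\ref{CapDer}, $x^{(1)}\in L_1([a,b];\mathbb{R})$, so choosing $q=1$ and $p=\infty$ (or any admissible pair compatible with the regularity of $y$) satisfies the constraint since $1+1/\infty=1\leq 2-\alpha$. The classical integration by parts is then justified because $x$ is absolutely continuous and the auxiliary function $t\mapsto {_tI_b^{1-\alpha}}y(t)$ (resp.\ ${_aI_t^{1-\alpha}}y(t)$) is differentiable almost everywhere with its derivative being the corresponding Riemann--Liouville derivative. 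Beyond this integrability check, every step is a bracket-by-bracket substitution, so no deeper difficulty is expected.
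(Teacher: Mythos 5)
Your proof is correct and is essentially the same argument the book relies on: the text states this constant-order theorem without proof (citing Almeida and Malinowska), but its own proof of the analogous result, Theorem~\ref{thm:FIP_HO}, proceeds exactly as you do — a Fubini-type swap of the kernel (which is what your appeal to Theorem~\ref{thm:FIP:int} amounts to), then classical integration by parts, then identifying $\pm\frac{d}{dt}$ of the Riemann--Liouville integral with the Riemann--Liouville derivative. Your sign bookkeeping in both \eqref{FIP:cap_l} and \eqref{FIP:cap_r} is right, and the integrability remark (taking $q=1$, $p=\infty$ for order $1-\alpha$) is an adequate justification at the level of rigour the chapter itself adopts.
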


When $\alpha \rightarrow 1$, we get $_a^CD_t^{\alpha}={_aD_t^{\alpha}}=\frac{d}{dt}$, $_t^CD_b^{\alpha}={_tD_b^{\alpha}}=-\frac{d}{dt}$, $_aI_t^{\alpha}={_tI_b^{\alpha}}=I$,
and formulas \eqref{FIP:cap_l} and \eqref{FIP:cap_r} give the classical
formulas of integration by parts.

Then, we introduce the integration by parts formulas for variable-order 
fractional integrals \cite{Cap1:Od2013a}.

\begin{Theorem}
\index{integration by parts! fractional integrals of variable-order}
\label{thm:FIP_VO_Int}
Let $\frac{1}{n}<\alpha (t,\tau)<1$ for all $t,\tau \in [a,b]$ and a certain
$n\in \mathbb{N}$ greater or equal than two, and $x,y \in C([a,b];\bR)$.
Then the following formula for integration by parts hold:
$$
\int_{a}^{b} y(t) \, _aI_t^{\a} x(t)dt =\int_a^b x(t) \, _tI_b^{\a}y(t)dt.
$$
\end{Theorem}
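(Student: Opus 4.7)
The plan is to reduce this to an application of Fubini's theorem, exactly as in the classical constant-order case. Writing out the left-hand side using the definition of $\LI$,
\begin{equation*}
\int_{a}^{b} y(t) \, \LI x(t)\,dt
=\int_{a}^{b} y(t)\!\int_{a}^{t}\frac{(t-\tau)^{\alpha(t,\tau)-1}}{\Gamma(\alpha(t,\tau))}\,x(\tau)\,d\tau\,dt,
\end{equation*}
the inner and outer integrals run over the triangle $\Delta=\{(t,\tau):a\le\tau\le t\le b\}$. First I would swap the order of integration so that $\tau$ becomes the outer variable, ranging over $[a,b]$, and $t$ becomes the inner variable, ranging over $[\tau,b]$. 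After factoring $x(\tau)$ out of the inner integral, a simple renaming of the dummy variables (swap $t\leftrightarrow\tau$) recognises the inner integral as $\RI y$ evaluated at the outer variable, because the kernel of $\RI$ is precisely $(\tau-t)^{\alpha(\tau,t)-1}/\Gamma(\alpha(\tau,t))$.

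Carrying out this identification yields
\begin{equation*}
\int_{a}^{b}y(t)\, \LI x(t)\,dt=\int_{a}^{b}x(\tau)\!\int_{\tau}^{b}\frac{(t-\tau)^{\alpha(t,\tau)-1}}{\Gamma(\alpha(t,\tau))}\,y(t)\,dt\,d\tau=\int_{a}^{b} x(\tau)\,\RI y(\tau)\,d\tau,
\end{equation*}
which, after renaming $\tau$ back to $t$, is the desired formula. So the entire algebraic content is really just the swap of integration order; the geometric transformation $\{a\le\tau\le t\le b\}$ to $\{a\le\tau\le b,\ \tau\le t\le b\}$ and the symmetry of the kernel under relabelling are what makes the proof tick.

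The main obstacle is justifying Fubini, since the kernel $(t-\tau)^{\alpha(t,\tau)-1}$ is singular along the diagonal $t=\tau$. This is where the hypothesis $\alpha(t,\tau)>1/n$ on $[a,b]^2$ is needed. Since $x,y$ are continuous on a compact interval they are bounded by some constant $M$, and since $\alpha$ takes values in the compact subinterval $[1/n,1]$ of $(0,1)$, the map $\alpha\mapsto 1/\Gamma(\alpha)$ is bounded above by some constant $C$ on the range of $\alpha$. For $a\le\tau<t\le b$ we have $\alpha(t,\tau)-1>-(1-1/n)$, so setting $L=\max(1,b-a)$ we obtain the uniform bound
\begin{equation*}
\left| y(t)\, x(\tau)\,\frac{(t-\tau)^{\alpha(t,\tau)-1}}{\Gamma(\alpha(t,\tau))}\right|\le M^{2}C\,L^{1-1/n}(t-\tau)^{1/n-1}
\end{equation*}
on $\Delta$ (splitting into $t-\tau\le 1$ and $t-\tau>1$ if $b-a>1$, or observing directly that $(t-\tau)^{\alpha(t,\tau)-1}\le(t-\tau)^{1/n-1}$ when $t-\tau\le 1$). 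The majorant $(t-\tau)^{1/n-1}$ is integrable over $\Delta$ because $1/n-1>-1$. Hence Fubini's theorem applies and the interchange of integrals is legitimate, completing the proof.
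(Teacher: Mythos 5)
Your proof is correct: writing out the left-hand side, interchanging the order of integration over the triangle $a\le\tau\le t\le b$, and recognising the resulting inner integral as ${_tI_b^{\a}}y$ is exactly the standard argument, and your justification of the interchange via the uniform majorant $M^{2}C\,L^{1-1/n}(t-\tau)^{1/n-1}$ (integrable because $1/n-1>-1$, which is precisely where the hypothesis $\alpha(t,\tau)>1/n$ enters) is sound. The book states this theorem without proof, citing the original reference, but the same Fubini/Dirichlet-formula swap is the device used there and also in the book's own proof of the higher-order integration by parts formula (Theorem~\ref{thm:FIP_HO}), so your approach coincides with the intended one.
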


In the following theorem, we present the formulas involving the Caputo
fractional derivative of variable-order. The theorem was proved 
in \cite{Cap1:Od2013a} and gives a generalization of the standard
fractional formulas of integration by parts for a constant $\alpha$.

\begin{Theorem}
\label{thm:FIP}
\index{integration by parts! Caputo fractional derivatives of variable-order}
Let $0<\alpha (t,\tau)<1-\frac{1}{n}$ for all $t,\tau \in [a,b]$ and a certain
$n\in \mathbb{N}$ greater or equal than two. If $x,y \in C^1 \left([a,b];\bR\right)$, then the
fractional integration by parts formulas
$$
\int_{a}^{b}y(t) \, \LC x(t)dt
=\int_a^b x(t) \, {\RDa}y(t)dt
+\left[x(t) \, {_tI_b^{1-\a}}y(t) \right]_{t=a}^{t=b}
$$
and
$$
\int_{a}^{b}y(t) \, {\RCa}x(t)dt
=\int_a^b x(t) \, {\LDa} y(t)dt
-\left[x(t) \, {_aI_t^{1-\a}}y(t)\right]_{t=a}^{t=b}
$$
hold.
\end{Theorem}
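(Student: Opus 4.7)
The plan is to reduce both formulas to the integration by parts theorem for variable-order fractional integrals (Theorem~\ref{thm:FIP_VO_Int}) followed by one application of the classical integration by parts. The key observation is that, from the explicit definition of the Caputo variable-order derivatives,
$$
\LC x(t)= {_aI_t^{1-\a}}x^{(1)}(t), \qquad \RCa x(t)= -\,{_tI_b^{1-\a}}x^{(1)}(t),
$$
so the Caputo operators are fractional integrals (of complementary order $1-\alpha$) of the ordinary derivative $x^{(1)}$.

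For the first identity I would start from
$$
\int_{a}^{b} y(t)\,\LC x(t)\,dt = \int_{a}^{b} y(t)\,{_aI_t^{1-\a}} x^{(1)}(t)\,dt
$$
and apply Theorem~\ref{thm:FIP_VO_Int} with order $1-\a$ in place of $\a$ to transfer the fractional integral from $x^{(1)}$ onto $y$, obtaining $\int_a^b x^{(1)}(t)\,{_tI_b^{1-\a}} y(t)\,dt$. A classical integration by parts on this last integral then produces the boundary term $\left[x(t)\,{_tI_b^{1-\a}}y(t)\right]_{t=a}^{t=b}$ together with $-\int_a^b x(t)\tfrac{d}{dt}\left[{_tI_b^{1-\a}}y(t)\right]dt$; by the definition of the right Riemann--Liouville variable-order derivative the latter integrand equals $x(t)\,\RDa y(t)$, which is exactly the required expression. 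The second identity is proved in the same way, now using $\RCa x = -\,{_tI_b^{1-\a}}x^{(1)}$, swapping left and right integrals via Theorem~\ref{thm:FIP_VO_Int}, and recognising $\tfrac{d}{dt}\,{_aI_t^{1-\a}}y(t)=\LDa y(t)$; the minus sign inherited from $\RCa$ flips the boundary contribution to give the desired sign in front of $\left[x(t)\,{_aI_t^{1-\a}}y(t)\right]_{t=a}^{t=b}$.

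The verifications that actually need care are the hypotheses. The condition $\alpha(t,\tau)<1-\tfrac{1}{n}$ gives $1-\alpha(t,\tau)>\tfrac{1}{n}$, which is precisely what Theorem~\ref{thm:FIP_VO_Int} demands in order to apply it with fractional order $1-\a$; since $x,y\in C^1([a,b];\bR)\subset C([a,b];\bR)$, the continuity assumption of that theorem is also met. For the classical integration by parts step one needs $x\in C^1$ (given) and differentiability of $t\mapsto {_tI_b^{1-\a}}y(t)$ on $(a,b)$ with finite boundary values; the weak singularity $(\tau-t)^{-\alpha(\tau,t)}$ is integrable because $\alpha(\tau,t)<1$, and the variable-order integral of a $C^1$ function extends continuously to $[a,b]$.

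The main obstacle I anticipate is the commuting of $d/dt$ with the variable-order fractional integral ${_tI_b^{1-\a}}y(t)$, whose kernel $(\tau-t)^{-\alpha(\tau,t)}/\Gamma(1-\alpha(\tau,t))$ depends on $t$ \emph{inside} $\alpha(\tau,t)$; this is not a routine Leibniz differentiation under the integral sign but is exactly what the definition of $\LDa$ and $\RDa$ is built to encode, so the identification $\tfrac{d}{dt}\,{_tI_b^{1-\a}}y = -\RDa y$ is by definition and no separate justification beyond the hypothesis $\alpha\in (0,1-\tfrac1n)$ is needed. All other manipulations (Fubini to validate Theorem~\ref{thm:FIP_VO_Int}, classical integration by parts) are standard once the integrability is in hand.
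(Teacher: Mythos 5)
Your proof is correct and follows essentially the same route as the paper: the paper (which for this statement defers to Odzijewicz, Malinowska and Torres, and proves the higher-order analogue in Theorem~\ref{thm:FIP_HO}) likewise writes the Caputo operator as a variable-order fractional integral of $x'$, performs the Fubini/Dirichlet swap --- which is exactly what your appeal to Theorem~\ref{thm:FIP_VO_Int} with order $1-\a$ packages --- and then applies one classical integration by parts, identifying $\mp\frac{d}{dt}$ of the resulting integral with the Riemann--Liouville derivative by definition. The only cosmetic differences are that you treat the right-sided formula directly by symmetry rather than by duality, and your hypothesis check $\frac{1}{n}<1-\alpha(t,\tau)<1$ is precisely how the paper's assumption $0<\alpha(t,\tau)<1-\frac{1}{n}$ is used.
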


This last theorem have an important role in this work to the proof of the
generalized Euler--Lagrange equations.

In the end of this chapter, we present integration by parts formulas for
generalized fractional operators \cite{Cap1:book:MOT}. For that, we need the
following definition:

\begin{Definition}
\index{dual parameter set}
Let $P=\langle a,t,b,\lambda,\mu\rangle$. We denote by $P^*$ the parameter
set $P^*=\langle a,t,b,\mu,\lambda\rangle$. The parameter $P^*$ is called
the dual of $P$.
\end{Definition}

Let $1<p<\infty$ and $q$ be the adjoint of $p$, that is $\frac{1}{p}+\frac{1}{q}=1$. 
A proof of the next result can be found in \cite{Cap1:book:MOT}.

\begin{Theorem}
Let $k \in L_q (\Delta;\bR)$. Then the operator $K_{P^*}$ is a linear bounded
operator from $L_p([a,b];\bR)$ to $L_q([a,b];\bR)$. Moreover, the following
integration by parts formula holds:
$$\int_a^b x(t)\cdot K_P[y](t) dt =  \int_a^b y(t)\cdot K_{P^*}[x](t) dt$$
for all $x,y \in L_p([a,b];\bR)$.
\end{Theorem}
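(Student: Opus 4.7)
The plan is to prove the theorem in two steps: first establish the boundedness of $K_{P^*}$ as an operator from $L_p([a,b];\bR)$ to $L_q([a,b];\bR)$, then derive the integration by parts formula via Fubini's theorem, essentially by swapping the order of integration and relabeling variables.

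For the boundedness part, I would write $K_{P^*}[x](t) = \mu\int_a^t k^\alpha(t,\tau)x(\tau)\,d\tau + \lambda\int_t^b k^\alpha(\tau,t)x(\tau)\,d\tau$ and estimate the $L_q$-norm of each piece separately. By H\"older's inequality applied pointwise in $t$ (using the conjugate pair $p,q$ with $\tfrac{1}{p}+\tfrac{1}{q}=1$), each inner integral is bounded by $\|k^\alpha(t,\cdot)\|_{L_q}\|x\|_{L_p}$, and then integrating in $t$ and invoking $k\in L_q(\Delta;\bR)$ yields an $L_q$-bound. Linearity is immediate from the definition \eqref{GenFIRL}, so this step is essentially a standard kernel-operator computation.

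The heart of the proof is the integration by parts formula. I would expand
\[
\int_a^b x(t)K_P[y](t)\,dt = \lambda\!\!\iint_{a\le\tau\le t\le b}\!\! x(t)k^\alpha(t,\tau)y(\tau)\,d\tau\,dt + \mu\!\!\iint_{a\le t\le\tau\le b}\!\! x(t)k^\alpha(\tau,t)y(\tau)\,d\tau\,dt.
\]
Both double integrals are absolutely convergent thanks to the boundedness step together with H\"older, so Fubini's theorem applies. I would then swap the order of integration in each double integral, relabel the dummy variables (exchanging $t\leftrightarrow\tau$ in the outer and inner roles), and observe that the first term becomes $\lambda\int_a^b y(t)\int_t^b x(\tau)k^\alpha(\tau,t)\,d\tau\,dt$, while the second becomes $\mu\int_a^b y(t)\int_a^t x(\tau)k^\alpha(t,\tau)\,d\tau\,dt$. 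Summing them and comparing with Definition~\ref{gfi-RL} applied to the dual parameter set $P^*=\langle a,t,b,\mu,\lambda\rangle$ gives exactly $\int_a^b y(t)K_{P^*}[x](t)\,dt$.

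The main obstacle is really just the bookkeeping: correctly identifying the triangular regions of integration $\{a\le\tau\le t\le b\}$ and $\{a\le t\le\tau\le b\}$, and keeping track of which variable plays the role of the kernel's first argument versus the second when one swaps $t$ and $\tau$. A useful sanity check is the special case $P=\langle a,t,b,1,0\rangle$ with the Riemann--Liouville kernel, which must reproduce Theorem~\ref{thm:FIP:int}; verifying this before writing up the general case would confirm that the dual-parameter swap $\lambda\leftrightarrow\mu$ is placed correctly. Justifying Fubini is the only analytic subtlety, and it is handled cleanly by the first step together with the hypothesis $k\in L_q(\Delta;\bR)$.
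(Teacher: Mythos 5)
Your proposal is correct, and in fact the paper itself offers no proof of this statement: it simply remarks that a proof can be found in the book of Malinowska, Odzijewicz and Torres on advanced methods in the fractional calculus of variations. Your argument is the standard (and essentially the cited) one: linearity is immediate, boundedness of $K_{P^*}$ from $L_p([a,b];\bR)$ to $L_q([a,b];\bR)$ follows by applying H\"older's inequality pointwise in $t$ and then integrating the $q$-th power over $[a,b]$, which produces exactly $\|k^\alpha\|_{L_q(\Delta)}\|x\|_{L_p}$ for each of the two pieces (note that for the second piece the relevant slice is $\|k^\alpha(\cdot,t)\|_{L_q(t,b)}$, consistent with how $\Delta$ is oriented); and the integration by parts identity is a Fubini argument over the two triangles $\{a\le\tau\le t\le b\}$ and $\{a\le t\le\tau\le b\}$, with absolute convergence guaranteed by the same H\"older estimate, after which relabelling the variables and recognizing the swapped weights $\lambda\leftrightarrow\mu$ yields precisely $\int_a^b y(t)\,K_{P^*}[x](t)\,dt$ with $P^*=\langle a,t,b,\mu,\lambda\rangle$. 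Your sanity check against the Riemann--Liouville case is sensible, with the caveat that the hypotheses there ($1/p+1/q\le 1+\alpha$) are not literally the same as requiring the kernel to lie in $L_q(\Delta)$, so the two theorems overlap rather than one subsuming the other; this does not affect the correctness of your proof of the present statement.
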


\begingroup
\renewcommand{\addcontentsline}[3]{}

\endgroup

\CP
%================================================================
\chapter{The calculus of variations}
\label{PartI_FCV}

As part of this book is devoted to the fractional calculus of variations,
in this chapter we introduce the basic concepts about the classical calculus
of variations and the fractional calculus of variations. The study of fractional
problems of the calculus of variations and respective Euler--Lagrange type
equations is a subject of current strong research.

In Section \ref{sec:cv}, we introduce some concepts and important results from
the classical theory. Afterwards, in Section~\ref{sec:fcv}, we start with a
brief historical introduction to the non--integer calculus of variations and
then we present recent results on the fractional calculus of variations.

For more information about this subject, we refer the reader to the books
\cite{Cap2:Brunt,Cap2:book:FCV,Cap2:book2:FCV,Cap2:book:MOT}.

% ---------------------------------------------

\section{The classical calculus of variations}
\label{sec:cv}
The calculus of variations is a field of mathematical analysis that concerns
with finding extrema (maxima or minima) for functionals, i.e., concerns with
the problem of finding a function for which the value of a certain integral
is either the largest or the smallest possible. \\

In this context, a functional is a mapping from a set of functions to the real
numbers, i.e., it receives a function and produces a real number.
Let $D \subseteq C^2 ([a,b];\bR)$ be a linear space endowed with a
norm $\|\cdot\|$. The cost functional $\mathcal{J}:D \rightarrow \bR$ is
generally of the form
\begin{equation}
\label{functional_RN}
\mathcal{J}(x)=\int_a^b L\left(t, x(t), x'(t)\right)dt,
\end{equation}
where $t \in [a,b]$ is the independent variable, usually called time, and
$x(t) \in \bR$ is a function. The integrand $L: [a,b] \times
\bR^2 \rightarrow \bR$, that depends on the function $x$, its
derivative $x'$ and the independent variable $t$, is a real-valued function,
called the Lagrangian.

The roots of the calculus of variations appear in works of Greek thinkers, such
as Queen Dido or Aristotle in the late of the 1st century BC.
During the $17$th century, some physicists and mathematicians (Galileo, Fermat,
Newton, among others) investigated some variational problems, but in general they did
not use variational methods to solve them. The development of the calculus of
variations began with a problem posed by Johann Bernoulli in 1696, called the
brachistochrone problem: given two points $A$ and $B$ in a vertical plane,
what is the curve traced out by a point acted on only by gravity, which starts
at $A$ and reaches $B$ in minimal time?\index{brachistochrone}
The curve that solves the problem is called the \textit{brachistochrone}.
This problem caught the attention of some mathematicians including Jakob
Bernoulli, Leibniz, L'H\^opital and Newton, which presented also a solution
for the brachistochrone problem.
Integer variational calculus is still, nowadays, a revelant area of research.
It plays a significant role in many areas of science, physics, engineering,
economics, and applied mathematics.

The classical variational problem, considered by Leonhard Euler, is stated as
follows.\\
Let $a, b \in \bR$. Among all functions $x\in D$, find 
the ones that minimize (or maximize) the functional $\mathcal{J}:D\rightarrow\bR$, where
\begin{equation}
\label{class_functJ}
\mathcal{J}(x)=\int_a^b L\left(t, x(t), x'(t)\right)dt,
\end{equation}
subject to the boundary conditions \index{boundary conditions}
\begin{equation}
\label{bondcond}
x(a)=x_a, \quad \mbox \quad x(b)=x_b,
\end{equation}
with $x_a, x_b$ fixed reals and the Lagrangian $L$ satisfying some smoothness
properties. Usually, we say that a function is "sufficiently smooth" for a
particular development if all required actions (integration, differentiation,
$\ldots$) are possible.

\begin{Definition}
\index{admissible trajectory}
A trajectory $x \in C^2 ([a,b];\bR)$ is said to be an admissible trajectory
if it satisfies all the constraints of the problem along the interval $[a, b]$.
The set of admissible trajectories is denoted by $D$.
\end{Definition}

To discuss maxima and minima of functionals, we need to introduce the following
definition.

\begin{Definition}
We say that $x^\star \in D$ is a local extremizer to the functional 
$\mathcal{J}:D \rightarrow \bR$  if there exists  some real $\epsilon >0$, such that
$$\forall x \in D: \quad \Vert x^\star - x \Vert < \epsilon \,\, 
\Rightarrow \,\,\mathcal{J}(x^\star)-\mathcal{J}(x)\leq 0 \, \vee \, \mathcal{J}(x^\star)
-\mathcal{J}(x)\geq 0.$$
\end{Definition}

In this context, as we are dealing with functionals defined on functions,
we need to clarify the term of directional derivatives, here called variations.
The concept of variation of a functional is central to obtain the solution of
variational problems.

\begin{Definition}
\index{first variation}
Let $\mathcal{J}$ be defined on $D$. The first variation of a functional
$\mathcal{J}$ at $x \in D$ in the direction $h \in D$ is defined by
$$
\delta \mathcal{J} (x,h)= \lim_{\e \rightarrow 0}
\frac{\mathcal{J}(x+\e h) - \mathcal{J}(x)}{\e}=
\left. \frac{d}{d\e}  \mathcal{J}(x+\e h) \right]_{\e=0},
$$
where $x$ and $h$ are functions and $\e$ is a scalar, whenever
the limit exists.
\end{Definition}

\begin{Definition}
\index{admissible variation}
A direction $h \in D$, $h\neq 0$, is said to be an admissible
variation for $\mathcal{J}$ at $y\in D$ if
\begin{enumerate}
\item $\delta \mathcal{J}(x,h)$ exists;
\item $x+\e h \in D$ for all sufficiently small $\e$.
\end{enumerate}
\end{Definition}

With the condition that $\mathcal{J}(x)$ be a local extremum and the
definition of variation, we have the following result that offers a
necessary optimality condition for problems of calculus of variations
\cite{Cap2:Brunt}.

\begin{Theorem}
Let $\mathcal{J}$ be a functional defined on $D$. If $x^\star$ minimizes
(or maximizes) the functional $\mathcal{J}$ over all functions
$x: [a,b] \rightarrow \bR$ satisfying boundary conditions \eqref{bondcond},
then
$$
\delta \mathcal{J}(x^\star,h)=0
$$
for all admissible variations $h$ at $x^\star$.
\end{Theorem}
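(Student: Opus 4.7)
The plan is to reduce the statement to Fermat's classical stationary-point theorem for real-valued functions of a single real variable. Fix an admissible variation $h \in D$ at $x^\star$, and define the auxiliary real function
$$
\phi(\epsilon) = \mathcal{J}(x^\star + \epsilon h).
$$
Because $h$ is admissible, condition~(2) of the definition of admissible variation guarantees that $x^\star + \epsilon h \in D$ for all sufficiently small $|\epsilon|$, so $\phi$ is defined on some open interval about $\epsilon = 0$. This makes $\phi$ an honest real-valued function of a real variable, and the problem becomes one-dimensional.

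Next I would transfer the extremizing property from $x^\star$ to $\phi$ at $\epsilon = 0$. By hypothesis, there is $\epsilon_1 > 0$ such that $\mathcal{J}(x^\star) - \mathcal{J}(x) \leq 0$ (resp.\ $\geq 0$) whenever $\|x^\star - x\| < \epsilon_1$. Taking $\epsilon_0 = \epsilon_1/(\|h\|+1) > 0$, for $|\epsilon| < \epsilon_0$ we have $\|x^\star - (x^\star + \epsilon h)\| = |\epsilon|\,\|h\| < \epsilon_1$, so the same inequality holds for $\mathcal{J}(x^\star) - \phi(\epsilon)$. Therefore $\epsilon = 0$ is a local extremum of $\phi$ on an open interval.

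Condition~(1) of admissibility says that $\delta\mathcal{J}(x^\star, h)$ exists, which is precisely the existence of $\phi'(0)$, since by definition
$$
\phi'(0) = \lim_{\epsilon \to 0} \frac{\mathcal{J}(x^\star + \epsilon h) - \mathcal{J}(x^\star)}{\epsilon} = \delta\mathcal{J}(x^\star, h).
$$
Applying Fermat's theorem to the differentiable real function $\phi$ at its interior extremum $\epsilon = 0$ yields $\phi'(0) = 0$, that is, $\delta\mathcal{J}(x^\star, h) = 0$. Since $h$ was an arbitrary admissible variation, the conclusion follows.

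There is no genuine obstacle here: the whole proof is a bookkeeping exercise that translates the infinite-dimensional extremizer condition into the one-dimensional setting where Fermat's theorem directly applies. The only point to be careful about is the norming step that ensures $x^\star + \epsilon h$ stays inside the $\epsilon_1$-ball around $x^\star$, which is handled by shrinking $\epsilon_0$ in proportion to $\|h\|$.
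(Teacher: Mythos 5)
Your proof is correct, and it is exactly the standard argument: the paper itself states this result without proof (citing van Brunt), but the same one-dimensional reduction via $j(\e)=\mathcal{J}(x^\star+\e h)$ and Fermat's theorem is precisely the device the authors use later, e.g.\ in the proofs of their fractional Euler--Lagrange theorems. Your extra care with the scaling $\e_0=\e_1/(\|h\|+1)$ to keep $x^\star+\e h$ in the $\e_1$-ball is a fine touch but otherwise nothing differs in substance.
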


%---------------------------

\subsection{Euler--Lagrange equations}
\index{Euler--Lagrange equations}

Although the calculus of variations was born with Johann's problem, it was
with the work of Euler in 1742 and the one of Lagrange in 1755 
that a systematic theory was developed. 
The common procedure to address such variational problems
consists in solving a differential equation, called the Euler--Lagrange equation,
which every minimizer/maximizer of the functional must satisfy.

In Lemma \ref{flcv}, we review an important result to transform the necessary
condition of extremum in a differential equation, free of integration with an
arbitrary function. In literature, it is known as the fundamental lemma of the calculus of variations.

\begin{Lemma}
\label{flcv}
Let $x$ be continuous in $[a,b]$ an let $h$ be an arbitrary function on $[a,b]$
such that it is continuous and $h(a)=h(b)=0$. If
$$
\int_a^b x(t)h(t)dt=0
$$
for all such $h$, then $x(t)=0$ for all $t \in [a,b]$.
\end{Lemma}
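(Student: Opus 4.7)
The plan is to argue by contradiction, exploiting continuity of $x$ to locate an interval on which $x$ is bounded away from zero, and then constructing an explicit admissible test function $h$ supported on that interval that makes the integral strictly positive.

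Suppose, for contradiction, that $x$ is not identically zero on $[a,b]$. Then there exists $t_0 \in [a,b]$ with $x(t_0)\neq 0$. Without loss of generality I may assume $x(t_0)>0$, since otherwise I replace $x$ by $-x$ (the hypothesis is symmetric in sign). By continuity of $x$ at $t_0$, I can find a closed subinterval $[t_1,t_2]\subset (a,b)$ containing a point where $x$ is strictly positive, together with a constant $c>0$ such that $x(t)\ge c$ for every $t\in[t_1,t_2]$. The only subtlety here is the boundary case $t_0=a$ or $t_0=b$: continuity lets me shift to a nearby interior point $t_0'$ where $x(t_0')>0$ still holds, so the interval $[t_1,t_2]$ can be chosen inside the open interval $(a,b)$.

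Next I would produce a concrete admissible $h$ concentrated on $[t_1,t_2]$. A convenient choice is
\[
h(t)=
\begin{cases}
(t-t_1)^2(t_2-t)^2, & t\in[t_1,t_2],\\[2pt]
0, & t\in[a,b]\setminus[t_1,t_2].
\end{cases}
\]
This $h$ is continuous on $[a,b]$ (the polynomial piece vanishes together with the zero piece at $t_1$ and $t_2$), it satisfies $h(a)=h(b)=0$ because its support lies in $(a,b)$, and $h(t)>0$ on the open interval $(t_1,t_2)$.

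With this choice, $x(t)h(t)\ge 0$ everywhere on $[a,b]$ and $x(t)h(t)\ge c\,(t-t_1)^2(t_2-t)^2>0$ on the open interval $(t_1,t_2)$, so
\[
\int_a^b x(t)h(t)\,dt \;\ge\; c\int_{t_1}^{t_2}(t-t_1)^2(t_2-t)^2\,dt \;>\;0,
\]
contradicting the hypothesis that the integral vanishes for every admissible $h$. Hence $x\equiv 0$ on $[a,b]$. The only genuine obstacle is the clean handling of the endpoint case and the verification that the constructed $h$ is admissible; once those are in place, the contradiction is immediate.
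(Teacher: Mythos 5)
Your proof is correct. The paper states Lemma~\ref{flcv} without proof (it is reviewed as the classical fundamental lemma of the calculus of variations, with the reader referred to standard texts such as van Brunt), and your argument is exactly the canonical one those references use: the sign reduction is legitimate since the hypothesis is invariant under $x\mapsto -x$, the interior interval $[t_1,t_2]\subset(a,b)$ with $x\ge c>0$ is guaranteed by continuity even when the point of non-vanishing is an endpoint, and your piecewise polynomial bump $h$ is continuous, vanishes at $a$ and $b$, and yields a strictly positive integral, giving the contradiction.
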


For the sequel, we denote by  $\partial_i z$, $i\in \{1,2,\ldots, M\}$, with
$M\in \mathbb{N}$, the partial derivative of a function
$z:\bR^{M} \rightarrow \bR$ with respect to its $i$th argument.
Now we can formulate the necessary optimality condition for the classical
variational problem \cite{Cap2:Brunt}.

\begin{Theorem}
\label{thm:17:rp}
If $x$ is an extremizing of the functional \eqref{class_functJ} on $D$,
subject to \eqref{bondcond}, then $x$ satisfies
\begin{equation}
\label{eqELclass}
\partial_2 L\left(t,x(t), x'(t) \right)-\frac{d}{dt}
\partial_3 L\left(t,x(t), x'(t) \right)=0
\end{equation}
for all $t\in [a,b]$.
\end{Theorem}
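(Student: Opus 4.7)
The plan is the textbook derivation of the Euler--Lagrange equation from the vanishing of the first variation, combined with the fundamental lemma.

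First, I would fix an arbitrary admissible variation $h \in C^2([a,b];\bR)$ satisfying $h(a)=h(b)=0$. This choice is what keeps $x+\varepsilon h$ inside $D$ (since it preserves the boundary conditions in \eqref{bondcond}), so $h$ is indeed admissible in the sense of the earlier definition. The necessary condition stated just before the theorem then gives $\delta \mathcal{J}(x,h)=0$.

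Next, I would compute $\delta \mathcal{J}(x,h)$ explicitly. Differentiating under the integral sign (justified by the smoothness assumed on $L$ and on $x,h$) yields
\begin{equation*}
\delta \mathcal{J}(x,h)=\int_a^b \left[\partial_2 L\bigl(t,x(t),x'(t)\bigr)\,h(t)+\partial_3 L\bigl(t,x(t),x'(t)\bigr)\,h'(t)\right]dt.
\end{equation*}
Integrating the second term by parts and using $h(a)=h(b)=0$ to kill the boundary contribution, I obtain
\begin{equation*}
\delta \mathcal{J}(x,h)=\int_a^b \left[\partial_2 L\bigl(t,x(t),x'(t)\bigr)-\frac{d}{dt}\partial_3 L\bigl(t,x(t),x'(t)\bigr)\right]h(t)\,dt.
\end{equation*}

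Finally, since this integral vanishes for every such $h$, I would invoke Lemma~\ref{flcv} (the fundamental lemma of the calculus of variations) with the continuous function
\begin{equation*}
t\mapsto \partial_2 L\bigl(t,x(t),x'(t)\bigr)-\frac{d}{dt}\partial_3 L\bigl(t,x(t),x'(t)\bigr)
\end{equation*}
to conclude that it must be identically zero on $[a,b]$, which is exactly \eqref{eqELclass}.

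The only mildly delicate point is the integration by parts step, which implicitly requires that $t\mapsto \partial_3 L(t,x(t),x'(t))$ be differentiable with respect to $t$; this is covered by the blanket ``sufficiently smooth'' hypothesis on the Lagrangian mentioned in the setup. Everything else is routine.
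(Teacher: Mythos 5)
Your proof is correct and follows exactly the standard route the paper has set up: compute the first variation, integrate by parts using $h(a)=h(b)=0$, and apply the fundamental lemma (Lemma~\ref{flcv}), which is precisely why that lemma is stated immediately before the theorem. The paper itself omits the argument and refers to van Brunt, so there is nothing in your approach that deviates from the intended proof.
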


To solve this second order differential equation, the two given boundary
conditions \eqref{bondcond} provide sufficient information to determine the
two arbitrary constants.

\begin{Definition}
\index{extremal}
A curve $x$ that is a solution of the Euler--Lagrange differential equation
will be called an extremal of $\mathcal{J}$.
\end{Definition}

%----------------------------

\subsection{Problems with variable endpoints}

In the basic variational problem considered previously, the functional
$\mathcal{J}$ to minimize (or maximize) is subject to given boundary conditions
of the form
$$
x(a)=x_a, \quad \quad x(b)=x_b,
$$
where $x_a,x_b \in \bR$ are fixed. It means that the solution of the problem,
$x$, needs to pass through the prescribed points. This variational problem is
called a fixed endpoints variational problem.
The Euler--Lagrange equation \eqref{eqELclass} is normally a second-order
differential equation containing two arbitrary constants, so with two given
boundary conditions provided, they are sufficient to determine the two constants.

However, in some areas, like physics and geometry, the variational problems
do not impose the appropriate number of boundary conditions. In these cases,
when one or both boundary conditions are missing, that is, when the set of
admissible functions may take any value at one or both of the boundaries, 
then one or two auxiliary conditions, known as the natural boundary 
conditions or transversality 
conditions\index{boundary conditions}\index{transversality conditions}, 
need to be obtained in order to solve the equation \cite{Cap2:Brunt}:
\begin{equation}
\left[\frac{\partial L(t,x(t), x'(t))}{\partial x'} \right]_{t=a}=0 \quad
\mbox{and/or} \quad \left[\frac{\partial L(t,x(t), x'(t))}{\partial x'} \right]_{t=b}=0.
\end{equation}

There are different types of variational problems with variable endpoints:
\begin{itemize}
\item Free terminal point -- one boundary condition at the initial time ($x(a)=x_a$).
The terminal point is free ($x(b) \in \mathbb{R}$);
\item Free initial point -- one boundary condition at the final time ($x(b)=x_b$).
The initial point is free ($x(a) \in \mathbb{R}$);
\item Free endpoints -- both endpoints are free ($x(a)\in \mathbb{R}$, $x(b) \in \mathbb{R}$);
\item Variable endpoints -- the initial point $x(a)$ or/and the endpoint $x(b)$ 
is variable on a certain set, for example, on a prescribed curve.
\end{itemize}

Another generalization of the variational problem 
consists to find an optimal curve $x$ and the optimal 
final time $T$ of the variational integral, $T \in [a,b]$. 
This problem is known in the literature as a free-time problem \cite{Cap2:Chiang}.
An example is the following free-time problem with free terminal point.
Let $D$ denote the subset $C^2([a,b];\bR)\times [a,b]$ endowed with a norm
$\Vert(\cdot, \cdot)\Vert$.
Find the local minimizers of the functional $\mathcal{J}:D\rightarrow \bR$,
with
\begin{equation}
\label{Prob_endpointClass}
\mathcal{J}(x,T)=\int_a^T L(t,x(t), x'(t))dt,
\end{equation}
over all $(x,T) \in D$ satisfying the boundary condition $x(a)=x_a$, with
$x_a \in \bR$ fixed. The terminal time $T$ and the terminal state $x(T)$ 
are here both free.

\begin{Definition}
We say that $(x^\star,T^\star)\in D$ is a local extremizer (minimizer or maximizer)
to the functional $\mathcal{J}:D\rightarrow \bR$ as in \eqref{Prob_endpointClass} if there exists some $\e>0$
such that, for all $(x,T)\in D$,
$$\|(x^\star,T^\star)-(x,T)\|<\e \Rightarrow J(x^\star,T^\star)\leq J(x,T) \, \vee \, J(x^\star,T^\star)\geq J(x,T).$$
\end{Definition}

To develop a necessary optimality condition to problem \eqref{Prob_endpointClass}
for an extremizer $(x^\star,T^\star)$, we need to consider 
an admissible variation of the form:
$$
(x^\star + \e h, \quad  T^\star + \e \Delta T),
$$
where $h \in C^1 ([a,b];\bR)$ is a perturbing curve that satisfies the condition
$h(a)=0$, $\e$ represents a small real number, and $\Delta T$ represents an
arbitrarily chosen small change in $T$. Considering the functional
$\mathcal{J}(x,T)$ in this admissible variation, we get a function of $\e$,
where the upper limit of integration will also vary with $\e$:
\begin{equation}
\label{functioneps}
\mathcal{J} (x^\star + \e h, \quad  T^\star + \e \Delta T) =
\int_a^{T^\star + \e \Delta T} L(t,(x^\star+\e h)(t), (x^\star+\e h)'(t))dt.
\end{equation}

To find the first-order necessary optimality condition, we need to determine the derivative
of \eqref{functioneps} with respect to $\e$ and set it equal to zero. 
By doing it, we obtain three terms on the equation, where 
the Euler--Lagrange equation emerges from the first term,
and the other two terms, which depend only on the
terminal time $T$, give the transversality conditions.\index{transversality conditions}

%------------------------------

\subsection{Constrained variational problems}

Variational problems are often subject to one or more constraints (holonomic
constraints, integral constraints, dynamic constraints, $\ldots$).
Isoperimetric problems are a special class of constrained variational problems
for which the admissible functions are needed to satisfy an integral constraint. \index{isoperimetric problem}

Here we review the classical isoperimetric variational problem.
The classical variational problem, already defined, may be modified by
demanding that the class of potential extremizing functions also satisfy a
new condition, called an isoperimetric constraint, of the form
\begin{equation}
\label{iso_cond}
\int_a^b g(t,x(t),x'(t))dt=C,
\end{equation}
where $g$ is a given function of $t$, $x$ and $x'$, and $C$ is a given
real number.

The new problem is called an isoperimetric problem and encompasses an important
family of variational problems. In this case, the variational problems
are often subject to one or more constraints involving an integral of a
given function \cite{Cap2:Fraser}.
Some classical examples of isoperimetric problems appear in geometry.
The most famous example consists in finding the curve of a given perimeter
that bounds the greatest area and the answer is the circle.
Isoperimetric problems are an important type of variational problems,
with applications in different areas, like geometry, astronomy, physics,
algebra or analysis.

In the next theorem, we present a necessary condition for a function to be an
extremizer to a classical isoperimetric problem, obtained via the concept of
Lagrange multiplier \cite{Cap2:Brunt}.

\begin{Theorem}
Consider the problem of minimizing (or maximizing) the functional
$\mathcal{J}$, defined by \eqref{class_functJ}, on $D$ given by those
$x\in C^2 \left([a,b];\bR\right)$ satisfying the boundary conditions
\eqref{bondcond} and an integral constraint of the form
$$
\mathcal{G}=\int_a^b g(t,x(t),x'(t))dt=C,
$$
where $g:[a,b]\times \bR^2 \rightarrow \bR$ is a twice
continuously differentiable function.
Suppose that $x$ gives a local minimum (or maximum) to this problem.
Assume that $\delta \mathcal{G}(x,h)$ does not vanish for all $h \in D$.
Then, there exists a constant $\lambda$ such that $x$ satisfies the
Euler--Lagrange equation
\begin{equation}
\partial_2 F\left(t,x(t), x'(t), \lambda \right)-\frac{d}{dt} \partial_3
F\left(t,x(t), x'(t),\lambda \right)=0,
\end{equation}
where $F\left(t,x, x',\lambda\right)=L(t,x,x')-\lambda g(t,x,x')$.
\end{Theorem}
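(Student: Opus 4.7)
The plan is to adapt the classical Lagrange multiplier technique to the variational setting. Since a single one-parameter variation $x+\epsilon h$ will in general violate the isoperimetric constraint $\mathcal{G}(x)=C$, I would consider a two-parameter family of admissible perturbations, so that one parameter can be used to correct the constraint while the other is free.

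First, I would take two variations $h_1,h_2\in C^2([a,b];\bR)$ with $h_i(a)=h_i(b)=0$, and set $\tilde{x}(t)=x(t)+\epsilon_1 h_1(t)+\epsilon_2 h_2(t)$. Define
$$\hat{\mathcal{J}}(\epsilon_1,\epsilon_2)=\mathcal{J}(\tilde{x}),\qquad \hat{\mathcal{G}}(\epsilon_1,\epsilon_2)=\mathcal{G}(\tilde{x})-C.$$
By hypothesis $\delta\mathcal{G}(x,\cdot)$ does not vanish identically, so I may fix $h_2$ with $\delta\mathcal{G}(x,h_2)\neq 0$, while $h_1$ remains arbitrary. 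Note that $\hat{\mathcal{G}}(0,0)=0$ and $\partial_2\hat{\mathcal{G}}(0,0)=\delta\mathcal{G}(x,h_2)\neq 0$.

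Next, I would apply the implicit function theorem to $\hat{\mathcal{G}}(\epsilon_1,\epsilon_2)=0$ near the origin to obtain a differentiable function $\epsilon_2=\epsilon_2(\epsilon_1)$ with $\epsilon_2(0)=0$ such that the perturbed curve $\tilde{x}$ is admissible for all small $\epsilon_1$. Since $x$ locally extremizes $\mathcal{J}$ on $D$, the scalar map $\epsilon_1\mapsto\hat{\mathcal{J}}(\epsilon_1,\epsilon_2(\epsilon_1))$ has a local extremum at $\epsilon_1=0$. Differentiating and using implicit differentiation of the constraint,
$$\delta\mathcal{J}(x,h_1)+\epsilon_2'(0)\,\delta\mathcal{J}(x,h_2)=0,\qquad \epsilon_2'(0)=-\frac{\delta\mathcal{G}(x,h_1)}{\delta\mathcal{G}(x,h_2)}.$$
Setting $\lambda:=\delta\mathcal{J}(x,h_2)/\delta\mathcal{G}(x,h_2)$, which is independent of $h_1$, I obtain the unconstrained necessary condition
$$\delta\mathcal{J}(x,h_1)-\lambda\,\delta\mathcal{G}(x,h_1)=0$$
for every admissible $h_1$ with $h_1(a)=h_1(b)=0$.

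Finally, expanding the left-hand side in terms of the integrand $F(t,x,x',\lambda)=L(t,x,x')-\lambda g(t,x,x')$ and integrating by parts on the $x'$-term (the boundary contribution vanishes because $h_1(a)=h_1(b)=0$) gives
$$\int_a^b\Bigl[\partial_2 F(t,x,x',\lambda)-\tfrac{d}{dt}\partial_3 F(t,x,x',\lambda)\Bigr]h_1(t)\,dt=0$$
for every such $h_1$. The fundamental lemma of the calculus of variations (Lemma~\ref{flcv}) then yields the stated Euler--Lagrange equation, exactly as in Theorem~\ref{thm:17:rp}. The main obstacle I anticipate is the rigorous justification of the implicit function step: one must verify that $\hat{\mathcal{G}}$ is $C^1$ in $(\epsilon_1,\epsilon_2)$ (which follows from the $C^2$ regularity of $g$ together with differentiation under the integral) and, more delicately, that the resulting curve $\tilde{x}(\epsilon_1)$ remains in the admissible class $D$; once this is secured, the rest of the argument is a direct reduction to the unconstrained Euler--Lagrange derivation.
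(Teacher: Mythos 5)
Your argument is correct. Note that the paper itself states this classical isoperimetric theorem without proof, citing van Brunt; the closest in-paper proof is the fractional analogue (Theorem~\ref{teo1_iso1} and its companions in the isoperimetric section), and your approach is essentially the same one used there: a two-parameter family $x+\e_1 h_1+\e_2 h_2$, the observation that $\partial \hat{\mathcal{G}}/\partial\e_2(0,0)=\delta\mathcal{G}(x,h_2)\neq 0$ for a suitably chosen $h_2$, and the implicit function theorem to produce a constrained subfamily. The only difference is in how the multiplier appears: the paper, after securing the subfamily, invokes the finite-dimensional Lagrange multiplier rule for the pair $(j,i)$ on $\bR^2$ to get $\nabla(j-\lambda i)(0,0)=(0,0)$, whereas you eliminate the constraint by implicit differentiation and read off $\lambda=\delta\mathcal{J}(x,h_2)/\delta\mathcal{G}(x,h_2)$ explicitly, checking it is independent of $h_1$. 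Both routes are standard and equivalent; yours has the small advantage of exhibiting the multiplier concretely, while the paper's phrasing transfers verbatim to situations with several constraints. Your closing reduction — writing $\delta\mathcal{J}(x,h_1)-\lambda\,\delta\mathcal{G}(x,h_1)=0$ as an integral against $h_1$, integrating the $\partial_3 F$ term by parts with vanishing boundary contributions, and applying Lemma~\ref{flcv} — is exactly the derivation of Theorem~\ref{thm:17:rp} applied to $F=L-\lambda g$, and the regularity points you flag (differentiation under the integral, admissibility of $\tilde x$, smallness of $\|\tilde x - x\|$ for small $\e_1$) are routine under the stated $C^2$ hypotheses.
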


\begin{Remark}
The constant $\lambda$ is called a Lagrange multiplier.
\index{Lagrange multiplier}
\end{Remark}

Observe that $\delta \mathcal{G}(x,h)$ does not vanish for all $h \in D$
if $x$ does not satisfies the Euler--Lagrange equation with respect to
the isoperimetric constraint, that is, $x$ is not an extremal for $\mathcal{G}$.

%---------------------------------

\section{Fractional calculus of variations}
\label{sec:fcv}
The first connection between fractional calculus and the calculus of
variations appeared in the XIX century, with Niels Abel \cite{Cap2:Abel}. In
1823, Abel applied fractional calculus in the solution of an integral
equation involved in a generalization of the tautochrone problem\index{tautochrone problem}. 
Only in the XX century, however, both areas were joined in an unique
research field: the fractional calculus of variations.

The fractional calculus of variations deals with problems in which the
functional, the constraint conditions, or both, depend on some fractional
operator \cite{Cap2:book:FCV,Cap2:book2:FCV,Cap2:book:MOT} and the main goal is to find
functions that extremize such a fractional functional.
By inserting fractional operators that are non-local in variational problems,
they are suitable for developing some models possessing memory effects.

This is a fast growing subject, and different approaches have been developed
by considering different types of Lagrangians, e.g., depending on
Riemann--Liouville or Caputo fractional derivatives, fractional integrals,
and mixed integer-fractional order operators (see, for example,  \cite{Cap2:Alm:Torres2009,Cap2:Askari,Cap2:Atanackovic,Cap2:Baleanu1,Cap2:Baleanu2,Cap2:Cresson,Cap2:Rabei,Cap2:Tarasov}).
In recent years, there has been a growing interest in the area of fractional
variational calculus and its applications, which include classical and quantum
mechanics, field theory and optimal control.

Although the origin of fractional calculus goes back more than three centuries,
the calculus of variations with fractional derivatives has born 
only in 1996-1997 with the works of F. Riewe. In his works, Riewe obtained a version of the
Euler--Lagrange equations for problems of the calculus of variations with
fractional derivatives, when investigating non-conservative Lagrangian and
Hamiltonian mechanics \cite{Cap2:Riewe96,Cap2:Riewe97}.
Agrawal continued the study of the fractional Euler--Lagrange equations 
\cite{Cap2:Agrawal2002,Cap2:Agrawal2006,Cap2:Agrawal2007}, for some kinds of fractional
variational problems, for example, problems with only one dependent variable,
for functionals with different orders of fractional derivatives, for several
functions, involving both Riemann--Liouville and Caputo derivatives, etc.
The most common fractional operators considered in the literature take into
account the past of the process, that is, one usually uses 
left fractional operators.
But, in some cases, we may be also interested in the future of the process,
and the computation of $\alpha(\cdot)$ to be influenced by it. In that case,
right fractional derivatives are then considered.

%------------------------------------

\subsection{Fractional Euler--Lagrange equations}

Similarly to the classical variational calculus, the common procedure to
address such fractional variational problems consists in solving a fractional
differential equation, called the fractional Euler--Lagrange equation,
which every minimizer/maximizer of the functional must satisfy.
With the help of the boundary conditions imposed on the problem
at the initial time $t=a$ and at the terminal time $t=b$,
one solves, often with the help of some numerical procedure, the fractional
differential equation and obtain a possible solution to the problem 
\cite{Cap2:Pooseh2013b,Cap2:Lotfi,Cap2:Blasz:Cie,Cap2:Xu,Cap2:Sumelka}.

Referring again to Riewe's works \cite{Cap2:Riewe96,Cap2:Riewe97}, friction forces are
described with Lagrangians that contain fractional derivatives. Precisely, for
$r, N$ and $N'$ natural numbers and assuming $x:[a,b]\rightarrow \bR^r$, $\alpha_i,
\beta_j \in [0,1]$ with $i=1,\ldots,N, j=1,\ldots,N'$, the functional defined by Riewe is
\begin{equation}
\label{functional_Riewe}
\mathcal{J}(x)=
\int_a^b L \left({_aD_t^{\alpha_1}[x](t)},\ldots,{_aD_t^{\alpha_N}}[x](t), {_tD_b^{\beta_1}[x](t)},\ldots,{_tD_b^{\beta_{N'}}[x](t)},x(t),t\right)dt.
\end{equation}
He proved that any solution $x$ of the variational problem of extremizing the functional \eqref{functional_Riewe}, satisfies the following necessary condition:
\index{variational fractional problem}
\begin{Theorem}
\index{Euler--Lagrange equations}
Considering the variational problem of minimizing (or maximizing) the
functional \eqref{functional_Riewe}, the fractional Euler--Lagrange
equation is
$$
\sum_{i=1}^N {_tD_b^{\alpha_i}}[\partial_iL]+\sum_{i=1}^{N'}
{_aD_t^{\beta_i}}[\partial_{i+N}L] +\partial_{N'+N+1}L=0.
$$
\end{Theorem}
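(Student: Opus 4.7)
The plan is to derive the stated Euler--Lagrange equation by the standard variational argument, adapted to fractional operators. I would follow exactly the three-step scheme used in the classical case (first variation, integration by parts, fundamental lemma), with the integration-by-parts step replaced by a fractional version.

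First, I fix an admissible variation $h$, where $h:[a,b]\to \bR^r$ is a sufficiently smooth function satisfying $h(a)=h(b)=0$ together with whatever additional endpoint vanishing is needed so that all boundary terms below disappear (for $\alpha_i,\beta_j\in(0,1)$, the condition $h(a)=h(b)=0$ together with integrability suffices). Since each Riemann--Liouville derivative ${_aD_t^{\alpha_i}}$ and ${_tD_b^{\beta_j}}$ is linear, I can write
\begin{equation*}
{_aD_t^{\alpha_i}}[x+\e h](t)={_aD_t^{\alpha_i}}[x](t)+\e\,{_aD_t^{\alpha_i}}[h](t),
\end{equation*}
and similarly for the right operators. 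Computing $\delta\mathcal{J}(x,h)=\left.\tfrac{d}{d\e}\mathcal{J}(x+\e h)\right|_{\e=0}$ by the chain rule then yields
\begin{equation*}
\delta\mathcal{J}(x,h)=\int_a^b\!\left[\sum_{i=1}^{N}\partial_i L\cdot {_aD_t^{\alpha_i}}[h](t)+\sum_{i=1}^{N'}\partial_{N+i}L\cdot {_tD_b^{\beta_i}}[h](t)+\partial_{N+N'+1}L\cdot h(t)\right]dt,
\end{equation*}
where each $\partial_k L$ is evaluated along $x$.

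Next, I transfer each fractional derivative off the variation $h$ by integration by parts. The key duality I rely on is the fractional integration-by-parts identity for Riemann--Liouville derivatives, obtained from Theorem~\ref{thm:FIP:int} applied after rewriting ${_aD_t^{\alpha_i}}$ as $\tfrac{d}{dt}\circ {_aI_t^{1-\alpha_i}}$ (and similarly on the right), followed by the classical integration by parts; this swaps a left derivative against a right derivative and produces boundary terms of the form $\bigl[\,\partial_iL\cdot {_tI_b^{1-\alpha_i}}h\bigr]_a^b$ that vanish under the endpoint conditions imposed on $h$. Carrying out this step for every term gives
\begin{equation*}
\delta\mathcal{J}(x,h)=\int_a^b\!\left[\sum_{i=1}^{N}{_tD_b^{\alpha_i}}[\partial_iL]+\sum_{i=1}^{N'}{_aD_t^{\beta_i}}[\partial_{N+i}L]+\partial_{N+N'+1}L\right]h(t)\,dt.
\end{equation*}

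Finally, because $x$ is an extremizer, $\delta\mathcal{J}(x,h)=0$ for every admissible $h$. The fundamental lemma of the calculus of variations (Lemma~\ref{flcv}), applied componentwise in $\bR^r$, forces the bracketed expression to vanish identically on $[a,b]$, which is precisely the claimed fractional Euler--Lagrange equation. The main obstacle in making this rigorous is the integration-by-parts step: one must ensure enough regularity on $x$ and on the partial derivatives $\partial_iL$ that the right-hand fractional derivatives ${_tD_b^{\alpha_i}}[\partial_iL]$ and the left-hand ones ${_aD_t^{\beta_i}}[\partial_{N+i}L]$ are well-defined and that the boundary contributions genuinely vanish; everything else is a transcription of the classical derivation into fractional language.
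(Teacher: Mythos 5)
Your overall scheme---first variation, fractional integration by parts to move the derivatives off the variation $h$, then the fundamental lemma (Lemma~\ref{flcv}) applied componentwise---is the right one: it is how Riewe and Agrawal derive this equation, and it is also the scheme this book uses later for its own Euler--Lagrange results (e.g.\ the proof of Theorem~\ref{FP_teo1}); the book states the present theorem without proof, citing Riewe. Your first-variation formula is correct, so the only issue is in the integration-by-parts step.

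There the bookkeeping, as written, does not give the stated equation. Take one term $\int_a^b \partial_iL\,{_aD_t^{\alpha_i}}[h]\,dt$. Writing ${_aD_t^{\alpha_i}}=\frac{d}{dt}\circ{_aI_t^{1-\alpha_i}}$ and integrating by parts classically, the boundary term is $\left[\partial_iL\cdot{_aI_t^{1-\alpha_i}}h\right]_{t=a}^{t=b}$ (a \emph{left} integral of $h$, not the right integral ${_tI_b^{1-\alpha_i}}h$ you wrote), and applying Theorem~\ref{thm:FIP:int} to the remaining integral produces $\int_a^b h\,{_t^CD_b^{\alpha_i}}[\partial_iL]\,dt$, i.e.\ the right \emph{Caputo} derivative of $\partial_iL$, not the Riemann--Liouville derivative in the statement. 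Moreover, the upper boundary term does not vanish for the reason you give: ${_aI_t^{1-\alpha_i}}h\big|_{t=b}$ depends on the values of $h$ on all of $[a,b]$, so $h(b)=0$ does not kill it (only the $t=a$ term vanishes automatically, since $\left|{_aI_t^{1-\alpha_i}}h(t)\right|\le\frac{\Vert h\Vert_\infty}{\Gamma(2-\alpha_i)}(t-a)^{1-\alpha_i}\to0$). The repair is to observe that this surviving boundary term is exactly the Caputo-to-Riemann--Liouville correction \eqref{relRL-Crp} integrated against $h$:
$$
\partial_iL(b)\,{_aI_t^{1-\alpha_i}}h\Big|_{t=b}
=\int_a^b h(t)\,\frac{\partial_iL(b)}{\Gamma(1-\alpha_i)}\,(b-t)^{-\alpha_i}\,dt,
$$
so adding it to the Caputo term converts ${_t^CD_b^{\alpha_i}}[\partial_iL]$ into ${_tD_b^{\alpha_i}}[\partial_iL]$ and no boundary contribution survives; equivalently, invoke directly the Riemann--Liouville duality $\int_a^b f\,{_aD_t^{\alpha}}g\,dt=\int_a^b g\,{_tD_b^{\alpha}}f\,dt$, valid for $g$ continuous and $f$ absolutely continuous, rather than trying to kill the boundary terms through $h(a)=h(b)=0$. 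The mirror computation for the terms $\int_a^b\partial_{N+i}L\,{_tD_b^{\beta_i}}[h]\,dt$ likewise yields ${_aD_t^{\beta_i}}[\partial_{N+i}L]$ with the $t=a$ boundary contribution absorbed via \eqref{relRL-Clp}. With this correction, your final appeal to Lemma~\ref{flcv} gives the stated Euler--Lagrange equation, under the regularity on $x$ and on $\partial_iL(\cdot)$ that you already flag as necessary.
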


Riewe also illustrated his results considering the classical problem of linear
friction \cite{Cap2:Riewe96}.

In what follows, we are concerned with problems of the fractional calculus of
variations where the functional depends on a combined fractional Caputo
derivative with constant orders $\alpha$ and $\beta$
\cite[Definition~\ref{CombClassical}]{Cap2:Malin:Tor2011}.

Let $D$ denote the set of all functions $x:[a,b] \rightarrow \bR^N$, endowed
with a norm $\Vert \cdot \Vert$ in $\bR^N$.
Consider the following problem: find a function $x \in D$ for which the
functional
\begin{equation}
\label{funct_comb_CO}
\mathcal{J}(x)=\int_a^b L(t, x(t),{^CD_\gamma^{\alpha,\beta}x(t)})dt
\end{equation}
subject to given boundary conditions
$$
x(a)=x_a, \quad \quad x(b)=x_b
$$
archives a minimum, where $t \in [a,b]$, $x_a,x_b \in \bR^N$, $\gamma \in [0,1]$
and the Lagrangian $L$ satisfies some smoothness properties.
\index{variational fractional problem}

\begin{Theorem}
Let $x = (x_1,\ldots, x_N)$ be a local minimizer to the problem with
the functional \eqref{funct_comb_CO} subject to two boundary conditions, as
defined before. Then, $x$ satisfies the system of $N$ fractional Euler--Lagrange
equations
\begin{equation}
\partial_i L(t, x(t),{^CD_\gamma^{\alpha,\beta}}x(t)) + D_{1-\gamma}^{\beta,\alpha} \partial_{N+i} L(t, x(t),{^CD_\gamma^{\alpha,\beta}}x(t)) = 0,
\end{equation}
$i=2,\ldots,N+1$, for all $t \in [a,b]$.
\end{Theorem}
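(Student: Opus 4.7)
The plan is to derive the Euler--Lagrange system by the standard first-variation argument, adapted to the fractional setting via the integration-by-parts formulas for Caputo derivatives (Theorem~\ref{thm:FIP:cap}).

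First I would fix an index $i \in \{1,\ldots,N\}$ and consider variations of the form $x + \epsilon h_i \mathbf{e}_i$, where $\mathbf{e}_i$ is the $i$-th standard basis vector and $h_i \in C^2([a,b];\bR)$ is arbitrary subject to $h_i(a) = h_i(b) = 0$. Since both boundary conditions $x(a)=x_a$ and $x(b)=x_b$ are prescribed, such perturbations keep the curve admissible. Using the linearity of $^CD_\gamma^{\alpha,\beta}$ (Definition~\ref{CombClassical}), the map $\epsilon \mapsto \mathcal{J}(x + \epsilon h_i \mathbf{e}_i)$ is differentiable at $\epsilon=0$ by the smoothness assumption on $L$ and differentiation under the integral sign, yielding the first variation
\begin{equation*}
\delta \mathcal{J}(x, h_i \mathbf{e}_i) = \int_a^b \Bigl[ \partial_i L \cdot h_i(t) + \partial_{N+i} L \cdot {^CD_\gamma^{\alpha,\beta}} h_i(t) \Bigr] dt,
\end{equation*}
where each $\partial_j L$ is evaluated at $(t, x(t), {^CD_\gamma^{\alpha,\beta}} x(t))$. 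A necessary condition for the local minimum is $\delta\mathcal{J}(x, h_i \mathbf{e}_i)=0$ for every admissible $h_i$.

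Next I would unpack the combined Caputo derivative and integrate by parts on each piece. Writing
\begin{equation*}
\int_a^b \partial_{N+i} L \cdot {^CD_\gamma^{\alpha,\beta}} h_i \, dt = \gamma \int_a^b \partial_{N+i} L \cdot {_a^CD_t^\alpha} h_i \, dt + (1-\gamma)\int_a^b \partial_{N+i} L \cdot {_t^CD_b^\beta} h_i \, dt,
\end{equation*}
I would apply formula \eqref{FIP:cap_l} to the left-Caputo term, converting it into $\gamma \int_a^b h_i \cdot {_tD_b^\alpha}\partial_{N+i} L \, dt$ plus boundary terms $\gamma [h_i \cdot {_tI_b^{1-\alpha}}\partial_{N+i}L]_a^b$, and analogously \eqref{FIP:cap_r} to the right-Caputo term, producing $(1-\gamma)\int_a^b h_i \cdot {_aD_t^\beta}\partial_{N+i} L \, dt$ minus the corresponding boundary expression. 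All boundary contributions vanish since $h_i(a)=h_i(b)=0$. The combined operator acting on the coefficient is therefore $(1-\gamma)\, {_aD_t^\beta} + \gamma \, {_tD_b^\alpha}$, which by the naming convention used for the combined Caputo derivative is exactly $D_{1-\gamma}^{\beta,\alpha}$ (note the swapped order/coefficient reflecting the duality between left and right operators under integration by parts).

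Assembling the pieces gives
\begin{equation*}
\int_a^b h_i(t) \Bigl[ \partial_i L + D_{1-\gamma}^{\beta,\alpha} \partial_{N+i} L \Bigr] dt = 0
\end{equation*}
for every admissible $h_i$, and the fundamental lemma of the calculus of variations (Lemma~\ref{flcv}) then forces the bracketed expression to vanish pointwise on $[a,b]$. Doing this separately for each $i = 1, \ldots, N$ yields the full system (the index shift in the statement, $i = 2, \ldots, N+1$, simply reflects that the $t$-slot of $L$ is counted first). The main technical obstacle is the bookkeeping in the integration-by-parts step: one must correctly pair the left-Caputo with the right-Riemann--Liouville and vice versa, track the signs from \eqref{FIP:cap_r}, and verify that the convex-combination structure on the Caputo side translates into the dual convex-combination $D_{1-\gamma}^{\beta,\alpha}$ on the Riemann--Liouville side. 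The rest is a routine application of the fundamental lemma.
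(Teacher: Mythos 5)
Your proof is correct and follows exactly the standard route: first variation with componentwise variations vanishing at both endpoints, the fractional integration by parts formulas \eqref{FIP:cap_l}--\eqref{FIP:cap_r} turning the convex combination $\gamma\,{_a^CD_t^{\alpha}}+(1-\gamma)\,{_t^CD_b^{\beta}}$ into the dual operator $D_{1-\gamma}^{\beta,\alpha}=(1-\gamma)\,{_aD_t^{\beta}}+\gamma\,{_tD_b^{\alpha}}$, and the fundamental lemma. This is essentially the same argument as in the source the paper cites for this theorem and as the paper's own proofs of the analogous variable-order results (e.g.\ Theorem~\ref{FP_teo1}), so no further comparison is needed.
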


For a proof of the last result, see \cite{Cap2:Malin:Tor2010}.
Observe that, if the orders $\alpha$ and $\beta$ go to $1$, and if $\gamma=0$
or $\gamma=1$, we obtain a corresponding result in the classical context of
the calculus of variations. In fact, considering $\alpha$ and $\beta$ going to
$1$, the fractional derivatives $_a^CD_t^\alpha$ and $_aD_t^\alpha$ coincide
with the classical derivative $\frac{d}{dt}$; and similarly, $_t^CD_b^\beta$
and $_tD_b^\beta$ coincide with the classical derivative $-\frac{d}{dt}$.

Variational problems with free endpoints and transversality conditions\index{transversality conditions}
are also relevant subjects in the fractional
calculus of variations. The subject of free boundary points in fractional
variational problems was first considered by Agrawal in \cite{Cap2:Agrawal2006}.
In that work, he studied the Euler--Lagrange equation and transversality
conditions for the case when both initial and final times are given and the 
admissible functions are specified at the initial time but are unspecified at the final time.
After that, some free-time variational problems involving fractional
derivatives or/and fractional integrals were studied \cite{Cap2:Alm:Malin,Cap2:Od2012b}.

Sometimes, the analytic solution of the fractional Euler--Lagrange equation
is very difficult to obtain and, in this case, some numerical methods have
been developed to solve the variational problem \cite{Cap2:book2:FCV}.

%------------------------------

\subsection{Fractional variational problems of variable-order}

In recent years, motivated by the works of Samko and Ross, where they
investigated integrals and derivatives not of a constant but of variable-order
\cite{Cap2:Samko:1995,Cap2:Samko:Ross},  some problems of the calculus of variations
involving derivatives of variable fractional order have appeared 
\cite{Cap2:Od2012c,Cap2:Atana}.

Considering Definition \ref{gfi-RL} of the generalized fractional integral
of operator $K_P$, \cite{Cap2:book:MOT} presented a new
variational problem, where the functional was defined by a given kernel.
For appropriate choices of the kernel $k$ and the set $P$, we can obtain
a variable-order fractional variational problem
(see Remark~\ref{example_kernel}).

Let $P = \langle a,t,b,\lambda,\mu \rangle$. Consider the functional
$\mathcal{J}$ in $\mathbf{A}(x_a,x_b)$ defined by:

\begin{equation}
\label{funct_kernel}
\mathcal{J}[x]=\int_a^b L \left(x(t), K_P[x](t), x'(t), B_P[x](t),t \right)dt,
\end{equation}
where $\mathbf{A}(x_a,x_b)$ is the set
$$\lbrace x\in C^1([a,b];\bR):\, x(a)=x_a, x(b)=x_b,\, K_P[x], B_P[x] \in C([a,b];\bR)\rbrace,$$
and $K_P$ is the generalized fractional integral operator with kernel
belonging to $L_q(\Delta; \bR)$ and $B_P$ the generalized fractional
derivative of Caputo type.

The optimality condition for the problem that consists to determine a
function that minimize (or maximize) the functional \eqref{funct_kernel}
is given in the following theorem \cite{Cap2:book:MOT}.\index{variational fractional problem}

\begin{Theorem}
\label{Theor_ELKernel}
\index{Euler--Lagrange equations}
Let $x \in \mathbf{A}(x_a,x_b)$ be a minimizer of functional \eqref{funct_kernel}.
Then, $x$ satisfies the following Euler--Lagrange equation:
\begin{equation}
\begin{split}
\frac{d}{dt}&\left[\partial_3L(\star_x)(t) \right] + A_{P^*}\left[\t\longmapsto\partial_4L(\star_x)(\t) \right](t) \\
&= \partial_1 L(\star_x)(t) + K_{P^*}\left[\t\longmapsto\partial_2L(\star_x)(\t)
\right](t),
\end{split}
\end{equation}
where $(\star_x)(t)=\left(x(t), K_P[x](t), x'(t), B_P[x](t), t\right)$,
for $t\in(a,b)$.
\end{Theorem}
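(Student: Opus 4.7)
The plan is to carry out the standard first-variation argument, adapted to accommodate the two generalized fractional operators appearing in the Lagrangian. Since $K_P$ and $B_P$ are linear, for any perturbation $h \in C^1([a,b];\mathbb{R})$ with $h(a)=h(b)=0$ and $K_P[h], B_P[h] \in C([a,b];\mathbb{R})$, the curve $x + \varepsilon h$ remains in $\mathbf{A}(x_a,x_b)$. I would begin by writing down the first variation
\[
\delta \mathcal{J}(x,h) = \int_a^b \Bigl[ \partial_1 L(\star_x)\,h + \partial_2 L(\star_x)\,K_P[h] + \partial_3 L(\star_x)\,h' + \partial_4 L(\star_x)\,B_P[h]\Bigr]\,dt,
\]
obtained by differentiating $\mathcal{J}[x+\varepsilon h]$ at $\varepsilon = 0$ (assuming enough smoothness of $L$ to pull the derivative inside the integral) and using the linearity of $K_P$ and $B_P$. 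Necessity of $\delta \mathcal{J}(x,h)=0$ at a minimizer follows as in the classical theory.

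Next, I would eliminate the appearances of $h'$, $K_P[h]$, and $B_P[h]$ to recast the first variation as $\int_a^b (\cdots)\,h(t)\,dt$. For the term with $\partial_3 L(\star_x)\,h'$, classical integration by parts together with $h(a)=h(b)=0$ yields $-\int_a^b \frac{d}{dt}\partial_3 L(\star_x)\,h\,dt$. For the $K_P[h]$ term, I would invoke the integration-by-parts formula for generalized fractional integrals (the theorem just preceding this result) to swap $K_P$ onto $\partial_2 L(\star_x)$, giving $\int_a^b K_{P^*}[\partial_2 L(\star_x)]\,h\,dt$. The key observation for the $B_P[h]$ term is that $B_P = K_P \circ d/dt$, hence $B_P[h] = K_P[h']$; applying the same integration-by-parts for $K_P$ moves it onto $\partial_4 L(\star_x)$, producing $\int_a^b h'(t)\,K_{P^*}[\partial_4 L(\star_x)](t)\,dt$. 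A further classical integration by parts (boundary terms vanish by $h(a)=h(b)=0$) converts this into $-\int_a^b h(t)\,\frac{d}{dt} K_{P^*}[\partial_4 L(\star_x)](t)\,dt$, and by the definition $A_{P^*} = \frac{d}{dt}\circ K_{P^*}$ this equals $-\int_a^b h(t)\,A_{P^*}[\partial_4 L(\star_x)](t)\,dt$.

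Collecting everything, the necessary condition $\delta\mathcal{J}(x,h)=0$ becomes
\[
\int_a^b \Bigl[\partial_1 L(\star_x) + K_{P^*}[\partial_2 L(\star_x)] - \tfrac{d}{dt}\partial_3 L(\star_x) - A_{P^*}[\partial_4 L(\star_x)]\Bigr]\,h(t)\,dt = 0,
\]
valid for every admissible $h$. Invoking the fundamental lemma of the calculus of variations (Lemma~\ref{flcv}), the bracketed expression vanishes pointwise on $(a,b)$, which rearranges into the claimed Euler--Lagrange equation.

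The main obstacle I expect is justifying the intermediate steps rigorously, rather than any conceptual difficulty: one must check that the perturbation $h$ can be chosen so that $x+\varepsilon h \in \mathbf{A}(x_a,x_b)$ (i.e.\ $K_P[h]$ and $B_P[h]$ are continuous), that differentiation under the integral sign is permitted (smoothness/boundedness of $L$ and its partials along the perturbed arc), and that the integration-by-parts formula for $K_P$ applies to the functions appearing here, which requires $K_{P^*}[\partial_2 L(\star_x)]$ and $K_{P^*}[\partial_4 L(\star_x)]$ to be well defined with $\partial_2 L(\star_x),\partial_4 L(\star_x) \in L_p$ against kernels in $L_q(\Delta;\mathbb{R})$. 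Once these regularity hypotheses are in place, the manipulation is straightforward and leads cleanly to the stated Euler--Lagrange equation.
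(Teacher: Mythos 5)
Your argument is correct and is essentially the standard derivation: the paper itself states this theorem without reproducing a proof (it cites Malinowska--Odzijewicz--Torres), and the proof given there proceeds exactly as you do --- first variation, the generalized integration by parts formula for $K_P$ to handle the $\partial_2 L$ term, the identity $B_P[h]=K_P[h']$ followed by classical integration by parts and $A_{P^*}=\frac{d}{dt}\circ K_{P^*}$ for the $\partial_4 L$ term, and the fundamental lemma of the calculus of variations. Your closing remarks on the regularity hypotheses (admissibility of $x+\varepsilon h$, differentiation under the integral, and $L_p$--$L_q$ conditions for the $K_P$ integration by parts) are precisely the technical points that the rigorous treatment in the cited source takes care of, so no gap remains.
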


Observe that, if functional \eqref{funct_kernel} does not depends on the
generalized fractional operators $K_P$ and $B_P$, this problem coincide with
the classical variational problem and Theorem~\ref{Theor_ELKernel} reduces to
Theorem~\ref{thm:17:rp}.

Let $\vartriangle:=\lbrace(t,\t) \in \mathbb{R}^2: a\leq\t < t\leq b \rbrace$ and let $1<p<\infty$ and $q$ be the adjoint of $p$. A special case of this problem is obtained when we consider
$\alpha:\Delta \rightarrow [0,1-\delta]$ with $\delta > 1/p$ and the
kernel is defined by
$$k^\alpha (t,\t)=\frac{1}{\Gamma(1-\alpha(t,\t))}(t-\t)^{-\alpha(t,\t)}$$
in $L_q(\Delta;\bR)$. The next results provides necessary conditions 
of optimality \cite{Cap2:book:MOT}.

\begin{Theorem}
Consider the problem of minimizing a functional
\begin{equation}
\label{fract_MOT}
\mathcal{J}[x]=\int_a^b L \left(x(t), {_aI_t^{1-\a}}[x](t), x'(t), {_a^CD_t^{\a}}[x](t),t \right)dt
\end{equation}
subject to boundary conditions
\begin{equation}
\label{bc_MOT}
x(a)=x_a, \quad x(b)=x_b,
\end{equation}
where $x', {_aI_t^{1-\a}[x]}, {_a^CD_t^{\a}[x]} \in C([a,b];\bR)$.
Then, if $x \in C^1([a,b];\bR)$ minimizes (or maximizes) the functional
\eqref{fract_MOT} subject to \eqref{bc_MOT}, then it satisfies the
following Euler--Lagrange equation:
\begin{equation*}
\begin{split}
\partial_1 L & \left(x(t),{_aI_t^{1-\a}[x](t)},x'(t),{_a^CD_t^{\a}[x](t),t}\right)\\
&-\frac{d}{dt}\partial_3 L \left(x(t),{_aI_t^{1-\a}[x]}(t),x'(t),{_a^CD_t^{\a}}[x](t),t\right)\\
&+{_tI_b^{1-\a}}\left[ \partial_2 L \left(x(\t),{_aI_\t^{1-\a}}[x](\t),x'(\t),{_a^CD_\t^{\a}}[x](\t),\t \right) \right](t)\\
&+{_tD_b^{\a}}\left[ \partial_4 L \left(x(\t),{_aI_\t^{1-\a}}[x](\t),x'(\t),{_a^CD_\t^{\a}}[x](\t),\t\right) \right](t)=0.
\end{split}
\end{equation*}
\end{Theorem}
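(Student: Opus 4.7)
The plan is to follow the standard variational strategy adapted to the variable-order fractional setting, relying on the integration by parts formulas of Section~\ref{subsec:IP} (specifically, Theorems~\ref{thm:FIP_VO_Int} and \ref{thm:FIP}) together with the fundamental lemma of the calculus of variations (Lemma~\ref{flcv}). First I would fix an admissible variation $x + \epsilon h$ with $h \in C^1([a,b];\bR)$ and $h(a) = h(b) = 0$, so that $x + \epsilon h$ satisfies the boundary conditions \eqref{bc_MOT} for every $\epsilon$. Because $_aI_t^{1-\a}$ and $_a^CD_t^{\a}$ are linear operators (as noted after their definitions in Section~\ref{VariableOperators}), the perturbed arguments of $L$ decompose cleanly as
\[
(x+\epsilon h)(t),\; {_aI_t^{1-\a}}[x](t) + \epsilon\, {_aI_t^{1-\a}}[h](t),\; x'(t)+\epsilon h'(t),\; {_a^CD_t^{\a}}[x](t) + \epsilon\, {_a^CD_t^{\a}}[h](t),\; t.
\]

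Next, I would compute the first variation $\delta \mathcal{J}(x,h) = \tfrac{d}{d\epsilon}\mathcal{J}[x+\epsilon h]\big|_{\epsilon=0}$ by differentiating under the integral sign and applying the chain rule. Writing $(\star_x)(t)$ as shorthand for the five-tuple argument, this yields
\[
\delta \mathcal{J}(x,h) = \int_a^b \Bigl[ \partial_1 L(\star_x) h + \partial_2 L(\star_x)\, {_aI_t^{1-\a}}[h] + \partial_3 L(\star_x)\, h' + \partial_4 L(\star_x)\, {_a^CD_t^{\a}}[h] \Bigr] dt.
\]
Since $x$ is an extremizer, $\delta \mathcal{J}(x,h) = 0$ for every admissible $h$.

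Then I would apply integration by parts to the three terms not already multiplying $h$ pointwise. The term with $h'$ is handled by classical integration by parts, with vanishing boundary contribution thanks to $h(a) = h(b) = 0$, producing $-\tfrac{d}{dt}\partial_3 L(\star_x)\cdot h$. The term involving $_aI_t^{1-\a}[h]$ is transferred to the adjoint operator $_tI_b^{1-\a}$ acting on $\partial_2 L(\star_x)$ via Theorem~\ref{thm:FIP_VO_Int}. The term involving $_a^CD_t^{\a}[h]$ is transferred via Theorem~\ref{thm:FIP}, producing $_tD_b^{\a}[\partial_4 L(\star_x)]\cdot h$ together with a boundary term $[h(t)\,{_tI_b^{1-\a}}\partial_4 L(\star_x)]_{t=a}^{t=b}$, which again vanishes because $h(a) = h(b) = 0$. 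Collecting all contributions under a single integral gives
\[
\int_a^b \Bigl[ \partial_1 L(\star_x) - \tfrac{d}{dt}\partial_3 L(\star_x) + {_tI_b^{1-\a}}[\partial_2 L(\star_x)] + {_tD_b^{\a}}[\partial_4 L(\star_x)] \Bigr] h(t)\, dt = 0.
\]

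Finally, since $h$ is arbitrary in $C^1([a,b];\bR)$ with $h(a) = h(b) = 0$, Lemma~\ref{flcv} forces the bracketed expression to vanish identically on $[a,b]$, which is precisely the claimed Euler--Lagrange equation. The main obstacle I anticipate is verifying that the hypotheses of Theorems~\ref{thm:FIP_VO_Int} and \ref{thm:FIP} are met at this regularity level: in particular, the variable order $\a$ must be restricted so that $0 < \alpha(t,\tau) < 1 - 1/n$ and the quantities $\partial_2 L(\star_x)$, $\partial_4 L(\star_x)$ are continuous (or $C^1$) as required for those integration by parts rules to apply, and one has to justify the differentiation under the integral sign in the first variation. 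Once these smoothness assumptions are folded into the statement, the argument proceeds as outlined above.
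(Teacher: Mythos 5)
Your argument is correct, but it follows a different route from the one the book takes for this statement. In the text, this theorem is not proved directly: it is presented as a special case of Theorem~\ref{Theor_ELKernel} on generalized fractional operators, obtained by choosing the singular kernel $k^\alpha(t,\tau)=\frac{1}{\Gamma(1-\alpha(t,\tau))}(t-\tau)^{-\alpha(t,\tau)}$ so that $K_P={_aI_t^{1-\a}}$, $B_P={_a^CD_t^{\a}}$, and the dual-parameter operators $K_{P^*}$, $A_{P^*}$ become ${_tI_b^{1-\a}}$ and (up to the sign built into the right Riemann--Liouville derivative) ${_tD_b^{\a}}$; the proof of the general result is delegated to the cited reference. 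You instead give a self-contained first-variation argument: perturb by $x+\epsilon h$ with $h(a)=h(b)=0$, use linearity of ${_aI_t^{1-\a}}$ and ${_a^CD_t^{\a}}$, integrate by parts classically for the $h'$ term and fractionally via Theorems~\ref{thm:FIP_VO_Int} and \ref{thm:FIP} for the other two, and conclude with Lemma~\ref{flcv}. This is exactly the template the book itself uses later for the variable-order problems of Chapter~\ref{PartII_FCV} (e.g.\ Theorem~\ref{FP_teo1}), so your route is methodologically consonant with the paper even though it bypasses the kernel formalism; what the paper's route buys is generality (one proof covers all kernels in $L_q(\Delta;\bR)$), while yours buys transparency and makes the needed hypotheses explicit. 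You correctly identified the two points that must be added to the statement for your proof to go through: the order restriction $0<\alpha(t,\tau)<1-\frac{1}{n}$ (so that the integration by parts theorems apply to operators of order $1-\a$ and $\a$), and enough regularity of $t\mapsto\partial_2 L(\star_x)(t)$, $\partial_3 L(\star_x)(t)$, $\partial_4 L(\star_x)(t)$ (continuity, differentiability, and $C^1$ respectively) to justify the boundary-term manipulations and the differentiation under the integral sign; with those caveats stated, the proof is complete.
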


In fact, the use of fractional derivatives of constant order in variational problems 
may not be the best option, since trajectories are a dynamic process, and the order may vary. 
Therefore, it is important to consider the order to be a function, $\alpha(\cdot)$, 
depending on time. Then we may seek what is the best function $\alpha(\cdot)$ such that 
the variable-order fractional differential equation 
$D^{\alpha(\cdot)}x(t) = f(t, x(t))$ better describes the process under study.

This approach is very recent, and many work has to be done for a complete study of the subject (see, e.g., \cite{Cap2:Atangana:Kili,Cap2:Coimbra2,Cap2:Samko:Ross,Cap2:Sheng,Cap2:Valerio:Vinagre}).

\begingroup
\renewcommand{\addcontentsline}[3]{}

\endgroup

\CP
%================================================================
\chapter{Expansion formulas for fractional derivatives}
\label{PartII_Expan}

In this chapter, we present a new numerical tool to solve differential
equations involving three types of Caputo  derivatives of fractional
variable-order. For each one of them, an approximation formula is 
obtained, which is written in terms of standard (integer order) derivatives only.
Estimations for the error of the approximations are also provided. Then,
we compare the numerical approximation of some test function with its exact
fractional derivative. We present an exemplification of how the presented
methods can be used to solve partial fractional differential equations of
variable-order.

Let us briefly describe the main contents of the chapter.
We begin this chapter by formulating the needed definitions
(Section~\ref{sec:defs}). Namely, we present three types of Caputo derivatives
of variable fractional order. First, we consider one independent variable only;
then we generalize for several independent variables.
The following Section~\ref{sec:NumApprox} is the main core of the chapter,
where we prove approximation formulas for the given fractional operators of
variable-order and respectively upper bound formulas for the errors.
To test the efficiency of the proposed method, in Section~\ref{sec:ex_numer}
we compare the exact fractional derivative of some test function with the
numerical approximations obtained from the decomposition formulas given in Section~\ref{sec:NumApprox}.
To end, in Section~\ref{sec:2_Appl} we apply our method to approximate two
physical problems involving Caputo fractional operators of variable-order
(a time-fractional diffusion equation and a fractional Burgers' partial
differential equation in fluid mechanics) by classical problems that may
be solved by well-known standard techniques.

The results of this chapter first appeared in \cite{Cap3:Tavares2016}.

%-----------

\section{Caputo-type fractional operators of variable-order}
\label{sec:defs}

In the literature of fractional calculus, several different
definitions  of derivatives are found \cite{Cap3:Samko:Kilbas}.
One of those, introduced by  \cite{Cap3:Caputo} and studied independently by other authors,
like \cite{Cap3:Dzherbashyan} and \cite{Cap3:Rabotnov},
has found many applications and seems to be more suitable to model physical phenomena 
\cite{Cap3:Dalir,Cap3:Diethelm,Cap3:Machado,Cap3:Murio,Cap3:Singh,Cap3:Sweilam,Cap3:Yajima}.

% ------------------

\subsection{Caputo derivatives for functions of one variable}
\label{subsec:one_variable}

Our goal is to consider fractional derivatives of variable-order, with $\alpha$ depending on time.
In fact, some phenomena in physics are better described when the order of the fractional operator is not constant,
for example, in the diffusion process in an inhomogeneous or heterogeneous medium, or processes
where the changes in the environment modify the dynamic of the particle \cite{Cap3:Chechkin,Cap3:Santamaria,Cap3:Sun}.
Motivated by the above considerations, we introduce three types of Caputo fractional derivatives.
The order of the derivative is considered as a function $\ati$ taking values on the open interval $(0,1)$.
To start, we define two different kinds of Riemann--Liouville fractional derivatives.

\begin{Definition}
Given a function  $x:[a,b]\to\mathbb{R}$,
\begin{enumerate}
\item the type I left Riemann--Liouville fractional derivative of order $\ati$ is defined by
$$
{_aD_t^\ati}x(t)=\frac{1}{\Gamma(1-\ati)}\frac{d}{dt}\int_a^t(t-\t)^{-\ati}x(\t)d\t;
$$
\item the type I right Riemann--Liouville fractional derivative of order $\ati$ is defined by
$$
{_tD_b^\ati}x(t)=\frac{-1}{\Gamma(1-\ati)}\frac{d}{dt}\int_t^b(\t-t)^{-\ati}x(\t)d\t;
$$
\item the type II left Riemann--Liouville fractional derivative of order $\ati$ is defined by
$$
{_a\mathcal{D}_t^\ati}x(t)=\frac{d}{dt}\left(\frac{1}{\Gamma(1-\ati)}\int_a^t(t-\t)^{-\ati}x(\t)d\t\right);
$$
\item the type II right Riemann--Liouville fractional derivative of order $\ati$ is defined by
$$
{_t\mathcal{D}_b^\ati}x(t)=\frac{d}{dt}\left(\frac{-1}{\Gamma(1-\ati)}\int_t^b(\t-t)^{-\ati}x(\t)d\t\right).
$$
\end{enumerate}
\end{Definition}

The Caputo derivatives are given using the previous Riemann--Liouville fractional deri\-vatives.

\begin{Definition}
Given a function  $x:[a,b]\to\mathbb{R}$,
\begin{enumerate}
\item the type I left Caputo derivative of order $\ati$ is defined by
\begin{align*}\LCI x(t)&={_aD_t^\ati}(x(t)-x(a))\\
&=\frac{1}{\Gamma(1-\ati)}\frac{d}{dt}\int_a^t(t-\t)^{-\ati}[x(\t)-x(a)]d\t;\end{align*}
\item the type I right Caputo derivative of order $\ati$ is defined by
\begin{align*}
\RCI x(t)&={_tD_b^\ati}(x(t)-x(b))\\
&=\frac{-1}{\Gamma(1-\ati)}\frac{d}{dt}\int_t^b(\t-t)^{-\ati}[x(\t)-x(b)]d\t;
\end{align*}
\item the type II left Caputo derivative of order $\ati$ is defined by
\begin{align*}
\LCII x(t)&= {_a\mathcal{D}_t^\ati} (x(t)-x(a))\\
&=\frac{d}{dt}\left(\frac{1}{\Gamma(1-\ati)}\int_a^t(t-\t)^{-\ati}[x(\t)-x(a)]d\t\right);
\end{align*}
\item the type II right Caputo derivative of order $\ati$ is defined by
\begin{align*}
\RCII x(t)&= {_t\mathcal{D}_b^\ati}(x(t)-x(b))\\
&=\frac{d}{dt}\left(\frac{-1}{\Gamma(1-\ati)}\int_t^b(\t-t)^{-\ati}[x(\t)-x(b)]d\t\right);
\end{align*}
\item the type III left Caputo derivative of order $\ati$ is defined by
$$
\LCIII x(t)=\frac{1}{\Gamma(1-\ati)}\int_a^t(t-\t)^{-\ati}x'(\t)d\t;
$$
\item the type III right Caputo derivative of order $\ati$ is defined by
$$
\RCIII x(t)=\frac{-1}{\Gamma(1-\ati)}\int_t^b(\t-t)^{-\ati}x'(\t)d\t.
$$
\end{enumerate}
\end{Definition}

In contrast with the case when $\alpha$ is a constant, definitions of different types do not coincide.

\begin{Theorem}
The following relations between the left fractional operators hold:
\begin{multline}
\label{eq1}
\LCI x(t)=\LCIII x(t)+\frac{\alpha'(t)}{\Gamma(2-\ati)}\\
\times \int_a^t(t-\t)^{1-\ati}x'(\t)\left[\frac{1}{1-\ati}-\ln(t-\t)\right]d\t\end{multline}
and
\begin{multline}
\label{eq2}
\LCI x(t)=\LCII x(t)-\frac{\alpha'(t)\Psi(1-\ati)}{\Gamma(1-\ati)}\\
\times \int_a^t(t-\t)^{-\ati}[x(\t)-x(a)]d\t.\end{multline}
\end{Theorem}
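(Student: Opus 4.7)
The plan is to derive each identity by differentiating carefully with respect to $t$, being mindful that $\alpha(t)$ (and hence $\Gamma(1-\ati)$ and the exponent inside the integral) is itself a function of $t$. Both formulas fall out of applying the product and chain rules, together with a single integration-by-parts step for the first identity.

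The second identity \eqref{eq2} is the cleaner of the two, so I would handle it first. Set $f(t):=1/\Gamma(1-\ati)$ and $H(t):=\int_a^t (t-\tau)^{-\ati}[x(\tau)-x(a)]\,d\tau$, so that $\LCII x(t) = \frac{d}{dt}\bigl[f(t)H(t)\bigr]$ while $\LCI x(t) = f(t) H'(t)$. The product rule gives
$$\LCII x(t) - \LCI x(t) = f'(t)H(t).$$
It then suffices to compute $f'(t)$ via the chain rule and the identity $\Gamma'(s)=\Psi(s)\Gamma(s)$ from Definition~\ref{Psif}, namely
$$f'(t) = -\frac{-\alpha'(t)\,\Gamma'(1-\ati)}{[\Gamma(1-\ati)]^2} = \frac{\alpha'(t)\,\Psi(1-\ati)}{\Gamma(1-\ati)},$$
after which rearrangement yields \eqref{eq2}.

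For the first identity \eqref{eq1}, I would begin by integrating by parts inside $H(t)$ with $u=x(\tau)-x(a)$ and $dv=(t-\tau)^{-\ati}\,d\tau$, noting that both boundary terms vanish (the one at $\tau=t$ because $1-\ati>0$, the one at $\tau=a$ because $x(a)-x(a)=0$). This rewrites
$$H(t) = \frac{1}{1-\ati}\,G(t), \qquad G(t):=\int_a^t(t-\tau)^{1-\ati}x'(\tau)\,d\tau.$$
Substituting into $\LCI x(t)=\frac{1}{\Gamma(1-\ati)}H'(t)$ and applying the product rule to $G(t)/(1-\ati)$ together with the Leibniz rule on $G'(t)$, the upper-limit contribution to $G'(t)$ vanishes (again because $1-\ati>0$), and the partial derivative of the integrand splits via logarithmic differentiation as
$$\frac{\partial}{\partial t}(t-\tau)^{1-\ati} = (1-\ati)(t-\tau)^{-\ati} - \alpha'(t)(t-\tau)^{1-\ati}\ln(t-\tau).$$
The first piece, combined with the overall $1/\Gamma(1-\ati)$, reconstructs $\LCIII x(t)$; the remaining two pieces — the logarithmic term from $G'(t)$ and the term arising from $\frac{d}{dt}(1-\ati)^{-1}=\alpha'(t)/(1-\ati)^2$ — combine, using $(1-\ati)\Gamma(1-\ati)=\Gamma(2-\ati)$, into the single correction integral with bracketed integrand $\frac{1}{1-\ati}-\ln(t-\tau)$.

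The main obstacle is the careful bookkeeping in \eqref{eq1}: one must differentiate an integral in which both the upper limit and the exponent depend on $t$, track three stray terms produced by the two product rules, and check that they line up precisely to form the bracket $\frac{1}{1-\ati}-\ln(t-\tau)$. The preliminary integration by parts is essential because it replaces the borderline-integrable factor $(t-\tau)^{-\ati}$ by $(t-\tau)^{1-\ati}$, so that differentiation under the integral sign is unproblematic and the boundary contribution at $\tau=t$ is clearly zero; once this is done, only elementary identities for $\Gamma$ and $\Psi$ are needed.
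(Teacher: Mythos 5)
Your proposal is correct and follows essentially the same route as the paper: for \eqref{eq1} you integrate by parts to replace $(t-\t)^{-\ati}[x(\t)-x(a)]$ by $\frac{1}{1-\ati}(t-\t)^{1-\ati}x'(\t)$ and then differentiate, tracking the $t$-dependence of both the exponent and the factor $\frac{1}{1-\ati}$, exactly as in the paper's proof. For \eqref{eq2} the paper merely states that it ``follows from direct calculations,'' and your product-rule argument with $\Gamma'(s)=\Psi(s)\Gamma(s)$ is precisely the intended computation.
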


\begin{proof}
Integrating by parts, one gets
\begin{equation*}
\begin{split}
\LCI x(t)&= \DS\frac{1}{\Gamma(1-\ati)}
\frac{d}{dt}\int_a^t(t-\t)^{-\ati}[x(\t)-x(a)]d\t\\
&= \DS\frac{1}{\Gamma(1-\ati)}\frac{d}{dt}
\left[\frac{1}{1-\ati}\int_a^t(t-\t)^{1-\ati}x'(\t)d\t\right].
\end{split}
\end{equation*}
Differentiating the integral, it follows that
\begin{equation*}
\begin{split}
&\LCI x(t)\DS=\frac{1}{\Gamma(1-\ati)}\left[
\frac{\alpha'(t)}{(1-\ati)^2}\int_a^t(t-\t)^{1-\ati}x'(\t)d\t\right.\\
&\DS\quad\left.+\frac{1}{1-\ati}\int_a^t(t-\t)^{1-\ati}x'(\t)\left[
-\alpha'(t)\ln(t-\t)+\frac{1-\ati}{t-\t}\right]d\t\right]\\
&=\DS\LCIII x(t)+\frac{\alpha'(t)}{\Gamma(2-\ati)}\int_a^t
(t-\t)^{1-\ati}x'(\t)\left[\frac{1}{1-\ati}-\ln(t-\t)\right]d\t.
\end{split}
\end{equation*}
The second formula follows from direct calculations.
\end{proof}

Therefore, when the order $\ati \equiv c$ is a constant, or for constant functions $x(t) \equiv k$, we have
$$
\LCI x(t)=\LCII x(t)=\LCIII x(t).
$$
Similarly, we obtain the next result.

\begin{Theorem}
The following relations between the right fractional operators hold:
\begin{multline*}
\RCI x(t) \DS=\DS\RCIII x(t)+\frac{\alpha'(t)}{\Gamma(2-\ati)}\\
\times\int_t^b(\t-t)^{1-\ati}x'(\t)\left[\frac{1}{1-\ati}-\ln(\t-t)\right]d\t\end{multline*}
and
$$
\RCI x(t)=\RCII x(t)+\frac{\alpha'(t)\Psi(1-\ati)}{\Gamma(1-\ati)}\int_t^b(\t-t)^{-\ati}[x(\t)-x(b)]d\t.
$$
\end{Theorem}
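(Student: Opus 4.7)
The plan is to mirror the argument used for the left fractional operators, so I would begin with the type I right Caputo derivative and first reduce the integrand $[x(\tau)-x(b)]$ to $x'(\tau)$ via integration by parts. Choosing $u=x(\tau)-x(b)$ and $dv=(\tau-t)^{-\alpha(t)}\,d\tau$ gives the antiderivative $v=(\tau-t)^{1-\alpha(t)}/(1-\alpha(t))$, which is well-defined since $\alpha(t)\in(0,1)$. The boundary term at $\tau=b$ vanishes because $x(b)-x(b)=0$, and the boundary term at $\tau=t$ vanishes because $(\tau-t)^{1-\alpha(t)}\to 0$. This yields
\[
\RCI x(t) = \frac{1}{\Gamma(1-\alpha(t))}\,\frac{d}{dt}\!\left[\frac{1}{1-\alpha(t)}\int_t^b (\tau-t)^{1-\alpha(t)}x'(\tau)\,d\tau\right].
\]

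Next I would differentiate using the product rule, noting that three sources of $t$-dependence appear: the prefactor $1/(1-\alpha(t))$, the lower limit of integration, and the exponent $1-\alpha(t)$. The derivative of the prefactor contributes $\alpha'(t)/(1-\alpha(t))^2$ times the same integral. The lower-limit boundary term again vanishes because $1-\alpha(t)>0$. For the exponent, I would write $(\tau-t)^{1-\alpha(t)}=\exp\bigl((1-\alpha(t))\ln(\tau-t)\bigr)$ to obtain
\[
\frac{\partial}{\partial t}(\tau-t)^{1-\alpha(t)} = (\tau-t)^{1-\alpha(t)}\!\left[-\alpha'(t)\ln(\tau-t)-\frac{1-\alpha(t)}{\tau-t}\right].
\]
Collecting terms and using $\Gamma(2-\alpha(t))=(1-\alpha(t))\Gamma(1-\alpha(t))$, the piece without $\alpha'(t)$ simplifies to $-\frac{1}{\Gamma(1-\alpha(t))}\int_t^b(\tau-t)^{-\alpha(t)}x'(\tau)\,d\tau$, which is exactly $\RCIII x(t)$, while the pieces proportional to $\alpha'(t)$ combine into $\frac{\alpha'(t)}{\Gamma(2-\alpha(t))}\int_t^b(\tau-t)^{1-\alpha(t)}x'(\tau)\bigl[\frac{1}{1-\alpha(t)}-\ln(\tau-t)\bigr]d\tau$, yielding the first identity.

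For the second identity I would compare $\RCI x(t)$ and $\RCII x(t)$ directly, without any integration by parts. Setting $G(t)=\int_t^b(\tau-t)^{-\alpha(t)}[x(\tau)-x(b)]\,d\tau$, the definitions give $\RCI x(t)=-G'(t)/\Gamma(1-\alpha(t))$ and $\RCII x(t)=\frac{d}{dt}\bigl[-G(t)/\Gamma(1-\alpha(t))\bigr]$. Applying the product rule to the latter and using the Psi-function identity
\[
\frac{d}{dt}\frac{1}{\Gamma(1-\alpha(t))} = \frac{\alpha'(t)\,\Psi(1-\alpha(t))}{\Gamma(1-\alpha(t))},
\]
which follows from $\Gamma'=\Psi\cdot\Gamma$ and the chain rule, I would obtain $\RCI x(t)-\RCII x(t)=\frac{\alpha'(t)\Psi(1-\alpha(t))}{\Gamma(1-\alpha(t))}G(t)$, which is the claimed identity.

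The only real obstacle is the careful bookkeeping of signs and of the $t$-dependence in the exponent $-\alpha(t)$: for right-sided operators the differentiation of the lower limit introduces a minus sign that partially cancels the leading $-1/\Gamma(1-\alpha(t))$ in the definition, and one must verify that the boundary terms genuinely vanish at $\tau=t$ (relying on $1-\alpha(t)>0$) rather than diverge. Once these sign checks are made, the rest is a direct, if slightly tedious, application of the Leibniz differentiation formula and the relation $\Gamma(2-\alpha)=(1-\alpha)\Gamma(1-\alpha)$, exactly as in the left-sided theorem.
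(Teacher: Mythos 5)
Your proposal is correct, and it is essentially the proof the paper intends: the book omits the right-sided argument with the remark that it follows ``similarly'' to the left-sided theorem, whose proof is exactly your scheme --- integrate by parts to replace $x(\t)-x(b)$ by $x'(\t)$, differentiate via the Leibniz rule (splitting off the $\alpha'(t)$ terms and using $\Gamma(2-\alpha)=(1-\alpha)\Gamma(1-\alpha)$), and obtain the second identity by a direct product-rule computation with $\Gamma'=\Psi\,\Gamma$. Your sign bookkeeping for the right-sided operators (including the $+$ sign in the $\Psi$ term, in contrast with the $-$ sign of the left-sided formula) checks out.
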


\begin{Theorem}
\label{initialpoint}
Let $x\in C^1\left([a,b],\mathbb{R}\right)$. At $t=a$
$$
\LCI x(t)=\LCII x(t)=\LCIII x(t)=0;
$$
at $t=b$
$$
\RCI x(t)=\RCII x(t)=\RCIII x(t)=0.
$$
\end{Theorem}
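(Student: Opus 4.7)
The plan is to handle the three types separately, starting with the type III derivatives (which are simplest since they are ordinary integrals, not derivatives of integrals) and then deducing the other two from the identities \eqref{eq1} and \eqref{eq2} already established in the preceding theorems. Throughout, I use that $x\in C^1([a,b];\mathbb{R})$ so $|x'|\le M$ for some constant $M$, and that on a neighborhood of $a$ the quantities $1-\ati$, $\Gamma(1-\ati)$, $\Gamma(2-\ati)$, $\alpha'(t)$, $\Psi(1-\ati)$ are all continuous and, for the first three, bounded away from zero (since $\ati\in(0,1)$).

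For the type III left derivative, I would estimate directly:
\[
|\LCIII x(t)|\le\frac{M}{\Gamma(1-\ati)}\int_a^t(t-\tau)^{-\ati}d\tau=\frac{M\,(t-a)^{1-\ati}}{\Gamma(1-\ati)(1-\ati)},
\]
which tends to $0$ as $t\to a^+$ because $1-\ati>0$. For the type I left derivative, I would use \eqref{eq1} to write $\LCI x(t)-\LCIII x(t)$ as a single integral and show that integral vanishes at $t=a$. After the substitution $u=t-\tau$, the non-logarithmic piece is dominated by $C(t-a)^{2-\ati}$, while the logarithmic piece, via one integration by parts, equals
\[
\int_0^{t-a}u^{1-\ati}\ln u\,du=\frac{(t-a)^{2-\ati}\ln(t-a)}{2-\ati}-\frac{(t-a)^{2-\ati}}{(2-\ati)^2},
\]
both of whose terms tend to $0$ since $2-\ati>1$ controls the logarithmic singularity. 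Combined with the type III result, this gives $\LCI x(a)=0$.

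For the type II left derivative I would invoke \eqref{eq2}, which expresses $\LCII x(t)$ as $\LCI x(t)$ plus a correction integral. Using the Lipschitz bound $|x(\tau)-x(a)|\le M(\tau-a)$ and the elementary identity
\[
\int_a^t(t-\tau)^{-\ati}(\tau-a)d\tau=(t-a)^{2-\ati}\left[\frac{1}{1-\ati}-\frac{1}{2-\ati}\right],
\]
the correction is bounded by a constant times $(t-a)^{2-\ati}$ and vanishes at $t=a$; combined with the previous step, $\LCII x(a)=0$. The three statements at $t=b$ for the right Caputo derivatives are proved by the analogous argument, with the obvious substitutions $t-\tau\leftrightarrow\tau-t$, $\tau-a\leftrightarrow b-\tau$ and with the reversed limits of integration; the symmetric forms of \eqref{eq1} and \eqref{eq2} given in the preceding right-hand theorem play the role of \eqref{eq1} and \eqref{eq2}.

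The main obstacle is the appearance of $\ln(t-\tau)$ in \eqref{eq1}: without care, one might worry the logarithmic singularity overwhelms the power $(t-\tau)^{1-\ati}$. The key observation resolving this is that after evaluating the integral, one is left with the product $(t-a)^{2-\ati}\ln(t-a)$, and because $2-\ati$ is uniformly bounded below by $1$ on a neighborhood of $a$, this product tends to zero. All remaining steps reduce to routine estimates and an appeal to the continuity of $\alpha$, $\alpha'$, $\Gamma$ and $\Psi$.
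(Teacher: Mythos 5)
Your proposal is correct and follows essentially the same route as the paper: bound the type III derivative directly by $\|x'\|(t-a)^{1-\ati}/\Gamma(2-\ati)$, then treat types I and II through the relations \eqref{eq1} and \eqref{eq2}, controlling the logarithmic term by an integration by parts (your explicit antiderivative of $u^{1-\ati}\ln u$ reproduces exactly the paper's bound), and conclude the right-sided cases by symmetry. Your Lipschitz estimate for the type II correction yields the same bound $\|x'\|(t-a)^{2-\ati}/\bigl((1-\ati)(2-\ati)\bigr)$ that the paper obtains by integrating by parts, so the arguments are equivalent.
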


\begin{proof}
We start proving the third equality at the initial time $t=a$. We simply note that
$$
\left|\LCIII x(t)\right|\leq \frac{\|x'\|}{\Gamma(1-\ati)}\int_a^t(t-\t)^{-\ati}d\t
=\frac{\|x'\|}{\Gamma(2-\ati)}(t-a)^{1-\ati},
$$
which is zero at $t=a$. For the first equality at $t=a$,
using equation \eqref{eq1}, and the two next relations
$$
\left|\int_a^t(t-\t)^{1-\ati}\frac{x'(\t)}{1-\ati}d\t\right|
\leq \frac{\|x'\|}{(1-\ati)(2-\ati)}(t-a)^{2-\ati}
$$
and
\begin{align*}
\Bigg|\int_a^t(t-\t)^{1-\ati}  x'(\t)\ln(t-\t)&d\t\Bigg|\\
\leq &\frac{\|x'\|}{2-\ati}(t-a)^{2-\ati}\Bigg|\ln(t-a)-\frac{1}{2-\ati}\Bigg|,\end{align*}
this latter inequality obtained from integration by parts,
we prove that $\LCI x(t)=0$ at $t=a$. Finally, we prove the second equality at $t=a$
by considering equation \eqref{eq2}: performing an integration by parts, we get
$$
\left|\int_a^t(t-\t)^{-\ati}[x(\t)-x(a)]d\t\right|\leq \frac{\|x'\|}{(1-\ati)(2-\ati)}(t-a)^{2-\ati}
$$
and so $\LCII x(t)=0$ at $t=a$. The proof that the right
fractional operators also vanish at the end point $t=b$
follows by similar arguments.
\end{proof}

With some computations, a relationship between the Riemann--Liouville
and the Caputo fractional derivatives is easily deduced:
\begin{equation*}
\begin{split}
{_aD_t^\ati}x(t)&=\displaystyle\LCI x(t)
+\frac{x(a)}{\Gamma(1-\ati)}\frac{d}{dt}\int_a^t(t-\t)^{-\ati}d\t\\
&=\displaystyle\LCI x(t)+\frac{x(a)}{\Gamma(1-\ati)}(t-a)^{-\ati}\\
&\qquad\displaystyle+\frac{x(a)\alpha'(t)}{\Gamma(2-\ati)}
(t-a)^{1-\ati}\left[\frac{1}{1-\ati}-\ln(t-a)\right]
\end{split}
\end{equation*}
and
\begin{equation*}
\begin{split}
{_a\mathcal{D}_t^\ati}x(t)&=\displaystyle\LCII x(t)
+x(a)\frac{d}{dt}\left(\frac{1}{\Gamma(1-\ati)}\int_a^t(t-\t)^{-\ati}d\t\right)\\
&=\displaystyle\LCII x(t)+\frac{x(a)}{\Gamma(1-\ati)}(t-a)^{-\ati}\\
&\qquad\displaystyle+\frac{x(a)\alpha'(t)}{\Gamma(2-\ati)}
(t-a)^{1-\ati}\left[\Psi(2-\ati)-\ln(t-a)\right].
\end{split}
\end{equation*}
For the right fractional operators, we have
\begin{multline*}
{_tD_b^\ati}x(t)=\displaystyle\RCI x(t)+\frac{x(b)}{\Gamma(1-\ati)}(b-t)^{-\ati}\\
\displaystyle-\frac{x(b)\alpha'(t)}{\Gamma(2-\ati)}(b-t)^{1-\ati}\left[\frac{1}{1-\ati}-\ln(b-t)\right]
\end{multline*}
and
\begin{multline*}
{_t\mathcal{D}_b^\ati}x(t)=\displaystyle\RCII x(t)+\frac{x(b)}{\Gamma(1-\ati)}(b-t)^{-\ati}\\
\displaystyle-\frac{x(b)\alpha'(t)}{\Gamma(2-\ati)}(b-t)^{1-\ati}\left[\Psi(2-\ati)-\ln(b-t)\right].
\end{multline*}
Thus, it is immediate to conclude that if $x(a)=0$, then
$${_aD_t^\ati}x(t)=\LCI x(t) \quad \mbox{and} \quad {_a\mathcal{D}_t^\ati}x(t)=\LCII x(t)$$
and if $x(b)=0$, then
$${_tD_b^\ati}x(t)=\RCI x(t) \quad \mbox{and} \quad {_t\mathcal{D}_b^\ati}x(t)=\RCII x(t).$$

Next we obtain formulas for the Caputo fractional derivatives of a power function.

\begin{Lemma}
\label{2_1_Lemma_power}
Let $x(t)=(t-a)^\gamma$ with $\gamma>0$. Then,
\begin{equation*}
\begin{split}
\LCI x(t) &=\DS \frac{\Gamma(\gamma+1)}{\Gamma(\gamma-\ati+1)}(t-a)^{\gamma-\ati}\\
&\DS\quad -\alpha'(t)\frac{\Gamma(\gamma+1)}{\Gamma(\gamma-\ati+2)}(t-a)^{\gamma-\ati+1}\\
&\DS \quad \times \left[\ln(t-a)-\Psi(\gamma-\ati+2)+\Psi(1-\ati)\right],\\
\LCII x(t) &=\DS \frac{\Gamma(\gamma+1)}{\Gamma(\gamma-\ati+1)}(t-a)^{\gamma-\ati}\\
&\DS\quad -\alpha'(t)\frac{\Gamma(\gamma+1)}{\Gamma(\gamma-\ati+2)}(t-a)^{\gamma-\ati+1}\\
&\DS \quad \times \left[\ln(t-a)-\Psi(\gamma-\ati+2)\right],\\
\LCIII x(t) &=\DS  \frac{\Gamma(\gamma+1)}{\Gamma(\gamma-\ati+1)}(t-a)^{\gamma-\ati}.
\end{split}
\end{equation*}
\end{Lemma}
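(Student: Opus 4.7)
The plan is to handle the three identities in order of increasing complexity: type III first (which is the cleanest), then use the conversion formula \eqref{eq1} to obtain type I, and finally use \eqref{eq2} to pass from type I to type II. The main engine throughout is the Beta-function evaluation of $\int_a^t (t-\tau)^{\mu-1}(\tau-a)^{\nu-1}\,d\tau$ via the substitution $\tau = a + s(t-a)$, which reduces each integral to $B(\nu,\mu)(t-a)^{\mu+\nu-1}$. The only non-routine ingredient is the integral carrying a logarithm.

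For the type III derivative, I will substitute $x'(\tau) = \gamma(\tau-a)^{\gamma-1}$ directly into the definition, apply the substitution above with $\mu = 1-\alpha(t)$ and $\nu = \gamma$, and use $B(\gamma,1-\alpha(t)) = \Gamma(\gamma)\Gamma(1-\alpha(t))/\Gamma(\gamma-\alpha(t)+1)$ together with $\gamma\Gamma(\gamma) = \Gamma(\gamma+1)$. Everything collapses immediately to the stated formula.

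For the type I derivative, I will invoke \eqref{eq1}. The bracketed integrand splits into two pieces. The piece without the logarithm is handled by the same Beta-function calculation, now with $\mu = 2-\alpha(t)$, giving a factor $\Gamma(\gamma)\Gamma(2-\alpha(t))/\Gamma(\gamma-\alpha(t)+2)$. The piece containing $\ln(t-\tau)$ is the delicate step. My plan is to introduce the auxiliary function
\begin{equation*}
f(u) = \int_a^t (t-\tau)^{1-u}(\tau-a)^{\gamma-1}\,d\tau = (t-a)^{\gamma-u+1}\,\frac{\Gamma(\gamma)\Gamma(2-u)}{\Gamma(\gamma-u+2)},
\end{equation*}
and observe that $\int_a^t (t-\tau)^{1-u}(\tau-a)^{\gamma-1}\ln(t-\tau)\,d\tau = -f'(u)$. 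Logarithmic differentiation of $f$, using $(\ln\Gamma)' = \Psi$, yields
\begin{equation*}
-f'(u) = f(u)\bigl[\ln(t-a) + \Psi(2-u) - \Psi(\gamma-u+2)\bigr].
\end{equation*}
Setting $u = \alpha(t)$ and combining the two pieces with the prefactor $\alpha'(t)/\Gamma(2-\alpha(t))$ produces a bracket of the form $\frac{1}{1-\alpha(t)} - \ln(t-a) - \Psi(2-\alpha(t)) + \Psi(\gamma-\alpha(t)+2)$. Using the recurrence $\Psi(2-\alpha(t)) = \Psi(1-\alpha(t)) + \frac{1}{1-\alpha(t)}$, the $\frac{1}{1-\alpha(t)}$ term cancels cleanly, leaving exactly the bracket in the statement (up to the sign that matches the $-\alpha'(t)$ prefactor). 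Adding this to the type III expression gives the formula for $\LCI x(t)$.

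For the type II derivative, I will start from \eqref{eq2} rewritten as $\LCII x = \LCI x + \frac{\alpha'(t)\Psi(1-\alpha(t))}{\Gamma(1-\alpha(t))}\int_a^t(t-\tau)^{-\alpha(t)}[x(\tau)-x(a)]\,d\tau$. Since $\gamma>0$ gives $x(a)=0$ and $x(\tau)=(\tau-a)^\gamma$, the Beta-function formula with $\mu = 1-\alpha(t)$, $\nu = \gamma+1$ evaluates the remaining integral to $(t-a)^{\gamma-\alpha(t)+1}\Gamma(\gamma+1)\Gamma(1-\alpha(t))/\Gamma(\gamma-\alpha(t)+2)$. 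The $\Gamma(1-\alpha(t))$ cancels against the denominator, producing a term $+\alpha'(t)\Psi(1-\alpha(t))\Gamma(\gamma+1)(t-a)^{\gamma-\alpha(t)+1}/\Gamma(\gamma-\alpha(t)+2)$, which exactly kills the $\Psi(1-\alpha(t))$ contribution in the bracket of the type I formula, yielding the stated type II expression. The only real obstacle is bookkeeping with the digamma recurrence in the logarithmic integral; once that identity is applied the three formulas fall out in parallel.
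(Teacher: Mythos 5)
Your proof is correct, but it takes a genuinely different route from the paper's for the first two identities. The paper proves the type II formula directly: it evaluates the inner integral in closed form via the change of variables $\tau-a=s(t-a)$ and the Beta function, reducing the operator to $\frac{d}{dt}\bigl(\frac{\Gamma(\gamma+1)}{\Gamma(\gamma-\ati+2)}(t-a)^{\gamma-\ati+1}\bigr)$, and then differentiates in $t$ (the chain rule through $\ati$ is what produces the $\ln(t-a)$ and $\Psi$ terms); the type I formula is simply cited from Samko--Ross, and type III is said to follow in a similar way. You instead prove type III from scratch (the same Beta computation) and then bootstrap types I and II through the conversion formulas \eqref{eq1} and \eqref{eq2}, evaluating the logarithmic integral by differentiating the Beta-type integral $f(u)$ with respect to the order parameter and using $(\ln\Gamma)'=\Psi$, with the recurrence $\Psi(2-\ati)=\Psi(1-\ati)+\frac{1}{1-\ati}$ producing the final cancellation; your bookkeeping checks out, and the $\Psi(1-\ati)$ term introduced by \eqref{eq2} does exactly cancel the corresponding term in the type I bracket. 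What your route buys is self-containedness (the type I formula is derived rather than quoted) and reuse of relations already established in the chapter; the paper's route is shorter because a single $t$-differentiation of the closed form yields all the terms at once. One small point you should make explicit: for $0<\gamma<1$ the function $x(t)=(t-a)^\gamma$ is not $C^1$ up to $t=a$, so when invoking \eqref{eq1}--\eqref{eq2} and when differentiating under the integral sign to get $f'(u)$, note briefly that the integration by parts behind those relations and the parameter differentiation are still justified here (the boundary term vanishes because $x(a)=0$, and $x'$ together with the logarithmic weight remains integrable).
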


\begin{proof}
The formula for $\LCI x(t)$ follows immediately
from \cite{Cap3:Samko:Ross}. For the second equality, one has
\begin{equation*}
\begin{split}
\LCII x(t) &=\DS\frac{d}{dt}\left(\frac{1}{\Gamma(1-\ati)}
\int_a^t(t-\t)^{-\ati}(\t-a)^{\gamma} d\t\right)\\
 &=\DS\frac{d}{dt}\left(\frac{1}{\Gamma(1-\ati)}\int_a^t
 (t-a)^{-\ati}\left(1-\frac{\t-a}{t-a}\right)^{-\ati}(\t-a)^{\gamma} d\t\right).
\end{split}
\end{equation*}
With the change of variables $\t-a=s(t-a)$,
and with the help of the Beta function $B(\cdot,\cdot)$ (see Definition \ref{Betaf}), we prove that \index{beta function}
\begin{equation*}
\begin{split}
\LCII x(t) &=\DS\frac{d}{dt}\left(\frac{(t-a)^{-\ati}}{\Gamma(1-\ati)}
\int_0^1(1-s)^{-\ati}s^{\gamma}(t-a)^{\gamma+1} ds\right)\\
&=\DS\frac{d}{dt}\left(\frac{(t-a)^{\gamma-\ati+1}}{\Gamma(1-\ati)}B(\gamma+1,1-\ati)\right)\\
 &= \DS\frac{d}{dt}\left(\frac{\Gamma(\gamma+1)}{\Gamma(\gamma-\ati+2)}(t-a)^{\gamma-\ati+1}\right).
\end{split}
\end{equation*}
We obtain the desired formula by differentiating this latter expression.
The last equality follows in a similar way.
\end{proof}

Analogous relations to those of Lemma~\ref{2_1_Lemma_power},
for the right Caputo fractional derivatives of variable-order,
are easily obtained.

\begin{Lemma}
\label{2_1_Lemma_power_r}
Let $x(t)=(b-t)^\gamma$ with $\gamma>0$. Then,
\begin{equation*}
\begin{split}
\RCI x(t) &=\DS \frac{\Gamma(\gamma+1)}{\Gamma(\gamma-\ati+1)}(b-t)^{\gamma-\ati}\\
&\DS\quad +\alpha'(t)\frac{\Gamma(\gamma+1)}{\Gamma(\gamma-\ati+2)}(b-t)^{\gamma-\ati+1}\\
&\DS \quad \times \left[\ln(b-t)-\Psi(\gamma-\ati+2)+\Psi(1-\ati)\right],\\
\RCII x(t) &=\DS \frac{\Gamma(\gamma+1)}{\Gamma(\gamma-\ati+1)}(b-t)^{\gamma-\ati}\\
&\DS\quad +\alpha'(t)\frac{\Gamma(\gamma+1)}{\Gamma(\gamma-\ati+2)}(b-t)^{\gamma-\ati+1}\\
&\DS \quad \times \left[\ln(b-t)-\Psi(\gamma-\ati+2)\right],\\
\RCIII x(t) &=\DS  \frac{\Gamma(\gamma+1)}{\Gamma(\gamma-\ati+1)}(b-t)^{\gamma-\ati}.
\end{split}
\end{equation*}
\end{Lemma}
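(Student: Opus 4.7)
My plan is to mirror the proof of Lemma~\ref{2_1_Lemma_power} for the left Caputo fractional derivatives of a power function, adapting the computation to the right-sided case with $x(\tau)=(b-\tau)^\gamma$. The key tool in all three cases is to reduce the defining integral to a Beta function integral via the substitution $\tau-t = s(b-t)$, which maps $\tau \in [t,b]$ to $s\in[0,1]$, with $b-\tau = (b-t)(1-s)$ and $d\tau = (b-t)\,ds$.

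First I would treat $\RCIII x(t)$. Using $x'(\tau) = -\gamma(b-\tau)^{\gamma-1}$ in the definition, the above substitution yields
\begin{equation*}
\RCIII x(t) = \frac{\gamma}{\Gamma(1-\ati)}(b-t)^{\gamma-\ati}\int_0^1 s^{-\ati}(1-s)^{\gamma-1}\,ds
= \frac{\gamma\, B(1-\ati,\gamma)}{\Gamma(1-\ati)}(b-t)^{\gamma-\ati},
\end{equation*}
which equals the stated expression after using $B(1-\ati,\gamma)=\Gamma(1-\ati)\Gamma(\gamma)/\Gamma(\gamma-\ati+1)$ and $\gamma\Gamma(\gamma)=\Gamma(\gamma+1)$. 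Since the differentiation $d/dt$ does not act here, there are no extra terms involving $\alpha'(t)$, consistent with the conclusion.

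Next, for $\RCII x(t)$, since $x(b)=0$, the integrand is simply $(\tau-t)^{-\ati}(b-\tau)^\gamma$. The same substitution gives
\begin{equation*}
\RCII x(t) = \frac{d}{dt}\left(\frac{-(b-t)^{\gamma-\ati+1}}{\Gamma(1-\ati)} B(1-\ati,\gamma+1)\right)
= \frac{d}{dt}\left(\frac{-\Gamma(\gamma+1)}{\Gamma(\gamma-\ati+2)}(b-t)^{\gamma-\ati+1}\right).
\end{equation*}
Differentiating, and keeping in mind that $\alpha=\ati$ depends on $t$, the $t$-derivative produces three pieces by the product rule: one from $(b-t)^{\gamma-\ati+1}$ (contributing the base power term and a $\ln(b-t)\,\alpha'(t)$ factor), and one from $1/\Gamma(\gamma-\ati+2)$ (contributing a $\Psi(\gamma-\ati+2)\,\alpha'(t)$ factor via $\frac{d}{dt}\ln\Gamma(\gamma-\ati+2)=-\alpha'(t)\Psi(\gamma-\ati+2)$). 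Collecting signs, the leading-order term reproduces $\frac{\Gamma(\gamma+1)}{\Gamma(\gamma-\ati+1)}(b-t)^{\gamma-\ati}$ via the Gamma recursion $\Gamma(\gamma-\ati+2)=(\gamma-\ati+1)\Gamma(\gamma-\ati+1)$, and the remaining terms combine to the stated $\alpha'(t)[\ln(b-t)-\Psi(\gamma-\ati+2)]$ expression. For $\RCI x(t)$, the factor $1/\Gamma(1-\ati)$ sits \emph{inside} the outer $d/dt$; differentiating $1/\Gamma(1-\ati)$ via $\frac{d}{dt}\ln\Gamma(1-\ati)=-\alpha'(t)\Psi(1-\ati)$ produces the additional $+\alpha'(t)\Psi(1-\ati)$ contribution relative to $\RCII$, exactly matching the stated formula.

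Alternatively, one could invoke the relations between the three types of right Caputo operators derived just before Theorem~\ref{initialpoint}, which express $\RCI$ in terms of $\RCIII$ (or $\RCII$) plus explicit correction integrals; the formula for $\RCIII$ then propagates to the other two via direct evaluation of those correction integrals on the power $(b-\tau)^\gamma$ (again reducing to Beta integrals, with logarithmic terms arising when integrating $(b-\tau)^{1-\ati}\ln(\tau-t)$ after the substitution). The main obstacle throughout is bookkeeping of signs: the outer minus sign in the definitions of the right operators, the minus from differentiating $(b-t)$, and the sign convention in $\frac{d}{d\alpha}\ln\Gamma(1-\alpha)=-\Psi(1-\alpha)$ all conspire, so in contrast with the left case the $\alpha'(t)$-terms appear with a $+$ sign in front. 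Once these signs are tracked consistently, the three formulas follow from the same Beta-function calculation as for the left derivatives.
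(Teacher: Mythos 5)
Your overall route is exactly the paper's: the text gives no separate proof of this lemma, saying only that it is obtained analogously to Lemma~\ref{2_1_Lemma_power}, and your computation (substitution $\t-t=s(b-t)$, reduction to a Beta integral, then differentiation in $t$ with the product rule acting on $(b-t)^{\gamma-\ati+1}$ and on the Gamma factors) is precisely that mirrored argument. Your treatment of $\RCIII x(t)$ and of $\RCII x(t)$ is correct, including the fact that the $\alpha'(t)$-terms now carry a plus sign.

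The one step that is wrong as written is the passage from $\RCII$ to $\RCI$. In the paper's definitions it is the type~II operator that has the factor $\frac{-1}{\Gamma(1-\ati)}$ \emph{inside} the outer $\frac{d}{dt}$, while for type~I this factor stands \emph{outside}; you assert the opposite. Followed literally, your premise (``$\RCI$ differs from $\RCII$ by the term coming from differentiating $1/\Gamma(1-\ati)$'') yields the extra contribution $-\alpha'(t)\Psi(1-\ati)\frac{\Gamma(\gamma+1)}{\Gamma(\gamma-\ati+2)}(b-t)^{\gamma-\ati+1}$, i.e.\ with the wrong sign. The correct mechanism is the reverse: after the Beta evaluation the integral equals $\frac{\Gamma(1-\ati)\Gamma(\gamma+1)}{\Gamma(\gamma-\ati+2)}(b-t)^{\gamma-\ati+1}$, and in the type~I case the factor $\Gamma(1-\ati)$ stays inside the derivative (it is not cancelled, because $\frac{-1}{\Gamma(1-\ati)}$ is outside); differentiating it gives $-\alpha'(t)\Psi(1-\ati)\Gamma(1-\ati)$ times the remaining factor, and the external $\frac{-1}{\Gamma(1-\ati)}$ converts this into the required $+\alpha'(t)\Psi(1-\ati)\frac{\Gamma(\gamma+1)}{\Gamma(\gamma-\ati+2)}(b-t)^{\gamma-\ati+1}$, whereas for type~II the cancellation occurs before differentiating and no $\Psi(1-\ati)$ term arises. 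Your alternative route, invoking the relation $\RCI x(t)=\RCII x(t)+\frac{\alpha'(t)\Psi(1-\ati)}{\Gamma(1-\ati)}\int_t^b(\t-t)^{-\ati}[x(\t)-x(b)]d\t$ stated just before Theorem~\ref{initialpoint}, repairs this immediately, since with $x(b)=0$ the integral there is again the Beta integral you already evaluated; with that correction the proof is complete.
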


With Lemma~\ref{2_1_Lemma_power} in mind, we immediately see that
$$\LCI x(t)\not=\LCII x(t)\not=\LCIII x(t).$$
Also, at least for the power function, it suggests that $\LCIII x(t)$
may be a more suitable inverse operation
of the fractional integral when the order is variable.
For example, consider functions $x(t)=t^2$ and $y(t)=(1-t)^2$,
and the fractional order $\ati=\frac{5t+1}{10}$, $t\in[0,1]$.
Then, $0.1\leq \ati \leq 0.6$ for all $t$. Next we compare
the fractional derivatives of $x$ and $y$ of order $\ati$
with the fractional derivatives of constant order $\alpha=0.1$ and $\alpha=0.6$.
By Lemma~\ref{2_1_Lemma_power}, we know that the left Caputo fractional
derivatives of order $\ati$ of $x$ are given by
\begin{equation*}
\begin{split}
{^C_0D_t^{\ati}} x(t)
&=\DS \frac{2}{\Gamma(3-\ati)}t^{2-\ati}\\
&\quad -\frac{t^{3-\ati}}{\Gamma(4-\ati)}\left[\ln(t)-\Psi(4-\ati)+\Psi(1-\ati)\right],\\
{^C_0\mathcal{D}_t^{\ati}} x(t) &=\DS \frac{2}{\Gamma(3-\ati)}t^{2-\ati}
-\frac{t^{3-\ati}}{\Gamma(4-\ati)}\left[\ln(t)-\Psi(4-\ati)\right],\\
{^C_0\mathbb{D}_t^{\ati}} x(t) &=\DS  \frac{2}{\Gamma(3-\ati)}t^{2-\ati},
\end{split}
\end{equation*}
while by Lemma~\ref{2_1_Lemma_power_r}, the right Caputo
fractional derivatives of order $\ati$ of $y$  are given by
\begin{equation*}
\begin{split}
{^C_tD_1^{\ati}} y(t) &=\DS \frac{2(1-t)^{2-\ati}}{\Gamma(3-\ati)}\\
&\quad+\frac{(1-t)^{3-\ati}}{\Gamma(4-\ati)}\left[\ln(1-t)-\Psi(4-\ati)+\Psi(1-\ati)\right],\\
{^C_t\mathcal{D}_1^{\ati}} y(t) &=\DS \frac{2(1-t)^{2-\ati}}{\Gamma(3-\ati)}
+\frac{(1-t)^{3-\ati}}{\Gamma(4-\ati)}\left[\ln(1-t)-\Psi(4-\ati)\right],\\
{^C_t\mathbb{D}_1^{\ati}} y(t) &=\DS  \frac{2(1-t)^{2-\ati}}{\Gamma(3-\ati)}.
\end{split}
\end{equation*}
For a constant order $\alpha$, we have
$$
{^C_0D_t^{\alpha}} x(t) = \frac{2}{\Gamma(3-\alpha)}t^{2-\alpha}
\quad \mbox{and} \quad {^C_tD_1^{\alpha}} y(t)
=\frac{2}{\Gamma(3-\alpha)}(1-t)^{2-\alpha}.
$$
The results can be seen in Figure~\ref{comparison}.

\begin{figure}[p]
\begin{center}
\subfigure[${^C_0{D}_t^{\alpha(t)}} x(t)$]{\includegraphics[scale=0.25]{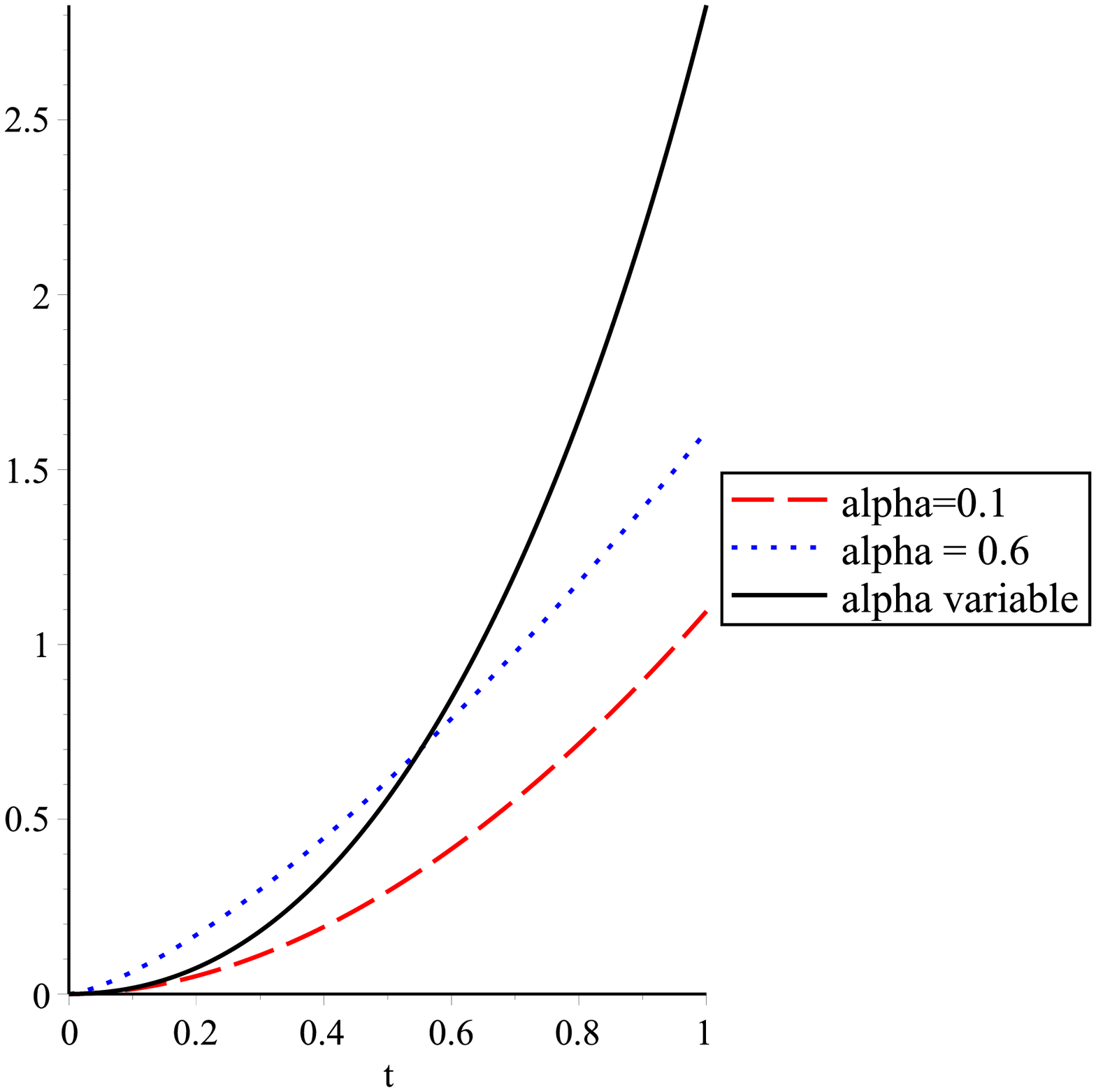}} \hspace{1cm}
\subfigure[${^C_0\mathcal{D}_t^{\alpha(t)}} x(t)$]{\includegraphics[scale=0.25]{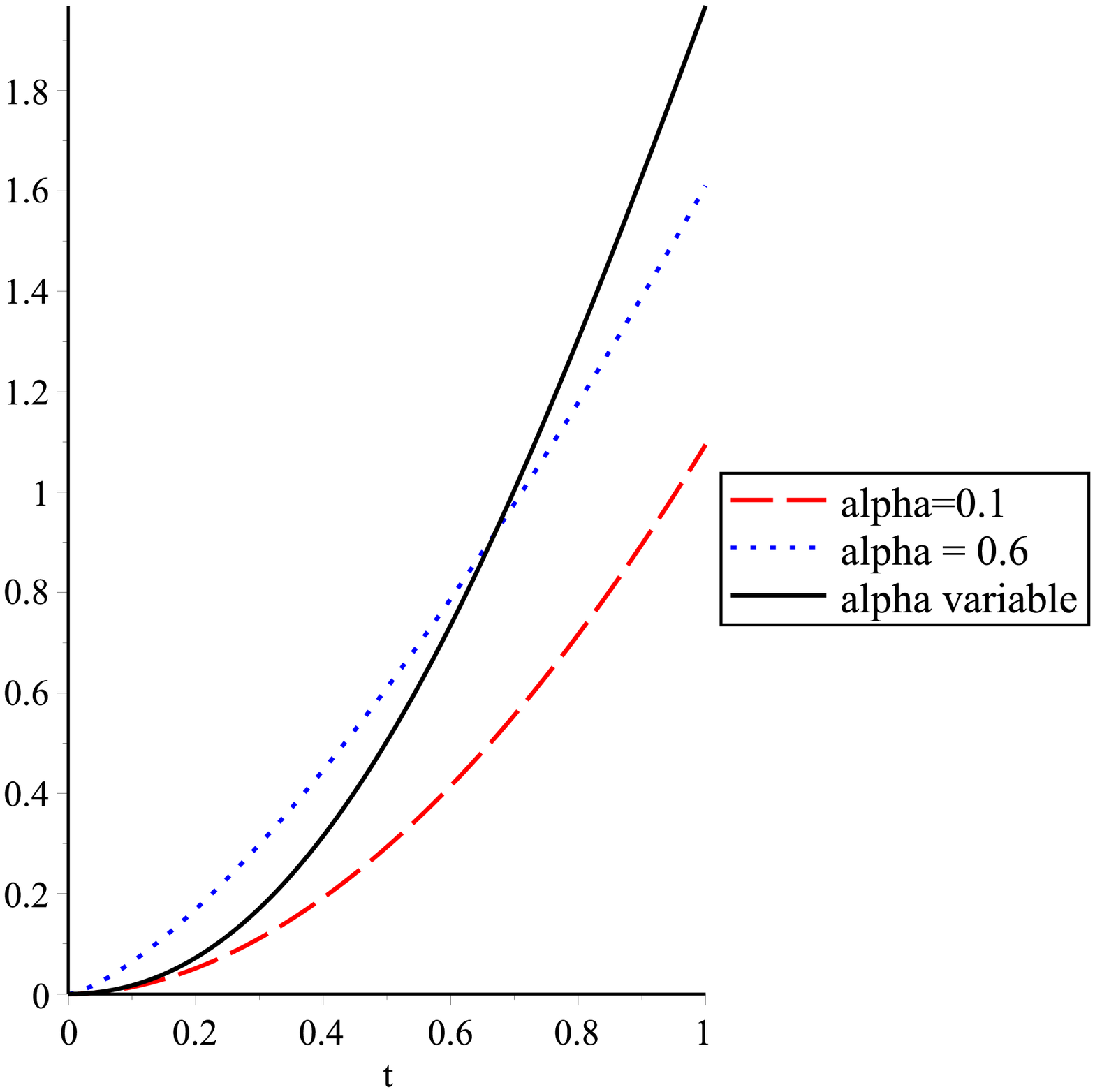}}
\subfigure[${^C_0\mathbb{D}_t^{\alpha(t)}} x(t)$]{\includegraphics[scale=0.25]{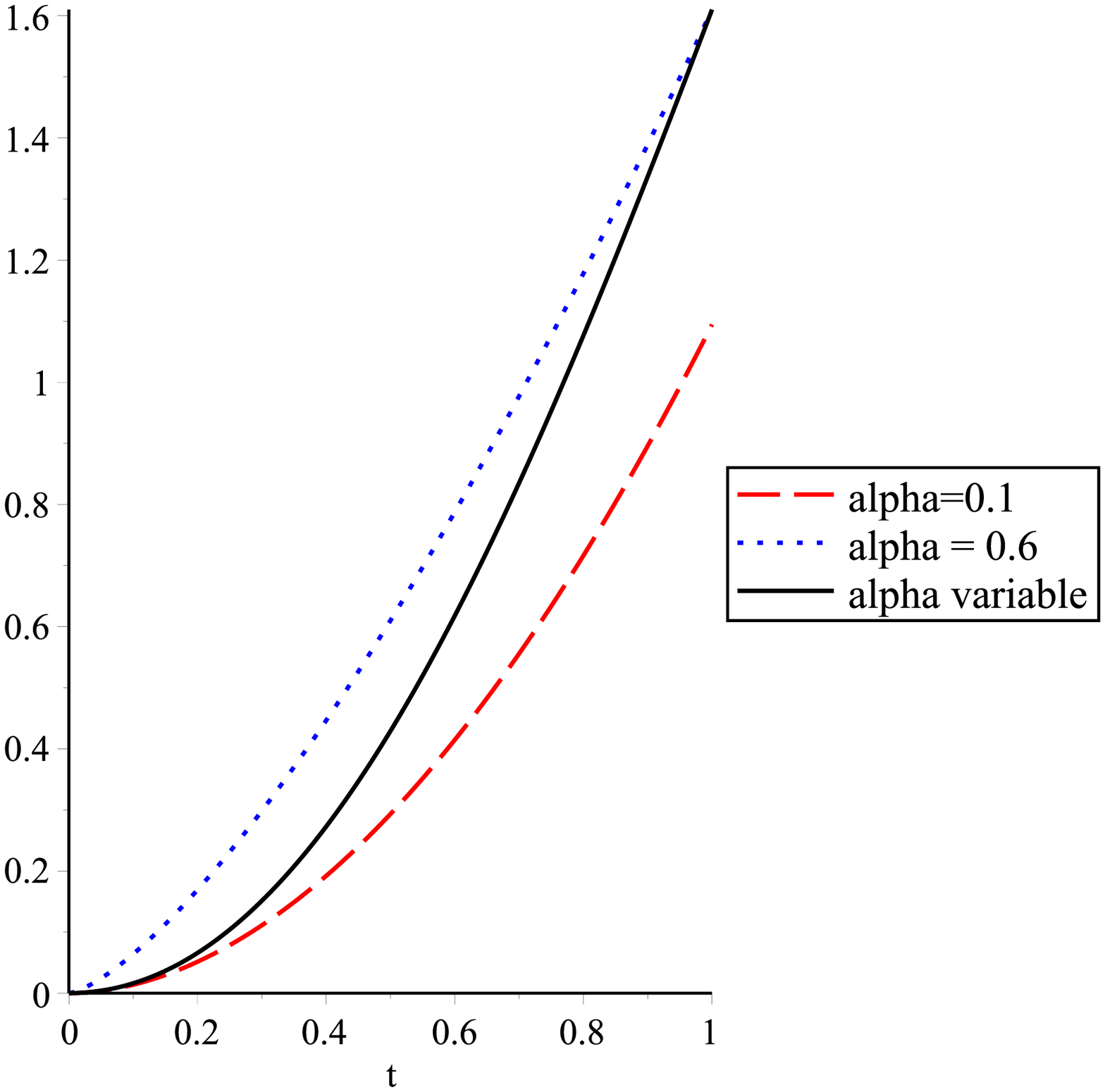}} \hspace{1cm}
\subfigure[${^C_t{D}_1^{\alpha(t)}} y(t)$]{\includegraphics[scale=0.25]{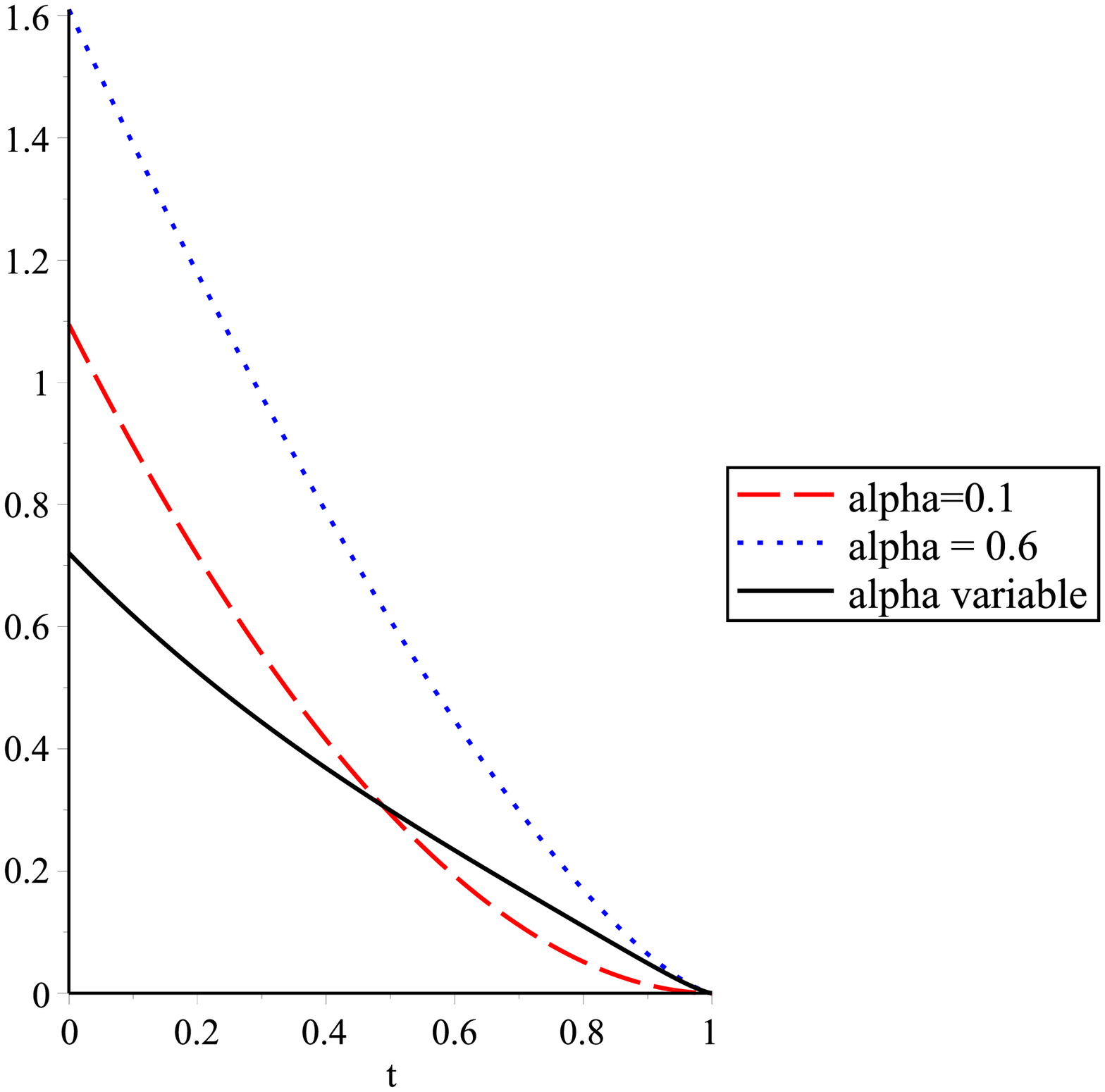}}
\subfigure[${^C_t\mathcal{D}_1^{\alpha(t)}} y(t)$]{\includegraphics[scale=0.25]{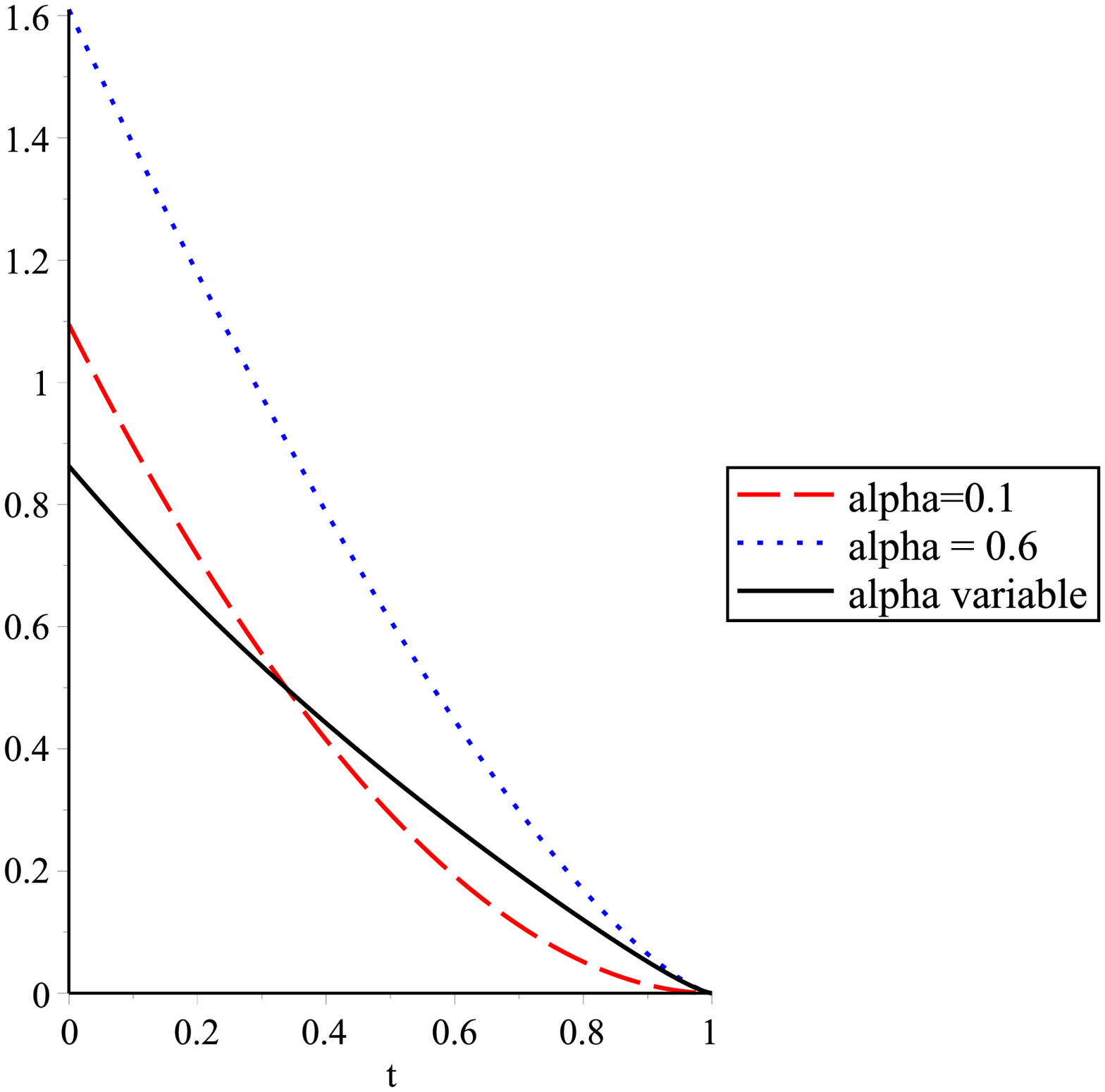}} \hspace{1cm}
\subfigure[${^C_t\mathbb{D}_1^{\alpha(t)}} y(t)$]{\includegraphics[scale=0.25]{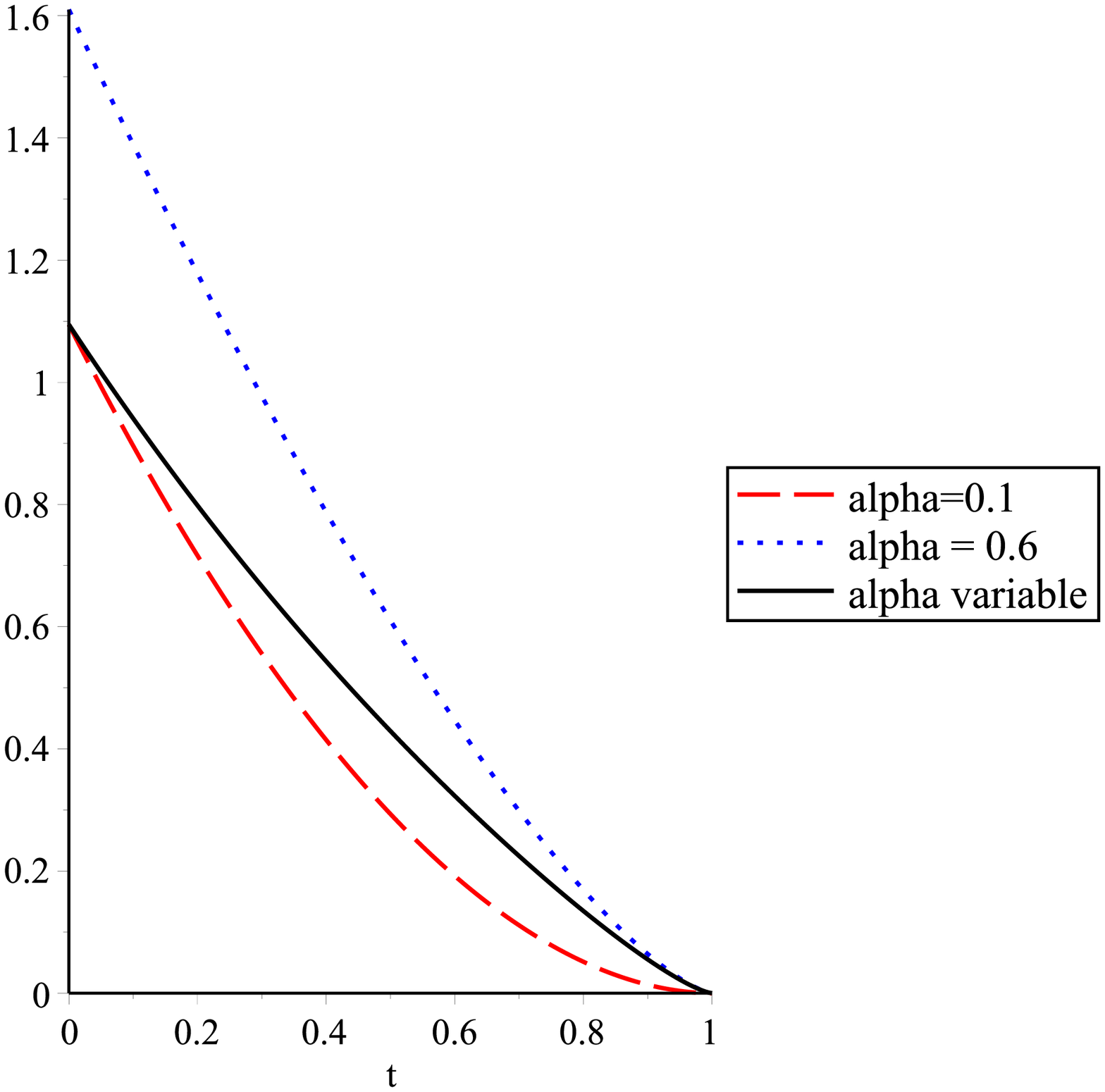}}
\end{center}
\caption{Comparison between variable-order and constant-order fractional derivatives.}\label{comparison}
\label{IntExp1}
\end{figure}

% ------------------

\subsection{Caputo derivatives for functions of several variables}
\label{subsec:sev_variables}
\index{Caputo fractional derivative! several variables}

Partial fractional derivatives are a natural extension
and are defined in a similar way. Let $m\in\mathbb{N}$,
$k\in\{1,\ldots,m\}$, and consider a function
$\DS x:\prod_{i=1}^m[a_i,b_i]\to\bR$
with $m$ variables. For simplicity, we define the vectors
$$
[\t]_k=(t_1,\ldots,t_{k-1},\t,t_{k+1},\ldots,t_m)\in\bR^m
$$
and
$$
(\overline t)=(t_1,\ldots,t_m)\in\bR^m.
$$

\begin{Definition}
\index{Caputo fractional derivative! partial left}
\index{Caputo fractional derivative! partial right}
\label{def:8}
Given a function $x:\prod_{i=1}^m[a_i,b_i]\to\bR$
and fractional orders $\alpha_k:[a_k,b_k]\to(0,1)$,
$k\in\{1,\ldots,m\}$,
\begin{enumerate}
\item the type I partial left Caputo derivative of order $\akn$ is defined by
$$
\PLCI x(\overline t)=\frac{1}{\Gamma(1-\akn)}\frac{\partial}{\partial t_k}
\int_{a_k}^{t_k}(t_k-\t)^{-\akn}\left(x[\t]_k-x[a_k]_k\right)d\t;
$$
\item the type I partial right Caputo derivative of order $\akn$ is defined by
$$
\PRCI x(\overline t)=\frac{-1}{\Gamma(1-\akn)}\frac{\partial }{\partial t_k}
\int_{t_k}^{b_k}(\t-t_k)^{-\akn}\left(x[\t]_k-x[b_k]_k\right)d\t;
$$
\item the type II partial left Caputo derivative of order $\akn$ is defined by
\begin{align*}
\PLCII& x(\overline t)\\
&=\frac{\partial }{\partial t_k}\left(\frac{1}{\Gamma(1-\akn)}
\int_{a_k}^{t_k}(t_k-\t)^{-\akn}\left(x[\t]_k-x[a_k]_k\right)d\t\right);
\end{align*}
\item the type II partial right Caputo derivative of order $\akn$ is defined by
\begin{align*}
\PRCII& x(\overline t)\\
&=\frac{\partial }{\partial t_k}\left(\frac{-1}{\Gamma(1-\akn)}
\int_{t_k}^{b_k}(\t-t_k)^{-\akn}\left(x[\t]_k-x[b_k]_k\right)d\t\right);
\end{align*}
\item the type III partial left Caputo derivative of order $\akn$ is defined by
$$
\PLCIII x(\overline t)=\frac{1}{\Gamma(1-\akn)}
\int_{a_k}^{t_k}(t_k-\t)^{-\akn}\frac{\partial x}{\partial t_k}[\t]_kd\t;
$$
\item the type III partial right Caputo derivative of order $\akn$ is defined by
$$
\PRCIII x(\overline t)=\frac{-1}{\Gamma(1-\akn)}
\int_{t_k}^{b_k}(\t-t_k)^{-\akn}\frac{\partial x}{\partial t_k}[\t]_kd\t.
$$
\end{enumerate}
\end{Definition}

Similarly as done before, relations between these definitions can be proven.

\begin{Theorem}
The following four formulas hold:
\begin{multline}
\label{relation}
\PLCI x(\overline t)=\PLCIII x(\overline t)\\
+\frac{\Dak}{\Gamma(2-\akn)}\int_{a_k}^{t_k}(t_k-\t)^{1-\akn}
\frac{\partial x}{\partial t_k}[\t]_k\left[\frac{1}{1-\akn}-\ln(t_k-\t)\right]d\t,
\end{multline}
\begin{multline}
\label{relation3}
\PLCI x(\overline t)=\PLCII x(\overline t)\\
-\frac{\Dak\Psi(1-\akn)}{\Gamma(1-\akn)}\int_{a_k}^{t_k}(t_k-\t)^{-\akn}[x[\t]_k-x[a_k]_k]d\t,
\end{multline}
\begin{multline*}
\PRCI x(\overline t)=\PRCIII x(\overline t)\\
+\frac{\Dak}{\Gamma(2-\akn)}\int_{t_k}^{b_k}(\t-t_k)^{1-\akn}
\frac{\partial x}{\partial t_k}[\t]_k\left[\frac{1}{1-\akn}-\ln(\t-t_k)\right]d\t
\end{multline*}
and
\begin{multline*}
\PRCI x(\overline t)=\PRCII x(\overline t)\\
+\frac{\Dak\Psi(1-\akn)}{\Gamma(1-\akn)}
\int_{t_k}^{b_k}(\t-t_k)^{-\akn}[x[\t]_k-x[b_k]_k]d\t.
\end{multline*}
\end{Theorem}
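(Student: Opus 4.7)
The plan is to reduce each of the four multivariable identities to the single-variable identities \eqref{eq1}, \eqref{eq2} (and their right-sided analogues) that were already established earlier in this section. The key observation is that in every definition in Definition~\ref{def:8}, only the $k$th coordinate plays an active role: the other coordinates $t_1,\dots,t_{k-1},t_{k+1},\dots,t_m$ are held fixed inside the integrals, and the fractional order $\alpha_k$ depends only on $t_k$. So for each fixed tuple of the remaining variables, the map $\t\mapsto x[\t]_k$ is a function of one variable, and the partial operators $\PLCI$, $\PLCII$, $\PLCIII$ (and the right-sided versions) reduce to the one-variable operators $\LCI$, $\LCII$, $\LCIII$ applied to that slice, with $a,b,t$ replaced by $a_k,b_k,t_k$ and $\alpha$ replaced by $\alpha_k$.

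First I would carry this reduction out explicitly for the left-sided identity \eqref{relation}. Starting from the definition of $\PLCI x(\overline t)$, I integrate by parts in $\t$ on $[a_k,t_k]$ (the boundary term at $\t=t_k$ vanishes because the exponent $1-\akn$ is positive and the bracket $x[\t]_k-x[a_k]_k$ vanishes at $\t=a_k$), getting
\[
\PLCI x(\overline t)=\frac{1}{\Gamma(1-\akn)}\frac{\partial}{\partial t_k}\left[\frac{1}{1-\akn}\int_{a_k}^{t_k}(t_k-\t)^{1-\akn}\frac{\partial x}{\partial t_k}[\t]_k\, d\t\right].
\]
Now I differentiate under the integral sign with respect to $t_k$, using the product rule on the three $t_k$-dependent factors: $1/\Gamma(1-\akn)$, $1/(1-\akn)$, and the integrand $(t_k-\t)^{1-\akn}$ (whose $t_k$-derivative produces $-\Dak(t_k-\t)^{1-\akn}\ln(t_k-\t)+(1-\akn)(t_k-\t)^{-\akn}$). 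Collecting the term $(1-\akn)(t_k-\t)^{-\akn}$ reproduces $\PLCIII x(\overline t)$, while the remaining terms assemble into the claimed correction involving $\Dak$, $\ln(t_k-\t)$, and $1/(1-\akn)$, giving \eqref{relation}.

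For the second identity \eqref{relation3}, the approach is more direct: I expand
\[
\PLCII x(\overline t)=\frac{\partial }{\partial t_k}\left(\frac{1}{\Gamma(1-\akn)}\right)\cdot\int_{a_k}^{t_k}(t_k-\t)^{-\akn}\bigl(x[\t]_k-x[a_k]_k\bigr)d\t
\]
\[
+\frac{1}{\Gamma(1-\akn)}\frac{\partial}{\partial t_k}\int_{a_k}^{t_k}(t_k-\t)^{-\akn}\bigl(x[\t]_k-x[a_k]_k\bigr)d\t.
\]
The second summand is exactly $\PLCI x(\overline t)$ by definition. The first summand is computed using $\frac{d}{du}\frac{1}{\Gamma(u)}=-\frac{\Psi(u)}{\Gamma(u)}$ and the chain rule, which yields the extra term $-\Dak\Psi(1-\akn)/\Gamma(1-\akn)$ multiplied by the integral, and rearranging produces \eqref{relation3}.

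The two right-sided identities are proved in exactly the same way: slice in $t_k$, integrate by parts on $[t_k,b_k]$, and apply the product rule when differentiating in $t_k$; the sign flip comes from the minus sign in the definitions and from $\partial_{t_k}(\t-t_k)=-1$, which accounts for the $+\Dak$ coefficients in the right-sided formulas. The only real obstacle is bookkeeping: one must be careful that differentiating the factor $1/\Gamma(1-\akn)$, the factor $1/(1-\akn)$, and the power $(t_k-\t)^{1-\akn}$ are all kept separate, and that the digamma terms introduced by $\Psi(1-\akn)$ and the $-\ln(t_k-\t)$ terms introduced by differentiating the power are collected correctly; no analytic subtlety arises beyond the absolute continuity assumption implicit in writing $\partial x/\partial t_k$.
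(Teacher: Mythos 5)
Your overall route is exactly the one the paper intends: the paper proves the one-variable relations \eqref{eq1}--\eqref{eq2} by integrating by parts and then differentiating the resulting integral, and for the partial operators it merely remarks that the relations are proven ``similarly as done before''; your reduction to the $k$th coordinate slice (all other variables frozen, $\alpha_k$ a function of $t_k$ alone) is precisely that argument, and your derivations of \eqref{relation3} and of the two right-sided formulas are sound.

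One step is misstated, however, and as literally described it would fail. In the type I operator $\PLCI$ the factor $1/\Gamma(1-\akn)$ stands \emph{outside} the $\partial/\partial t_k$ --- as it does in your own displayed formula after the integration by parts --- so it must \emph{not} be included in the product rule: only the factor $1/(1-\akn)$ and the kernel $(t_k-\t)^{1-\akn}$ are differentiated. If you also differentiated $1/\Gamma(1-\akn)$, as your prose asserts, you would produce an extra term $\Dak\,\Psi(1-\akn)/\Gamma(2-\akn)\int_{a_k}^{t_k}(t_k-\t)^{1-\akn}\frac{\partial x}{\partial t_k}[\t]_k\,d\t$ that does not cancel, and \eqref{relation} would be false; the digamma contribution belongs only to the type II comparison \eqref{relation3}, where the Gamma factor sits inside the derivative. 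In \eqref{relation3}, also mind the double sign: $\frac{\partial}{\partial t_k}\bigl(1/\Gamma(1-\akn)\bigr)=+\Dak\,\Psi(1-\akn)/\Gamma(1-\akn)$, since the minus from $\frac{d}{du}\Gamma(u)^{-1}=-\Psi(u)\Gamma(u)^{-1}$ cancels against $\partial_{t_k}(1-\akn)=-\Dak$; the minus sign in \eqref{relation3} therefore appears only after solving for $\PLCI$. With these two points corrected, your proof is complete and coincides with the paper's.
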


% -------------------------------------------

\section{Numerical approximations}
\label{sec:NumApprox}

Let $p\in\mathbb{N}$. We define
\begin{equation*}
\begin{split}
A_p &=\DS \frac{1}{\Gamma(p+1-\akn)}\left[1+\sum_{l=n-p+1}^N
\frac{\Gamma(\akn-n+l)}{\Gamma(\akn-p)(l-n+p)!}  \right],\\
B_p &=  \DS\frac{\Gamma(\akn-n+p)}{\Gamma(1-\akn)\Gamma(\akn)(p-n)!},\\
V_p(\overline t) &= \DS\int_{a_k}^{t_k}(\t-a_k)^{p-n}\frac{\partial x}{\partial t_k}[\t]_kd\t,\\
L_{p}(\overline t) &=\DS \max_{\t\in[a_k,t_k]}\left| \frac{\partial^{p} x}{\partial t_k^{p}}[\t]_k \right|.
\end{split}
\end{equation*}

\begin{Theorem}
\label{N_teo1}
\index{expansion formulas}
Let $x\in C^{n+1}\left(\prod_{i=1}^m[a_i,b_i],\bR\right)$ with $n\in\mathbb{N}$.
Then, for all $k\in\{1,\ldots,m\}$ and for all $N \in \mathbb{N}$ such that $N \geq n$, we have
\begin{align*}
\PLCIII x(\overline t)& =\DS\sum_{p=1}^{n}A_p (t_k-a_k)^{p-\akn}
\frac{\partial^p x}{\partial t_k^p}[t_k]_k \DS\\
& \quad +\sum_{p=n}^N B_p (t_k-a_k)^{n-p-\akn} V_p(\overline t)+E(\overline t).
\end{align*}
The approximation error $E(\overline t)$ is bounded by
\index{approximation error}
$$
E(\overline t)\leq L_{n+1}(\overline t)
\frac{\exp((n-\akn)^2+n-\akn)}{\Gamma(n+1-\akn)N^{n-\akn}(n-\akn)}(t_k-a_k)^{n+1-\akn}.
$$
\end{Theorem}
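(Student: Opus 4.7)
My plan is as follows.

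First, I would apply integration by parts $n$ times directly to the defining integral
$$
\PLCIII x(\overline t)=\frac{1}{\Gamma(1-\akn)}\int_{a_k}^{t_k}(t_k-\t)^{-\akn}\frac{\partial x}{\partial t_k}[\t]_k\,d\t,
$$
each time taking $u$ to be the current derivative of $x$ and $dv=(t_k-\t)^{j-\akn}d\t$. Because $j-\akn>0$ after the first step, all boundary contributions at $\t=t_k$ vanish, so only the lower endpoints $\t=a_k$ survive. Collecting the products of the constants $1/(j-\akn)$ into a Gamma ratio, this reduces the derivative to the ``pre-expansion'' form
$$
\PLCIII x(\overline t)=\sum_{j=1}^{n}\frac{(t_k-a_k)^{j-\akn}}{\Gamma(j+1-\akn)}\frac{\partial^j x}{\partial t_k^j}[a_k]_k+\frac{1}{\Gamma(n+1-\akn)}\int_{a_k}^{t_k}(t_k-\t)^{n-\akn}\frac{\partial^{n+1}x}{\partial t_k^{n+1}}[\t]_k\,d\t.
$$

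Second, I would expand the kernel of the remaining integral by the Newton binomial formula
$$
(t_k-\t)^{n-\akn}=(t_k-a_k)^{n-\akn}\sum_{p=0}^{N}\binom{n-\akn}{p}(-1)^{p}\Bigl(\frac{\t-a_k}{t_k-a_k}\Bigr)^{p}+R_N(\t),
$$
which converges uniformly on $[a_k,t_k]$ because $n-\akn>0$. Substituting, the $p$-th summand yields an integral $J_p=\int_{a_k}^{t_k}(\t-a_k)^{p}(\partial^{n+1}x/\partial t_k^{n+1})[\t]_k\,d\t$; to each such $J_p$ I apply integration by parts a further $n$ times. For $p\geq n$, the boundary terms at $\t=a_k$ all vanish (because of the polynomial factor) and the final integral becomes exactly $V_p(\overline t)=\int_{a_k}^{t_k}(\t-a_k)^{p-n}(\partial x/\partial t_k)[\t]_k\,d\t$, while the $n$ accumulated boundary contributions at $\t=t_k$ produce terms of the form $(t_k-a_k)^{q-\akn}(\partial^q x/\partial t_k^q)[t_k]_k$ for $q=1,\dots,n$. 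For $p<n$, iterating $p$ times reaches a plain integral $\int_{a_k}^{t_k}(\partial^{n-p}x/\partial t_k^{n-p})[\t]_k\,d\t$, which the fundamental theorem of calculus converts into the difference of an endpoint value at $t_k$ minus one at $a_k$.

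Third, the bookkeeping step — which I expect to be the main obstacle — is to sum all boundary contributions from both rounds of integration by parts. The $[a_k]_k$ evaluations coming from the initial reduction and from the $p<n$ terms must cancel out (or combine into $V_p$ terms via FTC), while the $[t_k]_k$ evaluations must aggregate into the single coefficient $A_p$. Verifying that this aggregation produces exactly
$$
A_p=\frac{1}{\Gamma(p+1-\akn)}\Bigl[1+\sum_{l=n-p+1}^{N}\frac{\Gamma(\akn-n+l)}{\Gamma(\akn-p)(l-n+p)!}\Bigr]
$$
requires rewriting $(-1)^{p}\binom{n-\akn}{p}$ in the reflected form $\Gamma(p-n+\akn)/(\Gamma(\akn-n)\,p!)$, and combining Pochhammer-type ratios across the two integration-by-parts rounds; the residual integrals then combine with the prefactors to give precisely $B_p$. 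I already verified the case $n=1$ by hand: the $x'(t)$ coefficient comes out with the bracket $1+\sum_{l=1}^{N}\Gamma(\akn-1+l)/(\Gamma(\akn-1)\,l!)$, and the $V_p$ coefficient simplifies through $\Gamma(\akn-1)=-\Gamma(\akn)/(1-\akn)$ to the expected $B_p$, so the general-$n$ identification is a matter of systematic Gamma-function manipulation.

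Finally, for the error bound I would control the truncated binomial remainder $R_N$ on $[0,1]$: standard estimates via Stirling's formula and the reflection identity give $\bigl|\sum_{p>N}\binom{n-\akn}{p}(-z)^{p}\bigr|\leq C/N^{n-\akn}$ with constant of the form $\exp((n-\akn)^2+n-\akn)$. Multiplying by the factor $(t_k-a_k)^{n-\akn}/\Gamma(n+1-\akn)$ inherited from the pre-expansion form, bounding $|\partial^{n+1}x/\partial t_k^{n+1}|\leq L_{n+1}(\overline t)$, and integrating the constant over $[a_k,t_k]$ to produce the extra factor $(t_k-a_k)/(n-\akn)$ yields the stated bound on $E(\overline t)$.
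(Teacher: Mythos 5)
Your plan follows the paper's own proof essentially step for step: the same $n$-fold integration by parts leading to the remainder integral containing $\partial^{n+1}x/\partial t_k^{n+1}$, the same truncated binomial expansion of $(t_k-\t)^{n-\akn}$ with the reflection identity $\binom{n-\akn}{p}(-1)^p=\Gamma(\akn-n+p)/\bigl(\Gamma(\akn-n)\,p!\bigr)$, a second round of integration by parts (the paper organizes it by repeatedly splitting off the lowest-index term of the whole sum and integrating the rest en bloc, rather than processing each binomial term $J_p$ separately as you do, but it is the same computation), and the same error analysis based on $\bigl|\binom{n-\akn}{p}\bigr|\le \exp((n-\akn)^2+n-\akn)\,p^{-(n+1-\akn)}$. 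One detail to fix when writing it up: the factor $1/(n-\akn)$ in the stated bound comes from comparing the tail sum $\sum_{p>N}p^{-(n+1-\akn)}$ with $\int_N^{\infty}p^{-(n+1-\akn)}\,dp$, not from integrating the constant bound over $[a_k,t_k]$, which contributes only the factor $(t_k-a_k)$.
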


\begin{proof}
By definition,
$$
\PLCIII x(\overline t)=\DS\frac{1}{\Gamma(1-\akn)}
\int_{a_k}^{t_k}(t_k-\t)^{-\akn}\frac{\partial x}{\partial t_k}[\t]_kd\t
$$
and, integrating by parts with $u'(\t)=(t_k-\t)^{-\akn}$
and $v(\t)=\frac{\partial x}{\partial t_k}[\t]_k$, we deduce that
\begin{align*}
\PLCIII x(\overline t)&=\DS\frac{(t_k-a_k)^{1-\akn}}{\Gamma(2-\akn)}
\frac{\partial x}{\partial t_k}[a_k]_k\\
& \quad +\frac{1}{\Gamma(2-\akn)}
\int_{a_k}^{t_k}(t_k-\t)^{1-\akn}\frac{\partial^2 x}{\partial t_k^2}[\t]_kd\t.
\end{align*}
Integrating again by parts, taking $u'(\t)=(t_k-\t)^{1-\akn}$
and $v(\t)=\frac{\partial^2 x}{\partial t_k^2}[\t]_k$, we get
\begin{multline*}
\PLCIII x(\overline t)
=\DS\frac{(t_k-a_k)^{1-\akn}}{\Gamma(2-\akn)}\frac{\partial x}{\partial t_k}[a_k]_k
+\frac{(t_k-a_k)^{2-\akn}}{\Gamma(3-\akn)}\frac{\partial^2 x}{\partial t_k^2}[a_k]_k\\
\DS +\frac{1}{\Gamma(3-\akn)}\int_{a_k}^{t_k}(t_k-\t)^{2-\akn}
\frac{\partial^3 x}{\partial t_k^3}[\t]_kd\t.
\end{multline*}
Repeating the same procedure $n-2$ more times, we get the expansion formula
\begin{multline*}
\PLCIII x(\overline t) = \DS\sum_{p=1}^n \frac{(t_k-a_k)^{p-\akn}}{\Gamma(p+1-\akn)}
\frac{\partial^p x}{\partial t_k^p}[a_k]_k\\
\DS +\frac{1}{\Gamma(n+1-\akn)}\int_{a_k}^{t_k}(t_k-\t)^{n-\akn}
\frac{\partial^{n+1} x}{\partial t_k^{n+1}}[\t]_kd\t.
\end{multline*}
Using the equalities
\begin{equation*}
\begin{split}
(t_k-\t)^{n-\akn}&=\DS(t_k-a_k)^{n-\akn}\left(
1-\frac{\t-a_k}{t_k-a_k}\right)^{n-\akn}\\
&=\DS(t_k-a_k)^{n-\akn}\left[\sum_{p=0}^N \C (-1)^p
\frac{(\t-a_k)^p}{(t_k-a_k)^p}+\overline E(\overline t)\right]
\end{split}
\end{equation*}
with
$$
\overline E(\overline t)
=\sum_{p=N+1}^\infty \C (-1)^p \frac{(\t-a_k)^p}{(t_k-a_k)^p},
$$
we arrive at
\begin{equation*}
\begin{split}
&\PLCIII  x(\overline t)
=\DS\sum_{p=1}^n \frac{(t_k-a_k)^{p-\akn}}{\Gamma(p+1-\akn)}\frac{\partial^p x}{\partial t_k^p}[a_k]_k\\
&\DS +\frac{(t_k-a_k)^{n-\akn}}{\Gamma(n+1-\akn)}\int_{a_k}^{t_k}
\sum_{p=0}^N \C (-1)^p \frac{(\t-a_k)^p}{(t_k-a_k)^p}
\frac{\partial^{n+1} x}{\partial t_k^{n+1}}[\t]_kd\t+E(\overline t)\\
&\quad \quad   =\DS \sum_{p=1}^n \frac{(t_k-a_k)^{p-\akn}}{\Gamma(p+1-\akn)}
\frac{\partial^p x}{\partial t_k^p}[a_k]_k+\frac{(t_k-a_k)^{n-\akn}}{\Gamma(n+1-\akn)}\\
&\quad \quad \quad  \DS \times\sum_{p=0}^N \C
\frac{(-1)^p}{(t_k-a_k)^p}\int_{a_k}^{t_k}(\t-a_k)^p
\frac{\partial^{n+1} x}{\partial t_k^{n+1}}[\t]_kd\t+E(\overline t)
\end{split}
\end{equation*}
with
$$
E(\overline t)=\frac{(t_k-a_k)^{n-\akn}}{\Gamma(n+1-\akn)}\int_{a_k}^{t_k}
\overline E(\overline t)\frac{\partial^{n+1} x}{\partial t_k^{n+1}}[\t]_kd\t.
$$
Now, we split the last sum into $p=0$ and the remaining terms $p=1,\ldots,N$
and integrate by parts with $u(\t)=(\t-a_k)^p$ and
$v'(\t)=\frac{\partial^{n+1} x}{\partial t_k^{n+1}}[\t]_k$.
Observing that
$$
\C(-1)^p=\frac{\Gamma(\akn-n+p)}{\Gamma(\akn-n)p!},
$$
we obtain:
\begin{equation*}
\begin{split}
&\frac{(t_k-a_k)^{n-\akn}}{\Gamma(n+1-\akn)}\sum_{p=0}^N \C \frac{(-1)^p}{(t_k-a_k)^p}
\int_{a_k}^{t_k}(\t-a_k)^p\frac{\partial^{n+1} x}{\partial t_k^{n+1}}[\t]_kd\t\\
&=\frac{(t_k-a_k)^{n-\akn}}{\Gamma(n+1-\akn)}\left[
\frac{\partial^n x}{\partial t_k^n}[t_k]_k-\frac{\partial^n x}{\partial t_k^n}[a_k]_k\right]\\
&\quad+\frac{(t_k-a_k)^{n-\akn}}{\Gamma(n+1-\akn)}\sum_{p=1}^N
\frac{\Gamma(\akn-n+p)}{\Gamma(\akn-n)p!(t_k-a_k)^{p}}\\
&\quad \times \left[(t_k-a_k)^p \frac{\partial^n x}{\partial t_k^n}[t_k]_k
-\int_{a_k}^{t_k}p(\t-a_k)^{p-1}\frac{\partial^n x}{\partial t_k^n}[\t]_kd\t\right]\\
&=-\frac{(t_k-a_k)^{n-\akn}}{\Gamma(n+1-\akn)}\frac{\partial^n x}{\partial t_k^n}[a_k]_k
+\frac{(t_k-a_k)^{n-\akn}}{\Gamma(n+1-\akn)}\frac{\partial^n x}{\partial t_k^n}[t_k]_k\\
&\quad \times\left[1+\sum_{p=1}^N \frac{\Gamma(\akn-n+p)}{\Gamma(\akn-n)p!}\right]
+\frac{(t_k-a_k)^{n-\akn-1}}{\Gamma(n-\akn)}\\
&\quad \times\sum_{p=1}^N\frac{\Gamma(\akn-n+p)}{\Gamma(\akn+1-n)(p-1)!(t_k-a_k)^{p-1}}
\int_{a_k}^{t_k}(\t-a_k)^{p-1}\frac{\partial^{n} x}{\partial t_k^{n}}[\t]_kd\t.
\end{split}
\end{equation*}
Thus, we get
\begin{equation*}
\begin{split}
\PLCIII &x(\overline t) =\DS\sum_{p=1}^{n-1}
\frac{(t_k-a_k)^{p-\akn}}{\Gamma(p+1-\akn)}\frac{\partial^p x}{\partial t_k^p}[a_k]_k\\
&  \DS +\frac{(t_k-a_k)^{n-\akn}}{\Gamma(n+1-\akn)}
\frac{\partial^n x}{\partial t_k^n}[t_k]_k\left[1+\sum_{p=1}^N
\frac{\Gamma(\akn-n+p)}{\Gamma(\akn-n)p!}  \right]\\
&  \DS +\frac{(t_k-a_k)^{n-\akn-1}}{\Gamma(n-\akn)}\sum_{p=1}^N
\frac{\Gamma(\akn-n+p)}{\Gamma(\akn+1-n)(p-1)!(t_k-a_k)^{p-1}} \\
& \DS\times\int_{a_k}^{t_k}(\t-a_k)^{p-1}
\frac{\partial^{n} x}{\partial t_k^{n}}[\t]_kd\t+E(\overline t).
\end{split}
\end{equation*}
Repeating the process $n-1$ more times with respect to the last sum, that is,
splitting the first term of the sum and integrating by parts the obtained result,
we arrive to
\begin{equation*}
\begin{split}
\PLCIII x(\overline t)& =\DS\sum_{p=1}^{n}
\frac{(t_k-a_k)^{p-\akn}}{\Gamma(p+1-\akn)}\frac{\partial^p x}{\partial t_k^p}[t_k]_k\\
& \quad \times\left[
1+\sum_{l=n-p+1}^N \frac{\Gamma(\akn-n+l)}{\Gamma(\akn-p)(l-n+p)!}  \right]\\
& \quad \DS +\sum_{p=n}^N \frac{\Gamma(\akn-n+p)}{\Gamma(1-\akn)\Gamma(\akn)(p-n)!}(t_k-a_k)^{n-p-\akn} \\
& \quad \DS\times\int_{a_k}^{t_k}(\t-a_k)^{p-n}\frac{\partial x}{\partial t_k}[\t]_kd\t+E(\overline t).
\end{split}
\end{equation*}
We now seek the upper bound formula for $E(\overline t)$.
Using the two relations
$$
\left|  \frac{\t-a_k}{t_k-a_k}\right|\leq 1,
\, \mbox{ if } \, \t\in[a_k,t_k]$$
and
$$
\left| \C \right|\leq \frac{\exp((n-\akn)^2+n-\akn)}{p^{n+1-\akn}},
$$
we get
\begin{equation*}
\begin{split}
\overline E(\overline t)& \leq\DS \sum_{p=N+1}^\infty
\frac{\exp((n-\akn)^2+n-\akn)}{p^{n+1-\akn}} \\
& \DS\leq \int_N^\infty \frac{\exp((n-\akn)^2+n-\akn)}{p^{n+1-\akn}}\, dp\\
&=\frac{\exp((n-\akn)^2+n-\akn)}{N^{n-\akn}(n-\akn)}.
\end{split}
\end{equation*}
Then,
$$
E(\overline t)\leq L_{n+1}(\overline t)
\frac{\exp((n-\akn)^2+n-\akn)}{\Gamma(n+1-\akn)N^{n-\akn}(n-\akn)}(t_k-a_k)^{n+1-\akn}.
$$
This concludes the proof.
\end{proof}

\begin{Remark}
In Theorem~\ref{N_teo1} we have
$$
\lim_{N\to\infty}E(\overline t)=0
$$
for all $\overline t\in \prod_{i=1}^m[a_i,b_i]$ and $n\in\mathbb{N}$.
\end{Remark}

\begin{Theorem}
\label{N_teo2}
\index{expansion formulas}
Let $x\in C^{n+1}\left(\prod_{i=1}^m[a_i,b_i],\mathbb{R}\right)$ with $n\in\mathbb{N}$.
Then, for all $k\in\{1,\ldots,m\}$ and for all $N \in \mathbb{N}$ such that $N \geq n$, we have
\begin{align*}
\PLCI &x(\overline t) =\DS\sum_{p=1}^{n}A_p (t_k-a_k)^{p-\akn}
\frac{\partial^p x}{\partial t_k^p}[t_k]_k \\
&\DS+\sum_{p=n}^N B_p (t_k-a_k)^{n-p-\akn} V_p(\overline t)+\frac{\Dak(t_k-a_k)^{1-\akn}}{\Gamma(2-\akn)}\\
&\times \left[\left(\frac{1}{1-\akn}-\ln(t_k-a_k)\right)\sum_{p=0}^N\D\frac{(-1)^p}{(t_k-a_k)^{p}} V_{n+p}(\overline t)\right.\\
&\left.+\sum_{p=0}^N\D(-1)^p\sum_{r=1}^N\frac{1}{r(t_k-a_k)^{p+r}}
V_{n+p+r}(\overline t)\right]+E(\overline t).
\end{align*}
The approximation error $E(\overline t)$ is bounded by
\index{approximation error}
\begin{equation*}
\begin{split}
E(\overline t)& \leq\DS L_{n+1}(\overline t)
\frac{\exp((n-\akn)^2+n-\akn)}{\Gamma(n+1-\akn)N^{n-\akn}(n-\akn)}(t_k-a_k)^{n+1-\akn}\\
&\quad \DS+\left|\Dak\right|L_1(\overline t)\frac{{\exp((1-\akn)^2+1-\akn)}}{\Gamma(2-\akn)N^{1-\akn}(1-\akn)}\\
&\quad \DS \times \left[\left|\frac{1}{1-\akn}-\ln(t_k-a_k)\right|+\frac{1}{N}\right](t_k-a_k)^{2-\akn}.
\end{split}
\end{equation*}
\end{Theorem}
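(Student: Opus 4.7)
The plan is to reduce Theorem~\ref{N_teo2} to Theorem~\ref{N_teo1} via identity \eqref{relation}, which gives
\begin{equation*}
\PLCI x(\overline t) = \PLCIII x(\overline t) + \frac{\Dak}{\Gamma(2-\akn)}\, I(\overline t),
\end{equation*}
where
\begin{equation*}
I(\overline t) := \int_{a_k}^{t_k}(t_k-\t)^{1-\akn}\frac{\partial x}{\partial t_k}[\t]_k\left[\frac{1}{1-\akn}-\ln(t_k-\t)\right]d\t.
\end{equation*}
Applying Theorem~\ref{N_teo1} directly produces the sums with coefficients $A_p$ and $B_p$ in the statement together with the first line of the error bound, so the task reduces to expanding $I(\overline t)$ to order $N$ and estimating the remainder of that expansion.

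To expand $I(\overline t)$, I would substitute $\t = a_k + s(t_k - a_k)$ and factor
\begin{equation*}
(t_k - \t)^{1-\akn} = (t_k - a_k)^{1-\akn}(1-s)^{1-\akn}, \quad \ln(t_k - \t) = \ln(t_k - a_k) + \ln(1-s),
\end{equation*}
writing $C := 1/(1-\akn) - \ln(t_k - a_k)$ for the leading coefficient. Then I apply the truncated binomial and logarithm series $(1-s)^{1-\akn} = \sum_{p=0}^{N}\D(-1)^p s^p + R_1(s)$ and $-\ln(1-s) = \sum_{r=1}^{N} s^r/r + R_2(s)$, expand the product $(1-s)^{1-\akn}[C - \ln(1-s)]$ using these, and return to the $\t$ variable so that $s^p = (\t - a_k)^p/(t_k - a_k)^p$ and $\int_{a_k}^{t_k}(\t - a_k)^p\,\partial_{t_k}x[\t]_k\, d\t = V_{n+p}(\overline t)$. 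This splits the main term into the $C$--sum and the double sum $\sum_{p}\sum_{r}$ appearing in the statement, with overall prefactor $\Dak(t_k-a_k)^{1-\akn}/\Gamma(2-\akn)$.

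For the error bound, write the integrand remainder as $(A - A_N)(C + B) + A_N(B - B_N)$, where $A(s) = (1-s)^{1-\akn}$, $B(s) = -\ln(1-s)$, and $A_N$, $B_N$ denote their $N$--term truncations. The binomial tail is controlled uniformly on $[0,1]$ by
\begin{equation*}
|A(s) - A_N(s)| \le \frac{\exp((1-\akn)^2 + 1 - \akn)}{(1-\akn)\,N^{1-\akn}},
\end{equation*}
using the same estimate $|\D| \le \exp((1-\akn)^2 + 1 - \akn)/p^{2-\akn}$ and integral comparison $\sum_{p > N} p^{-(2-\akn)} \le \int_N^\infty p^{-(2-\akn)}\,dp$ that underlies Theorem~\ref{N_teo1}. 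The hard part is the logarithmic piece: $B(s)$ is unbounded as $s \to 1^-$, so pointwise bounds on $(A - A_N)\,B$ and on $A_N(B - B_N)$ fail near $s = 1$. I would circumvent this by integrating first, using the finite moment $\int_0^1 |B(s)|\,ds = 1$ to absorb the $B$--piece and the telescoping identity
\begin{equation*}
\int_0^1 |B(s) - B_N(s)|\,ds = \sum_{r=N+1}^{\infty}\frac{1}{r(r+1)} = \frac{1}{N+1} \le \frac{1}{N}
\end{equation*}
to produce the extra $1/N$ factor. Multiplying by $L_1(\overline t)(t_k-a_k)^{2-\akn}$ from the supremum bound $|\partial_{t_k}x| \le L_1(\overline t)$ together with the Jacobian, and by the prefactor $|\Dak|/\Gamma(2-\akn)$, the three contributions assemble into the bracketed factor $|C| + 1/N$ of the stated bound. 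Adding the Theorem~\ref{N_teo1} error from the first step gives the total estimate.
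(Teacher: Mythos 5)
Your reduction via \eqref{relation} to Theorem~\ref{N_teo1}, the substitution $\t=a_k+s(t_k-a_k)$, and the truncated binomial and logarithm expansions are exactly the route taken in the paper, and they do produce the displayed formula. The genuine gap is in the error assembly. In your notation $A(s)=(1-s)^{1-\akn}$, $B(s)=-\ln(1-s)$, $C=\frac{1}{1-\akn}-\ln(t_k-a_k)$, the estimates you propose give, after multiplying by the common prefactor $|\Dak|L_1(\overline t)(t_k-a_k)^{2-\akn}/\Gamma(2-\akn)$, a bracket of the form $\mathcal{T}\,|C|+\mathcal{T}\cdot 1+\frac{1}{N+1}$, where $\mathcal{T}=\exp((1-\akn)^2+1-\akn)/\bigl(N^{1-\akn}(1-\akn)\bigr)$ is the binomial-tail bound: the piece $(A-A_N)B$ is controlled only by $\mathcal{T}\int_0^1 B(s)\,ds=\mathcal{T}$, i.e.\ it contributes a constant $1$ inside the bracket rather than $1/N$, and the piece $A_N(B-B_N)$ is controlled by $\sup|A_N|\cdot\frac{1}{N+1}$, which lacks the factor $\mathcal{T}$ that multiplies $1/N$ in the statement. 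The statement requires the bracket $\bigl|\frac{1}{1-\akn}-\ln(t_k-a_k)\bigr|+\frac1N$ times $\mathcal{T}$, and $\mathcal{T}+\frac{1}{N+1}\le \frac{\mathcal{T}}{N}$ is false; nor can the extra constant be absorbed into the $|C|$ term, since $C$ may be arbitrarily small (or zero). So the closing sentence ``the three contributions assemble into the bracketed factor $|C|+1/N$'' does not follow from the bounds you state, and as written your argument proves only a weaker error estimate than the theorem claims.

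For comparison, the paper's proof expands both factors inside the integral to order $N$ and writes the error as the two terms $C\,\overline E_1$ and $\overline E_1\overline E_2$ (binomial tail times $C$, and product of the two tails); their bounds \eqref{error1} and \eqref{error2} are precisely the two summands of the stated estimate, the extra $1/N$ arising from $\sum_{r>N}\frac{1}{r(r+1)}$ inside $\overline E_2$ multiplied by the $\overline E_1$ tail bound. Your decomposition $(A-A_N)(C+B)+A_N(B-B_N)$ is exact and therefore additionally carries the cross contributions $(A-A_N)B_N$ and $A_N(B-B_N)$, which the paper's two error terms do not cover; to fit these under the stated bracket you would need them to be of order $N^{\akn-2}$ with the statement's precise constant, and the crude moments $\int_0^1 B=1$ and $\int_0^1(B-B_N)=\frac{1}{N+1}$ cannot deliver that (even a sharper computation of $\int_0^1|A-A_N|\,B\,ds$ via $\int_0^1 s^pB(s)\,ds$ still produces an extra logarithmic factor). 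To prove the theorem as stated you should mirror the paper's bookkeeping and bound the terms $C\,\overline E_1$ and $\overline E_1\overline E_2$ as in \eqref{error1}--\eqref{error2}; otherwise you must substantially refine the treatment of the mixed terms, or settle for a weaker bound than the one claimed.
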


\begin{proof}
Taking into account relation \eqref{relation} and Theorem~\ref{N_teo1},
we only need to expand the term
\begin{equation}
\label{relation2}
\frac{\Dak}{\Gamma(2-\akn)}\int_{a_k}^{t_k}(t_k-\t)^{1-\akn}
\frac{\partial x}{\partial t_k}[\t]_k\left[\frac{1}{1-\akn}-\ln(t_k-\t)\right]d\t.
\end{equation}
Splitting the integral, and using the expansion formulas
\begin{equation*}
\begin{split}
(t_k-\t)^{1-\akn}&=\DS(t_k-a_k)^{1-\akn}\left(1-\frac{\t-a_k}{t_k-a_k}\right)^{1-\akn}\\
&=\DS(t_k-a_k)^{1-\akn}\left[\sum_{p=0}^N \D (-1)^p
\frac{(\t-a_k)^p}{(t_k-a_k)^p}+\overline E_1(\overline t)\right]
\end{split}
\end{equation*}
with
$$
\overline E_1(\overline t)
=\sum_{p=N+1}^\infty \D (-1)^p \frac{(\t-a_k)^p}{(t_k-a_k)^p}
$$
and
\begin{equation*}
\begin{split}
\ln(t_k-\t)&=\DS\ln(t_k-a_k)+\ln\left(1-\frac{\t-a_k}{t_k-a_k}\right)\\
&=\DS\ln(t_k-a_k)-\sum_{r=1}^N \frac{1}{r}
\frac{(\t-a_k)^r}{(t_k-a_k)^r}-\overline E_2(\overline t)
\end{split}
\end{equation*}
with
$$
\overline E_2(\overline t)=\sum_{r=N+1}^\infty
\frac{1}{r} \frac{(\t-a_k)^r}{(t_k-a_k)^r},
$$
we conclude that \eqref{relation2} is equivalent to
\begin{equation*}
\begin{split}
&\frac{\Dak}{\Gamma(2-\akn)}\left[\left(\frac{1}{1-\akn}-\ln(t_k-a_k)\right)
\int_{a_k}^{t_k}(t_k-\t)^{1-\akn}\frac{\partial x}{\partial t_k}[\t]_kd\t\right.\\
&\qquad \left. -\int_{a_k}^{t_k}(t_k-\t)^{1-\akn} \ln\left(1-\frac{\t-a_k}{t_k-a_k}\right)
\frac{\partial x}{\partial t_k}[\t]_kd\t\right]\\
&=\frac{\Dak}{\Gamma(2-\akn)}\left[
\left(\frac{1}{1-\akn}-\ln(t_k-a_k)\right)\right.\\
&\qquad \times\int_{a_k}^{t_k}(t_k-a_k)^{1-\akn}\sum_{p=0}^N \D (-1)^p
\frac{(\t-a_k)^p}{(t_k-a_k)^p}\frac{\partial x}{\partial t_k}[\t]_kd\t\\
&\qquad +\int_{a_k}^{t_k}(t_k-a_k)^{1-\akn}
\sum_{p=0}^N \D (-1)^p \frac{(\t-a_k)^p}{(t_k-a_k)^p}\\
& \qquad\left. \times \sum_{r=1}^N \frac{1}{r}\frac{(\t-a_k)^r}{(t_k-a_k)^r}
\frac{\partial x}{\partial t_k}[\t]_kd\t  \right]
+\frac{\Dak}{\Gamma(2-\akn)}\left[\left(\frac{1}{1-\akn}-\ln(t_k-a_k)\right)\right.\\
&\qquad\times\int_{a_k}^{t_k}(t_k-a_k)^{1-\akn}\overline E_1(\overline t)
\frac{\partial x}{\partial t_k}[\t]_kd\t\\
&\qquad\left. +\int_{a_k}^{t_k}(t_k-a_k)^{1-\akn}\overline E_1(\overline t)
\overline E_2(\overline t)\frac{\partial x}{\partial t_k}[\t]_kd\t \right]\\
&=\frac{\Dak(t_k-a_k)^{1-\akn}}{\Gamma(2-\akn)}\left[\left(\frac{1}{1-\akn}
-\ln(t_k-a_k)\right)\sum_{p=0}^N\D\right.\\
& \qquad\times \frac{(-1)^p}{(t_k-a_k)^{p}} V_{n+p}(\overline t)\left.
+\sum_{p=0}^N\D(-1)^p\sum_{r=1}^N\frac{1}{r(t_k-a_k)^{p+r}}
V_{n+p+r}(\overline t)\right]
\end{split}
\end{equation*}
\begin{equation*}
\begin{split}
&\qquad+\frac{\Dak (t_k-a_k)^{1-\akn}}{\Gamma(2-\akn)}\left[\left(\frac{1}{1-\akn}
-\ln(t_k-a_k)\right)\right.\\
&\qquad \times \left.\int_{a_k}^{t_k}\overline E_1(\overline t)
\frac{\partial x}{\partial t_k}[\t]_kd\t
+\int_{a_k}^{t_k}\overline E_1(\overline t)\overline E_2(\overline t)
\frac{\partial x}{\partial t_k}[\t]_kd\t  \right].
\end{split}
\end{equation*}
For the error analysis, we know from Theorem~\ref{N_teo1} that
$$
\overline E_1(\overline t)
\leq\frac{\exp((1-\akn)^2+1-\akn)}{N^{1-\akn}(1-\akn)}.
$$
Then,
\begin{equation}
\label{error1}
\begin{split}
\left| \int_{a_k}^{t_k}\right. &(t_k-a_k)^{1-\akn} \left. \overline E_1(\overline t)
\frac{\partial x}{\partial t_k}[\t]_kd\t \right| \\
& \leq L_1(\overline t)\frac{{\exp((1-\akn)^2+1-\akn)}}{N^{1-\akn}(1-\akn)}(t_k-a_k)^{2-\akn}.
\end{split}
\end{equation}
On the other hand, we have
\begin{equation}
\label{error2}
\begin{split}
&\left|\int_{a_k}^{t_k}(t_k-a_k)^{1-\akn}\overline E_1(\overline t)\overline E_2(\overline t)
\frac{\partial x}{\partial t_k}[\t]_kd\t \right|\\
&\leq L_1(\overline t)\frac{{\exp((1-\akn)^2+1-\akn)}}{N^{1-\akn}(1-\akn)}(t_k-a_k)^{1-\akn}\\
& \qquad \times\sum_{r=N+1}^\infty\frac{1}{r(t_k-a_k)^r}\int_{a_k}^{t_k}(\t-a_k)^rd\t\\
&= L_1(\overline t)\frac{{\exp((1-\akn)^2+1-\akn)}}{N^{1-\akn}(1-\akn)}(t_k-a_k)^{1-\akn}
\sum_{r=N+1}^\infty\frac{t_k-a_k}{r(r+1)}\\
&\leq L_1(\overline t)\frac{{\exp((1-\akn)^2+1-\akn)}}{N^{2-\akn}(1-\akn)}(t_k-a_k)^{2-\akn}.
\end{split}
\end{equation}
We get the desired result by combining inequalities \eqref{error1} and \eqref{error2}.
\end{proof}

\begin{Theorem}
\label{N_teo3}
\index{expansion formulas}
Let $x\in C^{n+1}(\prod_{i=1}^m[a_i,b_i],\bR)$ with $n\in\mathbb{N}$. Then,
for all $k\in\{1,\ldots,m\}$ and for all $N \in \mathbb{N}$ such that $N \geq n$, we have
\begin{equation*}
\begin{split}
&\PLCII x(\overline t) =\DS\sum_{p=1}^{n}A_p (t_k-a_k)^{p-\akn}\frac{\partial^p x}{\partial t_k^p}[t_k]_k \\
& \qquad \DS+\sum_{p=n}^N B_p (t_k-a_k)^{n-p-\akn} V_p(\overline t)+\frac{\Dak(t_k-a_k)^{1-\akn}}{\Gamma(2-\akn)}\\
&\quad\times\left[\left(\Psi(2-\akn)-\ln(t_k-a_k)\right)
\sum_{p=0}^N\D\frac{(-1)^p}{(t_k-a_k)^{p}} V_{n+p}(\overline t)\right.\\
&\quad \left.+\sum_{p=0}^N\D(-1)^p\sum_{r=1}^N\frac{1}{r(t_k-a_k)^{p+r}}
V_{n+p+r}(\overline t)\right]+E(\overline t).
\end{split}
\end{equation*}
The approximation error $E(\overline t)$ is bounded by
\index{approximation error}
\begin{equation*}
\begin{split}
E(\overline t)&\leq\DS L_{n+1}(\overline t)
\frac{\exp((n-\akn)^2+n-\akn)}{\Gamma(n+1-\akn)N^{n-\akn}(n-\akn)}(t_k-a_k)^{n+1-\akn}\\
& \quad \DS+\left|\Dak\right|L_1(\overline t)
\frac{{\exp((1-\akn)^2+1-\akn)}}{\Gamma(2-\akn)N^{1-\akn}(1-\akn)}\\
& \quad \DS\times \left[\left|\Psi(2-\akn)-\ln(t_k-a_k)\right|
+\frac{1}{N}\right](t_k-a_k)^{2-\akn}.
\end{split}
\end{equation*}
\end{Theorem}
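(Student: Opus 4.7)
The plan is to reduce Theorem~\ref{N_teo3} to Theorem~\ref{N_teo2} by exploiting the identity~\eqref{relation3} that links the type~II and type~I partial left Caputo derivatives, namely
$$
\PLCII x(\overline t)=\PLCI x(\overline t)+\frac{\Dak\,\Psi(1-\akn)}{\Gamma(1-\akn)}\int_{a_k}^{t_k}(t_k-\t)^{-\akn}\bigl[x[\t]_k-x[a_k]_k\bigr]d\t.
$$
The first term on the right is expanded by Theorem~\ref{N_teo2}. What remains is to give a similar series representation, with a controllable remainder, for the additional integral.

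To handle the extra integral, I would first integrate by parts (choosing $u(\t)=x[\t]_k-x[a_k]_k$, which vanishes at $\t=a_k$, and $v'(\t)=(t_k-\t)^{-\akn}$), producing
$$
\int_{a_k}^{t_k}(t_k-\t)^{-\akn}\bigl[x[\t]_k-x[a_k]_k\bigr]d\t=\frac{1}{1-\akn}\int_{a_k}^{t_k}(t_k-\t)^{1-\akn}\frac{\partial x}{\partial t_k}[\t]_k\,d\t.
$$
Next, I would expand $(t_k-\t)^{1-\akn}$ by the same binomial series used in Theorem~\ref{N_teo2},
$$
(t_k-\t)^{1-\akn}=(t_k-a_k)^{1-\akn}\left[\sum_{p=0}^{N}\D(-1)^p\frac{(\t-a_k)^p}{(t_k-a_k)^p}+\overline E_1(\overline t)\right],
$$
and integrate termwise against $\frac{\partial x}{\partial t_k}[\t]_k$. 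Recognising that $\int_{a_k}^{t_k}(\t-a_k)^p\frac{\partial x}{\partial t_k}[\t]_k d\t=V_{n+p}(\overline t)$, the additional contribution to $\PLCII x(\overline t)$ becomes
$$
\frac{\Dak\,\Psi(1-\akn)\,(t_k-a_k)^{1-\akn}}{\Gamma(2-\akn)}\sum_{p=0}^{N}\D\frac{(-1)^p}{(t_k-a_k)^p}V_{n+p}(\overline t)+\tilde E(\overline t).
$$

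To get the stated form of Theorem~\ref{N_teo3}, I would combine this expression with the $\bigl(\tfrac{1}{1-\akn}-\ln(t_k-a_k)\bigr)$ coefficient coming from Theorem~\ref{N_teo2} and use the digamma recurrence
$$
\Psi(2-\akn)=\Psi(1-\akn)+\frac{1}{1-\akn},
$$
which collapses the two contributions into the single coefficient $\Psi(2-\akn)-\ln(t_k-a_k)$ that appears in the statement. For the error, the piece inherited from Theorem~\ref{N_teo2} is already bounded; only the new remainder $\tilde E(\overline t)$ needs to be controlled, and it is estimated exactly as in \eqref{error1}--\eqref{error2} of the proof of Theorem~\ref{N_teo2}, yielding the factor $\bigl|\Psi(2-\akn)-\ln(t_k-a_k)\bigr|+\tfrac{1}{N}$ in place of the earlier $\bigl|\tfrac{1}{1-\akn}-\ln(t_k-a_k)\bigr|+\tfrac{1}{N}$.

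The main obstacle is bookkeeping rather than conceptual difficulty: one must carefully match the coefficients after invoking the digamma identity so that the two separate expansions (from $\PLCI$ and from the correction integral) fuse into the single clean form stated in the theorem, and must verify that no additional error term of order greater than $(t_k-a_k)^{2-\akn}N^{-(1-\akn)}$ is introduced by the termwise integration of the binomial remainder $\overline E_1$.
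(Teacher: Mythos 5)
Your expansion argument follows the paper's own route: the paper likewise starts from \eqref{relation3}, integrates the correction integral by parts to get $\PLCII x(\overline t)=\PLCI x(\overline t)+\frac{\Dak\,\Psi(1-\akn)}{\Gamma(2-\akn)}\int_{a_k}^{t_k}(t_k-\t)^{1-\akn}\frac{\partial x}{\partial t_k}[\t]_k\,d\t$, and then proceeds ``as in Theorem~\ref{N_teo2}''. Your identification of the extra series as $\Psi(1-\akn)$ times the $V_{n+p}$ sum, and the use of $\Psi(2-\akn)=\Psi(1-\akn)+\frac{1}{1-\akn}$ to fuse the two coefficients, are correct, so the expansion formula itself is obtained exactly as in the paper.

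The gap is in the error estimate. You propose to keep the bound of Theorem~\ref{N_teo2} for the $\PLCI$ part and to bound the new remainder $\tilde E$ separately. But the new remainder carries only the coefficient $\Psi(1-\akn)$ (the correction integral has no $\ln(t_k-\t)$ factor, hence no $\overline E_2$ contribution), so an estimate of type \eqref{error1} gives an extra term proportional to $\left|\Psi(1-\akn)\right|$, and adding the two bounds produces the bracket $\left|\frac{1}{1-\akn}-\ln(t_k-a_k)\right|+\left|\Psi(1-\akn)\right|+\frac{1}{N}$. By the triangle inequality this is in general \emph{strictly larger} than the stated $\left|\Psi(2-\akn)-\ln(t_k-a_k)\right|+\frac{1}{N}$: for $n=1$ one has $\Psi(1-\akn)<0$ while $\frac{1}{1-\akn}-\ln(t_k-a_k)>0$ whenever $t_k-a_k\leq 1$, so there is cancellation that separate bounding destroys. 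Hence the theorem's bound, as stated, does not follow from summing the two bounds; your final claim that this procedure ``yields the factor $\left|\Psi(2-\akn)-\ln(t_k-a_k)\right|+\frac{1}{N}$'' is not justified. The fix is to merge before bounding: substitute \eqref{relation} as well, so that $\PLCII x(\overline t)=\PLCIII x(\overline t)+\frac{\Dak}{\Gamma(2-\akn)}\int_{a_k}^{t_k}(t_k-\t)^{1-\akn}\frac{\partial x}{\partial t_k}[\t]_k\left[\Psi(2-\akn)-\ln(t_k-\t)\right]d\t$, and then repeat the proof of Theorem~\ref{N_teo2} verbatim with $\frac{1}{1-\akn}$ replaced by $\Psi(2-\akn)$; equivalently, regroup your two $\int\overline E_1$-type remainders into a single one with coefficient $\Psi(2-\akn)-\ln(t_k-a_k)$ before taking absolute values. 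With this regrouping the stated bound follows, and this is precisely what the paper means by ``the rest of the proof is similar to the one of Theorem~\ref{N_teo2}''.
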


\begin{proof}
Starting with relation \eqref{relation3},
and integrating by parts the integral, we obtain that
\begin{multline*}
\PLCII x(\overline t)=\PLCI x(\overline t)\\
+\frac{\Dak\Psi(1-\akn)}{\Gamma(2-\akn)}
\int_{a_k}^{t_k}(t_k-\t)^{1-\akn}\frac{\partial x}{\partial t_k}[\t]_k d\t.
\end{multline*}
The rest of the proof is similar to the one of Theorem~\ref{N_teo2}.
\end{proof}

\begin{Remark}
As particular cases of Theorems~\ref{N_teo1}, \ref{N_teo2} and \ref{N_teo3},
we obtain expansion formulas for $\LCI x(t)$, $\LCII x(t)$ and $\LCIII x(t)$.
\end{Remark}

With respect to the three right fractional operators of Definition~\ref{def:8},
we set, for $p\in\mathbb{N}$,
\begin{equation*}
\begin{split}
C_p &=\DS \frac{(-1)^{p}}{\Gamma(p+1-\akn)}\left[1+\sum_{l=n-p+1}^N
\frac{\Gamma(\akn-n+l)}{\Gamma(\akn-p)(l-n+p)!}  \right],\\
D_p &= \DS\frac{-\Gamma(\akn-n+p)}{\Gamma(1-\akn)\Gamma(\akn)(p-n)!},\\
W_p(\overline t)
&= \DS\int_{t_k}^{b_k}(b_k-\t)^{p-n}\frac{\partial x}{\partial t_k}[\t]_kd\t,\\
M_{p}(\overline t)
&=\DS \max_{\t\in[t_k,b_k]}\left| \frac{\partial^{p} x}{\partial t_k^{p}}[\t]_k \right|.
\end{split}
\end{equation*}
The expansion formulas are given in Theorems~\ref{thm:15}, \ref{thm:16} and \ref{thm:17}.
We omit the proofs since they are similar to the corresponding left ones.

\begin{Theorem}
\label{thm:15}
\index{expansion formulas}
Let $x\in C^{n+1}\left(\prod_{i=1}^m[a_i,b_i],\bR\right)$ with $n\in\mathbb{N}$.
Then, for all $k\in\{1,\ldots,m\}$ and for all $N \in \mathbb{N}$ such that $N \geq n$, we have
\begin{align*}
\PRCIII x(\overline t) =&\DS\sum_{p=1}^{n}C_p (b_k-t_k)^{p-\akn}\frac{\partial^p x}{\partial t_k^p}[t_k]_k\\
& \quad \DS +\sum_{p=n}^N D_p (b_k-t_k)^{n-p-\akn} W_p(\overline t)+E(\overline t).
\end{align*}
The approximation error $E(\overline t)$ is bounded by
\index{approximation error}
$$
E(\overline t)\leq M_{n+1}(\overline t)
\frac{\exp((n-\akn)^2+n-\akn)}{\Gamma(n+1-\akn)N^{n-\akn}(n-\akn)}(b_k-t_k)^{n+1-\akn}.
$$
\end{Theorem}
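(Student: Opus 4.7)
The plan is to mirror the proof of Theorem \ref{N_teo1}, the left-sided counterpart, performing the analogous manipulations on the interval $[t_k,b_k]$. Starting from the definition
$$\PRCIII x(\overline t)=\frac{-1}{\Gamma(1-\akn)}\int_{t_k}^{b_k}(\t-t_k)^{-\akn}\frac{\partial x}{\partial t_k}[\t]_k\,d\t,$$
I would integrate by parts $n$ times, each time differentiating $\frac{\partial^j x}{\partial t_k^j}[\t]_k$ and antidifferentiating the power $(\t-t_k)^{j-\akn}$. Because $(\t-t_k)^{j-\akn}$ vanishes at the lower endpoint $\t=t_k$ and equals $(b_k-t_k)^{j-\akn}$ at $\t=b_k$, the boundary contributions accumulate at $b_k$, producing (after $n$ iterations) a series of the form $\sum_{p=1}^n \frac{(-1)^{p}(b_k-t_k)^{p-\akn}}{\Gamma(p+1-\akn)}\frac{\partial^p x}{\partial t_k^p}[b_k]_k$ plus a remainder integral involving $\frac{\partial^{n+1} x}{\partial t_k^{n+1}}$. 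The factor $(-1)^{p}$ is the only substantive new feature compared to the left case, and it is exactly what appears in $C_p$ and $D_p$.

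Next, I would expand the remaining weight using the right-sided analogue of the binomial identity,
$$(\t-t_k)^{n-\akn}=(b_k-t_k)^{n-\akn}\left(1-\frac{b_k-\t}{b_k-t_k}\right)^{n-\akn}=(b_k-t_k)^{n-\akn}\sum_{p=0}^{N}\C(-1)^{p}\frac{(b_k-\t)^{p}}{(b_k-t_k)^{p}}+\overline E(\overline t),$$
truncating at $N$. Substituting into the remainder integral, splitting off the $p=0$ term, and integrating the generic term by parts (with $u(\t)=(b_k-\t)^{p}$, $v'(\t)=\frac{\partial^{n+1}x}{\partial t_k^{n+1}}[\t]_k$) kills the $(n+1)$-th derivative and returns boundary contributions at $\t=t_k$ together with integrals $W_p(\overline t)=\int_{t_k}^{b_k}(b_k-\t)^{p-n}\frac{\partial x}{\partial t_k}[\t]_k\,d\t$. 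Iterating this splitting procedure $n-1$ more times on the remaining sum — exactly as in Theorem \ref{N_teo1} — collapses all boundary terms onto $\frac{\partial^p x}{\partial t_k^p}[t_k]_k$ with coefficients $C_p(b_k-t_k)^{p-\akn}$, and the surviving integrals become $\sum_{p=n}^N D_p(b_k-t_k)^{n-p-\akn}W_p(\overline t)$.

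Finally, I would control the remainder $E(\overline t)$ coming from the tail $\overline E(\overline t)=\sum_{p=N+1}^{\infty}\C(-1)^{p}\frac{(b_k-\t)^p}{(b_k-t_k)^p}$. Using $\left|\frac{b_k-\t}{b_k-t_k}\right|\le 1$ on $[t_k,b_k]$ and the standard estimate $\left|\C\right|\le\frac{\exp((n-\akn)^{2}+n-\akn)}{p^{n+1-\akn}}$, the series is dominated by $\int_{N}^{\infty}p^{-(n+1-\akn)}dp$, yielding the claimed bound after multiplying by $\sup_{\t}|\partial^{n+1}_{t_k}x[\t]_k|=M_{n+1}(\overline t)$, the prefactor $\frac{(b_k-t_k)^{n-\akn}}{\Gamma(n+1-\akn)}$, and the length factor $(b_k-t_k)$ arising from integrating over $[t_k,b_k]$.

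The only real pitfall is bookkeeping the signs: compared to the left version, each integration by parts on $[t_k,b_k]$ produces an extra minus sign (from $\frac{d}{d\t}$ versus $-\frac{d}{dt}$ conventions), which combine with the leading $-\frac{1}{\Gamma(1-\akn)}$ to give the $(-1)^p$ in $C_p$ and the overall minus sign in $D_p$. Provided these signs are tracked carefully, every other step is the precise mirror of the proof of Theorem \ref{N_teo1} and the error estimate follows by the identical argument.
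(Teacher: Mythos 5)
Your proposal is correct and is exactly the argument the paper intends: the paper omits this proof, stating it is similar to the left-sided case, and your mirrored derivation (repeated integration by parts on $[t_k,b_k]$, the binomial expansion of $(\t-t_k)^{n-\akn}$ in powers of $(b_k-\t)/(b_k-t_k)$, the iterated splitting producing the $W_p$ integrals, and the same tail estimate for the error) reproduces Theorem~\ref{N_teo1}'s proof step by step. The sign bookkeeping you flag indeed yields $C_p=(-1)^pA_p$ and $D_p=-B_p$, consistent with the stated coefficients.
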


\begin{Theorem}
\label{thm:16}
\index{expansion formulas}
Let $x\in C^{n+1}\left(\prod_{i=1}^m[a_i,b_i],\bR\right)$ with $n\in\mathbb{N}$.
Then, for all $k\in\{1,\ldots,m\}$ and for all $N \in \mathbb{N}$ such that $N \geq n$, we have
\begin{equation*}
\begin{split}
&\PRCI x(\overline t) =\DS\sum_{p=1}^{n}C_p (b_k-t_k)^{p-\akn}\frac{\partial^p x}{\partial t_k^p}[t_k]_k \\
&\quad \DS+\sum_{p=n}^N D_p (b_k-t_k)^{n-p-\akn} W_p(\overline t)+\frac{\Dak(b_k-t_k)^{1-\akn}}{\Gamma(2-\akn)}\\
&\quad\quad \times \left[\left(\frac{1}{1-\akn}-\ln(b_k-t_k)\right)\sum_{p=0}^N\D\frac{(-1)^p}{(b_k-t_k)^{p}} W_{n+p}(\overline t)\right.\\
&\quad \left.+\sum_{p=0}^N\D(-1)^p\sum_{r=1}^N\frac{1}{r(b_k-t_k)^{p+r}}
W_{n+p+r}(\overline t)\right]+E(\overline t).
\end{split}
\end{equation*}
The approximation error $E(\overline t)$ is bounded by
\index{approximation error}
\begin{equation*}
\begin{split}
E(\overline t)&\leq M_{n+1}(\overline t)
\frac{\exp((n-\akn)^2+n-\akn)}{\Gamma(n+1-\akn)N^{n-\akn}(n-\akn)}(b_k-t_k)^{n+1-\akn}\\
&\quad \DS +\left|\Dak\right|M_1(\overline t)
\frac{{\exp((1-\akn)^2+1-\akn)}}{\Gamma(2-\akn)N^{1-\akn}(1-\akn)}\\
&\quad \DS \times \left[\left|\frac{1}{1-\akn}-\ln(b_k-t_k)\right|
+\frac{1}{N}\right](b_k-t_k)^{2-\akn}.
\end{split}
\end{equation*}
\end{Theorem}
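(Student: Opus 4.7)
The proof proceeds by mimicking the strategy used for Theorem~\ref{N_teo2}, with the left fractional operators replaced by their right counterparts. The plan is to first reduce the problem to an expansion of a single remainder integral, and then expand two analytic factors inside that integral using geometric/binomial and logarithmic series.

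First, I would invoke the relation between type~I and type~III partial right Caputo derivatives already established in the excerpt,
\begin{equation*}
\PRCI x(\overline t)=\PRCIII x(\overline t)+\frac{\Dak}{\Gamma(2-\akn)}\int_{t_k}^{b_k}(\t-t_k)^{1-\akn}\frac{\partial x}{\partial t_k}[\t]_k\left[\frac{1}{1-\akn}-\ln(\t-t_k)\right]d\t.
\end{equation*}
By Theorem~\ref{thm:15}, the $\PRCIII$ term already contributes the first two sums and an error term bounded by the first line in the desired bound for $E(\overline t)$. Thus the remaining task is to expand the integral above and control the resulting error.

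Next, on the interval $\t\in[t_k,b_k]$, I would write $\t-t_k=(b_k-t_k)\bigl(1-\tfrac{b_k-\t}{b_k-t_k}\bigr)$ and use the binomial and the logarithmic series
\begin{equation*}
(\t-t_k)^{1-\akn}=(b_k-t_k)^{1-\akn}\sum_{p=0}^{N}\D(-1)^p\frac{(b_k-\t)^p}{(b_k-t_k)^p}+\overline E_1(\overline t),
\end{equation*}
\begin{equation*}
\ln(\t-t_k)=\ln(b_k-t_k)-\sum_{r=1}^{N}\frac{1}{r}\frac{(b_k-\t)^r}{(b_k-t_k)^r}-\overline E_2(\overline t),
\end{equation*}
with the tails $\overline E_1,\overline E_2$ containing the terms from $N+1$ to $\infty$. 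Substituting these expansions, splitting the integral into the $\tfrac{1}{1-\akn}-\ln(b_k-t_k)$ part and the $\ln$-series part, and recognizing the resulting integrals as $W_{n+p}(\overline t)$ and $W_{n+p+r}(\overline t)$, yields exactly the asserted leading expression with prefactor $\Dak(b_k-t_k)^{1-\akn}/\Gamma(2-\akn)$.

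Finally, the error analysis mirrors that of Theorem~\ref{N_teo2}: the same bound $|\D|\le \exp((1-\akn)^2+1-\akn)/p^{2-\akn}$ gives $|\overline E_1(\overline t)|\le \exp((1-\akn)^2+1-\akn)/(N^{1-\akn}(1-\akn))$ on $[t_k,b_k]$, since $0\le (b_k-\t)/(b_k-t_k)\le 1$. Combining with $|\partial x/\partial t_k|\le M_1(\overline t)$ and the estimate $\sum_{r\ge N+1}(b_k-t_k)/(r(r+1))\le (b_k-t_k)/N$ for the mixed tail $\overline E_1\overline E_2$, one obtains the two contributions $|(1-\akn)^{-1}-\ln(b_k-t_k)|$ and $1/N$ inside the bracket of the second line of the bound, after factoring the common $|\Dak|M_1(\overline t)\exp((1-\akn)^2+1-\akn)(b_k-t_k)^{2-\akn}/(\Gamma(2-\akn)N^{1-\akn}(1-\akn))$. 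Adding this to the bound inherited from $\PRCIII$ yields the claimed inequality. The main technical step — and the only one where care is required — is the handling of the mixed remainder $\int\overline E_1\overline E_2\,\partial_{t_k}x$, because one must exploit the extra factor of $(b_k-t_k)/r$ obtained after integrating $(b_k-\t)^{r}$ over $[t_k,b_k]$ in order to recover the $1/N$ improvement.
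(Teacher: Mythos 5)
Your proposal is correct and follows exactly the route the paper intends: the paper omits this proof, stating it is analogous to the left-sided case, and your argument is precisely the mirrored version of the proof of Theorem~\ref{N_teo2} (using the type~I/type~III right relation, Theorem~\ref{thm:15} for the $\PRCIII$ part, the binomial and logarithmic expansions in powers of $(b_k-\t)/(b_k-t_k)$ giving the $W_{n+p}$ and $W_{n+p+r}$ terms, and the same tail estimates including the $1/N$ gain from integrating $(b_k-\t)^r$). No gaps.
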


\begin{Theorem}
\label{thm:17}
\index{expansion formulas}
Let $x\in C^{n+1}\left(\prod_{i=1}^m[a_i,b_i],\bR\right)$ with $n\in\mathbb{N}$.
Then, for all $k\in\{1,\ldots,m\}$ and for all $N \in \mathbb{N}$ such that $N \geq n$, we have
\begin{equation*}
\begin{split}
&\PRCI x(\overline t) =\DS\sum_{p=1}^{n}C_p (b_k-t_k)^{p-\akn}\frac{\partial^p x}{\partial t_k^p}[t_k]_k \DS\\
& \quad +\sum_{p=n}^N D_p (b_k-t_k)^{n-p-\akn} W_p(\overline t)+\frac{\Dak(b_k-t_k)^{1-\akn}}{\Gamma(2-\akn)}\\
&\quad\times\left[\left(\Psi(2-\akn)-\ln(b_k-t_k)\right)\sum_{p=0}^N\D\frac{(-1)^p}{(b_k-t_k)^{p}}
W_{n+p}(\overline t)\right.\\
&\quad \left.+\sum_{p=0}^N\D(-1)^p\sum_{r=1}^N\frac{1}{r(b_k-t_k)^{p+r}}
W_{n+p+r}(\overline t)\right]+E(\overline t).
\end{split}
\end{equation*}
The approximation error $E(\overline t)$ is bounded by
\index{approximation error}
\begin{equation*}
\begin{split}
E(\overline t)&\leq M_{n+1}(\overline t)
\frac{\exp((n-\akn)^2+n-\akn)}{\Gamma(n+1-\akn)N^{n-\akn}(n-\akn)}(b_k-t_k)^{n+1-\akn}\\
&\quad \DS +\left|\Dak\right|M_1(\overline t)\frac{{\exp((1-\akn)^2+1-\akn)}}{\Gamma(2-\akn)N^{1-\akn}(1-\akn)}\\
&\quad \DS \times \left[\left|\Psi(2-\akn)-\ln(b_k-t_k)\right|+\frac{1}{N}\right](b_k-t_k)^{2-\akn}.
\end{split}
\end{equation*}
\end{Theorem}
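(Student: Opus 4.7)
The plan is to prove this as the right-sided analogue of Theorem~\ref{N_teo3}, parallel to the way Theorem~\ref{thm:16} is the right-sided analogue of Theorem~\ref{N_teo2}. The key observation is that we already have an identity relating $\PRCI$ and $\PRCII$, proved earlier in the chapter:
$$
\PRCI x(\overline t)=\PRCII x(\overline t)+\frac{\Dak\Psi(1-\akn)}{\Gamma(1-\akn)}\int_{t_k}^{b_k}(\t-t_k)^{-\akn}[x[\t]_k-x[b_k]_k]d\t.
$$
An integration by parts (with $u(\tau)=x[\tau]_k-x[b_k]_k$ vanishing at $\tau=b_k$ and $v'(\tau)=(\tau-t_k)^{-\akn}$, so that $v(\tau)=(\tau-t_k)^{1-\akn}/(1-\akn)$ vanishes at $\tau=t_k$) lifts the $-\akn$ power to $1-\akn$ and yields
$$
\PRCI x(\overline t)=\PRCII x(\overline t)-\frac{\Dak\Psi(1-\akn)}{\Gamma(2-\akn)}\int_{t_k}^{b_k}(\t-t_k)^{1-\akn}\frac{\partial x}{\partial t_k}[\t]_kd\t.
$$

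Next, I would substitute the expansion of $\PRCII x(\overline t)$ (i.e. the right-sided counterpart of Theorem~\ref{N_teo3}, obtained by the same arguments used to derive Theorem~\ref{thm:16} from Theorem~\ref{N_teo2}) and expand the remaining integral by the same technique as in Theorem~\ref{thm:16}: write
$$
(\t-t_k)^{1-\akn}=(b_k-t_k)^{1-\akn}\left(1-\frac{b_k-\t}{b_k-t_k}\right)^{1-\akn}
$$
and apply the binomial series $\sum_{p=0}^N\D(-1)^p\,\big((b_k-\t)/(b_k-t_k)\big)^p$ with an explicit tail estimate. This produces a sum of terms of the form $\D(-1)^p(b_k-t_k)^{-p}W_{n+p}(\overline t)$, together with a controllable remainder.

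The decisive algebraic step is the merging of coefficients. The expansion of $\PRCII$ contributes a factor $\frac{1}{1-\akn}-\ln(b_k-t_k)$ in front of the leading series (exactly as in Theorem~\ref{thm:16}, since the structural proof is identical). The new integration-by-parts correction adds $-\Psi(1-\akn)$. Invoking the digamma recurrence $\Psi(2-\akn)=\Psi(1-\akn)+\frac{1}{1-\akn}$ and absorbing signs, these combine into the required factor $\Psi(2-\akn)-\ln(b_k-t_k)$. The double-sum term involving $\sum_{r=1}^N \frac{1}{r(b_k-t_k)^{p+r}}W_{n+p+r}$ is unchanged since the logarithmic expansion of $\ln(b_k-\t)$ only couples to the $(b_k-\t)^{1-\akn}$ factor, which is already present.

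For the error, I would simply add the bound inherited from the $\PRCII$ expansion to the bound on the new correction integral, estimating the latter exactly as in Theorem~\ref{thm:16} via $|\overline E_1|\le \exp((1-\akn)^2+1-\akn)/(N^{1-\akn}(1-\akn))$ and the same telescoping $\sum_{r=N+1}^\infty 1/(r(r+1))\le 1/N$ argument. After the digamma substitution, the bracketed factor becomes $|\Psi(2-\akn)-\ln(b_k-t_k)|+1/N$, which matches the claim. The only real obstacle is bookkeeping: keeping the four nested series (binomial, logarithm, truncation tails of each) aligned so that the digamma identity actually applies at the coefficient of $W_{n+p}(\overline t)$; everything else is a mirror translation of the proof of Theorem~\ref{N_teo3} under $a_k\leftrightarrow b_k$ and $(t_k-a_k)\leftrightarrow(b_k-t_k)$.
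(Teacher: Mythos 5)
Your overall blueprint is the right one, and it is essentially what the paper leaves implicit when it omits the proofs of Theorems~\ref{thm:15}--\ref{thm:17} as ``similar to the corresponding left ones'': start from the identity linking the type I and type II right operators, integrate by parts to raise the exponent from $-\akn$ to $1-\akn$, invoke the right-sided expansion carrying the factor $\frac{1}{1-\akn}-\ln(b_k-t_k)$, expand the extra integral binomially, and merge coefficients through $\Psi(2-\akn)=\Psi(1-\akn)+\frac{1}{1-\akn}$, with the error estimated exactly as you say. Your integration by parts is correct, and so is the resulting relation $\PRCI x(\overline t)=\PRCII x(\overline t)-\frac{\Dak\Psi(1-\akn)}{\Gamma(2-\akn)}\int_{t_k}^{b_k}(\t-t_k)^{1-\akn}\frac{\partial x}{\partial t_k}[\t]_k\,d\t$.

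The gap is in your ``decisive algebraic step''. First, the operator in the statement must be the type II derivative $\PRCII$ (the printed $\PRCI$ is a slip: Theorem~\ref{thm:16} already expands $\PRCI$ with the factor $\frac{1}{1-\akn}-\ln(b_k-t_k)$, and the two statements are incompatible unless $\Dak=0$); the theorem is the right-sided twin of Theorem~\ref{N_teo3}. Given that, you cannot ``substitute the expansion of $\PRCII$'' --- that expansion is precisely what is being proved, so the step is circular --- and the factor you attribute to it, $\frac{1}{1-\akn}-\ln(b_k-t_k)$ ``exactly as in Theorem~\ref{thm:16}'', belongs to $\PRCI$, not $\PRCII$. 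Nor do the signs ``absorb'': with your relation the correction enters with $-\Psi(1-\akn)$, and $\frac{1}{1-\akn}-\Psi(1-\akn)-\ln(b_k-t_k)$ differs from the claimed $\Psi(2-\akn)-\ln(b_k-t_k)$ by $2\Psi(1-\akn)$; while if you feed in the true $\PRCII$ factor you merely recover Theorem~\ref{thm:16}. The correct bookkeeping is to solve your relation the other way, $\PRCII x(\overline t)=\PRCI x(\overline t)+\frac{\Dak\Psi(1-\akn)}{\Gamma(2-\akn)}\int_{t_k}^{b_k}(\t-t_k)^{1-\akn}\frac{\partial x}{\partial t_k}[\t]_k\,d\t$, substitute the already proved Theorem~\ref{thm:16} for $\PRCI$, and expand the correction integral binomially so that it contributes $+\Psi(1-\akn)$ times the series $\sum_{p=0}^N\D\frac{(-1)^p}{(b_k-t_k)^{p}}W_{n+p}(\overline t)$ plus a remainder; then $\frac{1}{1-\akn}+\Psi(1-\akn)=\Psi(2-\akn)$ yields the stated coefficient, the double sum is untouched, and the error bound follows exactly as you describe, mirroring the passage from Theorem~\ref{N_teo2} to Theorem~\ref{N_teo3}.
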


% -------------------------------------------

\section{Example}
\label{sec:ex_numer}

To test the accuracy of the proposed method, we compare the fractional derivative
of a concrete given function with some numerical approximations of it.
For $t\in[0,1]$, let $x(t)=t^2$ be the test function. For the order
of the fractional derivatives we consider two cases:
$$
\ati=\frac{50t+49}{100} \quad \mbox{and} \quad \beta(t)=\frac{t+5}{10}.
$$
We consider the approximations given in Theorems~\ref{N_teo1}, \ref{N_teo2} and \ref{N_teo3},
with a fixed $n=1$ and $N\in\{2,4,6\}$. The error of approximating
$f(t)$ by $\tilde{f}(t)$ is measured by $|f(t)-\tilde{f}(t)|$.
See Figures~\ref{IntExp2}--\ref{IntExp7}.
% ------------------------
\begin{figure}[!ht]
\begin{center}
\subfigure[${^C_0\mathbb{D}_t^{\ati}} x(t)$]{\includegraphics[scale=0.25]{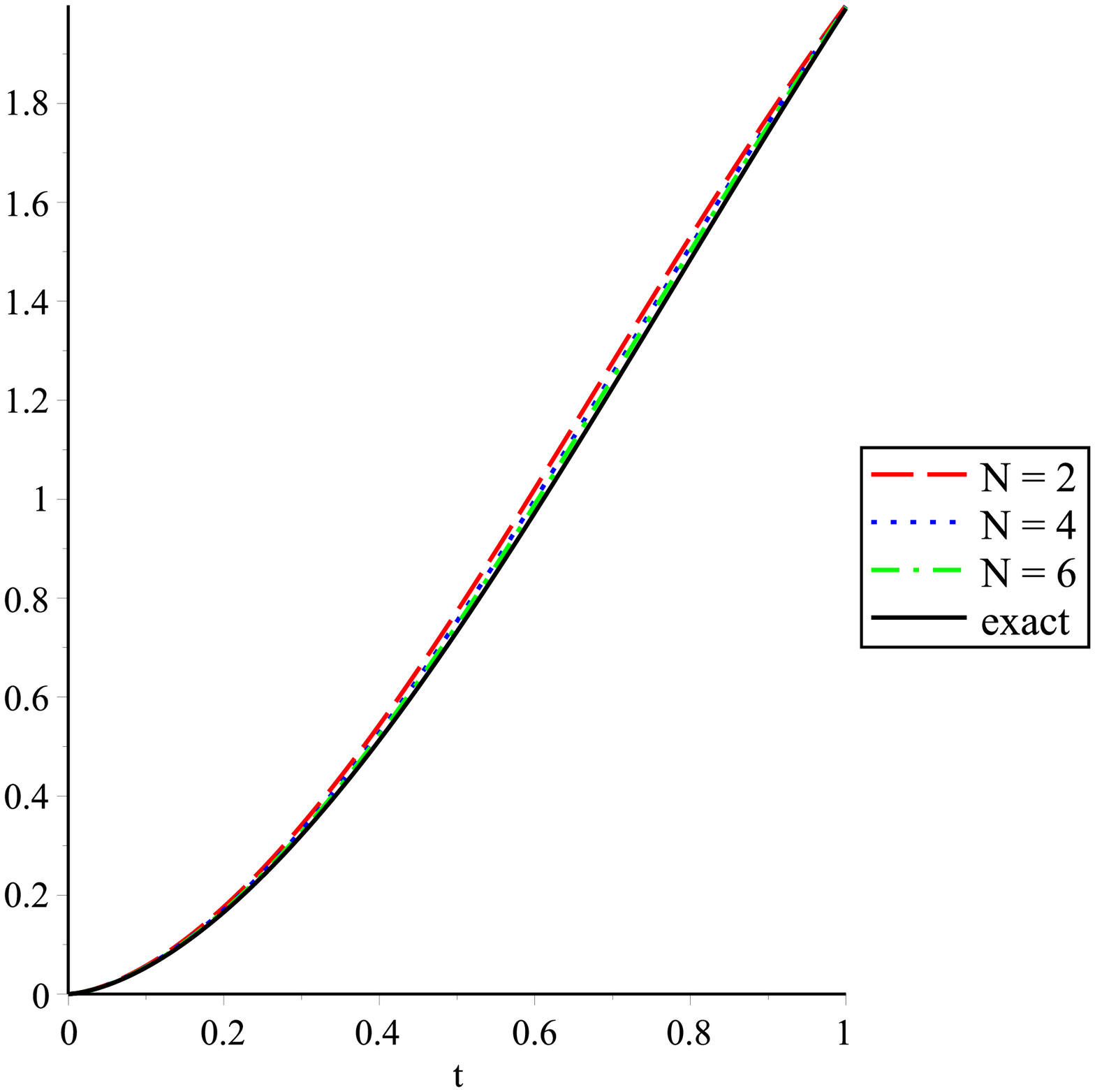}}\hspace{1cm}
\subfigure[Error]{\includegraphics[scale=0.25]{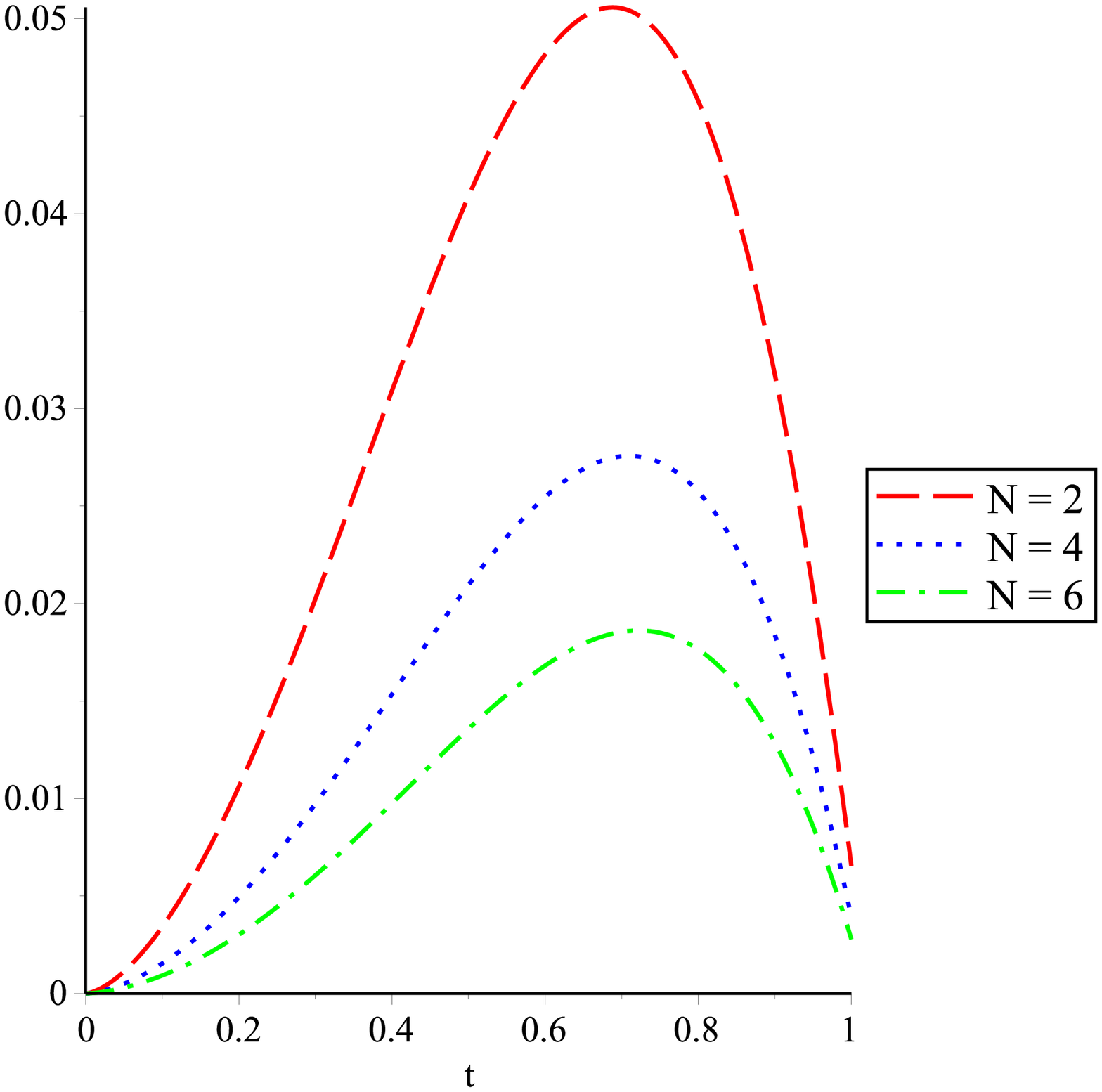}}
\end{center}
\caption{Type III left Caputo derivative of order $\ati$
for the example of Section~\ref{sec:ex_numer}---analytic
versus numerical approximations obtained from Theorem~\ref{N_teo1}.}
\label{IntExp2}
\end{figure}
% ------------------------
\begin{figure}[!ht]
\begin{center}
\subfigure[${^C_0D_t^{\ati}} x(t)$]{\includegraphics[scale=0.25]{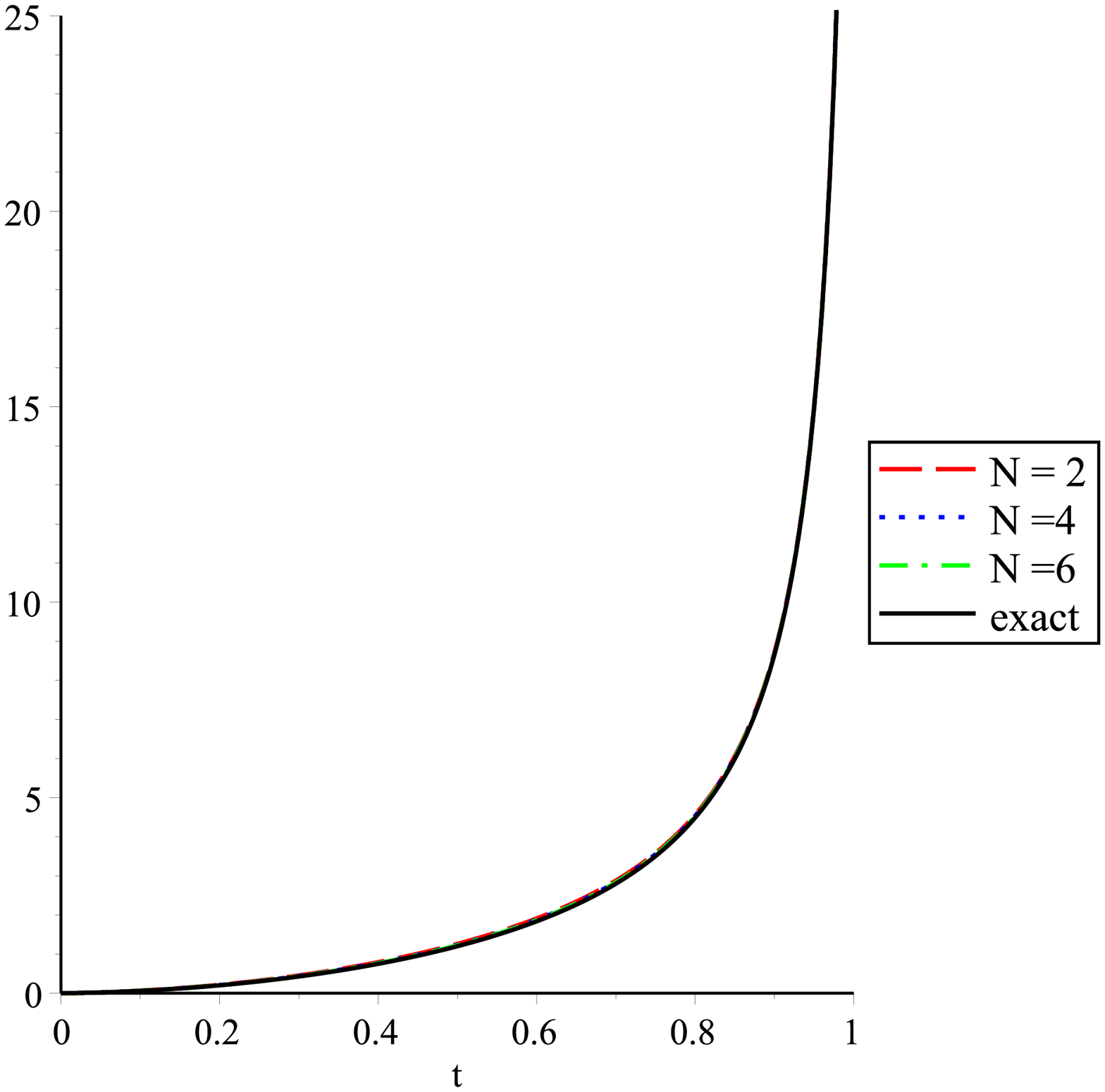}}\hspace{1cm}
\subfigure[Error]{\includegraphics[scale=0.25]{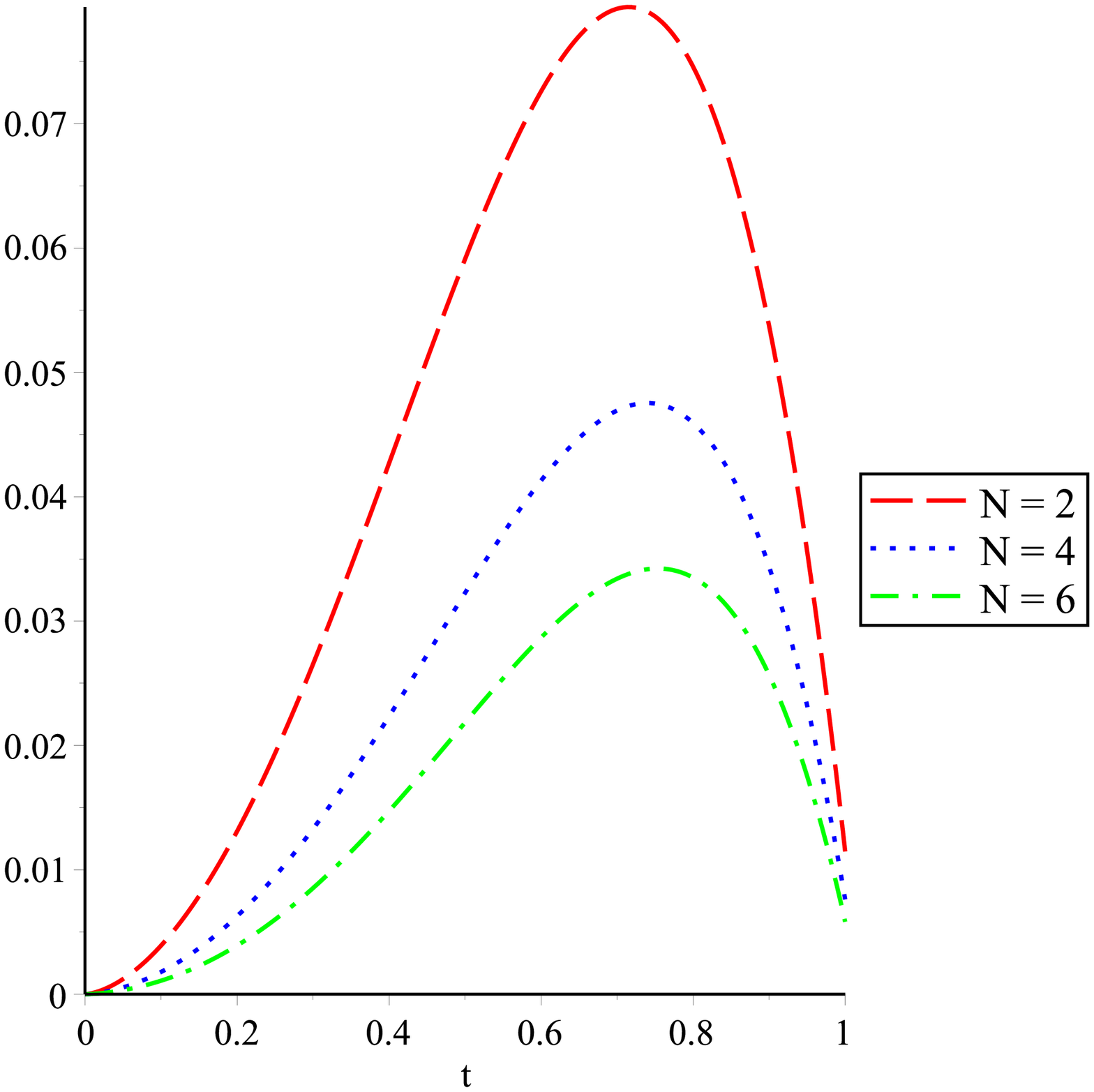}}
\end{center}
\caption{Type I left Caputo derivative of order $\ati$
for the example of Section~\ref{sec:ex_numer}---analytic
versus numerical approximations obtained from Theorem~\ref{N_teo2}.}
\label{IntExp3}
\end{figure}
% ------------------------
\begin{figure}[!ht]
\begin{center}
\subfigure[${^C_0\mathcal{D}_t^{\ati}} x(t)$]{\includegraphics[scale=0.25]{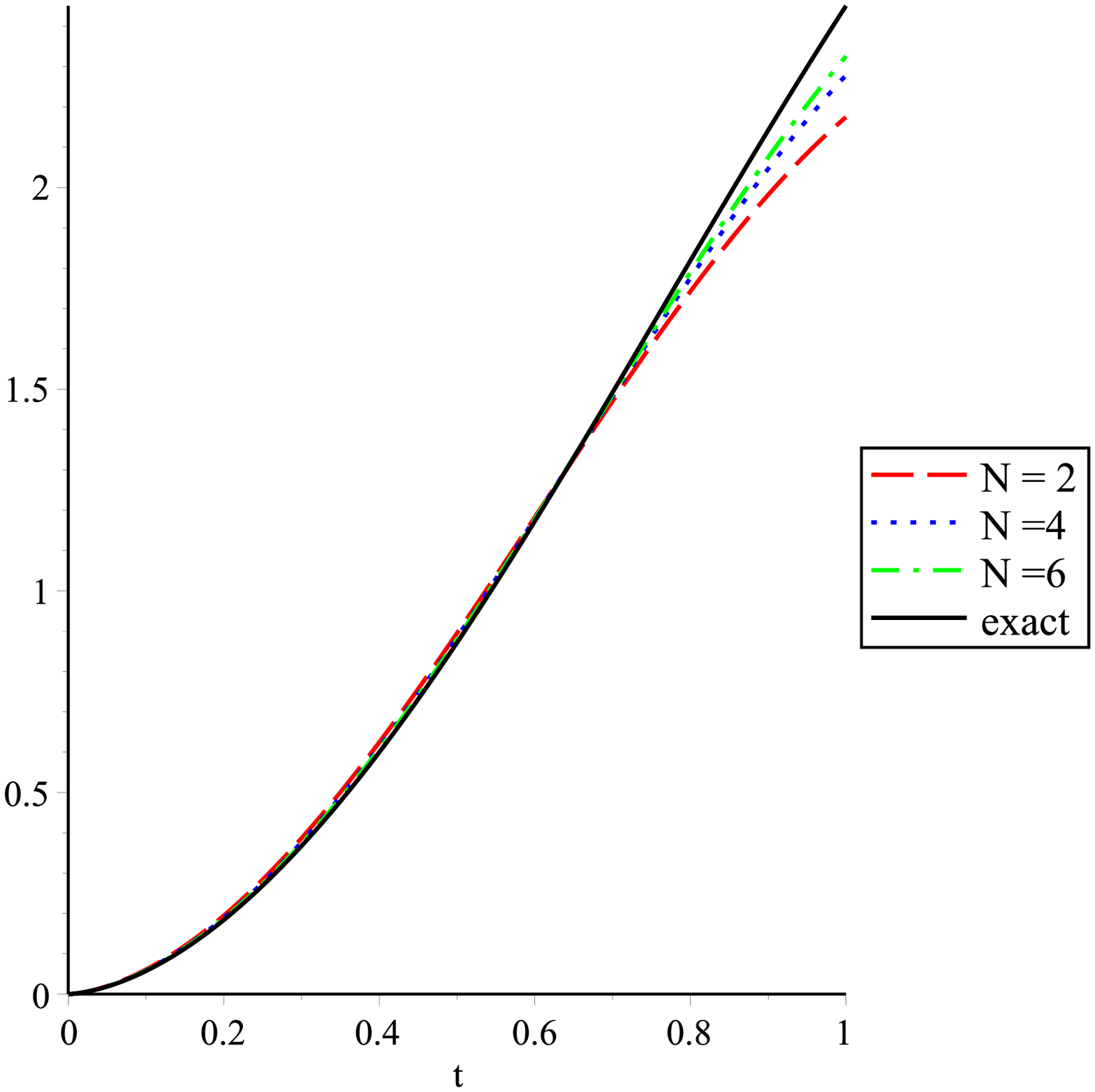}} \hspace{1cm}
\subfigure[Error]{\includegraphics[scale=0.25]{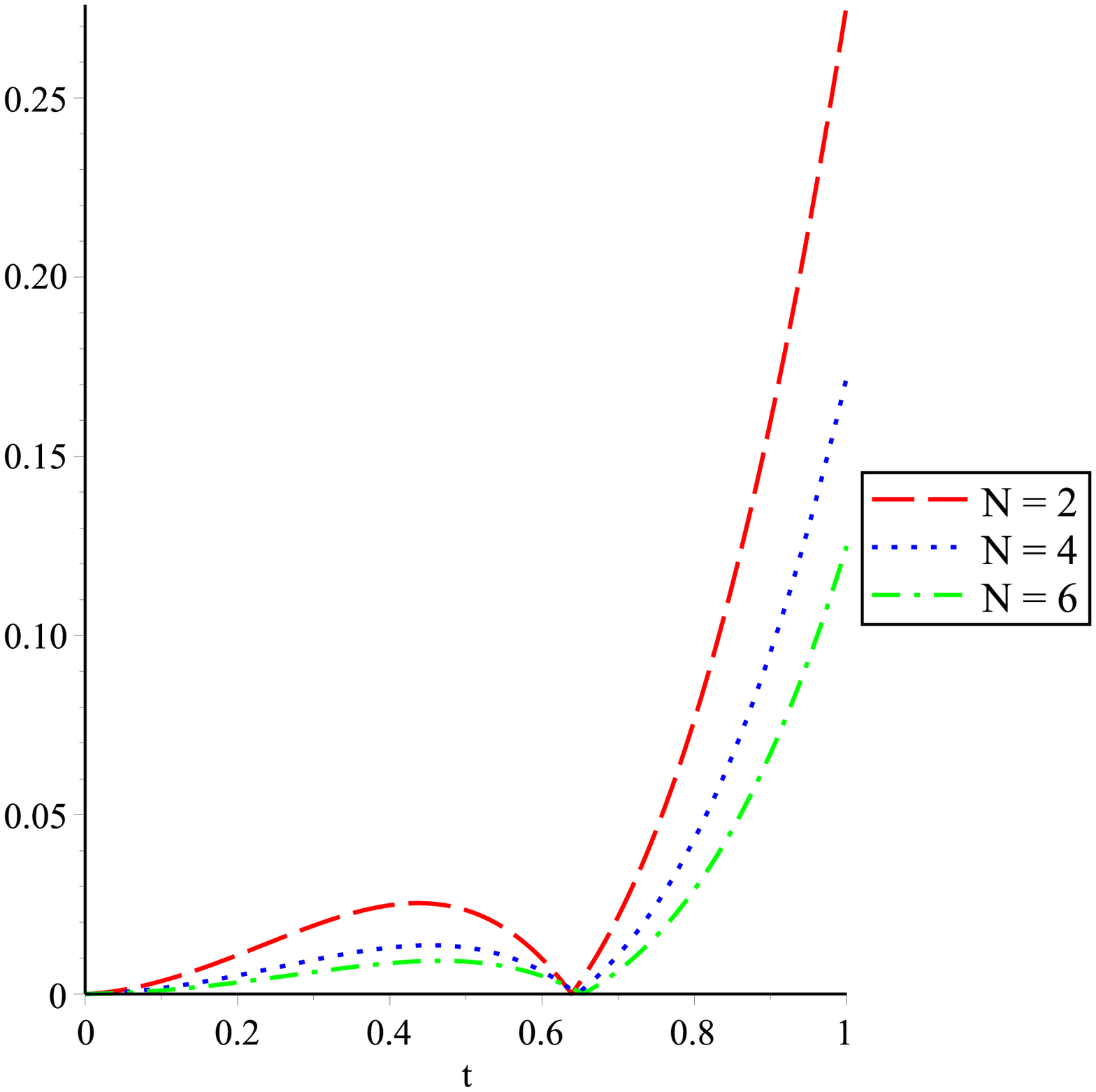}}
\end{center}
\caption{Type II left Caputo derivative of order $\ati$
for the example of Section~\ref{sec:ex_numer}---analytic
versus numerical approximations obtained from Theorem~\ref{N_teo3}.}
\label{IntExp4}
\end{figure}
% ------------------------
\begin{figure}[!ht]
\begin{center}
\subfigure[${^C_0\mathbb{D}_t^{\beta(t)}} x(t)$]{\includegraphics[scale=0.25]{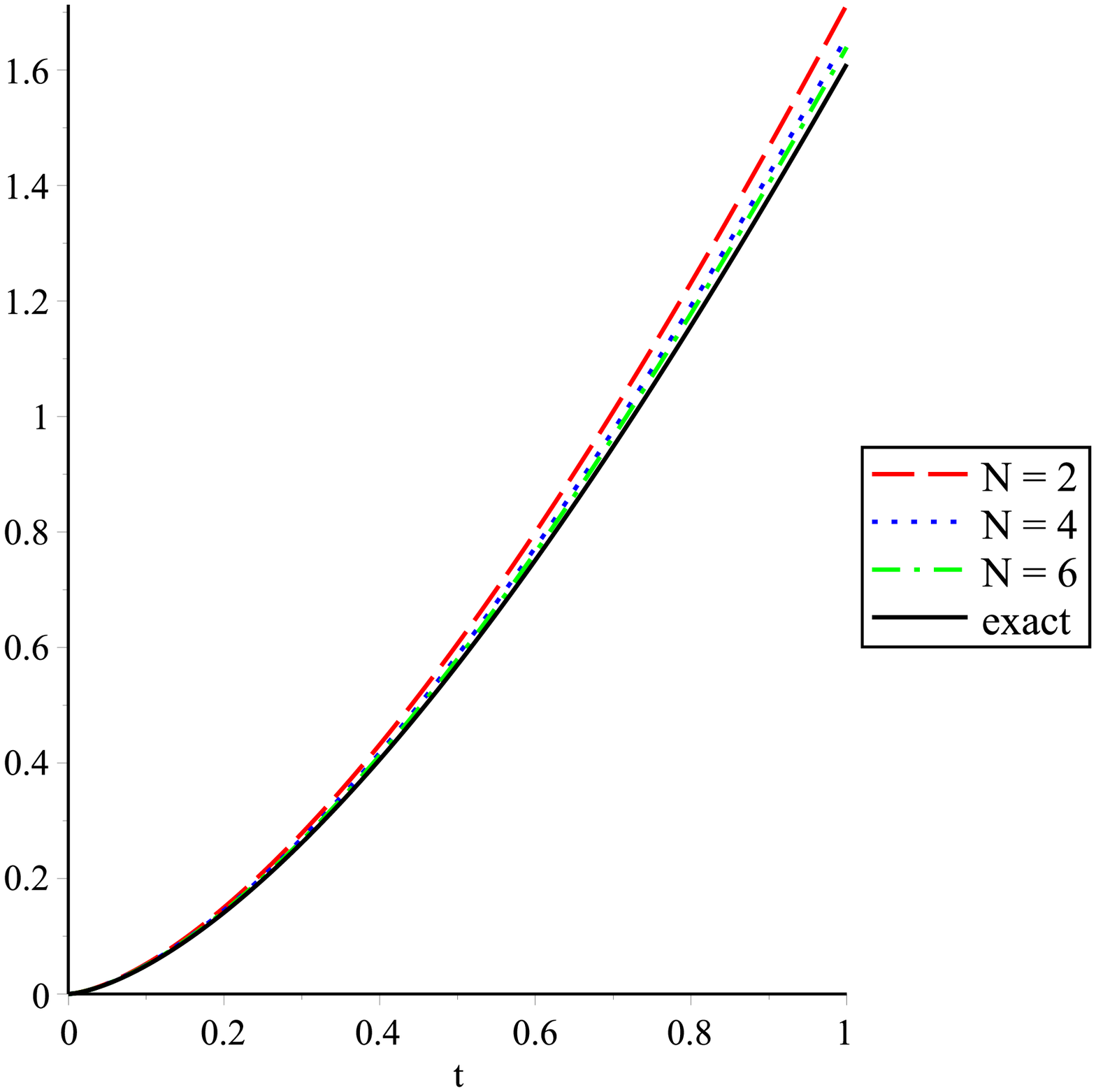}}\hspace{1cm}
\subfigure[Error]{\includegraphics[scale=0.25]{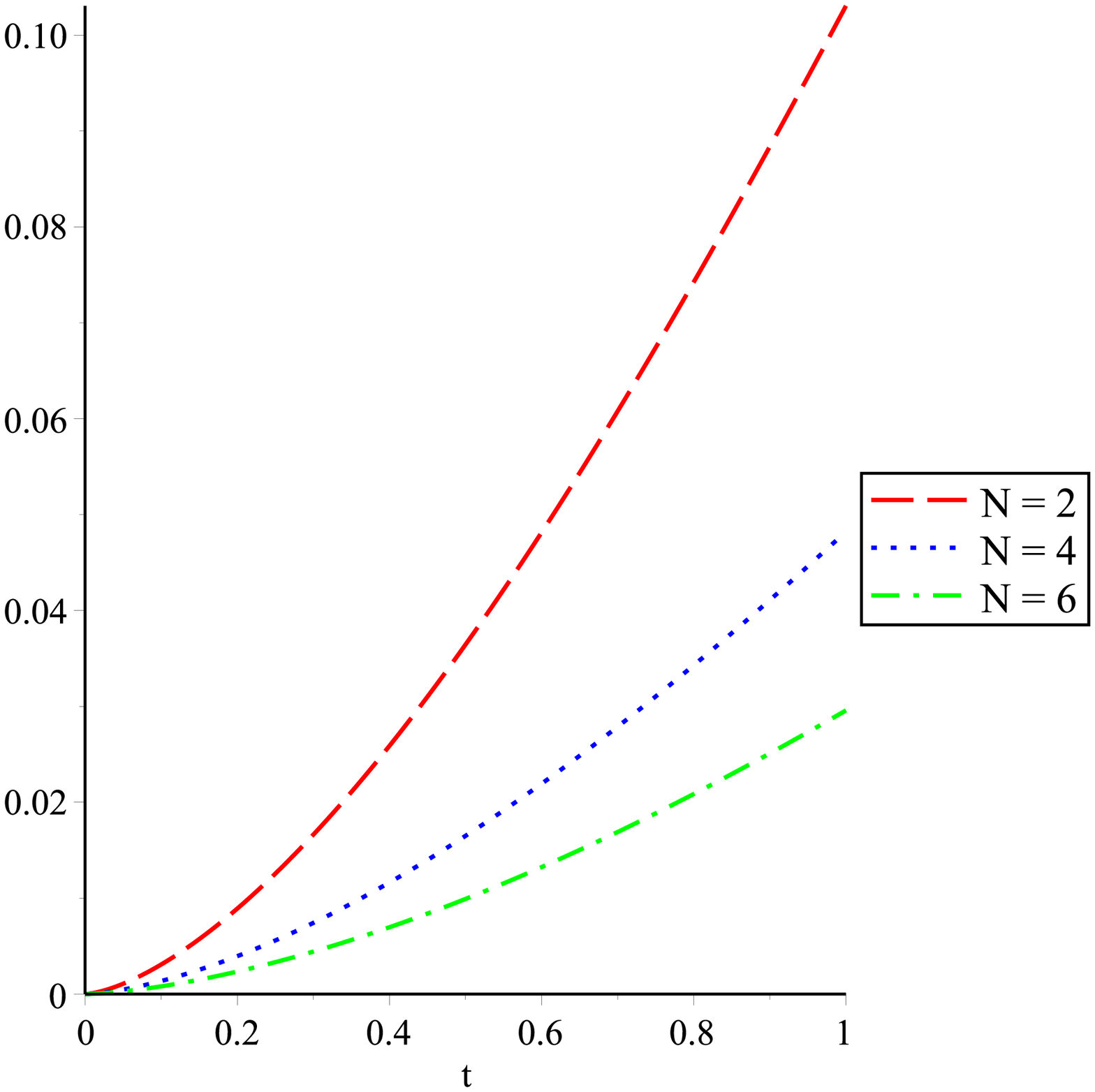}}
\end{center}
\caption{Type III left Caputo derivative of order $\beta(t)$
for the example of Section~\ref{sec:ex_numer}---analytic
versus numerical approximations obtained from Theorem~\ref{N_teo1}.}
\label{IntExp5}
\end{figure}
% ------------------------
\begin{figure}[!ht]
\begin{center}
\subfigure[${^C_0D_t^{\beta(t)}} x(t)$]{\includegraphics[scale=0.25]{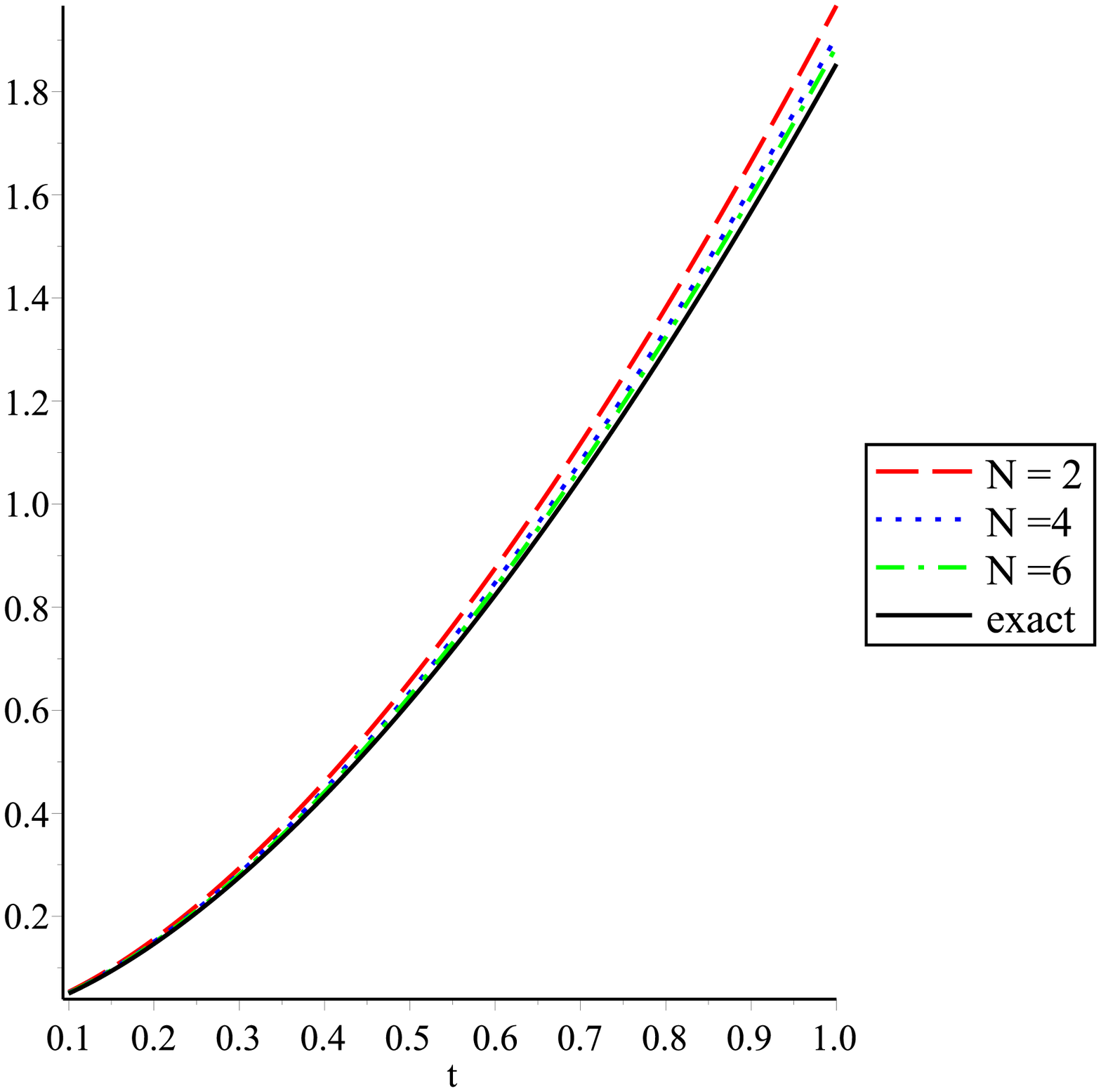}}\hspace{1cm}
\subfigure[Error]{\includegraphics[scale=0.25]{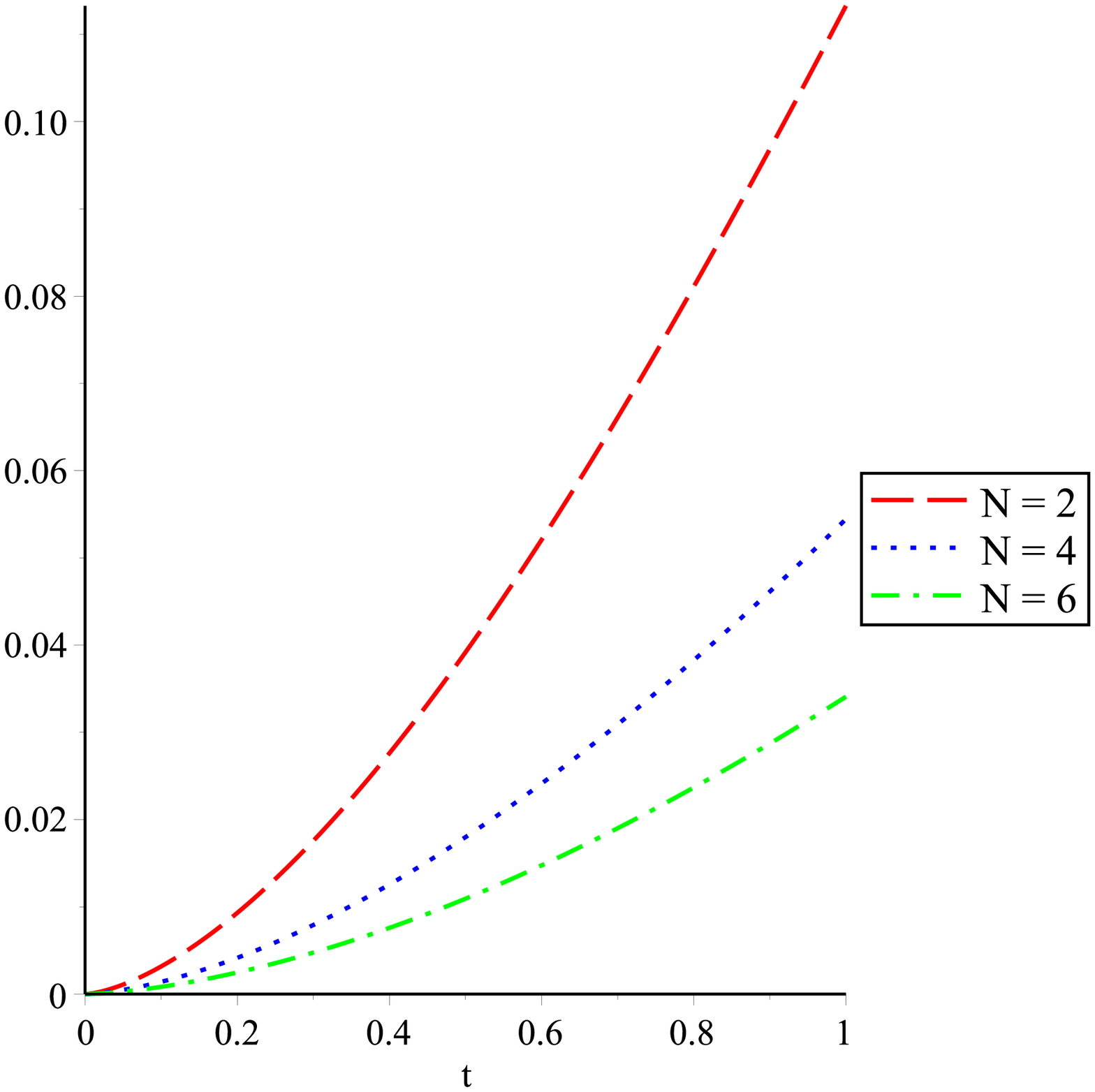}}
\end{center}
\caption{Type I left Caputo derivative of order $\beta(t)$
for the example of Section~\ref{sec:ex_numer}---analytic
versus numerical approximations obtained from Theorem~\ref{N_teo2}.}
\label{IntExp6}
\end{figure}
% ------------------------
\begin{figure}[!ht]
\begin{center}
\subfigure[${^C_0\mathcal{D}_t^{\beta(t)}} x(t)$]{\includegraphics[scale=0.25]{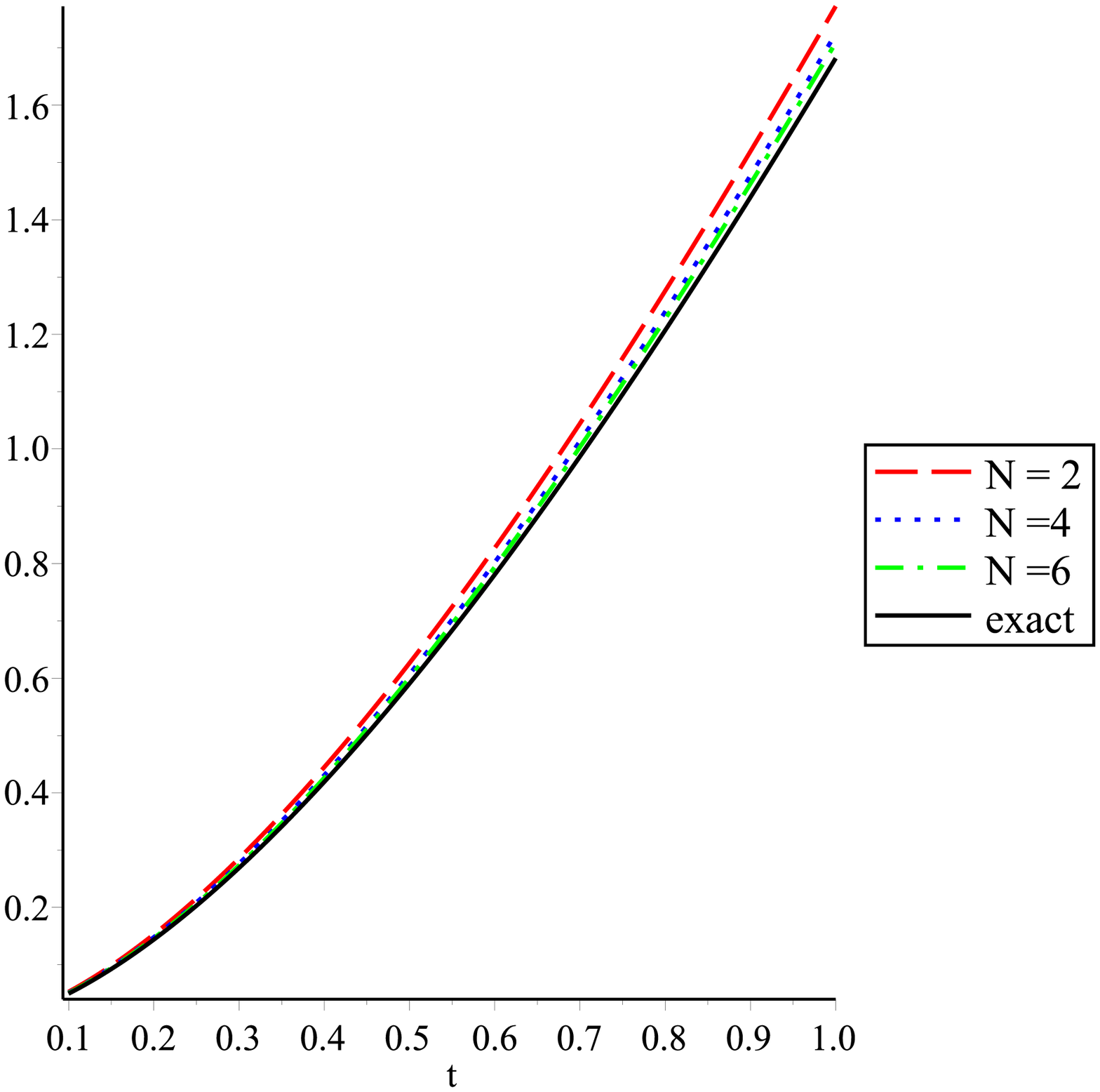}} \hspace{1cm}
\subfigure[Error]{\includegraphics[scale=0.25]{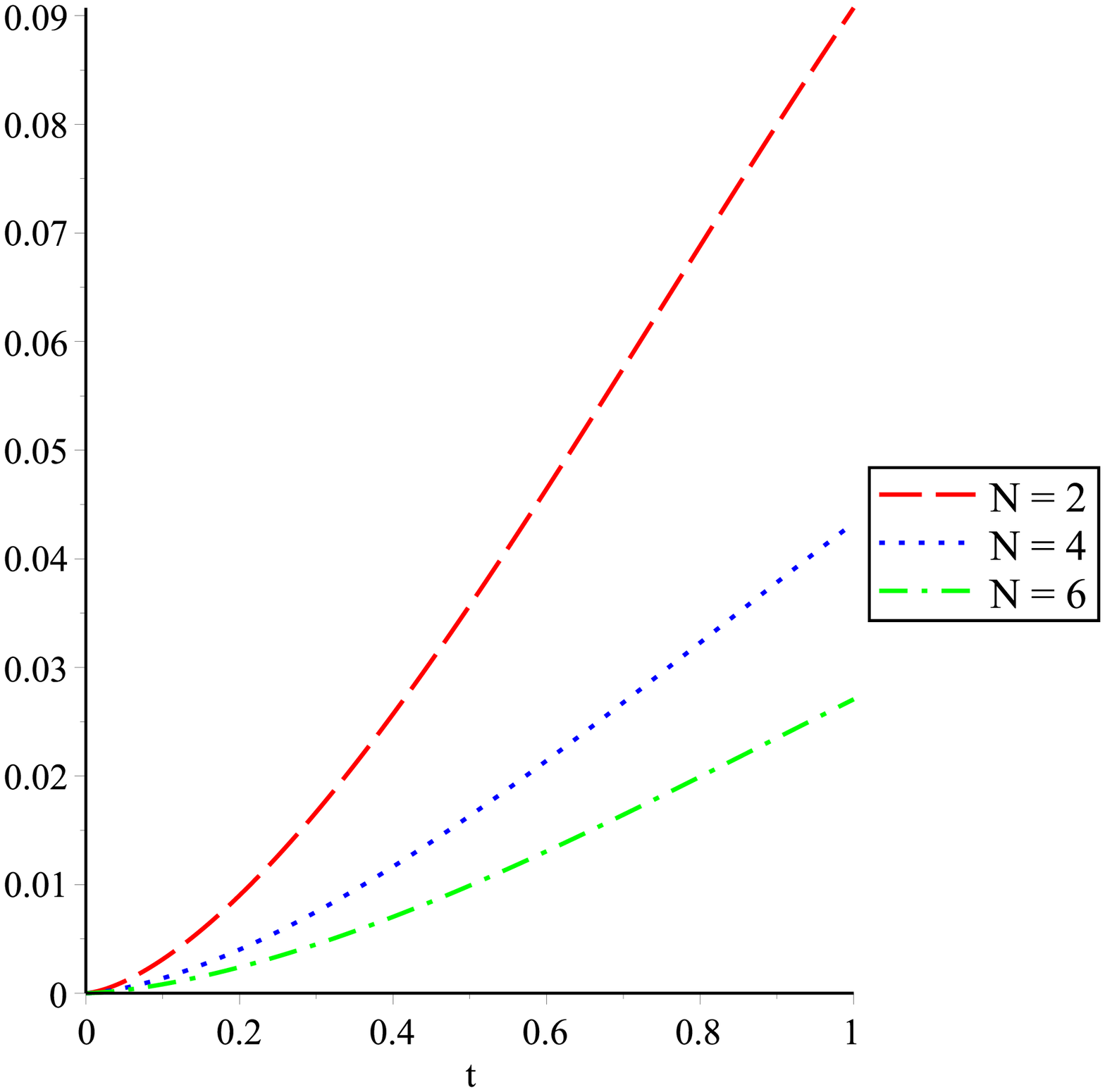}}
\end{center}
\caption{Type II left Caputo derivative of order $\beta(t)$
for the example of Section~\ref{sec:ex_numer}---analytic
versus numerical approximations obtained from Theorem~\ref{N_teo3}.}
\label{IntExp7}
\end{figure}

% -------------------------------------------

\section{Applications}
\label{sec:2_Appl}

In this section we apply the proposed technique to some concrete
fractional differential equations of physical relevance.

% -----------------------

\subsection{A time-fractional diffusion equation}
\label{subsec:example1}

We extend the one-dimensional time-fractional diffusion equation \cite{Cap3:Lin}
to the variable-order case. Consider
$u=u(x,t)$ with domain $[0,1]^2$. The partial fractional differential
equation of order $\ati$ is the following:
\begin{equation}
\label{eq:diffeq}
{^C_{0}\mathbb{D}_{t}^{\ati}u}(x,t)-\frac{\partial^2u}{\partial x^2}(x,t)
=f(x,t) \quad \mbox{ for } \, x\in[0,1],
\quad t\in [0,1],
\end{equation}
subject to the boundary conditions \index{boundary conditions}
\begin{equation}
\label{eq:diffeq:bc1}
u(x,0)=g(x), \quad \mbox{ for } \, x\in(0,1),
\end{equation}
and
\begin{equation}
\label{eq:diffeq:bc2}
u(0,t)=u(1,t)=0,\quad \mbox{ for } \, t\in[0,1].
\end{equation}
We mention that when $\ati\equiv1$, one obtains the classical diffusion equation,
and when $\ati \equiv 0$ one gets the classical Helmholtz elliptic equation.
Using Lemma~\ref{2_1_Lemma_power}, it is easy to check that
$$
u(x,t)=t^2\sin(2\pi x)
$$
is a solution to \eqref{eq:diffeq}--\eqref{eq:diffeq:bc2} with
$$
f(x,t)=\left(\frac{2}{\Gamma(3-\ati)}t^{2-\ati}
+4\pi^2t^2\right)\sin(2\pi x)
$$
and
$$
g(x)=0
$$
(compare with Example~1 in \cite{Cap3:Lin}).
The numerical procedure is the following: replace ${^C_{0}\mathbb{D}_{t}^{\ati}u}$
with the approximation given in Theorem~\ref{N_teo1},
taking $n=1$ and an arbitrary $N\geq 1$, that is,
$$
{^C_{0}\mathbb{D}_{t}^{\ati}u}(x,t)\approx A t^{1-\ati}
\frac{\partial u}{\partial t}(x,t) +\sum_{p=1}^N B_p t^{1-p-\ati} V_p(x,t)
$$
with
\begin{equation*}
\begin{split}
A& =\DS \frac{1}{\Gamma(2-\ati)}\left[
1+\sum_{l=1}^N \frac{\Gamma(\ati-1+l)}{\Gamma(\ati-1)l!}  \right],\\
B_p & =  \DS\frac{\Gamma(\ati-1+p)}{\Gamma(1-\ati)\Gamma(\ati)(p-1)!},\\
V_p(x,t)& = \DS\int_{0}^{t}\t^{p-1}\frac{\partial u}{\partial t}(x,\t)d\t.
\end{split}
\end{equation*}
Then, the initial fractional problem \eqref{eq:diffeq}--\eqref{eq:diffeq:bc2}
is approximated by the following system of second-order partial differential equations:
$$
A t^{1-\ati}\frac{\partial u}{\partial t}(x,t)
+\sum_{p=1}^N B_p t^{1-p-\ati} V_p(x,t)
-\frac{\partial^2u}{\partial x^2}(x,t)=f(x,t)
$$
and
$$
\frac{\partial V_p}{\partial t}(x,t)=t^{p-1}
\frac{\partial u}{\partial t}(x,t),
\quad p=1,\ldots, N,
$$
for $x\in[0,1]$ and for $t\in [0,1]$,
subject to the boundary conditions \index{boundary conditions}
$$
u(x,0)=0, \quad \mbox{ for } \, x\in(0,1),
$$
$$
u(0,t)=u(1,t)=0,\quad \mbox{ for } \, t\in[0,1],
$$
and
$$
V_p(x,0)=0, \quad \mbox{ for } \, x\in[0,1], \quad p=1,\ldots, N.
$$

% -----------------------

\subsection{A fractional partial differential equation in fluid mechanics}
\label{subsec:fluid:mech}

We now apply our approximation techniques to the following
one-dimensional linear inhomogeneous fractional Burgers'
equation of variable-order (see \cite[Example~5.2]{Cap3:Odibat}):
\begin{equation}
\label{eq:fluid:mech}
{^C_{0}\mathbb{D}_{t}^{\ati}u}(x,t)+\frac{\partial u}{\partial x}(x,t)
-\frac{\partial^2u}{\partial x^2}(x,t)=\frac{2t^{2-\ati}}{\Gamma(3-\ati)}
+2x-2,
\end{equation}
for $x\in[0,1]$ and $t\in [0,1]$,
subject to the boundary condition \index{boundary conditions}
\begin{equation}
\label{eq:fluid:mech:bc}
u(x,0)=x^2, \quad \mbox{ for } \, x\in(0,1).
\end{equation}
Here,
$$
F(x,t)=\frac{2t^{2-\ati}}{\Gamma(3-\ati)}+2x-2
$$
is the external force field. Burgers' equation is used to model gas dynamics,
traffic flow, turbulence, fluid mechanics, etc. The exact solution is
$$
u(x,t)=x^2+t^2.
$$
The fractional problem \eqref{eq:fluid:mech}--\eqref{eq:fluid:mech:bc}
can be approximated by
\begin{multline*}
A t^{1-\ati}\frac{\partial u}{\partial t}(x,t) +\sum_{p=1}^N B_p t^{1-p-\ati} V_p(x,t)
+\frac{\partial u}{\partial x}(x,t)-\frac{\partial^2u}{\partial x^2}(x,t)\\
=\frac{2t^{2-\ati}}{\Gamma(3-\ati)}+2x-2
\end{multline*}
with $A$, $B_p$ and $V_p$, $p\in\{1,\ldots,N\}$, as in Section~\ref{subsec:example1}. 
The approximation error can be decreased as much as desired by increasing the value of $N$.

\begingroup
\renewcommand{\addcontentsline}[3]{}

\endgroup

% ------------------------------------------- 

\CP
%================================================================
\chapter{The fractional calculus of variations}
\label{PartII_FCV}

In this chapter, we consider general fractional problems of the calculus
of variations, where the Lagrangian depends on a combined Caputo fractional
derivative of variable fractional order $^CD_\gamma^{\a,\b}$ given as a
combination of the left and the right Caputo fractional derivatives of orders, 
respectively, $\a$ and $\b$. More specifically, here we study some problems
of the calculus of variations with integrands depending on the independent
variable $t$, an arbitrary function $x$ and a fractional derivative
$^CD_\gamma^{\a,\b}x$ defined by
$$
^{C}D_{\gamma}^{\a,\b}x(t)=\gamma_1 \, \LC x(t)+\gamma_2 \, {^C_tD_b^{\b}} x(t),
$$
where $\gamma =\left(\gamma_1,\gamma_2 \right)\in [0,1]^2$, 
with $\gamma_1$ and $\gamma_2$ not both zero.

Starting from the fundamental variational problem, we investigate different
types of variational problems: problems with time delay or with higher-order
derivatives, isoperimetric problems, problems with holonomic constraints and
problems of Herglotz and those depending on combined Caputo fractional
derivatives of variable-order.
Our variational problems are known as free-time problems because, in general,
we impose a boundary condition at the initial time $t=a$, but we consider
the endpoint $b$ of the integral and the terminal state $x(b)$ to be free
(variable). The main results provide necessary optimality conditions of
Euler--Lagrange type, described by fractional differential equations 
of variable-order, and different transversality optimality conditions.

Our main contributions are to consider the Lagrangian containing a fractional
operator where the order is not a constant, and may depend on time. Moreover,
we do not only assume that $x(b)$ is free, but the endpoint $b$ is also free.

In Section~\ref{sec:introd}, we introduce the combined Caputo fractional
derivative of variable-order and provide the necessary concepts and results
needed in the sequel (Section~\ref{subsec:Combined}). We deduce two formulas
of integration by parts that involve the combined Caputo fractional derivative
of higher order (Section~\ref{subsec:intbyparts}).

The fractional variational problems under our consideration are formulated
in terms of the fractional derivative $^CD_\gamma^{\a,\b}$. We discuss the
fundamental concepts of a variational calculus such as the Euler--Lagrange
equations and transversality conditions (Section~\ref{sec:FP}), variational
problems involving higher-order derivatives (Section~\ref{sec:HO}), variational
problems with time delay (Section~\ref{sec:delay}), isoperimetric problems
(Section~\ref{sec:Iso}), problems with holonomic constraints (Section~\ref{sec:Hol})
and the last Section \ref{sec:Herg} investigates fractional variational Herglotz
problems.

Some illustrative examples are presented for all considered variational problems.

The results of this chapter first appeared in
\cite{Cap4:Tavares2015,Cap4:Tavares2017,Cap4:Tavares_4,Cap4:Tavares_5}.

%--------------------------------------------------------

\section{Introduction}
\label{sec:introd}

In this section, we recall the fundamental definition of the combined Caputo
fractional derivative presented in Section \ref{sec:combC_classic}
(see Definition~\ref{CombClassical}), and generalize it for fractional 
variable-order.
In the end, we prove an integration by parts formula, involving the
higher-order Caputo fractional derivative of variable-order.

\subsection{Combined operators of variable-order}
\label{subsec:Combined}

Motivated by the combined fractional Caputo derivative, we propose the following
definitions about combined variable-order fractional calculus.

Let $\alpha, \, \beta: [a,b]^2\rightarrow(0,1)$ be two functions and
$\gamma=(\gamma_1,\gamma_2) \in [0,1]^{2}$ a vector, with $\gamma_1$ and $\gamma_2$ not both zero.

\begin{Definition}
\label{Comb_RL1}
\index{combined Riemann--Liouville fractional derivative! variable-order}
The combined Rie\-mann--Liouville fractional derivative of variable-order,
denoted by $D_\gamma^{\a,\b}$, is defined by
$$
D_\gamma^{\a,\b}=\gamma_1 \, \LDa + \gamma_2 \, \RDb,
$$
acting on $x\in C([a,b];\mathbb{R})$ in the following way:
$$
D_\gamma^{\a,\b} x(t)=\gamma_1 \, \LDa x(t)+\gamma_2 \, \RDb x(t).
$$
\end{Definition}

\begin{Definition}
\label{Comb_Cap}
\index{combined Caputo fractional derivative! variable-order}
The combined Caputo fractional derivative operator of variable-order,
denoted by $^{C}D_\gamma^{\a,\b}$, is defined by
$$
^CD_\gamma^{\a,\b}=\gamma_1 \, \LC+\gamma_2 \, \RCb,
$$
acting on $x\in C^{1}([a,b];\mathbb{R})$ in the following way:
$$
^{C}D_\gamma^{\a,\b}x(t)=\gamma_1 \, \LC x(t)+\gamma_2 \, \RCb x(t).
$$
\end{Definition}

In the sequel, we need the auxiliary notation of the dual fractional derivative,
defined by
\begin{equation}
D_{\overline{\gamma}}^{\b,\a}=\gamma_2 \, {_aD_t^{\b}}+\gamma_1 \, {_tD_T^{\a}},
\end{equation}
where $\overline{\gamma}=(\gamma_2,\gamma_1)$ and $T\in(a,b]$.

To generalize the concept of the combined fractional derivative to higher orders,
we need to review some definitions of higher-order operators.

Let $ n \in \mathbb{N}$ and $x: [a,b] \rightarrow \mathbb{R}$ be a function
of class $C^n$. The fractional order is a continuous function of two variables,
$\alpha_n:[a,b]^2 \to (n-1,n)$.

\begin{Definition}
\index{Riemann--Liouville fractional integral! left (higher-order)}
\index{Riemann--Liouville fractional integral! right (higher-order)}
\label{def:RL:fi}
The left and right Riemann--Liouville fractional integrals of order $\an$
are defined respectively by
$$
\LIan x(t)=\int_a^t  \frac{1}{\Gamma(\alpha_n(t,\t))}(t-\t)^{\alpha_n(t,\t)-1}x(\t)d\t
$$
and
$$
\RIan x(t)=\int_t^b\frac{1}{\Gamma(\alpha_n(\t,t))}(\t-t)^{\alpha_n(\t,t)-1}x(\t)d\t.
$$
\end{Definition}

With this definition of Riemann--Liouville fractional integrals of variable-order,
and considering $n=1$, in Appendix~A.1 we implemented two functions \texttt{leftFI(x,alpha,a)}
and \texttt{rightFI(x,alpha,b)} that approximate, respectively, 
the Riemann--Liouville fractional integrals $\LIan x$ and $\RIan x$, 
using the open source software package \textsf{Chebfun} \cite{Cap4:chebfun:book}. 
With these two functions we present also, in Appendix~A.1, an illustrative example 
where we determine computational approximations to the fractional integrals 
for a specific power function of the form
$x(t)=t^\gamma$ (see Example~\ref{ex:cheb02} in Appendix~A.1).
For this numerical computations, we have used \textsf{MATLAB} \cite{Cap4:Matlab}.

\begin{Definition}
\index{Riemann--Liouville fractional derivative! left (higher-order)}
\index{Riemann--Liouville fractional derivative! right (higher-order)}
\label{HORLFD}
The left and right Riemann--Liouville fractional derivatives of order $\an$ are
defined by
$$
\LDan x(t)=\frac{d^n}{dt^n}\int_a^t \frac{1}{\Gamma(n-\alpha_n(t,\t))}
(t-\t)^{n-1-\alpha_n(t,\t)}x(\t)d\t
$$
and
$$
\RDan x(t)=(-1)^{n}\frac{d^n}{dt^n}\int_t^b\frac{1}{\Gamma(n-\alpha_n(\t,t))}
(\t-t)^{n-1-\alpha_n(\t,t)} x(\t)d\t,
$$
respectively.
\end{Definition}

\begin{Definition}
\index{Caputo fractional derivative! left (higher-order)}
\index{Caputo fractional derivative! right (higher-order)}
\label{HOCFD}
The left and right Caputo fractional derivatives of order $\an$ are defined by
\begin{equation}
\label{eq:left:Cap:der}
\LCan x(t)=\int_a^t\frac{1}{\Gamma(n-\alpha_n(t,\t))}
(t-\t)^{n-1-\alpha_n(t,\t)}x^{(n)}(\t)d\t
\end{equation}
and
\begin{equation}
\label{eq:right:Cap:der}
\RCan x(t)=(-1)^{n}\int_t^b\frac{1}{\Gamma(n-\alpha_n(\t,t))}
(\t-t)^{n-1-\alpha_n(\t,t)}x^{(n)}(\t)d\t,
\end{equation}
respectively.
\end{Definition}

\begin{Remark}
Definitions~\ref{HORLFD} and \ref{HOCFD}, for the particular case
of order between 0 and 1, can be found in \cite{Cap4:book:MOT}.
They seem to be new for the higher-order case.
\end{Remark}

Considering Definition~\ref{HOCFD}, in Appendix~A.2 we  implement two new functions 
\texttt{leftCaputo(x,alpha,a,n)} and \texttt{rightCaputo(x,alpha,b,n)} 
that approximate the higher-order Caputo fractional derivatives $\LCan x$ 
and $\RCan x$, respectively.
With these two functions, we present, also in Appendix~A.2, an illustrative
example where we study approximations to the Caputo fractional derivatives
for a specific power function of the form $x(t)=t^\gamma$ (see Example~\ref{ex:cheb01}
in Appendix~A.2).

\begin{Remark}
From Definition~\ref{def:RL:fi}, it follows that
$$
\LDan x(t)=\frac{d^{n}}{dt^{n}}\, _aI_t^{n-\an} x(t),
\, \RDan x(t)=(-1)^{n}\frac{d^{n}}{dt^{n}}\, _tI_b^{n-\an} x(t)
$$
and
$$
\LCan x(t)=\, _aI_t^{n-\an} \frac{d^{n}}{dt^{n}}x(t),
\, \RCan x(t)=(-1)^{n}\, _tI_b^{n-\an} \frac{d^{n}}{dt^{n}}x(t).
$$
\end{Remark}

In Lemma~\ref{lemma:power}, we obtain higher-order Caputo fractional derivatives
of a power function.
For that, we assume that the fractional order depends only on the first variable:
$\alpha_n(t,\t) := \overline{\alpha}_n(t)$, where $\overline{\alpha}_n:[a,b]\to (n-1,n)$
is a given function.

\begin{Lemma}
\label{lemma:power}
Let $x(t)=(t-a)^\gamma$ with $\gamma>n-1$. Then,
$$
{^C_aD_t^{\overline{\alpha}_n(t)}}x(t)
= \frac{\Gamma(\gamma+1)}{\Gamma(\gamma-\overline{\alpha}_n(t)+1)}
(t-a)^{\gamma-\overline{\alpha}_n(t)}.
$$
\end{Lemma}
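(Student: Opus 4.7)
The plan is a direct computation: substitute the power function into the defining integral \eqref{eq:left:Cap:der} and reduce it to a Beta integral. With the assumption $\alpha_n(t,\tau) = \overline{\alpha}_n(t)$, the factor $1/\Gamma(n-\overline{\alpha}_n(t))$ depends only on $t$ and pulls out of the integral, so that
$$
{^C_aD_t^{\overline{\alpha}_n(t)}}x(t) = \frac{1}{\Gamma(n-\overline{\alpha}_n(t))}\int_a^t (t-\tau)^{n-1-\overline{\alpha}_n(t)}\,x^{(n)}(\tau)\,d\tau.
$$
Since $\gamma > n-1$, the classical power rule yields $x^{(n)}(\tau) = \frac{\Gamma(\gamma+1)}{\Gamma(\gamma-n+1)}(\tau-a)^{\gamma-n}$, with $\gamma - n > -1$ guaranteeing integrability near $\tau = a$.

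Next, I would perform the substitution $\tau - a = s(t-a)$, $d\tau = (t-a)\,ds$, which sends $\tau=a$ to $s=0$ and $\tau=t$ to $s=1$, and converts $(t-\tau)^{n-1-\overline{\alpha}_n(t)}(\tau-a)^{\gamma-n}$ into $(t-a)^{\gamma-1-\overline{\alpha}_n(t)}(1-s)^{n-1-\overline{\alpha}_n(t)}s^{\gamma-n}$. Combining with the Jacobian, the integral becomes
$$
(t-a)^{\gamma-\overline{\alpha}_n(t)}\int_0^1 (1-s)^{n-1-\overline{\alpha}_n(t)} s^{\gamma-n}\,ds = (t-a)^{\gamma-\overline{\alpha}_n(t)}\,B\bigl(\gamma-n+1,\,n-\overline{\alpha}_n(t)\bigr),
$$
recognizing the Beta function from Definition~\ref{Betaf}. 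The arguments of $B$ are both positive: the first by the hypothesis $\gamma>n-1$, the second because $\overline{\alpha}_n(t) \in (n-1,n)$.

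Finally, I would apply the identity $B(u,v)=\Gamma(u)\Gamma(v)/\Gamma(u+v)$ to rewrite the Beta function as $\Gamma(\gamma-n+1)\Gamma(n-\overline{\alpha}_n(t))/\Gamma(\gamma-\overline{\alpha}_n(t)+1)$, and collect all the factors. The $\Gamma(n-\overline{\alpha}_n(t))$ cancels against the prefactor, and the $\Gamma(\gamma-n+1)$ cancels against the one appearing in $x^{(n)}$, leaving exactly
$$
\frac{\Gamma(\gamma+1)}{\Gamma(\gamma-\overline{\alpha}_n(t)+1)}(t-a)^{\gamma-\overline{\alpha}_n(t)},
$$
as claimed. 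There is no real obstacle here; the only point requiring a brief justification is that all the Gamma and Beta arguments lie in the admissible ranges, which follows immediately from the hypothesis $\gamma > n-1$ together with $\overline{\alpha}_n(t) \in (n-1,n)$. The result is the natural variable-order extension of the constant-order formula in Definition~\ref{CapDer} and reduces to it when $\overline{\alpha}_n$ is constant.
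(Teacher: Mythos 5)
Your proposal is correct and follows essentially the same route as the paper's proof: differentiate the power function $n$ times, substitute into the defining integral, change variables via $\t-a=s(t-a)$ to produce $B(\gamma-n+1,\,n-\overline{\alpha}_n(t))$, and finish with the identity $B(u,v)=\Gamma(u)\Gamma(v)/\Gamma(u+v)$. The brief check that all Gamma and Beta arguments are positive is a small welcome addition not spelled out in the paper, but it does not change the argument.
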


\begin{proof}
As $x(t)=(t-a)^\gamma$, if we differentiate it $n$ times, we obtain
$$
x^{(n)}(t)=\frac{\Gamma(\gamma + 1)}{\Gamma(\gamma-n+1)}(t-a)^{\gamma-n}.
$$
Using Definition~\ref{HOCFD} of the left Caputo fractional 
derivative,\index{Caputo fractional derivative! left (higher-order)} we get
$$
\begin{array}{ll}
{^C_aD_t^{\overline{\alpha}_n(t)}}x(t)
&= \DS \int_a^t \frac{1}{\Gamma(n-\overline{\alpha}_n(t))}
(t-\t)^{n-1-\overline{\alpha}_n(t)}x^{(n)}(\t) d\t\\
&=\DS \int_a^t \frac{\Gamma(\gamma + 1)}{\Gamma(\gamma-n+1)
\Gamma(n-\overline{\alpha}_n(t))}(t-\t)^{n-1-\overline{\alpha}_n(t)}(\t-a)^{\gamma-n} d\t.
\end{array}
$$
Now, we proceed with the change of variables $\t-a=s(t-a)$.
Using the Beta function $B(\cdot,\cdot)$, we obtain that
\index{beta function}
$$
\begin{array}{ll}
{^C_aD_t^{\overline{\alpha}_n(t)}}x(t)
&= \DS \frac{\Gamma(\gamma + 1)}{\Gamma(\gamma-n+1)\Gamma(n-\overline{\alpha}_n(t))}\\
& \quad \times \int_0^1 (1-s)^{n-1-\overline{\alpha}_n(t)} s^{\gamma-n} (t-a)^{\gamma-\overline{\alpha}_n(t)} ds\\
&= \DS \frac{\Gamma(\gamma + 1)(t-a)^{\gamma-\overline{\alpha}_n(t)}}{\Gamma(\gamma-n+1)
\Gamma(n-\overline{\alpha}_n(t))} B \left(\gamma-n+1, n-\overline{\alpha}_n(t)\right)\\
&= \DS \frac{\Gamma(\gamma + 1)}{\Gamma(\gamma-\overline{\alpha}_n(t)+1)}
\quad (t-a)^{\gamma-\overline{\alpha}_n(t)}.
\end{array}
$$
The proof is complete.
\end{proof}

Considering the higher-order left Caputo fractional derivative's formula of a
power function of the form $x(t)=(t-a)^\gamma$, deduced before, in Appendix~A.2
we determine the left Caputo fractional derivative of the particular function
$x(t)=t^4$ for several values of $t$ and compare them with the approximated values
obtained by the \textsf{Chebfun} function \texttt{leftCaputo(x,alpha,a,n)}
(see Example~\ref{ex:lemma:power} in Appendix A.2).

For our next result, we assume that the fractional order depends only on the
second variable:
$\alpha_n(\t,t) := \overline{\alpha}_n(t)$, where  $\overline{\alpha}_n:
[a,b] \to (n-1,n)$ is a given function. The proof is similar to that of
Lemma~\ref{lemma:power}, and so we omit it here.

\begin{Lemma}
\label{lemma:power:2}
Let $x(t)=(b-t)^\gamma$ with $\gamma>n-1$. Then,
$$
{^C_tD_b^{\overline{\alpha}_n(t)}}x(t)
= \frac{\Gamma(\gamma+1)}{\Gamma(\gamma-\overline{\alpha}_n(t)+1)}
(b-t)^{\gamma-\overline{\alpha}_n(t)}.
$$
\end{Lemma}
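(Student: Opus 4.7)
The plan is to mirror, step by step, the proof of Lemma \ref{lemma:power}, exploiting the fact that the right Caputo derivative in \eqref{eq:right:Cap:der} differs from the left one only by an overall sign $(-1)^n$ and by the direction of the kernel, both of which are compensated by differentiating a function of the form $(b-t)^{\gamma}$. Under the standing assumption $\alpha_n(\tau,t)=\overline{\alpha}_n(t)$, the fractional order can be treated as a constant with respect to the integration variable $\tau$, so the argument reduces to a classical Euler-type integral.

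First I would compute the $n$th ordinary derivative of $x$, obtaining
$$
x^{(n)}(t)=(-1)^{n}\,\frac{\Gamma(\gamma+1)}{\Gamma(\gamma-n+1)}(b-t)^{\gamma-n},
$$
which is legitimate because $\gamma>n-1$. Substituting into \eqref{eq:right:Cap:der} gives
$$
{^C_tD_b^{\overline{\alpha}_n(t)}}x(t)
=\frac{(-1)^{n}(-1)^{n}\Gamma(\gamma+1)}{\Gamma(\gamma-n+1)\Gamma(n-\overline{\alpha}_n(t))}\int_{t}^{b}(\tau-t)^{n-1-\overline{\alpha}_n(t)}(b-\tau)^{\gamma-n}\,d\tau,
$$
and the two sign factors cancel. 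Next I would perform the affine change of variables $\tau=t+s(b-t)$, $s\in[0,1]$, which sends $\tau-t\mapsto s(b-t)$ and $b-\tau\mapsto(1-s)(b-t)$, pulling $(b-t)^{\gamma-\overline{\alpha}_n(t)}$ outside the integral and leaving
$$
\int_{0}^{1}s^{n-1-\overline{\alpha}_n(t)}(1-s)^{\gamma-n}\,ds
=B\!\bigl(n-\overline{\alpha}_n(t),\,\gamma-n+1\bigr).
$$
Finally, applying the identity $B(p,q)=\Gamma(p)\Gamma(q)/\Gamma(p+q)$ (Definition~\ref{Betaf}) collapses the prefactor to the claimed expression $\Gamma(\gamma+1)/\Gamma(\gamma-\overline{\alpha}_n(t)+1)$.

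I do not expect any serious obstacle here: the substitution is the exact mirror of the one used for the left derivative, and the hypothesis $\gamma>n-1$ ensures that both $\gamma-n+1>0$ and $n-\overline{\alpha}_n(t)>0$, so the Beta integral converges and its gamma-function representation is valid. The only point that requires a bit of care is the bookkeeping of the two $(-1)^n$ factors, one coming from the definition of the right Caputo derivative and one from the $n$th derivative of $(b-t)^{\gamma}$; once these are seen to cancel, the remainder is a direct transcription of the computation carried out in Lemma~\ref{lemma:power}.
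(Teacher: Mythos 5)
Your proof is correct and is exactly the argument the paper intends: the text omits the proof of Lemma~\ref{lemma:power:2} precisely because it is the mirror of Lemma~\ref{lemma:power}, namely computing $x^{(n)}(t)=(-1)^{n}\frac{\Gamma(\gamma+1)}{\Gamma(\gamma-n+1)}(b-t)^{\gamma-n}$, cancelling the two $(-1)^{n}$ factors, substituting $\tau=t+s(b-t)$, and evaluating the resulting Beta integral. Your sign bookkeeping and the convergence conditions ($\gamma-n+1>0$, $n-\overline{\alpha}_n(t)>0$) are all handled correctly.
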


The next step is to consider a linear combination of the previous fractional
derivatives to define the combined fractional operators for higher-order.

Let $\alpha_n,\beta_n: [a,b]^2\rightarrow(n-1,n)$ be two variable fractional
orders, $\gamma^n =\left(\gamma_1^n,\gamma_2^n \right)\in [0,1]^2$ a vector, 
with $\gamma_1$ and $\gamma_2$ not both zero, 
and $x \in C^n\left([a,b];\mathbb{R}\right) $ a function.

\begin{Definition}
\label{def3}
\index{combined Riemann--Liouville fractional derivative! higher-order}
The hi\-gher-order combined Riemann--Liouville fractional derivative is defined by
$$D_{\gamma^n}^{\an,\bn}=\gamma_1^n \, \LDan+\gamma_2^n \, \RDbn,$$
acting on $x \in C^n \left([a,b];\mathbb{R}\right)$ in the following way:
$$D_{\gamma^n}^{\an,\bn}x(t)=\gamma_1^n \, \LDan x(t)+\gamma_2^n \, \RDbn x(t).$$
\end{Definition}

In our work, we use both Riemann--Liouville and Caputo derivatives definitions.
The emphasis is, however, in Caputo fractional derivatives.

\begin{Definition}
\label{def4}
\index{combined Caputo fractional derivative! higher-order}
The higher-order combined Caputo fractional derivative of $x$ at $t$ is defined by
$$
^{C}D_{\gamma^n}^{\an,\bn}x(t)=\gamma_1^n \, \LCan x(t)+\gamma_2^n \, \RCbn x(t).
$$
\end{Definition}

Similarly, in the sequel of this work, we need the auxiliary notation of the dual
fractional derivative:
\begin{equation}
D_{\overline{\gamma^i}}^{\bi,\ai}=\gamma_2^i \, {_aD_t^{\bi}}
+\gamma_1^i \, {_tD_T^{\ai}},
\end{equation}
where $\overline{\gamma^i}=(\gamma_2^i,\gamma_1^i)$
and $T\in(a,b]$.

Some computational aspects about the combined Caputo fractional derivative 
of variable-order, are also discussed in Appendix~A.3, using the software package
\textsf{Chebfun}. For that, we developed the new function (see Example~\ref{ex:matlab:comb} in Appendix~A.3) \texttt{combinedCaputo(x,alpha,beta,gamma1,gamma2,a,b,n)} and obtained approximated
values for a particular power function.

%--------------------------

\subsection{Generalized fractional integration by parts}
\label{subsec:intbyparts}

When dealing with variational problems, one key property is integration by parts.
Formulas of integration by parts have an important role in the proof of Euler--Lagrange
conditions. In the following theorem, such formulas are proved for integrals
involving higher-order Caputo fractional derivatives of variable-order.

Let $n \in \mathbb{N}$ and $x,y \in C^n\left([a,b]; \bR\right)$ be two functions.
The fractional order is a continuous function of two variables, $\alpha_n: [a,b]^2 \to (n-1,n)$.

\begin{Theorem}
\label{thm:FIP_HO}
\index{integration by parts! Caputo fractional derivative of higher-order}
The higher-order Caputo fractional derivatives of variable-order satisfy the
integration by parts formulas
\begin{equation*}
\begin{split}
\int_{a}^{b}y(t) \, \LCan x(t)dt&=\int_a^b x(t) \, {\RDan}y(t)dt\\
&+\left[\sum_{k=0}^{n-1} (-1)^{k} x^{(n-1-k)}(t) \, \dfrac{d^{k}}
{dt^{k}}{_tI_b^{n-\an}}y(t) \right]_{t=a}^{t=b}
\end{split}
\end{equation*}
and
\begin{equation*}
\begin{split}
\int_{a}^{b}y(t) \, {\RCan}x(t)dt &=\int_a^b x(t) \, {\LDan} y(t)dt\\
&+\left[\sum_{k=0}^{n-1} (-1)^{n+k}x^{(n-1-k)}(t) \, \dfrac{d^{k}}{dt^{k}}
{_aI_t^{n-\an}}y(t)\right]_{t=a}^{t=b}.
\end{split}
\end{equation*}
\end{Theorem}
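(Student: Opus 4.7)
The plan is to prove the first formula and note that the second follows by an entirely symmetric argument. My starting point is the observation that, by Definition~\ref{HOCFD},
\begin{equation*}
\LCan x(t) = {_aI_t^{n-\an}}\, x^{(n)}(t),
\end{equation*}
so that
\begin{equation*}
\int_{a}^{b} y(t)\, \LCan x(t)\, dt = \int_{a}^{b} y(t) \cdot {_aI_t^{n-\an}}\, x^{(n)}(t)\, dt.
\end{equation*}
First I would apply the integration by parts formula for variable-order Riemann--Liouville fractional integrals (Theorem~\ref{thm:FIP_VO_Int}) to transfer the fractional integral from $x^{(n)}$ onto $y$, obtaining
\begin{equation*}
\int_{a}^{b} y(t)\, \LCan x(t)\, dt = \int_{a}^{b} x^{(n)}(t) \cdot {_tI_b^{n-\an}}\, y(t)\, dt.
\end{equation*}

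Next I would iterate classical integration by parts $n$ times on the right-hand side, differentiating ${_tI_b^{n-\an}}\, y(t)$ and integrating the factor carrying the highest derivative of $x$. Writing $Y(t):= {_tI_b^{n-\an}}\, y(t)$, the $k$-th step contributes the boundary term $(-1)^{k}\bigl[x^{(n-1-k)}(t)\, Y^{(k)}(t)\bigr]_{a}^{b}$, and after $n$ steps there remains the bulk integral $(-1)^{n}\int_{a}^{b} x(t)\, Y^{(n)}(t)\, dt$. Using Definition~\ref{HORLFD}, one recognizes
\begin{equation*}
(-1)^{n}\, \frac{d^{n}}{dt^{n}}\, {_tI_b^{n-\an}}\, y(t) = \RDan y(t),
\end{equation*}
so this remaining integral is exactly $\int_{a}^{b} x(t)\, \RDan y(t)\, dt$. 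Collecting the accumulated boundary terms yields the first asserted identity.

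For the second formula, the only modification is that $\RCan x(t) = (-1)^{n}\, {_tI_b^{n-\an}}\, x^{(n)}(t)$, so Theorem~\ref{thm:FIP_VO_Int} transfers the integral onto $y$ and produces an extra factor of $(-1)^{n}$, after which $n$ classical integrations by parts on $\int_{a}^{b} x^{(n)}(t)\cdot {_aI_t^{n-\an}}\, y(t)\, dt$ yield boundary terms with sign $(-1)^{n+k}$ and a remaining bulk term $\int_{a}^{b} x(t)\, \frac{d^{n}}{dt^{n}}\, {_aI_t^{n-\an}}\, y(t)\, dt = \int_{a}^{b} x(t)\, \LDan y(t)\, dt$, again by Definition~\ref{HORLFD}.

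The main obstacle I anticipate is purely technical: justifying the application of Theorem~\ref{thm:FIP_VO_Int} in the higher-order setting. That theorem was stated for orders in $(\tfrac{1}{n},1)$, whereas here the relevant order is $n-\an(t,\tau) \in (0,1)$, and one has to ensure this falls in the admissible range (for some integer $m \geq 2$ one needs $\tfrac{1}{m} < n-\an(t,\tau) < 1$, which can be arranged by continuity of $\an$ away from the integer endpoints) and that $x^{(n)}, y \in C([a,b];\bR)$, which is given by the hypothesis $x,y \in C^{n}([a,b];\bR)$. Once this is in place, the rest is bookkeeping of signs and boundary terms.
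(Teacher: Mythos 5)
Your proposal is correct and follows essentially the same strategy as the paper's proof: both reduce the left-hand side to $\int_a^b x^{(n)}(t)\,{_tI_b^{n-\an}}y(t)\,dt$ and then perform $n$ classical integrations by parts, recognizing $(-1)^{n}\frac{d^{n}}{dt^{n}}\,{_tI_b^{n-\an}}y(t)=\RDan y(t)$ to identify the bulk term and collecting the stated boundary terms. The only minor divergences are in the transfer step and in the second identity: the paper swaps the order of integration directly via Dirichlet's formula rather than citing Theorem~\ref{thm:FIP_VO_Int} (your citation is legitimate, since the continuous order $\alpha_n$ on the compact square $[a,b]^2$ takes values in the open interval $(n-1,n)$, so $n-\alpha_n(t,\tau)$ is bounded away from $0$ and $1$ and the hypotheses of that theorem can be met), and the paper deduces the right-sided formula by Caputo--Torres duality whereas you redo the symmetric computation, which also works.
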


\begin{proof}
Considering the Definition \ref{HOCFD} of left Caputo fractional derivatives
of order $\an$, we obtain
\begin{multline*}\int_{a}^{b}y(t) \, \LCan x(t)dt\\
=\int_a^b \int_a^t y(t) \,\dfrac{1}{\Gamma(n-\alpha_n(t,\t))} (t-\t)^{n-1-\alpha_n(t,\t)}x^{(n)}(\t)d\t dt.\end{multline*}
Using Dirichelt's Formula, we rewrite it as
\begin{equation}
\label{eq:star}
\begin{split}
\int_a^b & \int_t^b y(\t) \, \dfrac{(\t-t)^{n-1-\alpha_n(\t,t)}}{\Gamma(n-\alpha_n(\t,t))} x^{(n)}(t)d\t dt\\
&=\int_a^b x^{(n)}(t) \int_t^b  \, \dfrac{(\t-t)^{n-1-\alpha_n(\t,t)}}{\Gamma(n-\alpha_n(\t,t))} y(\t)d\t dt
=\int_a^b x^{(n)}(t)  \, {_tI_b^{n-\an}} y(t) dt.
\end{split}
\end{equation}
Using the (usual) integrating by parts formula, we get that \eqref{eq:star} is
equal to
$$
-\int_a^b x^{(n-1)}(t) \dfrac{d}{dt}  {_tI_b^{n-\an}} y(t)dt + \left[ x^{(n-1)}(t) {_tI_b^{n-\an}}y(t)\right]_{t=a}^{t=b}.
$$
Integrating by parts again, we obtain
\begin{multline*}
\int_a^b x^{(n-2)}(t) \, \dfrac{d^2}{dt^2}  {_tI_b^{n-\an}} y(t)dt\\
+ \left[ x^{(n-1)}(t) \, {_tI_b^{n-\an}}y(t) - x^{(n-2)}(t) \dfrac{d}{dt} {_tI_b^{n-\an}}y(t)\right]_{t=a}^{t=b}.
\end{multline*}
If we repeat this process $n-2$ times more, we get
\begin{equation*}
\begin{split}
&\int_a^b x(t) (-1)^n \, \dfrac{d^n}{dt^n}  {_tI_b^{n-\an}} y(t)dt \\
&\quad+ \left[ \sum_{k=0}^{n-1} (-1)^{k} x^{(n-1-k)}(t) \, \dfrac{d^{k}}{dt^{k}} {_tI_b^{n-\an}}y(t)\right]_{t=a}^{t=b} \\
=&\int_a^b x(t) \RDan y(t) dt + \left[ \sum_{k=0}^{n-1} (-1)^{k} x^{(n-1-k)}(t) \, \dfrac{d^{k}}{dt^{k}} {_tI_b^{n-\an}}y(t)\right]_{t=a}^{t=b}.
\end{split}
\end{equation*}
The second relation of the theorem for the right Caputo fractional
derivative of order $\an$, follows directly from the first
by Caputo--Torres duality \cite{Cap4:Cap:Tor}.
\end{proof}

\begin{Remark}
If we consider in Theorem~\ref{thm:FIP_HO} the particular case when $n=1$,
then the fractional integration by parts formulas take the well-known forms
presented in Theorem~\ref{thm:FIP}.
\end{Remark}

\begin{Remark}
\index{integration by parts! Caputo fractional derivatives}
If $x$ is such that $x^{(i)}(a)=x^{(i)}(b)=0$, $i=0, \ldots, n-1$,
then the higher-order formulas of fractional integration by parts given
by Theorem~\ref{thm:FIP_HO} can be rewritten as
$$
\int_{a}^{b}y(t) \, \LCan x(t)dt=\int_a^b x(t) \, {\RDan}y(t)dt
$$
and
$$
\int_{a}^{b}y(t) \, {\RCan}x(t)dt=\int_a^b x(t) \, {\LDan} y(t)dt.
$$
\end{Remark}

%--------------------------------------------------------

\section{Fundamental variational problem}
\label{sec:FP}

This section is dedicated to establish necessary optimality conditions for
variational problems with a Lagrangian depending on a combined Caputo derivative
of variable fractional order.
The problem is then stated in Section~\ref{subsec:NecCond_FP}, consisting of
the variational functional
$$
\mathcal{J}(x,T)=\int_a^T L\left(t, x(t), \DC x(t)\right)dt+\phi(T,x(T)),
$$
where $\DC x(t)$ stands for the combined Caputo fractional derivative
of variable fractional order (Definition~\ref{Comb_Cap}), subject to the
boundary condition \index{boundary conditions} $x(a)=x_a$.

In this problem, we do not only assume that $x(T)$ is free, but the
endpoint $T$ is also variable. Therefore, we are interested in finding
an optimal curve $x(\cdot)$ and also the endpoint of the variational
integral, denoted in the sequel by $T$.

We begin by proving in Section~\ref{subsec:NecCond_FP} necessary optimality
conditions that every extremizer $(x,T)$ must satisfy.
The main results of this section provide necessary optimality conditions
of Euler--Lagrange type, described by fractional differential equations
of variable-order, and different transversality optimality conditions
(Theorems~\ref{FP_teo1} and \ref{FP_teo2}). Some particular cases of
interest are considered in Section~\ref{subsec:part:cases}.
We end with two illustrative examples (Section~\ref{subsec_FPex}).

%------------------------------

\subsection{Necessary optimality conditions}
\label{subsec:NecCond_FP}

{Let $\alpha, \, \beta: [a,b]^2\rightarrow(0,1)$ be two functions. Let $D$ denote the set
\begin{equation}
\label{set}
D=\left\{ (x,t)\in  C^1([a,b])\times [a,b] : \DC x \in C([a,b])\right\},
\end{equation}
endowed with the norm $\Vert(\cdot,\cdot)\Vert$ defined on the linear space $ C^1([a,b])\times \bR$ by
$$
\|(x,t)\|:=\max_{a\leq t \leq  b}|x(t)|+\max_{a\leq t \leq b}\left| \DC x(t)\right|+|t|.
$$

\begin{Definition}
We say that $(x^\star,T^\star)\in D$ is a local minimizer to the functional
$\mathcal{J}:D\rightarrow \bR$ if there exists some $\epsilon>0$ such that
$$
\forall (x,T)\in D \, : \quad \|(x^\star,T^\star)-(x,T)\|<\epsilon\Rightarrow J(x^\star,T^\star)\leq J(x,T).
$$
\end{Definition}

Along the work, we denote by $\partial_i z$, $i\in \{1,2,3\}$, the partial
derivative of a function $z:\bR^{3} \rightarrow\bR$ with respect to its $i$th
argument, and by $L$ a differentiable Lagrangian $L: [a,b]\times \bR^2
\rightarrow \bR$.

Consider the following problem of the calculus of variations:

\index{variational fractional problem! fundamental}
\begin{Problem}
\label{P_fund}
Find the local minimizers of the functional $\mathcal{J}:D\rightarrow \bR$, with
\begin{equation}
\label{functional1}
\mathcal{J}(x,T)=\int_a^T L\left(t, x(t), \DC x(t)\right) dt + \phi(T,x(T)),
\end{equation}
over all $(x,T)\in D$ satisfying the boundary condition 
$x(a)=x_a$,\index{boundary conditions} for a fixed $x_a\in \bR$. 
The terminal time $T$ and terminal state $x(T)$
are free.\\
The terminal cost function \index{terminal cost function} $\phi:[a,b]\times \bR\to\bR$
is at least of class $C^1$.
\end{Problem}

For simplicity of notation, we introduce the operator $[\cdot]_\gamma^{\alpha, \beta}$ defined by
\begin{equation}
\label{operat_simpli}
[x]_\gamma^{\alpha, \beta}(t)=\left(t, x(t), \DC x(t)\right).
\end{equation}

With the new notation, one can write \eqref{functional1} simply as
$$\mathcal{J}(x,T)=\int_a^T L[x]_\gamma^{\alpha, \beta}(t) dt + \phi(T,x(T)).$$
The next theorem gives fractional necessary optimality conditions to
Problem~\ref{P_fund}.

\begin{Theorem}\label{FP_teo1}\index{Euler--Lagrange equations}
Suppose that $(x,T)$ is a local minimizer to the functional \eqref{functional1}
on $D$.
Then, $(x,T)$ satisfies the fractional Euler--Lagrange equations
\begin{equation}
\label{FP_ELeq_1}
\partial_2 L[x]_\gamma^{\alpha, \beta}(t)
+D{_{\overline{\gamma}}^{\b,\a}}\partial_3 L[x]_\gamma^{\alpha, \beta}(t)=0,
\end{equation}
on the interval $[a,T]$, and
\begin{equation}
\label{FP_ELeq_2}
\gamma_2\left({\LDb}\partial_3 L[x]_\gamma^{\alpha, \beta}(t)
-{ _TD{_t^{\b}}\partial_3 L[x]_\gamma^{\alpha, \beta}(t)}\right)=0,
\end{equation}
on the interval $[T,b]$. Moreover, $(x,T)$ satisfies the transversality conditions
\index{transversality conditions}
\begin{equation}
\label{FP_CT1}
\begin{cases}
L[x]_\gamma^{\alpha, \beta}(T)+\partial_1\phi(T,x(T))+\partial_2\phi(T,x(T))x'(T)=0,\\
\left[\gamma_1 \, {_tI_T^{1-\a}} \partial_3L[x]_\gamma^{\alpha, \beta}(t)
-\gamma_2 \, {_TI_t^{1-\b}} \partial_3 L[x]_\gamma^{\alpha, \beta}(t)\right]_{t=T}
+\partial_2 \phi(T,x(T))=0,\\
\gamma_2 \left[ {_TI_t^{1-\b}}\partial_3 L[x]_\gamma^{\alpha, \beta}(t)
-{_aI_t^{1-\b}\partial_3L[x]_\gamma^{\alpha, \beta}(t)}\right]_{t=b}=0.
\end{cases}
\end{equation}
\end{Theorem}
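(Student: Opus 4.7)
My plan is to follow the standard variational argument adapted to the variable-order fractional setting, taking advantage of Theorem~\ref{thm:FIP}. I would start by considering arbitrary admissible variations of the form $(x+\varepsilon h,\,T+\varepsilon\Delta T)$, where $\varepsilon\in\mathbb{R}$ is a small parameter, $\Delta T\in\mathbb{R}$, and $h\in C^{1}([a,b];\mathbb{R})$ satisfies $h(a)=0$ (to preserve the fixed initial condition $x(a)=x_{a}$); crucially, $h$ is free on $[T,b]$ because the right Caputo derivative $\RCb$ in the integrand couples the values of $x$ on the whole interval $[a,b]$, not only on $[a,T]$. Writing $j(\varepsilon):=\mathcal{J}(x+\varepsilon h,T+\varepsilon\Delta T)$ and computing $j'(0)=0$ by the Leibniz rule for the variable upper limit gives the identity
\begin{equation*}
\int_{a}^{T}\Bigl[\partial_{2}L[x]\xs(t)\,h(t)+\partial_{3}L[x]\xs(t)\,\DC h(t)\Bigr]dt+L[x]\xs(T)\,\Delta T+\partial_{1}\phi(T,x(T))\,\Delta T+\partial_{2}\phi(T,x(T))\,\bigl(h(T)+x'(T)\Delta T\bigr)=0.
\end{equation*}

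Next, I would split $\DC h=\gamma_{1}\LC h+\gamma_{2}\RCb h$ and transform each term via fractional integration by parts. For the left Caputo part I would apply Theorem~\ref{thm:FIP} directly on $[a,T]$, producing the operator $\,_{t}D_{T}^{\alpha}$ acting on $\partial_{3}L$ together with the boundary term $\bigl[h(t)\,_{t}I_{T}^{1-\alpha}\partial_{3}L[x]\xs(t)\bigr]_{a}^{T}$. The right Caputo part is the delicate one: since $\RCb h(t)$ for $t\in[a,T]$ depends on $h$ on the full interval $[t,b]$, I would apply Fubini's theorem to the double integral and split the resulting $\tau$-range at $\tau=T$. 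This produces two contributions: one that reassembles, after integration by parts, into $\int_{a}^{T}h(t)\,_{a}D_{t}^{\beta}\partial_{3}L[x]\xs(t)\,dt$ plus boundary terms evaluated at $t=a,T$, and a second one that after interchanging the order of integration takes the form $\int_{T}^{b}h(t)\bigl[\,_{t}D_{b}^{\beta}\partial_{3}L[x]\xs(t)-\,_{T}D_{t}^{\beta}\partial_{3}L[x]\xs(t)\bigr]dt$ plus boundary evaluations at $t=T,b$. Collecting everything yields an identity linear in $h$ on $[a,T]$, linear in $h$ on $[T,b]$, linear in $h(T)$, and linear in $\Delta T$.

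I would then extract the conclusions by specializing the variations. Choosing $h$ with compact support in $(a,T)$ and $\Delta T=0$, all boundary contributions vanish and the fundamental lemma (Lemma~\ref{flcv}) together with the dual-operator notation $D_{\overline{\gamma}}^{\beta,\alpha}$ yields the Euler--Lagrange equation \eqref{FP_ELeq_1} on $[a,T]$. Choosing instead $h$ supported in $(T,b)$ with $h(T)=0$ and $\Delta T=0$, only the $\gamma_{2}$-contribution on $[T,b]$ survives, and the fundamental lemma yields \eqref{FP_ELeq_2}. Finally, letting $h$ and $\Delta T$ vary freely so that $h(T)$ and $\Delta T$ are arbitrary (independent, after using the already established Euler--Lagrange equations to kill the bulk integrals), the coefficient of $\Delta T$ gives the first transversality condition in \eqref{FP_CT1}, the coefficient of $h(T)$ gives the second, and the coefficient of $h(b)$ (which is free because no condition is imposed on $x(b)$) gives the third.

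The main obstacle, as hinted above, is the non-local nature of $\RCb$: the right Caputo derivative of $h$ evaluated at $t\in[a,T]$ mixes values of $h$ over $[T,b]$, so the naive application of Theorem~\ref{thm:FIP} on $[a,T]$ is not available. Handling this requires the careful Fubini/splitting argument that separates the contribution of $h$ on $[a,T]$ from that on $[T,b]$, and it is precisely this splitting that is responsible for the appearance of the auxiliary Euler--Lagrange equation \eqref{FP_ELeq_2} on $[T,b]$ and of the third transversality condition involving the difference $\,_{T}I_{t}^{1-\beta}-\,_{a}I_{t}^{1-\beta}$ evaluated at $t=b$.
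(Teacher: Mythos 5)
Your overall strategy coincides with the paper's own proof: the same admissible variations $(x+\varepsilon h,\,T+\varepsilon\Delta T)$ with $h(a)=0$, the same first-variation identity $j'(0)=0$, the same way of handling the non-locality of the $\gamma_2$-term by splitting at $T$ (the paper simply writes $\int_a^T=\int_a^b-\int_T^b$ and applies Theorem~\ref{thm:FIP} on $[a,b]$ and on $[T,b]$, which is exactly the Fubini-and-split computation you describe), and the same localization of variations to extract \eqref{FP_ELeq_1}, \eqref{FP_ELeq_2} and the three transversality conditions.

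There is, however, one concrete error in your intermediate formula which, taken literally, would not deliver the stated \eqref{FP_ELeq_2}. After the Fubini splitting, the coefficient of $h'(\tau)$ for $\tau\in[T,b]$ is $-\int_a^T \frac{(\tau-t)^{-\beta(\tau,t)}}{\Gamma(1-\beta(\tau,t))}\,\partial_3 L[x]_\gamma^{\alpha, \beta}(t)\,dt=-\bigl({_aI_\tau^{1-\beta(\cdot,\cdot)}}-{_TI_\tau^{1-\beta(\cdot,\cdot)}}\bigr)\partial_3 L[x]_\gamma^{\alpha, \beta}(\tau)$, so after the classical integration by parts in $\tau$ the bulk term on $[T,b]$ is $\gamma_2\int_T^b h(t)\bigl[{_aD_t^{\beta(\cdot,\cdot)}}\partial_3 L[x]_\gamma^{\alpha, \beta}(t)-{_TD_t^{\beta(\cdot,\cdot)}}\partial_3 L[x]_\gamma^{\alpha, \beta}(t)\bigr]dt$, i.e.\ it involves \emph{left} Riemann--Liouville derivatives of $\partial_3 L$, not the right-sided operator ${_tD_b^{\beta(\cdot,\cdot)}}$ that appears in your sketch: in this piece $\partial_3 L$ is weighted only against times $t<\tau$, so no operator with terminal point $b$ can arise. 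This matters because \eqref{FP_ELeq_2} is precisely $\gamma_2\bigl({_aD_t^{\beta(\cdot,\cdot)}}-{_TD_t^{\beta(\cdot,\cdot)}}\bigr)\partial_3 L=0$ on $[T,b]$; applying the fundamental lemma to the expression as you wrote it would give a different equation. With the operator corrected, your plan goes through and reproduces the paper's bookkeeping: at $t=T$ the two ${_aI^{1-\beta(\cdot,\cdot)}}$ boundary contributions cancel, leaving $-\gamma_2\,{_TI_t^{1-\beta(\cdot,\cdot)}}\partial_3 L[x]_\gamma^{\alpha, \beta}(t)\big|_{t=T}$ in the second transversality condition, and at $t=b$ one gets the difference ${_TI_t^{1-\beta(\cdot,\cdot)}}-{_aI_t^{1-\beta(\cdot,\cdot)}}$ of the third, exactly as you indicated.
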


\begin{proof}
Let $(x,T)$ be a solution to the problem and
$\left(x+\e{h},T+\e\Delta{T}\right)$ be an admissible variation,
where $h\in C^1([a,b];\bR)$ is a perturbing curve,
$\triangle T \in\bR$ represents an arbitrarily chosen
small change in $T$ and $\e \in\bR$ represents
a small number.
The constraint $x(a)=x_a$ implies that all admissible variations
must fulfill the condition $h(a)=0$.
Define $j(\cdot)$ on a neighborhood of zero by
\begin{equation*}
\begin{split}
j(\e) &=\mathcal{J}(x+\e h,T+\e \triangle T)\\
&=\int_a^{T+\e \triangle T} L[x+\e h]_\gamma^{\alpha, \beta}(t)\,dt
+\phi\left(T+\e \triangle T,(x+\e h)(T+\e \triangle T)\right).
\end{split}
\end{equation*}
The derivative $j'(\e)$ is
\begin{equation*}
\begin{split}
&\int_a^{T+\e \triangle T} \left(
\partial_2 L[x+\e h]_\gamma^{\alpha, \beta}(t) h(t)
+ \partial_3 L[x+\e h]_\gamma^{\alpha, \beta}(t) \DC h(t) \right)dt\\
&\quad +L[x + {\e h}]_\gamma^{\alpha, \beta}(T + \e \Delta T)\Delta T
+ \partial_1 \phi\left(T+\e \triangle T,
(x+\e h)(T+\e \triangle T)\right) \, \Delta T\\
&\quad +\partial_2\phi\left(T+\e \triangle T,
(x+\e h)(T+\e \triangle T)\right) \, \left[(x+\e h)(T+\e \triangle T)\right]'.
\end{split}
\end{equation*}
Considering the differentiability properties of $j$, a necessary condition
for $(x,T)$ to be a local extremizer is given by
$\left. j'(\e) \right|_{\e=0}=0$, that is,
\begin{multline}
\label{FP_eq_derj}
\int_a^T \left( \partial_2 L[x]_\gamma^{\alpha, \beta}(t) h(t)
+ \partial_3 L[x]_\gamma^{\alpha, \beta}(t) \DC h(t) \right)dt
+L[x]_\gamma^{\alpha, \beta}(T)\Delta T\\
+\partial_1 \phi \left(T, x(T)\right)\Delta T
+ \partial_2\phi(T, x(T))\left[h(T)+x'(T) \triangle T \right]=0.
\end{multline}
The second addend of the integral function \eqref{FP_eq_derj},
\begin{equation}
\label{FP_term}
\int_a^T \partial_3 L[x]_\gamma^{\alpha, \beta}(t) \DC h(t) dt,
\end{equation}
can be written, using the definition of combined
Caputo fractional derivative, as
\begin{equation*}
\begin{split}
\int_a^T & \partial_3 L[x]_\gamma^{\alpha, \beta}(t) \DC h(t) dt\\
&=\int_a^T \partial_3 L[x]_\gamma^{\alpha, \beta}(t)\left[\gamma_1
\, \LC h(t)+\gamma_2 \, \RCb h(t)\right]dt\\
&=\gamma_1 \int_a^T \partial_3 L[x]_\gamma^{\alpha, \beta}(t)\LC h(t)dt  \\
&\quad + \gamma_2 \left[ \int_a^b \partial_3 L[x]_\gamma^{\alpha, \beta}(t) \RCb h(t)dt
- \int_T^b \partial_3 L[x]_\gamma^{\alpha, \beta}(t) \RCb h(t)dt \right].
\end{split}
\end{equation*}
Integrating by parts (see Theorem~\ref{thm:FIP}), and since $h(a)=0$,
the term \eqref{FP_term} can be written as
\begin{equation*}
\begin{split}
\gamma_1 & \left[ \int_a^T h(t) _tD_T^{\a} \partial_3
L[x]_\gamma^{\alpha, \beta}(t) dt
+\left[h(t) {_t I_T^{1-\a}\partial_3 L[x]_\gamma^{\alpha, \beta}(t)}\right]_{t=T} \right]\\
&+ \gamma_2 \Biggl[ \int_a^b h(t){_aD_t^{\b}} \partial_3 L[x]_\gamma^{\alpha, \beta}(t) dt
- \left[h(t){_a I_t^{1-\b}\partial_3 L[x]_\gamma^{\alpha, \beta}(t)}\right]_{t=b} \\
&\qquad \quad -\Biggl( \int_T^b h(t){_TD_t^{\b}} \partial_3 L[x]_\gamma^{\alpha, \beta}(t) dt
- \left[h(t){_TI_t^{1-\b}\partial_3 L[x]_\gamma^{\alpha, \beta}(t)}\right]_{t=b}\\
&\qquad \qquad \quad +\left[h(t){_TI_t^{1-\b}
\partial_3 L[x]_\gamma^{\alpha, \beta}(t)}\right]_{t=T} \Biggr) \Biggr].
\end{split}
\end{equation*}
Unfolding these integrals, and considering the fractional operator
$D_{\overline{\gamma}}^{\b,\a}$  with
$\overline{\gamma}=(\gamma_2,\gamma_1)$,
then \eqref{FP_term} is equivalent to
\begin{equation*}
\begin{split}
\int_a^T h(t) & D_{\overline{\gamma}}^{\b,\a}\partial_3L[x]_\gamma^{\alpha, \beta}(t)dt\\
&+ \int_T^b\gamma_2h(t)\left[_aD_t^{\b}\partial_3 L[x]_\gamma^{\alpha, \beta}(t)
-{_TD_t^{\b}\partial_3L[x]_\gamma^{\alpha, \beta}(t)}\right]dt\\
&+\left[h(t)\left(\gamma_1 \, {_tI_T^{1-\a}\partial_3L[x]_\gamma^{\alpha, \beta}(t)}
-{\gamma_2 \, {_TI_t^{1-\b}\partial_3L[x]_\gamma^{\alpha, \beta}(t)}}\right)\right]_{t=T}\\
&+\left[ h(t)\gamma_2\left({_TI_t^{1-\b}\partial_3 L[x]_\gamma^{\alpha, \beta}(t)}
-{_aI_t^{1-\b}\partial_3L[x]_\gamma^{\alpha, \beta}(t)}\right)\right]_{t=b}.
\end{split}
\end{equation*}
Substituting these relations into equation \eqref{FP_eq_derj}, we obtain
\begin{equation}
\label{FP_eq_derj2}
\begin{split}
0= &\int_a^Th(t)\left[\partial_2 L[x]_\gamma^{\alpha, \beta}(t)
+ D_{\overline{\gamma}}^{\b,\a}\partial_3 L[x]_\gamma^{\alpha, \beta}(t)\right]dt\\
&+ \int_T^b \gamma_2 h(t) \left[_aD_t^{\b}\partial_3 L[x]_\gamma^{\alpha, \beta}(t)
-{_TD_t^{\b}\partial_3 L[x]_\gamma^{\alpha, \beta}(t)}\right]dt\\
&+ h(T)\Bigl[\gamma_1 \, {_tI_T^{1-\a}\partial_3L[x]_\gamma^{\alpha, \beta}(t)}
-{\gamma_2 \, {_TI_t^{1-\b}\partial_3L[x]_\gamma^{\alpha, \beta}(t)}}\\
&\quad \quad +\partial_2 \phi(t,x(t))\Bigr]_{t=T}\\
&+\Delta T \left[ L[x]_\gamma^{\alpha, \beta}(t)+\partial_1\phi(t,x(t))
+\partial_2\phi(t,x(t))x'(t)  \right]_{t=T}\\
&+ h(b)\left[ \gamma_2  \left( _TI_t^{1-\b}\partial_3 L[x]_\gamma^{\alpha, \beta}(t)
-{_aI_t^{1-\b}\partial_3L[x]_\gamma^{\alpha, \beta}(t)}\right) \right]_{t=b}.
\end{split}
\end{equation}
As $h$ and $\triangle T$ are arbitrary, we can choose $\triangle T=0$ and $h(t)=0$,
for all $t\in[T,b]$, but $h$ is arbitrary in $t\in[a,T)$.
Then, for all $t\in[a,T]$, we obtain the first necessary condition \eqref{FP_ELeq_1}:
$$
\partial_2 L[x]_\gamma^{\alpha, \beta}(t)
+D{_{\overline{\gamma}}^{\b,\a}}\partial_3 L[x]_\gamma^{\alpha, \beta}(t)=0.
$$
Analogously, considering $\triangle T=0$, $h(t)=0$,
for all $t\in[a,T] \cup \{b\}$, and $h$ arbitrary on $(T,b)$,
we obtain the second necessary condition \eqref{FP_ELeq_2}:
$$
\gamma_2\left({\LDb}\partial_3L[x]_\gamma^{\alpha, \beta}(t)
-{ _TD{_t^{\b}}\partial_3L[x]_\gamma^{\alpha, \beta}(t)}\right)=0.
$$
As $(x,T)$ is a solution to the necessary conditions \eqref{FP_ELeq_1}
and \eqref{FP_ELeq_2}, then equation \eqref{FP_eq_derj2} takes the form
\begin{equation}
\label{FP_eq_derj3}
\begin{split}
0=&h(T)\left[\gamma_1 \, {_tI_T^{1-\a}\partial_3L[x]_\gamma^{\alpha, \beta}(t)}
-{\gamma_2 \, {_TI_t^{1-\b}\partial_3L[x]_\gamma^{\alpha, \beta}(t)}}
+\partial_2 \phi(t,x(t))\right]_{t=T}\\
&+\Delta T \left[ L[x]_\gamma^{\alpha, \beta}(t)+\partial_1\phi(t,x(t))
+\partial_2\phi(t,x(t))x'(t)  \right]_{t=T}\\
&+ h(b)\left[\gamma_2  \left( _TI_t^{1-\b}\partial_3 L[x]_\gamma^{\alpha, \beta}(t)
-{_aI_t^{1-\b}\partial_3L[x]_\gamma^{\alpha, \beta}(t)}\right) \right]_{t=b}.
\end{split}
\end{equation}
Transversality conditions \eqref{FP_CT1}
are obtained for appropriate choices of variations.
\end{proof}

In the next theorem, considering the same Problem \ref{P_fund},
we rewrite the transversality conditions \eqref{FP_CT1} in terms
of the increment $\Delta T$ on time and on the consequent increment
$\Delta x_T$ on $x$, given by
\begin{equation}
\label{FP_xt}
\Delta x_T = (x+h)\left( T + \Delta T \right) - x(T).
\end{equation}

\begin{Theorem}
\label{FP_teo2}
Let $(x,T)$ be a local minimizer to the functional \eqref{functional1} on $D$.
Then, the fractional Euler--Lagrange equations \eqref{FP_ELeq_1}
and \eqref{FP_ELeq_2} are satisfied together with
the following transversality conditions:
\index{transversality conditions}
\begin{equation}
\label{FP_CT2}
\begin{cases}
L[x]_\gamma^{\alpha, \beta}(T)+\partial_1\phi(T,x(T))\\
\qquad + x'(T) \left[ \gamma_2 {_TI}_t^{1-\b} \partial_3L[x]_\gamma^{\alpha, \beta}(t)
- \gamma_1 {_tI_T^{1-\a} \partial_3L[x]_\gamma^{\alpha, \beta}(t)} \right]_{t=T} =0,\\
\left[ \gamma_1\, {_tI_T^{1-\a}} \partial_3L[x]_\gamma^{\alpha, \beta}(t)
- \gamma_2\, {_TI_t^{1-\b}} \partial_3L[x]_\gamma^{\alpha, \beta}(t)\right]_{t=T}
+\partial_2 \phi(T,x(T))=0,\\
\gamma_2 \left[ _TI_t^{1-\b}\partial_3 L[x]_\gamma^{\alpha, \beta}(t)
-{_aI_t^{1-\b}\partial_3L[x]_\gamma^{\alpha, \beta}(t)}\right]_{t=b}=0.
\end{cases}
\end{equation}
\end{Theorem}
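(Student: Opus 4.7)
The plan is to reuse the work already done in Theorem~\ref{FP_teo1} and merely re-parameterize the endpoint variation. Specifically, I would start again from the key identity
$$
0 = h(T)\Bigl[\gamma_1 \, {_tI_T^{1-\a}}\partial_3L[x]_\gamma^{\alpha, \beta}(t) - \gamma_2 \, {_TI_t^{1-\b}}\partial_3L[x]_\gamma^{\alpha, \beta}(t) + \partial_2\phi(t,x(t))\Bigr]_{t=T}
$$
$$
+ \, \Delta T \,\bigl[L[x]_\gamma^{\alpha, \beta}(t) + \partial_1\phi(t,x(t)) + \partial_2\phi(t,x(t))x'(t)\bigr]_{t=T}
$$
$$
+ \, h(b)\,\gamma_2\bigl[{_TI_t^{1-\b}}\partial_3L[x]_\gamma^{\alpha, \beta}(t) - {_aI_t^{1-\b}}\partial_3L[x]_\gamma^{\alpha, \beta}(t)\bigr]_{t=b},
$$
which holds once the two Euler--Lagrange equations \eqref{FP_ELeq_1}--\eqref{FP_ELeq_2} are imposed. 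This is exactly equation \eqref{FP_eq_derj3} from the proof of Theorem~\ref{FP_teo1}, and it is valid for arbitrary admissible $h$ and $\Delta T$.

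Next I would relate $h(T)$ to the new independent quantity $\Delta x_T$ defined in \eqref{FP_xt}. Writing $(x+h)(T+\Delta T) = x(T+\Delta T) + h(T+\Delta T)$ and performing a first-order Taylor expansion in the perturbation parameter, one gets
$$
\Delta x_T = x'(T)\,\Delta T + h(T) + o(1),
$$
so that, at the level of first variations, $h(T) = \Delta x_T - x'(T)\,\Delta T$. Substituting this into the displayed identity above and grouping the coefficients of $\Delta T$ and $\Delta x_T$ separately yields
$$
0 = \Delta x_T\,\bigl[\gamma_1\, {_tI_T^{1-\a}}\partial_3L[x]_\gamma^{\alpha, \beta}(t) - \gamma_2\, {_TI_t^{1-\b}}\partial_3L[x]_\gamma^{\alpha, \beta}(t) + \partial_2\phi(t,x(t))\bigr]_{t=T}
$$
$$
+ \, \Delta T\,\bigl[L[x]_\gamma^{\alpha, \beta}(T) + \partial_1\phi(T,x(T)) + x'(T)\bigl(\gamma_2\,{_TI_t^{1-\b}}\partial_3L[x]_\gamma^{\alpha, \beta}(t) - \gamma_1\,{_tI_T^{1-\a}}\partial_3L[x]_\gamma^{\alpha, \beta}(t)\bigr)_{t=T}\bigr]
$$
$$
+ \, h(b)\,\gamma_2\bigl[{_TI_t^{1-\b}}\partial_3L[x]_\gamma^{\alpha, \beta}(t) - {_aI_t^{1-\b}}\partial_3L[x]_\gamma^{\alpha, \beta}(t)\bigr]_{t=b},
$$
where the contribution $-x'(T)\Delta T\,\partial_2\phi$ coming from the substitution cancels the term $\partial_2\phi(T,x(T))x'(T)\Delta T$ in the coefficient of $\Delta T$.

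Finally, since $\Delta T$, $\Delta x_T$ and $h(b)$ may be chosen independently (the boundary condition only fixes $x(a)=x_a$, leaving the right endpoint free in both the $t$- and $x$-directions, and $h(b)$ is unconstrained), the standard argument forces each of the three bracketed coefficients to vanish. These three equations are precisely the transversality conditions \eqref{FP_CT2}. The main thing to be careful about is the cancellation in the coefficient of $\Delta T$; everything else is a direct rewriting of Theorem~\ref{FP_teo1}.
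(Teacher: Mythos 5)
Your proposal is correct and follows essentially the same route as the paper: impose the Euler--Lagrange equations to reduce to identity \eqref{FP_eq_derj3}, use the first-order Taylor expansion (the paper phrases this as restricting to variations with $h'(T)=0$) to write $h(T)=\Delta x_T - x'(T)\Delta T + O(\Delta T)^2$, substitute, note the cancellation of the $\partial_2\phi(T,x(T))x'(T)\Delta T$ terms, and conclude by independent choices of $\Delta T$, $\Delta x_T$ and $h(b)$.
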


\begin{proof}
The Euler--Lagrange equations are deduced following
similar arguments as the ones presented in Theorem \ref{FP_teo1}.
We now focus our attention on the proof of the transversality conditions.
Using Taylor's expansion up to first order for a small $\Delta T$,
and restricting the set of variations to those for which $h'(T)=0$, we obtain
$$
(x+h)\left( T + \Delta T \right)=(x+h)(T)+x'(T) \Delta T + O(\Delta T)^{2}.
$$
Rearranging the relation \eqref{FP_xt} allows us to express $h(T)$
in terms of $\Delta T$ and $\Delta x_T$:
$$
h(T)= \Delta x_T - x'(T) \Delta T + O(\Delta T)^{2}.
$$
Substitution of this expression into \eqref{FP_eq_derj3} gives us
\begin{equation*}
\begin{split}
0=& \Delta x_T \left[\gamma_1 \, {_tI_T^{1-\a}\partial_3L[x]_\gamma^{\alpha, \beta}(t)}
-{\gamma_2 \, {_TI_t^{1-\b}\partial_3 L[x]_\gamma^{\alpha, \beta}(t)}}
+\partial_2 \phi(t,x(t))\right]_{t=T}\\
&+\Delta T \left[ L[x]_\gamma^{\alpha, \beta}(t)+\partial_1\phi(t,x(t))\right.\\
&\left.\qquad - x'(t)\left( \gamma_1 \, {_tI_T^{1-\a}\partial_3L[x]_\gamma^{\alpha, \beta}(t)}
-{\gamma_2 \, {_TI_t^{1-\b}\partial_3L[x]_\gamma^{\alpha, \beta}(t)}} \right)\right]_{t=T}\\
&+h(b)\left[\gamma_2\left( _TI_t^{1-\b}\partial_3L[x]_\gamma^{\alpha, \beta}(t)
-{_aI_t^{1-\b}\partial_3L[x]_\gamma^{\alpha, \beta}(t)}\right) \right]_{t=b}
+ O(\Delta T)^{2}.
\end{split}
\end{equation*}
Transversality conditions \eqref{FP_CT2} are obtained
using appropriate choices of variations.
\end{proof}

% ---------------------------------------

\subsection{Particular cases}
\label{subsec:part:cases}

Now, we specify our results to three particular
cases of variable terminal points.

% ------------------

\textbf{Vertical terminal line} \index{vertical terminal line}

This case involves a fixed upper bound $T$. Thus, $\Delta T=0$ and,
consequently, the second term in \eqref{FP_eq_derj3} drops out.
Since $\Delta x_T$ is arbitrary, we obtain the following
transversality conditions: if $T<b$, then
$$
\begin{cases}
\left[ \gamma_1 \, {_tI_T^{1-\a}} \partial_3L[x]_\gamma^{\alpha, \beta}(t)
- \gamma_2 \, {_TI_t^{1-\b}} \partial_3L[x]_\gamma^{\alpha, \beta}(t)\right]_{t=T}
+\partial_2 \phi(T,x(T))=0,\\
 \gamma_2 \left[ _TI_t^{1-\b}\partial_3L[x]_\gamma^{\alpha, \beta}(t)
 -{_aI_t^{1-\b}\partial_3L[x]_\gamma^{\alpha, \beta}(t)}\right]_{t=b}=0;
\end{cases}
$$
if $T=b$, then $\Delta x_T=h(b)$ and the transversality conditions reduce to
$$
\left[\gamma_1 \, {_tI_b^{1-\a}} \partial_3L[x]_\gamma^{\alpha, \beta}(t)
-\gamma_2 \, {_aI_t^{1-\b}\partial_3L[x]_\gamma^{\alpha, \beta}(t)}\right]_{t=b}
+\partial_2 \phi(b,x(b))=0.
$$

% ------------------

\textbf{Horizontal terminal line} \index{horizontal terminal line}

In this situation, we have $\Delta x_T=0$ but $\Delta T$ is arbitrary.
Thus, the transversality conditions are
$$
\begin{cases}
L[x]_\gamma^{\alpha, \beta}(T)+\partial_1\phi(T,x(T))\\
\qquad + x'(T) \left[ \gamma_2 {_TI_t^{1-\b} \partial_3L[x]_\gamma^{\alpha, \beta}(t)}
- \gamma_1 {_tI_T^{1-\a} \partial_3L[x]_\gamma^{\alpha, \beta}(t)} \right]_{t=T} =0,\\
\gamma_2 \left[ _TI_t^{1-\b}\partial_3L[x]_\gamma^{\alpha, \beta}(t)
-{_aI_t^{1-\b}\partial_3L[x]_\gamma^{\alpha, \beta}(t)}\right]_{t=b}=0.
\end{cases}
$$

% ------------------

\textbf{Terminal curve} \index{terminal curve}

Now the terminal point is described by a given curve
$\psi:C^{1}([a,b])\rightarrow \bR$,
in the sense that $x(T)=\psi (T)$. From Taylor's formula,
for a small arbitrary $\Delta T$, one has
$$
\Delta x(T)=\psi'(T) \Delta T+ O(\Delta T)^{2}.
$$
Hence, the transversality conditions are presented in the form
\begin{equation*}
\begin{cases}
L[x]_\gamma^{\alpha, \beta}(T)+\partial_1\phi(T,x(T))+ \partial_2 \phi(T, x(T))\psi'(T)+\left(  x'(T)-\psi'(T)\right)\\
\quad\quad\quad\times\left[ \gamma_2 \, {_TI_t^{1-\b} \partial_3L[x]_\gamma^{\alpha, \beta}(t)}
- \gamma_1 \, {_tI_T^{1-\a} \partial_3L[x]_\gamma^{\alpha, \beta}(t)} \right]_{t=T} =0,\\
\gamma_2 \left[ _TI_t^{1-\b} \partial_3L[x]_\gamma^{\alpha, \beta}(t)
-{_aI_t^{1-\b}\partial_3L[x]_\gamma^{\alpha, \beta}(t)}\right]_{t=b}=0.
 \end{cases}
\end{equation*}

% -------------------------------------------

\subsection{Examples}
\label{subsec_FPex}

In this section, we show two examples to illustrate the new results.
Let $\alpha(t,\t)=\alpha(t)$ and $\beta(t,\t)=\beta(\t)$ be two functions
depending on a variable $t$ and $\t$ only, respectively.
Consider the following fractional variational problem: to minimize the functional
\begin{multline*}
\mathcal{J}(x,T)=\int_0^T\Bigg[2 \alpha(t) -1\\+ \Bigg({^CD_\gamma^{\alpha(\cdot),\beta(\cdot)}} x(t)
-\frac{t^{1-\alpha(t)}}{2\Gamma(2-\alpha(t))}-\frac{(10-t)^{1-\beta(t)}}{2\Gamma(2-\beta(t))}\Bigg)^2\Bigg]dt
\end{multline*}
for $t\in[0,10]$, subject to the initial condition $x(0)=0$ and where $\gamma = (\gamma_1,\gamma_2)=(1/2,1/2)$.
Simple computations show that for $\overline{x}(t)=t$, with $t\in[0,10]$, we have
$$
^CD_\gamma^{\alpha(\cdot),\beta(\cdot)} \overline{x}(t)=\frac{t^{1-\alpha(t)}}{2\Gamma(2-\alpha(t))}+\frac{(10-t)^{1-\beta(t)}}{2\Gamma(2-\beta(t))}.
$$
For $\overline{x}(t)=t$ the functional reduces to
$$
\mathcal{J}(\overline{x},T)=\int_0^T\left(2 \alpha(t) -1\right)dt.
$$
In order to determine the optimal time $T$, we have to solve the equation
$2 \alpha(T) = 1$. For example, let $\alpha(t) = t^2/2$. In this case,
since $\mathcal{J}(x,T)\geq-2/3$ for all pairs $(x,T)$ and $\mathcal{J}(\overline{x},1)=-2/3$,
we conclude that the (global) minimum value of the functional is $-2/3$,
obtained for $\overline{x}$ and $T=1$.
It is obvious that the two Euler--Lagrange equations \eqref{FP_ELeq_1} and \eqref{FP_ELeq_2}
are satisfied when $x=\overline{x}$, since
$$
\partial_3L[\overline{x}]_\gamma^{\alpha, \beta}(t)=0
\quad \mbox{for all} \quad t\in[0,10].
$$
Using this relation, together with
$$
L[\overline{x}]_\gamma^{\alpha, \beta}(1)=0,
$$
the transversality conditions \eqref{FP_CT1} are also verified.

For our last example, consider the functional
\begin{multline*}
\mathcal{J}(x,T)=\int_0^T\Bigg[2 \alpha(t) -1\\
+\Bigg({^CD_\gamma^{\alpha(\cdot),\beta(\cdot)}} x(t)
-\frac{t^{1-\alpha(t)}}{2\Gamma(2-\alpha(t))}
-\frac{(10-t)^{1-\beta(t)}}{2\Gamma(2-\beta(t))}\Bigg)^3\Bigg]dt,
\end{multline*}
where the remaining assumptions and conditions are as in the previous example.
For this case, $\overline{x}(t)=t$ and $T=1$ still satisfy
the necessary optimality conditions. However, we cannot assure
that $(\overline x,1)$ is a local minimizer to the problem.

% -------------------------------------------

\section{Higher-order variational problems}
\label{sec:HO}

In this section, we intend to generalize the results obtained in Section
\ref{sec:FP} by considering higher-order variational problems with a
Lagrangian depending on a higher-order combined Caputo derivative of
variable fractional order, defined by
$$^{C}D_{\gamma^n}^{\an,\bn}x(t)=\gamma_1^n \, \LCan x(t)+\gamma_2^n \, \RCbn x(t),$$
subject to boundary conditions at the initial time $t=a$.

In Section~\ref{subsec:NecCond_HO}, we obtain higher-order Euler--Lagrange
equations and transversality conditions for the generalized variational problem
with a Lagrangian depending on a combined Caputo fractional derivative of
variable fractional order  (Theorems~\ref{HO_teo1} and \ref{HO_teo2}).

One illustrative example is discussed in Section~\ref{subsec:HO_example}.

%--------------------------------

\subsection{Necessary optimality conditions}
\label{subsec:NecCond_HO}

Let $n \in \mathbb{N}$ and $x : [a,b] \rightarrow \bR$ be a function of
class $C^n$. The fractional order is a continuous function of two variables,
$\alpha_n: [a,b]^2 \rightarrow (n-1,n)$.

Let $D$ denote the linear subspace of $C^n([a,b])\times [a,b]$
such that the fractional derivative of $x$,  $\DCi x(t)$, exists and is continuous on the interval $[a,b]$
for all $i \in \{1, \ldots,n\}$. We endow $D$ with the norm
$$
\|(x,t)\|=\max_{a\leq t \leq  b}|x(t)|+\max_{a\leq t \leq b}
\sum_{i=1}^{n} \left| \DCi  x(t)\right|+|t|.
$$
Consider the following higher-order problem of the calculus of variations:
\index{variational fractional problem! higher-order}
\begin{Problem}
\label{Problem_HO}
Minimize functional $\mathcal{J}:D\rightarrow \mathbb{R}$,
where
\begin{multline}
\label{funct_HO_1}
\mathcal{J}(x,T)
=\int_a^T L\Big(t, x(t), \,^CD_{\gamma^1}^{\alpha_1(\cdot,\cdot),
\beta_1(\cdot,\cdot)}x(t), \ldots, \,^CD_{\gamma^n}^{\alpha_n(\cdot,\cdot),
\beta_n(\cdot,\cdot)}x(t)  \Big) dt\\ + \phi(T,x(T)),
\end{multline}
over all $(x,T)\in D$ subject to boundary conditions
$$
x(a)=x_a, \quad x^{(i)}(a)=x^{i}_a, \quad \forall i \in \lbrace1,\ldots,n-1\rbrace,
$$
\index{boundary conditions}
for fixed $x_a, x^{1}_a, \ldots, x^{n-1}_a \in \mathbb{R}$.
Here the terminal time $T$ and terminal state $x(T)$ are both free.
For all $i \in \lbrace1, \ldots, n\rbrace$,
$\alpha_i, \beta_i  \left([a,b]^2\right) \subseteq (i-1,i)$
and $\gamma^i=\left(\gamma_1^i, \gamma_2^i\right)$ is a vector.
The terminal cost function $\phi:[a,b]\times \mathbb{R}\rightarrow \mathbb{R}$
is at least of class $C^1$. \index{terminal cost function}
\end{Problem}

For simplicity of notation, we introduce the operator $[\cdot]_\gamma^{\alpha, \beta}$ defined by
$$
[x]_\gamma^{\alpha, \beta}(t)
=\left(t, x(t), \,^CD_{\gamma^1}^{\alpha_1(\cdot,\cdot),
\beta_1(\cdot,\cdot)}x(t), \ldots,
\,^CD_{\gamma^n}^{\alpha_n(\cdot,\cdot),
\beta_n(\cdot,\cdot)}x(t)\right).
$$
We assume that the Lagrangian $L:[a,b]\times \mathbb{R}^{n+1} \to \mathbb{R}$
is a function of class  $C^{1}$. Along the work, we denote by $\partial_i L$,
$i \in \{1,\ldots, n+2\}$, the partial derivative of the Lagrangian $L$
with respect to its $i$th argument.

Now, we can rewrite functional \eqref{funct_HO_1} as
\begin{equation}
\label{funct_HO_1a}
\mathcal{J}(x,T)=\int_a^T L[x]_\gamma^{\alpha, \beta}(t) dt + \phi(T,x(T)).
\end{equation}

In the previous section, we obtained fractional necessary optimality conditions
that every local minimizer of functional $\mathcal{J}$, with $n=1$, must fulfill.
Here, we generalize those results to arbitrary values of $n$, $n \in \mathbb{N}$. 
Necessary optimality conditions for Problem~\ref{Problem_HO} are presented next.

\begin{Theorem}
\label{HO_teo1}
Suppose that $(x,T)$ gives a minimum to functional \eqref{funct_HO_1a} on $D$.
Then, $(x,T)$ satisfies the following fractional Euler--Lagrange equations:
\index{Euler--Lagrange equations}
\begin{equation}
\label{ELeqHO_1}
\partial_2 L[x]_\gamma^{\alpha, \beta}(t)
+\sum_{i=1}^{n} {D_{\overline{\gamma^i}}^{\bi,\ai}}
\partial_{i+2} L[x]_\gamma^{\alpha, \beta}(t)=0,
\end{equation}
on the interval $[a,T]$, and
\begin{equation}
\label{ELeqHO_2}
\sum_{i=1}^{n} \gamma_2^i \left({_aD_t^{\bi}}\partial_{i+2}
L[x]_\gamma^{\alpha, \beta}(t)-{ _TD{_t^{\bi}}
\partial_{i+2} L[x]_\gamma^{\alpha, \beta}(t)}\right)=0,
\end{equation}
on the interval $[T,b]$. Moreover, $(x,T)$ satisfies the following transversality conditions:
\index{transversality conditions}
\begin{equation}
\label{CTHO_1}
\begin{cases}
 L[x]_\gamma^{\alpha,\DS \beta}(T)+\partial_1\phi(T,x(T))+\partial_2\phi(T,x(T))x'(T)=0,\\
\DS \sum_{i=1}^{n}\left[\gamma_1^i (-1)^{i-1} \, \frac{d^{i-1}}{dt^{i-1}}
{_tI_T^{i-\ai}} \partial_{i+2} L[x]_\gamma^{\alpha, \beta}(t)\right.\\
 \DS - \left.\gamma_2^i \,\frac{d^{i-1}}{dt^{i-1}}  {_TI_t^{i-\bi}}
\partial_{i+2} L[x]_\gamma^{\alpha, \beta}(t)\right]_{t=T} +\partial_2 \phi(T,x(T))=0,\\
\DS \sum_{i=j+1}^{n} \left[ \gamma_1^i (-1)^{i-1-j} \,
\frac{d^{i-1-j}}{dt^{i-1-j}}  {_tI_T^{i-\ai}} \partial_{i+2} L[x]_\gamma^{\alpha, \beta}(t)\right.\\
\DS + \left.\gamma_2^i (-1)^{j+1}\,\frac{d^{i-1-j}}{dt^{i-1-j}}
{_TI_t^{i-\bi}} \partial_{i+2} L[x]_\gamma^{\alpha, \beta}(t)\right]_{t=T} =0, \quad \forall j=1,\ldots,n-1,\\
\DS \sum_{i=j+1}^{n} \left[\gamma_2^i (-1)^{j+1}\, \left[
\frac{d^{i-1-j}}{dt^{i-1-j}} {_aI_t^{i-\bi}}\partial_{i+2} L[x]_\gamma^{\alpha, \beta}(t)\right.\right.\\
\DS \left.\left. - \,\frac{d^{i-1-j}}{dt^{i-1-j}}   {_TI_t^{i-\bi}\partial_{i+2}
L[x]_\gamma^{\alpha, \beta}(t)}\right]\right]_{t=b}=0, \quad \forall j=0,\ldots,n-1.
\end{cases}
\end{equation}
\end{Theorem}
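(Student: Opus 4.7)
The plan is to mimic the proof of Theorem \ref{FP_teo1}, replacing the first-order integration by parts formula by the higher-order version proved in Theorem \ref{thm:FIP_HO}. I take an admissible variation $(x+\varepsilon h,\, T+\varepsilon\Delta T)$, where $h\in C^{n}([a,b])$ satisfies $h^{(i)}(a)=0$ for $i=0,1,\ldots,n-1$ (these follow from the fixed initial conditions $x^{(i)}(a)=x_a^{i}$), and set $j(\varepsilon)=\mathcal{J}(x+\varepsilon h,\, T+\varepsilon\Delta T)$. A necessary first-order condition $j'(0)=0$ yields, after differentiating under the integral sign and applying Leibniz's rule for the upper limit,
\begin{equation*}
\begin{split}
0 =& \int_a^T\!\left[\partial_2 L[x]_\gamma^{\alpha,\beta}(t)\,h(t)+\sum_{i=1}^{n}\partial_{i+2}L[x]_\gamma^{\alpha,\beta}(t)\,\DCi h(t)\right]dt\\
&\quad + L[x]_\gamma^{\alpha,\beta}(T)\,\Delta T+\partial_1\phi(T,x(T))\,\Delta T+\partial_2\phi(T,x(T))\bigl[h(T)+x'(T)\Delta T\bigr].
\end{split}
\end{equation*}

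Next, I transfer the fractional derivatives off $h$. For each $i$, I split
$$\int_a^T \partial_{i+2}L\cdot\gamma_2^{i}\,{^C_tD_b^{\bi}}h(t)\,dt=\int_a^b(\cdots)\,dt-\int_T^b(\cdots)\,dt,$$
so that every left Caputo integral runs from $a$ to $T$ and every right one runs either from $a$ to $b$ or from $T$ to $b$; on each of these intervals, Theorem \ref{thm:FIP_HO} applies and produces an interior term (a Riemann--Liouville derivative acting on $\partial_{i+2}L$) plus a boundary sum of the form $\bigl[\sum_{k=0}^{i-1}(\pm 1)\,h^{(i-1-k)}(t)\,\tfrac{d^k}{dt^k}\,{}_{\bullet}I_{\bullet}^{i-\alpha_i}\partial_{i+2}L\bigr]_{t=\text{endpoint}}$. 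Using $h^{(j)}(a)=0$ for $j=0,\ldots,n-1$, every boundary contribution at $t=a$ vanishes. Grouping the surviving terms by the interval of integration, the identity becomes
\begin{equation*}
\begin{split}
0 =& \int_a^T h(t)\!\left[\partial_2L+\sum_{i=1}^{n}D_{\overline{\gamma^{i}}}^{\bi,\ai}\partial_{i+2}L\right]\!dt +\int_T^b h(t)\sum_{i=1}^{n}\gamma_2^{i}\bigl[{_aD_t^{\bi}}-{_TD_t^{\bi}}\bigr]\partial_{i+2}L\,dt\\
&\quad + (\text{boundary terms at } t=T) + (\text{boundary terms at } t=b)+L[x]_\gamma^{\alpha,\beta}(T)\Delta T\\
&\qquad +\partial_1\phi\,\Delta T+\partial_2\phi\bigl[h(T)+x'(T)\Delta T\bigr].
\end{split}
\end{equation*}

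The Euler--Lagrange equations \eqref{ELeqHO_1} and \eqref{ELeqHO_2} are then extracted in the standard way: first take $\Delta T=0$ together with $h$ supported in $(a,T)$ with $h^{(k)}(T)=h^{(k)}(b)=0$ for all relevant $k$, and apply the fundamental lemma (Lemma \ref{flcv}) to obtain \eqref{ELeqHO_1}; then take $\Delta T=0$ with $h$ supported in $(T,b)$ vanishing to order $n-1$ at $T$ and $b$, giving \eqref{ELeqHO_2}. With both interior integrals eliminated, what remains is a linear combination of $\Delta T$, $h(T),h'(T),\ldots,h^{(n-1)}(T)$ and $h(b),h'(b),\ldots,h^{(n-1)}(b)$ which must vanish for all admissible choices. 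Equating each coefficient to zero yields: from $\Delta T$ the first line of \eqref{CTHO_1}; from $h(T)$ the second line; from $h^{(j)}(T)$, $j=1,\ldots,n-1$, the third family (where the shift $i-1-j$ in the derivative order arises because the boundary term produced by Theorem \ref{thm:FIP_HO} at order $i$ contributes $h^{(i-1-k)}$ with $k=0,\ldots,i-1$, so $h^{(j)}$ receives contributions only from $i\geq j+1$ with $k=i-1-j$); and from $h^{(j)}(b)$, $j=0,\ldots,n-1$, the fourth family, using the cancellation between the $\int_a^b$ and $\int_T^b$ boundary terms at $b$.

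The main obstacle will be the bookkeeping in the last step: one must verify that, after the index reshuffling $k\mapsto i-1-j$ and the sign $(-1)^{k}=(-1)^{i-1-j}$ inherited from Theorem \ref{thm:FIP_HO}, together with the extra factor $(-1)^{i}$ carried by the right Caputo integration by parts, the coefficients align with the exact signs $(-1)^{i-1-j}$ and $(-1)^{j+1}$ appearing in \eqref{CTHO_1}. Once this accounting is done once and for all, the rest of the proof is routine and parallels Theorem \ref{FP_teo1}.
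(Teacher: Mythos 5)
Your proposal is correct and follows essentially the same route as the paper's own proof: the same admissible variation with $h^{(i)}(a)=0$, the same splitting of the right Caputo term into $\int_a^b-\int_T^b$, integration by parts via Theorem~\ref{thm:FIP_HO} with the boundary contributions at $t=a$ vanishing, and extraction of \eqref{ELeqHO_1}--\eqref{CTHO_1} by the fundamental lemma and suitable choices of $\Delta T$, $h^{(j)}(T)$, $h^{(j)}(b)$. Your sign bookkeeping ($(-1)^{i+k}$ with $k=i-1-j$ giving $(-1)^{j+1}$ for the right-sided terms, and $(-1)^{i-1-j}$ for the left-sided ones) is exactly the accounting carried out in the paper.
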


\begin{proof}
The proof is an extension of the one used in Theorem \ref{FP_teo1}.
Let $h\in C^n([a,b];\bR)$ be a perturbing curve and $\triangle T \in\mathbb{R}$
an arbitrarily chosen small change in $T$. For a small number $\e \in\mathbb{R}$,
if $(x,T)$ is a solution to the problem, we consider an admissible variation of $(x,T)$ of the form $\left(x+\e{h},T+\e\Delta{T}\right)$, and then, by the minimum condition, we have that
$$
\mathcal{J}(x,T) \leq \mathcal{J}(x+\e{h},T+\e\Delta{T}).
$$
The constraints $x^{(i)}(a)=x^{(i)}_a$ imply that all admissible variations
must fulfill the conditions $h^{(i)}(a)=0,$ for all $i=0, \ldots, n-1$.
We define function $j(\cdot)$ on a neighborhood of zero by
\begin{equation*}
\begin{split}
j(\e) &=\mathcal{J}(x+\e h,T+\e \triangle T)\\
&=\int_a^{T+\e \triangle T} L[x+\e h]_\gamma^{\alpha, \beta}(t)\,dt
+\phi\left(T+\e \triangle T,(x+\e h)(T+\e \triangle T)\right).
\end{split}
\end{equation*}
The derivative $j'(\e)$ is given by the expression
\begin{equation*}
\begin{split}
&\int_a^{T+\e \triangle T} \left(
\partial_2 L[x+\e h]_\gamma^{\alpha, \beta}(t) h(t)
+\sum_{i=1}^{n}  \partial_{i+2} L[x+\e h]_\gamma^{\alpha, \beta}(t) \DCi h(t) \right) dt  \\
&\quad \quad +L[x + {\e h}]_\gamma^{\alpha, \beta}(T + \e \Delta T)\Delta T
+ \partial_1 \phi\left(T+\e \triangle T,
(x+\e h)(T+\e \triangle T)\right) \, \Delta T\\
&\quad\quad +\partial_2\phi\left(T+\e \triangle T,
(x+\e h)(T+\e \triangle T)\right) \, (x+\e h)'(T+\e \triangle T).
\end{split}
\end{equation*}
Hence, by Fermat's theorem, a necessary condition for $(x,T)$
to be a local minimizer of $j$ is given by $j'(0) =0$, that is,
\begin{multline}
\label{eqHO_derj}
\int_a^T \left( \partial_2 L[x]_\gamma^{\alpha, \beta}(t) h(t)
+\sum_{i=1}^{n} \partial_{i+2} L[x]_\gamma^{\alpha, \beta}(t) \DCi h(t) \right)dt\\
+L[x]_\gamma^{\alpha, \beta}(T)\Delta T
+\partial_1 \phi \left(T, x(T)\right)\Delta T
+ \partial_2\phi(T, x(T))\left[h(t)+x'(T) \triangle T \right]=0.
\end{multline}
Considering the second addend of the integral function \eqref{eqHO_derj},
for $i=1$, we get
\begin{equation*}
\begin{split}
&\int_a^T \partial_3 L[x]_\gamma^{\alpha, \beta}(t) {^CD_{\gamma^1}^{\ao,\bo}} h(t) dt\\
&=\int_a^T \partial_3 L[x]_\gamma^{\alpha, \beta}(t)\left[\gamma_1^1
\, \LCao h(t)+\gamma_2^1 \, \RCbo h(t)\right]dt\\
&=\gamma_1^1 \int_a^T \partial_3 L[x]_\gamma^{\alpha, \beta}(t)\LCao h(t)dt  \\
&\quad + \gamma_2^1 \left[ \int_a^b \partial_3 L[x]_\gamma^{\alpha, \beta}(t) \RCbo h(t)dt
- \int_T^b \partial_3 L[x]_\gamma^{\alpha, \beta}(t) \RCbo h(t)dt \right].
\end{split}
\end{equation*}
Integrating by parts (see Theorem~\ref{thm:FIP_HO}), and since $h(a)=0$,
we obtain that
\begin{equation*}
\begin{split}
\gamma_1^1 & \left[ \int_a^T h(t) _tD_T^{\ao} \partial_3
L[x]_\gamma^{\alpha, \beta}(t) dt
+\left[h(t) {_t I_T^{1-\ao}\partial_3 L[x]_\gamma^{\alpha, \beta}(t)}\right]_{t=T} \right]\\
&+  \gamma_2^1 \Biggl[ \int_a^b h(t){_aD_t^{\bo}} \partial_3 L[x]_\gamma^{\alpha, \beta}(t) dt
-\left[h(t){_a I_t^{1-\bo}\partial_3 L[x]_\gamma^{\alpha, \beta}(t)}\right]_{t=b} \\
&-\Biggl( \int_T^b h(t){_TD_t^{\bo}} \partial_3 L[x]_\gamma^{\alpha, \beta}(t) dt
- \left[h(t){_TI_t^{1-\bo}\partial_3 L[x]_\gamma^{\alpha, \beta}(t)}\right]_{t=b}\\
&+\left[h(t){_TI_t^{1-\bo}\partial_3 L[x]_\gamma^{\alpha, \beta}(t)}\right]_{t=T} \Biggr) \Biggr].
\end{split}
\end{equation*}
Unfolding these integrals, and considering the fractional operator
$D_{\overline{\gamma^1}}^{\beta_1,\alpha_1}$ with
$\overline{\gamma^1}=(\gamma_2^1,\gamma_1^1)$,
then the previous term is equal to
\begin{equation*}
\begin{split}
\int_a^T h(t) & D_{\overline{\gamma^1}}^{\bo,\ao}\partial_3L[x]_\gamma^{\alpha, \beta}(t)dt\\
&+ \int_T^b\gamma_2^1 h(t)\left[_aD_t^{\bo}\partial_3 L[x]_\gamma^{\alpha, \beta}(t)
-{_TD_t^{\bo}\partial_3L[x]_\gamma^{\alpha, \beta}(t)}\right]dt\\
&+h(T) \left[\gamma_1^1 \, {_tI_T^{1-\ao}\partial_3L[x]_\gamma^{\alpha, \beta}(t)}
-{\gamma_2^1 \, {_TI_t^{1-\bo}\partial_3L[x]_\gamma^{\alpha, \beta}(t)}}\right]_{t=T}\\
&- h(b)\gamma_2^1\left[  {_aI_t^{1-\bo}\partial_3 L[x]_\gamma^{\alpha, \beta}(t)}
-{_TI_t^{1-\bo}\partial_3L[x]_\gamma^{\alpha, \beta}(t)}\right]_{t=b}.
\end{split}
\end{equation*}
Considering the third addend of the integral function \eqref{eqHO_derj},
for $i=2$, we get
\begin{equation*}
\begin{split}
&\int_a^T \partial_4 L[x]_\gamma^{\alpha, \beta}(t) {^CD_{\gamma^2}^{\at,\bt}} h(t) dt
=\gamma_1^2 \int_a^T \partial_4 L[x]_\gamma^{\alpha, \beta}(t)\LCat h(t)dt\\
&\quad + \gamma_2^2 \left[ \int_a^b \partial_4 L[x]_\gamma^{\alpha, \beta}(t) \RCbt h(t)dt
- \int_T^b \partial_4 L[x]_\gamma^{\alpha, \beta}(t) \RCbt h(t)dt \right]\\
&=\gamma_1^2 \left[ \int_a^T h(t) _tD_T^{\at} \partial_4L[x]_\gamma^{\alpha, \beta}(t) dt\right.\\
&\quad+\left. \left[h^{(1)}(t) {_t I_T^{2-\at}\partial_4 L[x]_\gamma^{\alpha, \beta}(t)}
-h(t) \frac{d}{dt} {_t I_T^{2-\at}\partial_4 L[x]_\gamma^{\alpha, \beta}(t)}\right]_{t=T} \right]\\
&\quad+  \gamma_2^2 \Biggl[ \int_a^b h(t){_aD_t^{\bt}} \partial_4 L[x]_\gamma^{\alpha, \beta}(t) dt \\
&\quad+\left. \left[h^{(1)}(t) {_a I_t^{2-\bt}\partial_4 L[x]_\gamma^{\alpha, \beta}(t)}
-h(t) \frac{d}{dt} {_a I_t^{2-\bt}\partial_4 L[x]_\gamma^{\alpha, \beta}(t)}\right]_{t=b} \right.\\
\end{split}
\end{equation*}
\begin{equation*}
\begin{split}
&\quad- \int_T^b h(t){_TD_t^{\bt}} \partial_4 L[x]_\gamma^{\alpha, \beta}(t) dt\\
&\quad- \left[h^{(1)}(t){_TI_t^{2-\bt}\partial_4 L[x]_\gamma^{\alpha, \beta}(t)}
-h(t)\frac{d}{dt} {_T I_t^{2-\bt}\partial_4 L[x]_\gamma^{\alpha, \beta}(t)}\right]_{t=b}\\
&\quad+ \left[h^{(1)}(t){_TI_t^{2-\bt} \partial_4 L[x]_\gamma^{\alpha, \beta}(t)
-h(t) \frac{d}{dt} {_T I_t^{2-\bt}\partial_4 L[x]_\gamma^{\alpha, \beta}(t)}}\right]_{t=T}\Biggr].
\end{split}
\end{equation*}
Again, with the auxiliary operator $D_{\overline{\gamma^2}}^{\beta_2,\alpha_2}$,
with $\overline{\gamma^2}=(\gamma_2^2,\gamma_1^2)$, we obtain
\begin{equation*}
\begin{split}
&\int_a^T h(t)  D_{\overline{\gamma^2}}^{\bt,\at}\partial_4L[x]_\gamma^{\alpha, \beta}(t)dt\\
&\quad+ \int_T^b\gamma_2^2 h(t)\left[_aD_t^{\bt}\partial_4 L[x]_\gamma^{\alpha, \beta}(t)
-{_TD_t^{\bt}\partial_4L[x]_\gamma^{\alpha, \beta}(t)}\right]dt\\
&\quad+\left[h^{(1)}(t)\left(\gamma_1^2 \, {_tI_T^{2-\at}\partial_4L[x]_\gamma^{\alpha, \beta}(t)}
+{\gamma_2^2 \, {_TI_t^{2-\bt}\partial_4L[x]_\gamma^{\alpha, \beta}(t)}}\right)\right]_{t=T}\\
&\quad-\left[h(t)\left(\gamma_1^2 \, \frac{d}{dt} {_tI_T^{2-\at}\partial_4L[x]_\gamma^{\alpha, \beta}(t)}
+{\gamma_2^2 \,\frac{d}{dt} {_TI_t^{2-\bt}\partial_4L[x]_\gamma^{\alpha, \beta}(t)}}\right)\right]_{t=T}\\
&\quad+\left[ h^{(1)}(t)\gamma_2^2 \left({_aI_t^{2-\bt}\partial_4 L[x]_\gamma^{\alpha, \beta}(t)}
-{_TI_t^{2-\bt}\partial_4L[x]_\gamma^{\alpha, \beta}(t)}\right)\right]_{t=b}\\
&\quad-\left[ h(t)\gamma_2^2 \left(\frac{d}{dt} {_aI_t^{2-\bt}\partial_4 L[x]_\gamma^{\alpha, \beta}(t)}
- \frac{d}{dt} {_TI_t^{2-\bt}\partial_4L[x]_\gamma^{\alpha, \beta}(t)}\right)\right]_{t=b}.
\end{split}
\end{equation*}
Now, consider the general case
\begin{equation*}
\int_a^T \partial_{i+2} L[x]_\gamma^{\alpha, \beta}(t) {^CD_{\gamma^i}^{\ai,\bi}} h(t) dt,
\end{equation*}
$i=3, \ldots, n$. Then, we obtain
\begin{equation*}
\begin{split}
& \gamma_1^i  \left[ \int_a^T h(t)  _tD_T^{\ai} \partial_{i+2}
L[x]_\gamma^{\alpha, \beta}(t) dt \right.\\
& \quad \quad + \left. \left[ \sum_{k=0}^{i-1}
(-1)^{k} h^{(i-1-k)}(t) \frac{d^k}{dt^k} {_t I_T^{i-\ai}
\partial_{i+2} L[x]_\gamma^{\alpha, \beta}(t) t)}\right]_{t=T} \right]\\
&+  \gamma_2^i \Biggl[ \int_a^b h(t){_aD_t^{\bi}} \partial_{i+2} L[x]_\gamma^{\alpha, \beta}(t) dt\\
&\quad \quad + \left[ \sum_{k=0}^{i-1} (-1)^{i+k} h^{(i-1-k)}(t)
\frac{d^k}{dt^k} {_a I_t^{i-\bi}\partial_{i+2} L[x]_\gamma^{\alpha, \beta}(t)}\right]_{t=b}\\
&- \left. \int_T^b h(t) {_TD_t^{\bi}} \partial_{i+2} L[x]_\gamma^{\alpha, \beta}(t) dt \right.\\
& \quad \quad - \left. \left[ \sum_{k=0}^{i-1} (-1)^{i+k} h^{(i-1-k)}(t)
\frac{d^k}{dt^k} {_T I_t^{i-\bi}\partial_{i+2}
L[x]_\gamma^{\alpha, \beta}(t)}\right]^{t=b}_{t=T}\right].
\end{split}
\end{equation*}
Unfolding these integrals, we obtain
\begin{equation*}
\begin{split}
&\int_a^T h(t)  D_{\overline{\gamma^i}}^{\bi,\ai}\partial_{i+2}L[x]_\gamma^{\alpha, \beta}(t)dt\\
&+ \int_T^b\gamma_2^i h(t)\left[_aD_t^{\bi}\partial_{i+2} L[x]_\gamma^{\alpha, \beta}(t)
-{_TD_t^{\bi}\partial_{i+2} L[x]_\gamma^{\alpha, \beta}(t)}\right]dt\\
&+h^{(i-1)}(T)\left[\gamma_1^i \, {_tI_T^{i-\ai}\partial_{i+2}L[x]_\gamma^{\alpha, \beta}(t)}
+{\gamma_2^i (-1)^i \, {_TI_t^{i-\bi}\partial_{i+2}L[x]_\gamma^{\alpha, \beta}(t)}}\right]_{t=T}\\
&+h^{(i-1)}(b) \gamma_2^i (-1)^i \left[ \, {_aI_t^{i-\bi}\partial_{i+2}L[x]_\gamma^{\alpha, \beta}(t)}
- {_TI_t^{i-\bi}\partial_{i+2}L[x]_\gamma^{\alpha, \beta}(t)}\right]_{t=b}\\
&+h^{(i-2)}(T)\left[\gamma_1^i (-1)^1 \frac{d}{dt} \, {_tI_T^{i-\ai}
\partial_{i+2}L[x]_\gamma^{\alpha, \beta}(t)}\right.\\
&\qquad \qquad \qquad \qquad \left.+{\gamma_2^i (-1)^{i+1}
\frac{d}{dt} \,{_TI_t^{i-\bi}\partial_{i+2}L[x]_\gamma^{\alpha, \beta}(t)}}\right]_{t=T}\\
&+h^{(i-2)}(b) \gamma_2^i (-1)^{i+1} \left[ \,
\frac{d}{dt} {_aI_t^{i-\bi}\partial_{i+2}L[x]_\gamma^{\alpha, \beta}(t)}\right.\\
&\qquad \qquad \qquad \qquad \left.- \frac{d}{dt} {_TI_t^{i-\bi}\partial_{i+2}L[x]_\gamma^{\alpha, \beta}(t)}\right]_{t=b}\\
&+ \ldots+ h(T)\left[ \gamma_1^i (-1)^{i-1} \frac{d^{i-1}}{dt^{i-1}} {_tI_T^{i-\ai}\partial_{i+2}
L[x]_\gamma^{\alpha, \beta}(t)}\right.\\
& \quad \quad \quad \quad + \left. \gamma_2^i (-1)^{2i-1}\frac{d^{i-1}}{dt^{i-1}}
{_TI_t^{i-\bi}\partial_{i+2}L[x]_\gamma^{\alpha, \beta}(t)}\right]_{t=T}\\
&+ h(b)\gamma_2^i (-1)^{2i-1}\left[  \frac{d^{i-1}}{dt^{i-1}} {_aI_t^{i-\bi}\partial_{i+2}
L[x]_\gamma^{\alpha, \beta}(t)}\right.\\
&\qquad \qquad \left.- \frac{d^{i-1}}{dt^{i-1}} {_TI_t^{i-\bi}
\partial_{i+2}L[x]_\gamma^{\alpha, \beta}(t)}\right]_{t=b}.
\end{split}
\end{equation*}
Substituting all the relations into equation \eqref{eqHO_derj}, we obtain that
\begin{equation*}
\begin{split}
0=&\int_a^T h(t)  \left( \partial_2 L[x]_\gamma^{\alpha, \beta}(t) + \sum_{i=1}^{n}
D_{\overline{\gamma^i}}^{\bi,\ai}\partial_{i+2}L[x]_\gamma^{\alpha, \beta}(t) \right) dt\\
&+ \int_T^b h(t) \sum_{i=1}^{n} \gamma_2^i \left[_aD_t^{\bi}\partial_{i+2} L[x]_\gamma^{\alpha, \beta}(t)
-{_TD_t^{\bi}\partial_{i+2} L[x]_\gamma^{\alpha, \beta}(t)}\right]dt\\
& +\sum_{j=0}^{n-1} h^{(j)}(T) \sum_{i=j+1}^{n}\left[ \gamma_1^i (-1)^{i-1-j}
\frac{d^{i-1-j}}{dt^{i-1-j}}{_tI_T^{i-\ai}\partial_{i+2} L[x]_\gamma^{\alpha, \beta}(t)} \right.\\
& \quad \quad \quad \quad \quad \quad \quad \quad \left.
+ \gamma_2^i (-1)^{j+1}\frac{d^{i-1-j}}{dt^{i-1-j}}{_TI_t^{i-\bi}\partial_{i+2}
L[x]_\gamma^{\alpha, \beta}(t)}\right]_{t=T}\\
\end{split}
\end{equation*}
\begin{equation}
\label{eq_HO_derj2}
\begin{split}
& +\sum_{j=0}^{n-1} h^{(j)}(b) \sum_{i=j+1}^{n} \gamma_2^i (-1)^{j+1}\left[
\frac{d^{i-1-j}}{dt^{i-1-j}} {_aI_t^{i-\bi}\partial_{i+2}
L[x]_\gamma^{\alpha, \beta}(t)} \right. \\
& \quad \quad \quad \quad \left.
- \frac{d^{i-1-j}}{dt^{i-1-j}}{_TI_t^{i-\bi}\partial_{i+2}
L[x]_\gamma^{\alpha, \beta}(t)} \right]_{t=b}+ h(T) \partial_2 \phi \left(T, x(T)\right)\\
&+ \Delta T \left[ L[x]_\gamma^{\alpha, \beta}(T)  +\partial_1 \phi \left(T, x(T)\right)
+ \partial_2\phi(T, x(T))x'(T) \right].
\end{split}
\end{equation}

We obtain the fractional Euler--Lagrange equations \eqref{ELeqHO_1}--\eqref{ELeqHO_2}
and the transversality conditions \eqref{CTHO_1} applying the fundamental lemma
of the calculus of variations (see, e.g., \cite{Cap4:Brunt}) for appropriate choices of
variations.
\end{proof}

\begin{Remark}
When $n=1$, functional \eqref{funct_HO_1} takes the form
$$
\mathcal{J}(x,T)=\int_a^T L\left(t, x(t), \DC x(t)\right)dt+\phi(T,x(T)),
$$
and the fractional Euler--Lagrange equations
\eqref{ELeqHO_1}--\eqref{ELeqHO_2} coincide with those
of Theorem~\ref{FP_teo1}.
\end{Remark}

Considering the increment $\Delta T$ on time $T$,
and the consequent increment $\Delta x_T$ on $x$, given by
\begin{equation}
\label{xt_HO}
\Delta x_T = (x+h)\left( T + \Delta T \right) - x(T),
\end{equation}
in the next theorem we rewrite the transversality conditions
\eqref{CTHO_1} in terms of these increments.

\begin{Theorem}
\label{HO_teo2}
If $(x,T)$ minimizes functional $\mathcal{J}$ defined by \eqref{funct_HO_1a},
then $(x,T)$ satisfies the Euler--Lagrange equations \eqref{ELeqHO_1}
and \eqref{ELeqHO_2}, and the following transversality conditions:
\index{transversality conditions}
\begin{equation*}
\begin{cases}
\partial_1\phi(T,x(T))-x'(T)\DS\sum_{i=1}^{n}\left[\gamma_1^i (-1)^{i-1} \, \frac{d^{i-1}}{dt^{i-1}}
{_tI_T^{i-\ai}} \partial_{i+2} L[x]_\gamma^{\alpha, \beta}(t) \right.\\ 
\DS \left. -\gamma_2^i \, \frac{d^{i-1}}{dt^{i-1}}
{_TI_t^{i-\bi}} \partial_{i+2} L[x]_\gamma^{\alpha, \beta}(t)+L[x]_\gamma^{\alpha,\beta}(T) \right]_{t=T}=0,\\

\DS\sum_{i=1}^{n}\left[\gamma_1^i (-1)^{i-1} \, \frac{d^{i-1}}{dt^{i-1}}
{_tI_T^{i-\ai}} \partial_{i+2} L[x]_\gamma^{\alpha, \beta}(t)\right.\\
\DS - \left.\gamma_2^i \,\frac{d^{i-1}}{dt^{i-1}}
{_TI_t^{i-\bi}} \partial_{i+2} L[x]_\gamma^{\alpha, \beta}(t)\right]_{t=T}
+\partial_2 \phi(T,x(T))=0,\\

\DS \sum_{i=j+1}^{n} \left[ \gamma_1^i (-1)^{i-1-j}
\, \frac{d^{i-1-j}}{dt^{i-1-j}}  {_tI_T^{i-\ai}}
\partial_{i+2} L[x]_\gamma^{\alpha, \beta}(t)\right.\\
\DS + \left.\gamma_2^i (-1)^{j+1}\,
\frac{d^{i-1-j}}{dt^{i-1-j}}  {_TI_t^{i-\bi}} \partial_{i+2}
L[x]_\gamma^{\alpha, \beta}(t)\right]_{t=T} =0, \quad \forall j=2,\ldots,n-1,\\

\DS \sum_{i=j+1}^{n} \left[\gamma_2^i (-1)^{j+1}\, \left[
\frac{d^{i-1-j}}{dt^{i-1-j}} {_aI_t^{i-\bi}}\partial_{i+2}
L[x]_\gamma^{\alpha, \beta}(t)\right.\right.\\
\DS \left.\left. - \,\frac{d^{i-1-j}}{dt^{i-1-j}}
{_TI_t^{i-\bi}\partial_{i+2}L[x]_\gamma^{\alpha, \beta}(t)}\right]\right]_{t=b}=0,
\quad \forall j=0,\ldots,n-1.
\end{cases}
\end{equation*}
\end{Theorem}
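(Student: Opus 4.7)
\textbf{Proof proposal for Theorem~\ref{HO_teo2}.} The plan is to mimic the argument used in Theorem~\ref{FP_teo2} (the first-order analogue) and lift it to the higher-order setting by relying on the expression for the first variation that was already assembled inside the proof of Theorem~\ref{HO_teo1}. Concretely, I would begin from the identity obtained there by setting $j'(0)=0$, namely the equation labelled \eqref{eq_HO_derj2}, which collects the two integral terms (on $[a,T]$ and on $[T,b]$), the $n$ boundary terms at $t=T$ indexed by $j=0,\ldots,n-1$, the $n$ boundary terms at $t=b$ indexed by $j=0,\ldots,n-1$, and the explicit $\Delta T$-term coming from the upper limit of integration and from the terminal cost $\phi$.

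Next, I would apply the Euler--Lagrange equations \eqref{ELeqHO_1} and \eqref{ELeqHO_2}, which were already established in Theorem~\ref{HO_teo1}, to annihilate the two integral contributions in \eqref{eq_HO_derj2}. What remains is a linear combination of $h^{(j)}(T)$ ($j=0,\ldots,n-1$), $h^{(j)}(b)$ ($j=0,\ldots,n-1$), and $\Delta T$, equated to zero. The key change of variables is then to trade the free parameter $h(T)$ for the geometric increment $\Delta x_T$ defined in \eqref{xt_HO}. A first-order Taylor expansion of $(x+h)(T+\varepsilon\Delta T)$ at $T$, carried out under the restriction $h^{(1)}(T)=0$ (exactly as in the proof of Theorem~\ref{FP_teo2}), gives
$$
h(T)=\Delta x_T-x'(T)\,\Delta T+O(\Delta T)^2.
$$
Substituting this into the remaining identity replaces the $j=0$ boundary term at $t=T$ and the standalone $\Delta T$-term by a new pair: one coefficient multiplying $\Delta x_T$ (which collects the old $j=0$ coefficient from \eqref{CTHO_1}) and one coefficient multiplying $\Delta T$ (which adds to $L[x]_\gamma^{\alpha,\beta}(T)+\partial_1\phi(T,x(T))$ a $-x'(T)$ multiple of the old $j=0$ coefficient).

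The transversality conditions are then harvested, one at a time, by the standard localisation argument: since $\Delta T$, $\Delta x_T$, the values $h^{(j)}(T)$ for $j=2,\ldots,n-1$, and the values $h^{(j)}(b)$ for $j=0,\ldots,n-1$ are mutually independent admissible choices (the constraint $h^{(1)}(T)=0$ is what removes $j=1$ from the list at $t=T$), each of their coefficients must vanish separately. This yields exactly the four families of conditions listed in the statement: the $\Delta T$-condition (first line), the $\Delta x_T$-condition (second line, which reproduces the $j=0$ line of \eqref{CTHO_1}), the $j=2,\ldots,n-1$ conditions at $t=T$, and the $j=0,\ldots,n-1$ conditions at $t=b$.

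The main obstacle is purely bookkeeping rather than conceptual: one must carefully track, in the substitution $h(T)=\Delta x_T-x'(T)\Delta T+O(\Delta T)^2$, which pieces of the long sum in \eqref{eq_HO_derj2} are affected. In particular, only the $j=0$ term at $t=T$ couples to the free-time increment $\Delta T$, so the higher $j\geq 2$ conditions at $t=T$ and all the $b$-conditions are inherited from Theorem~\ref{HO_teo1} unchanged; verifying this cleanly, and justifying that the $O(\Delta T)^2$ remainder drops out when $\varepsilon\to 0$, is the only delicate point.
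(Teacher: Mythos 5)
Your proposal is correct and follows essentially the same route as the paper: the paper's proof of Theorem~\ref{HO_teo2} likewise restricts to variations with $h'(T)=0$, uses the first-order Taylor expansion to write $h(T)=\Delta x_T-x'(T)\Delta T+O(\Delta T)^2$, substitutes this into \eqref{eq_HO_derj2} (with the integral terms already eliminated by the Euler--Lagrange equations \eqref{ELeqHO_1}--\eqref{ELeqHO_2}), and then obtains the four families of conditions by appropriate choices of variations. Your additional remarks — that only the $j=0$ boundary term at $t=T$ couples to $\Delta T$, and that the restriction $h^{(1)}(T)=0$ is what removes the $j=1$ condition at $t=T$ — are consistent with, and make explicit, what the paper leaves implicit.
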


 \begin{proof}
Using Taylor's expansion up to first order, and restricting
the set of variations to those for which $h'(T)=0$, we obtain that
$$
(x+h)\left(T+\Delta T \right)=(x+h)(T)+x'(T) \Delta T + O(\Delta T)^{2}.
$$
According to the increment on $x$ given by \eqref{xt_HO}, we get
$$
h(T)= \Delta x_T - x'(T) \Delta T + O(\Delta T)^{2}.
$$
From substitution of this expression into \eqref{eq_HO_derj2},
and by using appropriate choices of variations, we obtain
the intended transversality conditions.
\end{proof}

% -------------------------------------------

\subsection{Example}
\label{subsec:HO_example}
We provide an illustrative example. It is covered by Theorem~\ref{HO_teo1} .

Let $p_{n-1}(t)$ be a polynomial of degree $n-1$. If $\alpha,\beta:[0,b]^2\to(n-1,n)$
are the fractional orders, then $\DC p_{n-1}(t)=0$ since $p_{n-1}^{(n)}(t)=0$
for all $t$.
Consider the functional
$$
\mathcal{J}(x,T)=\int_0^T\left[ \left( \DC x(t)\right)^2+(x(t)-p_{n-1}(t))^2-t-1\right]\,dt +T^2
$$
subject to the initial constraints
$$
x(0)=p_{n-1}(0) \quad \mbox{and} \quad x^{(k)}(0)=p_{n-1}^{(k)}(0) , \, k=1,\ldots,n-1.
$$
Observe that, for all $t\in[0,b]$,
$$
\partial_i L [p_{n-1}]_\gamma^{\alpha, \beta}(t)=0,
\quad i=2,3.
$$
Thus, function $x\equiv p_{n-1}$ and the final time $T=1$ satisfy
the necessary optimality conditions of Theorem~\ref{HO_teo1}. We also
remark that, for any curve $x$, one has
$$
\mathcal{J}(x,T)\geq\int_0^T\left[-t-1\right]\,dt +T^2=\frac{T^2}{2}-T,
$$
which attains a minimum value $-1/2$ at $T=1$. Since $\mathcal{J}(p_{n-1},1)=-1/2$,
we conclude that $(p_{n-1},1)$ is the (global) minimizer of $\mathcal{J}$.

% -------------------------------------------

\section{Variational problems with time delay}
\label{sec:delay}

In this section, we consider fractional variational problems with time
delay.\index{time delay}
As mentioned in \cite{Cap4:machado_delay}, ``we verify that a fractional
derivative requires an infinite number of samples capturing, therefore,
all the signal history, contrary to what happens with integer order
derivatives that are merely local operators. This fact motivates the
evaluation of calculation strategies based on delayed signal samples''.
This subject has already been studied for constant fractional order 
\cite{Cap4:Almeidadelay,Cap4:Baleanudelay,Cap4:Jaraddelay}. 
However, for a variable fractional order, it is, to the authors' 
best knowledge, an open question. We also refer to the works 
\cite{Cap4:Daftardar,Cap4:deng,Cap4:Laza,Cap4:wangdelay}, where
fractional differential equations are considered with a time delay.

In Section~\ref{subsec:NecCond_delay}, we deduce necessary optimality
conditions when the Lagrangian depends on a time delay
(Theorem~\ref{teo:delay}). One illustrative example is discussed
in Section~\ref{subsec:delay_example}.

% -----------------

\subsection{Necessary optimality conditions}
\label{subsec:NecCond_delay}

For simplicity of presentation, we consider fractional orders
$\alpha,\beta:[a,b]^2\to(0,1)$. Using similar arguments,
the problem can be easily generalized for higher-order derivatives.
Let $\s>0$ and define the vector
$$
\xs(t)= \left(t, x(t), \DC x(t),x(t-\s)\right).
$$
For the domain of the functional, we consider the set
$$
D_\s=\left\{(x,t)\in C^1([a-\s,b])\times [a,b]: \DC x\in C([a,b])\right\}.
$$
Let $\mathcal{J}:D_\s\to\mathbb R$ be the functional defined by
\begin{equation}
\label{funct:delay}
\mathcal{J}(x,T)=\int_a^T L\xs(t)+\phi(T,x(T)),
\end{equation}
where we assume again that the Lagrangian $L$ and the payoff term
$\phi$ are differentiable.

The optimization problem with time delay is the following:\\
\index{variational fractional problem! with time delay}
\begin{Problem} Minimize functional \eqref{funct:delay} on $D_\s$ subject
to the boundary condition
$$x(t)=\varphi(t)$$
\index{boundary conditions}
for all $t\in[a-\s,a]$, where $\varphi$ is a given (fixed) function.
\end{Problem}
We now state and prove the Euler--Lagrange equations for this problem.
\index{Euler--Lagrange equations}

\begin{Theorem}
\label{teo:delay}
Suppose that $(x,T)$ gives a local minimum to functional \eqref{funct:delay}
on $D_\s$.
If $\s\geq T-a$, then $(x,T)$ satisfies
\begin{equation}
\label{ELeq_1delay}
\partial_2 L\xs (t)+D{_{\overline{\gamma}}^{\b,\a}}\partial_3 L\xs (t)=0,
\end{equation}
for $t\in[a,T]$, and
\begin{equation}
\label{ELeq_2delay}
\gamma_2\left({\LDb}\partial_3 L\xs (t)-{ _TD{_t^{\b}}\partial_3 L\xs (t)}\right)=0,
\end{equation}
for $t\in[T,b]$. Moreover, $(x,T)$ satisfies
\begin{equation}
\label{CT1delay}
\begin{cases}
L\xs (T)+\partial_1\phi(T,x(T))+\partial_2\phi(T,x(T))x'(T)=0,\\
\left[\gamma_1 \, {_tI_T^{1-\a}} \partial_3L\xs (t)
-\gamma_2 \, {_TI_t^{1-\b}} \partial_3 L\xs (t)\right]_{t=T}\\
\qquad\qquad\qquad\qquad+\partial_2 \phi(T,x(T))=0,\\
\gamma_2 \left[ {_TI_t^{1-\b}}\partial_3 L\xs (t)
-{_aI_t^{1-\b}\partial_3L\xs (t)}\right]_{t=b}=0.
\end{cases}
\end{equation}
If $\s<T-a$, then Eq. \eqref{ELeq_1delay} is replaced by the two following ones:
\begin{equation}
\label{ELeq_3delay}
\partial_2 L\xs (t)+D{_{\overline{\gamma}}^{\b,\a}}\partial_3 L\xs (t)+\partial_4 L\xs (t+\s)=0,
\end{equation}
for $t\in[a,T-\s]$, and
\begin{equation}
\label{ELeq_4delay}
\partial_2 L\xs (t)+D{_{\overline{\gamma}}^{\b,\a}}\partial_3 L\xs (t)=0,
\end{equation}
for $t\in[T-\s,T]$.
\end{Theorem}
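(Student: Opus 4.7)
The plan is to follow the scheme used in Theorem~\ref{FP_teo1}, adapting it to accommodate the delayed argument $x(t-\sigma)$. First, I would fix a perturbing curve $h \in C^{1}([a-\sigma,b];\bR)$ and a real $\Delta T$, and consider the one-parameter family of admissible variations $(x+\e h, T+\e\Delta T)$. Since $x(t)=\varphi(t)$ is fixed on $[a-\sigma,a]$, any admissible $h$ must satisfy $h(t)\equiv 0$ for $t\in[a-\sigma,a]$. Define $j(\e)=\mathcal{J}(x+\e h, T+\e \Delta T)$ and impose the Fermat condition $j'(0)=0$, which produces
$$
\int_a^T\!\!\Big[\partial_2 L\xs(t)h(t)+\partial_3 L\xs(t)\DC h(t)+\partial_4 L\xs(t)h(t-\sigma)\Big]dt+\text{boundary terms}=0,
$$
where the boundary terms coincide with those appearing in Theorem~\ref{FP_teo1}.

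Next, I would transform each non-standard integrand into a pointwise multiplier of $h(t)$. For the term containing $\DC h(t)$, I would split the combined Caputo derivative into its left and right parts, extend the right-part integral from $[a,T]$ to $[a,b]\setminus[T,b]$ as in the proof of Theorem~\ref{FP_teo1}, and apply the fractional integration by parts formulas of Theorem~\ref{thm:FIP}. This yields, exactly as in Theorem~\ref{FP_teo1}, the dual operator $D_{\overline{\gamma}}^{\b,\a}\partial_3 L\xs$ over $[a,T]$, a residual integral on $[T,b]$ involving $\LDb$ and $\,_TD_t^\b$, and three boundary contributions at $t=T$ and $t=b$ (the last two lines of \eqref{CT1delay}).

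The key new step is handling $\int_a^T \partial_4 L\xs(t)\,h(t-\sigma)\,dt$ via the change of variable $\tau=t-\sigma$, which gives
$$
\int_{a-\sigma}^{T-\sigma}\partial_4 L[x]_\sigma^{\alpha,\beta}(\tau+\sigma)\,h(\tau)\,d\tau.
$$
Since $h\equiv 0$ on $[a-\sigma,a]$, the lower limit effectively becomes $a$. Here the case split enters: if $\sigma\ge T-a$, then $T-\sigma\le a$ and the integral vanishes identically, so the delayed term contributes nothing to the stationarity condition on $[a,T]$, yielding the Euler--Lagrange equation \eqref{ELeq_1delay}. If instead $\sigma<T-a$, the integral reduces to $\int_a^{T-\sigma}\partial_4 L\xs(t+\sigma)\,h(t)\,dt$, and by partitioning $[a,T]=[a,T-\sigma]\cup[T-\sigma,T]$ one obtains, after grouping, the two equations \eqref{ELeq_3delay}--\eqref{ELeq_4delay}.

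Finally, I would collect all pointwise multipliers of $h(t)$ (on $[a,T]$, with the case split above), of $h(t)$ on $[T,b]$, and of the boundary evaluations $h(T)$, $h(b)$, $\Delta T$, and invoke the fundamental lemma of the calculus of variations with suitably chosen localized variations (first taking $\Delta T=0$ and $h$ supported in $(a,T)$, then in $(T,b)$, then letting $h(T)$, $h(b)$ and $\Delta T$ vary independently) to obtain \eqref{ELeq_1delay}--\eqref{ELeq_2delay} (or \eqref{ELeq_3delay}--\eqref{ELeq_4delay}) together with the transversality conditions \eqref{CT1delay}. The main obstacle is bookkeeping: one must keep careful track of how the delayed integral interacts with the splitting of $\DC h$ and with the fact that $h$ is no longer free on $[a-\sigma,a]$, so that the case distinction $\sigma\gtreqless T-a$ is handled cleanly; once this is done, the remaining steps are routine adaptations of the proof of Theorem~\ref{FP_teo1}.
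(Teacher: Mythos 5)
Your proposal is correct and follows essentially the same route as the paper's proof: admissible variations $(x+\e h,T+\e\Delta T)$ with $h\equiv 0$ on $[a-\s,a]$, the Fermat condition $j'(0)=0$, the change of variables $\t=t-\s$ in the delayed term with the case split $\s\gtreqless T-a$, fractional integration by parts via Theorem~\ref{thm:FIP}, and localized variations to extract \eqref{ELeq_1delay}--\eqref{CT1delay} or \eqref{ELeq_3delay}--\eqref{ELeq_4delay}. The only (harmless) difference is that in the case $\s<T-a$ the paper splits $\int_a^T\partial_3 L\xs(t)\,\DC h(t)\,dt$ at $T-\s$ before integrating by parts, producing operators anchored at $T-\s$ that subsequently cancel, whereas you integrate by parts over $[a,T]$ as in Theorem~\ref{FP_teo1} and split only the resulting Euler--Lagrange integral; both yield the same multipliers of $h$ on $[a,T-\s]$, $[T-\s,T]$ and $[T,b]$.
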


\begin{proof}
Consider variations of the solution $\left(x+\e h,T+\e\Delta{T}\right)$,
where $h\in C^1([a-\s,b];\bR)$ is such that $h(t)=0$ for all $t\in[a-\s,a]$,
and $\e,\triangle T$ are two reals. If we define
$j(\e)=\mathcal J\left(x+\e h,T+\e\Delta{T}\right)$, then $j'(0)=0$, that is,
\begin{multline}
\label{eq_derjdelay}
\int_a^T \Big( \partial_2 L\xs (t) h(t)+ \partial_3 L\xs (t) \DC h(t)\\
+ \partial_4 L\xs (t) h(t-\s) \Big)dt+L\xs (T)\Delta T\\
+\partial_1 \phi \left(T, x(T)\right)\Delta T
+ \partial_2\phi(T, x(T))\left[h(T)+x'(T) \triangle T \right]=0.
\end{multline}
First, suppose that $\s \geq T-a$. In this case, since
$$
\int_a^T  \partial_4 L\xs (t) h(t-\s) \, dt
=\int_{a-\s}^{T-\s}  \partial_4 L\xs (t+\s) h(t) \, dt
$$
and $h \equiv 0$ on $[a-\s,a]$, this term vanishes in \eqref{eq_derjdelay}
and we obtain Eq. \eqref{FP_eq_derj}. The rest of the proof is similar
to the one presented in Section \ref{sec:FP}, Theorem \ref{FP_teo1}, and we obtain
\eqref{ELeq_1delay}--\eqref{CT1delay}. Suppose now that $\s<T-a$. In this case,
we have that
\begin{equation*}
\begin{split}
\int_a^T  \partial_4 L\xs (t) h(t-\s) \, dt
&=\int_{a-\s}^{T-\s}  \partial_4 L\xs (t+\s) h(t) \, dt\\
&=\int_{a}^{T-\s}  \partial_4 L\xs (t+\s) h(t) \, dt.
\end{split}
\end{equation*}
Next, we evaluate the integral
\begin{multline*}
\int_a^T \partial_3 L\xs (t) \DC h(t) \,dt\\
=\int_a^{T-\s} \partial_3 L\xs (t) \DC h(t) \,dt\\
+\int_{T-\s}^T \partial_3 L\xs (t) \DC h(t) \,dt.
\end{multline*}
For the first integral, integrating by parts, we have
\begin{align*}
&\int_a^{T-\s} \partial_3 L\xs (t) \DC h(t) \,dt
=\gamma_1 \int_a^{T-\s} \partial_3 L\xs (t)\LC h(t)dt\\
&+ \gamma_2 \left[\int_a^b \partial_3 L\xs (t) \RCb h(t)dt
- \int_{T-\s}^b \partial_3 L\xs (t) \RCb h(t)dt \right]\\
&=\int_a^{T-\s} h(t)\left[{\gamma_1} {_tD_{T-\s}^{\a}}
\partial_3L\xs (t)+{\gamma_2} {_aD_t^{\b}} \partial_3L\xs (t)\right] dt\\
&+\int_{T-\s}^b\gamma_2 h(t)\left[{_aD_t^{\b}} \partial_3 L\xs (t)
-{_{T-\s}D_t^{\b}} \partial_3 L\xs (t)\right] dt\\
&+ \left[h(t)\left[{\gamma_1} {_tI_{T-\s}^{1-\a}} \partial_3L\xs (t)
-{\gamma_2} {_{T-\s}I_t^{1-\b}} \partial_3L\xs (t)\right]\right]_{t=T-\s}\\
&+ \left[\gamma_2 h(t)\left[-{_aI_t^{1-\b}} \partial_3L\xs (t)
+ {_{T-\s}I_t^{1-\b}} \partial_3L\xs (t)\right]\right]_{t=b}.
\end{align*}
For the second integral, in a similar way, we deduce that
\begin{align*}
&\int_{T-\s}^T \partial_3 L\xs (t) \DC h(t) \,dt \\
&=\gamma_1\left[\int_a^T \partial_3 L\xs (t)\LC h(t)dt
- \int_a^{T-\s} \partial_3 L\xs (t)\LC h(t)dt\right]\\
&+ \gamma_2 \left[\int_{T-\s}^b \partial_3 L\xs (t) \RCb h(t)dt
- \int_T^b \partial_3 L\xs (t) \RCb h(t)dt \right]\\
&=\int_a^{T-\s} \gamma_1  h(t)\left[{_tD_T^{\a}} \partial_3L\xs (t)
-{_tD_{T-\s}^{\a}} \partial_3L\xs (t)\right] dt\\
&+\int_{T-\s}^T h(t)\left[{\gamma_1}{_tD_T^{\a}} \partial_3L\xs (t)
+{\gamma_2}{_{T-\s}D_t^{\b}} \partial_3L\xs (t)\right] dt\\
&+\int_T^b\gamma_2 h(t)\left[{_{T-\s}D_t^{\b}} \partial_3 L\xs (t)
-{_TD_t^{\b}} \partial_3 L\xs (t)\right] dt\\
&+ \left[h(t)\left[-{\gamma_1} {_tI_{T-\s}^{1-\a}} \partial_3L\xs (t)
+{\gamma_2} {_{T-\s}I_t^{1-\b}} \partial_3L\xs (t)\right]\right]_{t=T-\s}\\
&+ \left[h(t)\left[{\gamma_1} {_tI_T^{1-\a}} \partial_3L\xs (t)
-{\gamma_2} {_TI_t^{1-\b}} \partial_3L\xs (t)\right]\right]_{t=T}\\
&+ \left[\gamma_2 h(t)\left[-{_{T-\s}I_t^{1-\b}} \partial_3L\xs (t)
+ {_TI_t^{1-\b}} \partial_3L\xs (t)\right]\right]_{t=b}.
\end{align*}
Replacing the above equalities into \eqref{eq_derjdelay}, we prove that
\begin{align*}
&0= \int_a^{T-\s} h(t)\left[\partial_2 L\xs (t)
+ D_{\overline{\gamma}}^{\b,\a}\partial_3 L\xs (t)+\partial_4 L\xs (t+\s)\right]dt\\
&+\int_{T-\s}^Th(t)\left[\partial_2 L\xs (t)+ D_{\overline{\gamma}}^{\b,\a}\partial_3 L\xs (t)\right]dt\\
&+ \int_T^b \gamma_2 h(t) \left[_aD_t^{\b}\partial_3 L\xs (t)-{_TD_t^{\b}\partial_3 L\xs (t)}\right]dt\\
&+ h(T)\left[\gamma_1 \, {_tI_T^{1-\a}\partial_3L\xs (t)}-{\gamma_2 \, {_TI_t^{1-\b}\partial_3L\xs (t)}}
+\partial_2 \phi(t,x(t))\right]_{t=T}\\
&+\Delta T \left[ L\xs (t)+\partial_1\phi(t,x(t))+\partial_2\phi(t,x(t))x'(t)  \right]_{t=T}\\
&+ h(b)\left[ \gamma_2  \left( _TI_t^{1-\b}\partial_3 L\xs (t)-{_aI_t^{1-\b}
\partial_3L\xs (t)}\right) \right]_{t=b}.
\end{align*}
By the arbitrariness of $h$ in $[a,b]$ and of $\triangle T$,
we obtain Eqs. \eqref{ELeq_2delay}--\eqref{ELeq_4delay}.
\end{proof}

% -------------------------------------------

\subsection{Example}
\label{subsec:delay_example}

Let $\alpha,\beta:[0,b]^2\to(0,1)$, $\s=1$, $f$ be a function of class
$C^1$, and $\hat{f}(t)= \DC f(t)$. Consider the following problem of
the calculus of variations:
\begin{multline*}
\mathcal{J}(x,T)
=\int_0^T\Biggl[ \left( \DC x(t)-\hat{f}(t)\right)^2+(x(t)-f(t))^2\\
+(x(t-1)-f(t-1))^2-t-2\Biggr]\,dt + T^2 \rightarrow \min
\end{multline*}
 subject to the condition $x(t)=f(t)$ for all $t\in[-1,0]$.\\
In this case, we can easily verify that $(x,T)=(f,2)$ satisfies all the
conditions in Theorem~\ref{teo:delay} and that it is actually the
(global) minimizer of the problem.
% -------------------------------------------

% -------------------------------------------

\section{Isoperimetric problems}
\label{sec:Iso}
\index{isoperimetric problem}

Isoperimetric problems are optimization problems, that consist in
minimizing or maximizing a cost functional subject to an integral
constraint. From the variational problem with dependence on a combined
Caputo derivative of variable fractional order (see Definition~\ref{Comb_Cap})
discussed in Section \ref{sec:FP}, here we study two variational problems
subject to an additional integral constraint.
In each of the problems, the terminal point in the cost integral, as well
as the terminal state, are considered to be free, and we obtain corresponding
natural boundary conditions.

In Sections~\ref{subsec:problemI} and \ref{subsec:problemII}, we study necessary
optimality conditions in order to determine the minimizers for each of the problems.
We end this section with an example (Section~\ref{subsec:exeiso}).\\
\\

For the two isoperimetric problems considered in the next sections, let $D$ be
the set given by \eqref{set} and $[x]_\gamma^{\alpha, \beta}(t)$ the
vector \eqref{operat_simpli}.

Picking up the problem of the fractional calculus of variations 
with variable-order, discussed in Section~\ref{sec:FP}, 
we consider also a differentiable Lagrangian 
$L:[a,b]\times \bR^2 \to\bR$ and the functional
$\mathcal{J}:D\rightarrow \bR$ of the form
\begin{equation}
\label{funct1_iso}
\mathcal{J}(x,T)=\int_a^T L[x]_\gamma^{\alpha, \beta}(t) dt + \phi(T,x(T)),
\end{equation}
where the terminal cost function \index{terminal cost function}
$\phi:[a,b]\times \bR\to\bR$ is of class $C^1$.

In the sequel, we need the auxiliary notation of the dual fractional derivative:
\begin{equation}
\label{aux:FD}
D_{\overline{\gamma},c}^{\b,\a}=\gamma_2 \, {_aD_t^{\b}}+\gamma_1 \, {_tD_c^{\a}},
\quad \mbox{where} \quad \overline{\gamma}=(\gamma_2,\gamma_1)
\quad \mbox{and} \quad c\in(a,b].
\end{equation}

With the functional $\mathcal{J}$, defined by \eqref{funct1_iso}, we consider
two different isoperimetrics problems.

% ----------------------------------------

\subsection{Necessary optimality conditions I}
\label{subsec:problemI}
The first fractional isoperimetric problem of the calculus of variations
is Problem~\ref{Problem_iso1}.

\begin{Problem}
\label{Problem_iso1}
Determine the local minimizers of $\mathcal{J}$ over all $(x,T)\in D$
satisfying a boundary condition \index{boundary conditions}
\begin{equation}
\label{bcxa_iso}
x(a)=x_a
\end{equation}
for a fixed $x_a\in \bR$ and an integral constraint of the form
\begin{equation}
\label{IsoConst_1}
\int_a^T g[x]_\gamma^{\alpha, \beta}(t)dt=\psi(T),
\end{equation}
where $g:C^{1}\left([a,b]\times \bR^2 \right)\to\bR$
and $\psi: [a,b]\to \bR$ are two differentiable functions. The terminal
time $T$ and terminal state $x(T)$ are free.
\end{Problem}

In this problem, the condition of the form \eqref{IsoConst_1} is called
an isoperimetric constraint. The next theorem gives fractional necessary
optimality conditions for Problem~\ref{Problem_iso1}.

\begin{Theorem}
\label{teo1_iso1}
Suppose that $(x,T)$ gives a local minimum  for functional \eqref{funct1_iso} on
$D$ subject to the boundary condition \eqref{bcxa_iso} and the isoperimetric
constraint \eqref{IsoConst_1}. If $(x,T)$ does not satisfy 
the Euler--Lagrange\index{Euler--Lagrange equations}
equations with respect to the isoperimetric constraint, that is,
if one of the two following conditions are not verified,
\begin{equation}
\label{eq:eq1}
\partial_2 g[x]_\gamma^{\alpha, \beta}(t) + D_{\overline{\gamma},T}^{\b,\a}
\partial_3g[x]_\gamma^{\alpha, \beta}(t)=0, \quad t\in[a,T],
\end{equation}
or
\begin{equation}
\label{eq:eq2}
\gamma_2\left[ _aD_t^{\b} \partial_3 g[x]_\gamma^{\alpha, \beta}(t)
- {_TD_t}^{\b} \partial_3 g[x]_\gamma^{\alpha, \beta}(t) \right]=0,
\quad t\in[T,b],
\end{equation}
then there exists a constant $\lambda$ such that, if we define the function
$F:[a,b]\times \bR^2\to\bR$ by $F= L-\lambda g$,
$(x,T)$ satisfies the fractional Euler--Lagrange equations
\begin{equation}
\label{ELeq_1}
\partial_2 F[x]_\gamma^{\alpha, \beta}(t)
+D{_{\overline{\gamma},T}^{\b,\a}}\partial_3 F[x]_\gamma^{\alpha, \beta}(t)=0
\end{equation}
on the interval $[a,T]$ and
\begin{equation}
\label{ELeq_2}
\gamma_2\left({\LDb}\partial_3F[x]_\gamma^{\alpha, \beta}(t)
-{ _TD{_t^{\b}}\partial_3F[x]_\gamma^{\alpha, \beta}(t)}\right)=0
\end{equation}
on the interval $[T,b]$. Moreover,
$(x,T)$ satisfies the transversality conditions
\index{transversality conditions}
\begin{equation}
\label{CT1}
\begin{cases}
F[x]_\gamma^{\alpha, \beta}(T)+\partial_1\phi(T,x(T))
+\partial_2\phi(T,x(T))x'(T)+\lambda \psi'(T)=0,\\
\left[\gamma_1 \, {_tI_T^{1-\a}\partial_3F[x]_\gamma^{\alpha, \beta}(t)}
-{\gamma_2 \, {_TI_t^{1-\b}\partial_3F[x]_\gamma^{\alpha, \beta}(t)}}\right]_{t=T}
+\partial_2 \phi(T,x(T))=0,\\
\gamma_2 \left[ {_TI_t^{1-\b}}\partial_3 F[x]_\gamma^{\alpha, \beta}(t)
-{_aI_t^{1-\b}\partial_3F[x]_\gamma^{\alpha, \beta}(t)}\right]_{t=b}=0.
\end{cases}
\end{equation}
\end{Theorem}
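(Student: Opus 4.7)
The plan is to adapt the classical Lagrange multiplier argument for isoperimetric problems to the present variable-order Caputo setting, mimicking the structure of the proof of Theorem~\ref{FP_teo1} but with a two-parameter family of variations. I would introduce an admissible variation of the form $(x + \varepsilon_1 h_1 + \varepsilon_2 h_2,\, T + \varepsilon_1 \Delta T_1 + \varepsilon_2 \Delta T_2)$, where $h_1, h_2 \in C^1([a,b];\mathbb{R})$ with $h_1(a)=h_2(a)=0$, and $\varepsilon_1, \varepsilon_2$ are small real parameters. Define the two functions
\begin{equation*}
j(\varepsilon_1, \varepsilon_2) = \mathcal{J}(x + \varepsilon_1 h_1 + \varepsilon_2 h_2,\, T + \varepsilon_1 \Delta T_1 + \varepsilon_2 \Delta T_2)
\end{equation*}
and
\begin{equation*}
i(\varepsilon_1, \varepsilon_2) = \int_a^{T + \varepsilon_1 \Delta T_1 + \varepsilon_2 \Delta T_2} g[x + \varepsilon_1 h_1 + \varepsilon_2 h_2]_\gamma^{\alpha,\beta}(t)\, dt - \psi(T + \varepsilon_1 \Delta T_1 + \varepsilon_2 \Delta T_2).
\end{equation*}

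Next I would exploit the hypothesis that $(x,T)$ does not satisfy the Euler--Lagrange equations \eqref{eq:eq1}--\eqref{eq:eq2} for the constraint functional. Computing $\partial i/\partial \varepsilon_2$ at $(0,0)$, integrating by parts using Theorem~\ref{thm:FIP} exactly as in the proof of Theorem~\ref{FP_teo1} (splitting the interval at $T$, introducing the dual operator $D_{\overline{\gamma},T}^{\beta,\alpha}$ defined in \eqref{aux:FD}, and unfolding the integrals to obtain boundary terms at $a$, $T$, $b$), produces an expression in which $h_2$, $\Delta T_2$, and $h_2(b)$ appear as arbitrary multipliers of the quantities in \eqref{eq:eq1}, \eqref{eq:eq2}, plus transversality-type terms and $-\lambda\psi'(T)$-type contributions from $\psi$. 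Since at least one of these multiplier expressions is nonzero somewhere, we can choose $h_2$ and $\Delta T_2$ so that $\partial i/\partial \varepsilon_2(0,0) \neq 0$. By the implicit function theorem, there is a $C^1$ function $\varepsilon_2 = \varepsilon_2(\varepsilon_1)$ near $0$ with $\varepsilon_2(0)=0$ such that $i(\varepsilon_1, \varepsilon_2(\varepsilon_1)) \equiv 0$; the admissible varied pairs thus satisfy the isoperimetric constraint \eqref{IsoConst_1}.

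Now $\varepsilon_1 \mapsto j(\varepsilon_1, \varepsilon_2(\varepsilon_1))$ has a local minimum at $0$, and by the Lagrange multiplier rule, there exists $\lambda \in \mathbb{R}$ such that $\nabla (j - \lambda i)(0,0) = 0$. Writing out both partial derivatives and applying the same integration-by-parts machinery as in Theorem~\ref{FP_teo1}---this time with integrand $F = L - \lambda g$ and with $\phi$ being effectively replaced by $\phi + \lambda \psi$ in the endpoint terms---yields a single identity of the form
\begin{equation*}
\int_a^T h_1(t)\!\left[\partial_2 F[x]_\gamma^{\alpha,\beta}(t) + D_{\overline{\gamma},T}^{\beta,\alpha}\partial_3 F[x]_\gamma^{\alpha,\beta}(t)\right]dt + \text{(other terms)} = 0,
\end{equation*}
where the ``other terms'' are an integral over $[T,b]$ involving $h_1$, a boundary contribution in $h_1(T)$, a term in $\Delta T_1$, and a boundary contribution in $h_1(b)$. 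By the fundamental lemma of the calculus of variations applied to suitable choices of $h_1$ and $\Delta T_1$ (first vanishing outside $[a,T]$, then supported on $(T,b)$, then with free $h_1(T)$, $\Delta T_1$, and $h_1(b)$), I extract in order the Euler--Lagrange equations \eqref{ELeq_1} and \eqref{ELeq_2} and the three transversality conditions in \eqref{CT1}.

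The main obstacle is the careful bookkeeping in the integration-by-parts step: because the upper limit of the cost integral is $T<b$, the right Caputo derivative term must be split as $\int_a^b - \int_T^b$ before Theorem~\ref{thm:FIP} can be applied, producing boundary contributions at $a$, $T$, and $b$ that must be grouped correctly so that the dual operator $D_{\overline{\gamma},T}^{\beta,\alpha}$ appears on $[a,T]$ while the residual operator $\gamma_2({_aD_t^\beta} - {_T D_t^\beta})$ appears on $[T,b]$; this is exactly the nontrivial bookkeeping done in Theorem~\ref{FP_teo1}, and it must be carried out simultaneously for $L$ and $g$ so that the multiplier $\lambda$ couples them cleanly into $F = L - \lambda g$. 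Once this is handled, the remainder of the argument is a direct application of the fundamental lemma and reduces to the same pattern as the unconstrained case.
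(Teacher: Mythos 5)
Your proposal follows essentially the same route as the paper's proof: a two-parameter family of variations, the implicit function theorem (using the failure of \eqref{eq:eq1} or \eqref{eq:eq2} to make $\partial i/\partial\e_2(0,0)\neq 0$), the Lagrange multiplier rule applied to $j-\lambda i$ with $F=L-\lambda g$, and then the same integration-by-parts bookkeeping as in Theorem~\ref{FP_teo1} followed by the fundamental lemma with suitable choices of variations. The only cosmetic difference is that you attach a time increment $\Delta T_2$ to $\e_2$ as well, whereas the paper perturbs $T$ only along $\e_1$; this changes nothing essential.
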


\begin{proof}
Consider variations of the optimal solution $(x,T)$ of the type
\begin{equation}
\label{av}
(x^{*},T^{*})=\left(x+\e_{1}{h_{1}}+\e_{2}{h_{2}},T+\e_{1}\Delta{T}\right),
\end{equation}
where, for each $i \in \{1,2\}$, $\e_{i} \in\bR$ is a small parameter,
$h_{i}\in C^1([a,b];\bR)$ satisfies $h_{i}(a)=0$, and $\triangle T \in\bR$.
The additional term $\e_{2}{h_{2}}$ must be selected so that the admissible
variations $(x^{*},T^{*})$ satisfy the isoperimetric constraint \eqref{IsoConst_1}.
For a fixed choice of $h_{i}$, let
$$
i(\e_{1},\e_{2})=\int_a^{T+\e_{1} \triangle T}
g[x^{*}]_\gamma^{\alpha, \beta}(t)dt-\psi(T+\e_1\triangle T).
$$
For $\e_{1}=\e_{2}=0$, we obtain that
$$
i(0,0)=\int_a^{T} g[x]_\gamma^{\alpha, \beta}(t)dt-\psi(T)=\psi(T)-\psi(T)=0.
$$
The derivative $\dfrac{\partial i}{\partial \e_{2}}$ is given by
\begin{equation*}
\dfrac{\partial i}{\partial \e_{2}}=\int_a^{T+\e_{1} \triangle T} \left(
\partial_2 g[x^{*}]_\gamma^{\alpha, \beta}(t) h_{2}(t)
+ \partial_3 g[x^{*}]_\gamma^{\alpha, \beta}(t) \DC h_{2}(t) \right)dt.
\end{equation*}
For $\e_{1}=\e_{2}=0$ one has
\begin{equation}
\label{funct5}
\left. \dfrac{\partial i}{\partial \e_{2}} \right|_{(0,0)}=\int_a^{T} \left(
\partial_2 g[x]_\gamma^{\alpha, \beta}(t) h_{2}(t) \
+ \partial_3 g[x]_\gamma^{\alpha, \beta}(t) \DC h_{2}(t) \right)dt.
\end{equation}
The second term in \eqref{funct5} can be written as
\begin{equation}
\label{term2}
\begin{split}
\int_a^T  &\partial_3 g[x]_\gamma^{\alpha, \beta}(t) \DC h_{2}(t) dt\\
&=\int_a^T \partial_3 g[x]_\gamma^{\alpha, \beta}(t)\left[\gamma_1
\, \LC h_{2}(t)+\gamma_2 \, \RCb h_{2}(t)\right]dt\\
&=\gamma_1 \int_a^T \partial_3 g[x]_\gamma^{\alpha, \beta}(t)\LC h_{2}(t)dt  \\
& + \gamma_2 \left[ \int_a^b \partial_3
g[x]_\gamma^{\alpha, \beta}(t) \RCb h_{2}(t)dt
- \int_T^b \partial_3 g[x]_\gamma^{\alpha, \beta}(t) \RCb h_{2}(t)dt \right].
\end{split}
\end{equation}
Using the fractional integrating by parts formula, \eqref{term2} is equal to
\begin{equation*}
\begin{split}
\int_a^T & h_{2}(t) \left[ \gamma_1 {_tD_T}^{\a} \partial_3
g[x]_\gamma^{\alpha, \beta}(t) + \gamma_2 {_aD_t}^{\b}
\partial_3 g[x]_\gamma^{\alpha, \beta}(t) \right] dt\\
+ &  \int_T^b \gamma_2 h_{2}(t) \left[ _aD_t^{\b} \partial_3
g[x]_\gamma^{\alpha, \beta}(t) - {_TD_t}^{\b} \partial_3
g[x]_\gamma^{\alpha, \beta}(t) \right] dt\\
+ & \Biggl[ h_{2}(t) \left( \gamma_1 {{_tI_T}}^{1-\a} \partial_3
g[x]_\gamma^{\alpha, \beta}(t) - \gamma_2 {_TI_t}^{1-\b} \partial_3
g[x]_\gamma^{\alpha, \beta}(t) \Biggr) \right]_{t=T}\\
+ & \Biggl[ \gamma_2 h_{2}(t)  \left( {{_TI_t}}^{1-\b} \partial_3
g[x]_\gamma^{\alpha, \beta}(t) - {_aI_t}^{1-\b} \partial_3
g[x]_\gamma^{\alpha, \beta}(t) \Biggr) \right]_{t=b}.
\end{split}
\end{equation*}
Substituting these relations into \eqref{funct5}, and considering the fractional
operator $D_{\overline{\gamma},c}^{\b,\a}$
as defined in \eqref{aux:FD}, we obtain that
\begin{equation*}
\begin{split}
\left.\dfrac{\partial i}{\partial \e_{2}} \right|_{(0,0)}&
= \int_a^T  h_{2}(t) \left[ \partial_2 g[x]_\gamma^{\alpha, \beta}(t)
+ D_{\overline{\gamma},T}^{\b,\a}\partial_3
g[x]_\gamma^{\alpha, \beta}(t) \right] dt\\
&+ \int_T^b \gamma_2 h_{2}(t) \left[ _aD_t^{\b} \partial_3
g[x]_\gamma^{\alpha, \beta}(t) - {_TD_t}^{\b} \partial_3
g[x]_\gamma^{\alpha, \beta}(t) \right] dt\\
&+ \Biggl[ h_{2}(t) \left( \gamma_1 {{_tI_T}}^{1-\a} \partial_3
g[x]_\gamma^{\alpha, \beta}(t) - \gamma_2 {_TI_t}^{1-\b} \partial_3
g[x]_\gamma^{\alpha, \beta}(t) \Biggr) \right]_{t=T}\\
&+ \Biggl[ \gamma_2 h_{2}(t)  \left( {{_TI_t}}^{1-\b} \partial_3
g[x]_\gamma^{\alpha, \beta}(t) - {_aI_t}^{1-\b} \partial_3
g[x]_\gamma^{\alpha, \beta}(t) \Biggr) \right]_{t=b}.
\end{split}
\end{equation*}
Since \eqref{eq:eq1} or \eqref{eq:eq2} fails, there exists a function $h_{2}$
such that
$$
\left.\dfrac{\partial i}{\partial \e_{2}} \right|_{(0,0)}\neq 0.
$$
In fact, if not, from the arbitrariness of the function $h_2$ and the
fundamental lemma of the calculus of the variations, \eqref{eq:eq1}
and \eqref{eq:eq2} would be verified. Thus, we may apply the implicit
function theorem, that ensures the existence of a function $\e_{2}(\cdot)$,
defined in a neighborhood of zero, such that $i(\e_1,\e_2(\e_1))=0$.
In conclusion, there exists a subfamily of variations of the form \eqref{av}
that verifies the integral constraint \eqref{IsoConst_1}. We now seek to prove
the main result. For that purpose, consider the auxiliary function
$j(\e_1,\e_2)=\mathcal{J}(x^{*},T^{*})$.

By hypothesis, function $j$ attains a local minimum at $(0,0)$ when subject
to the constraint $i(\cdot,\cdot)=0$, and we proved before that
$\nabla i(0,0)\not= (0,0)$. Applying the Lagrange multiplier rule, we ensure
the existence of a number $\lambda$ such that
$$
\nabla \left(j(0,0)-\lambda i(0,0)\right)=(0,0).
$$
In particular,
\begin{equation}
\label{funct7}
\dfrac{\partial \left(j-\lambda i\right)}{\partial \e_{1}} (0,0)=0.
\end{equation}
Let $F=L-\lambda g$. The relation \eqref{funct7} can be written as
\begin{equation}
\label{funct8}
\begin{split}
&0= \int_a^Th_{1}(t)\left[\partial_2 F[x]_\gamma^{\alpha, \beta}(t)
+ D_{\overline{\gamma},T}^{\b,\a}\partial_3 F[x]_\gamma^{\alpha, \beta}(t)\right]dt\\
&+ \int_T^b \gamma_2 h_{1}(t) \left[_aD_t^{\b}\partial_3 F[x]_\gamma^{\alpha, \beta}(t)
-{_TD_t^{\b}\partial_3 F[x]_\gamma^{\alpha, \beta}(t)}\right]dt\\
&+ h_{1}(T)\left[\gamma_1 \, {_tI_T^{1-\a}\partial_3F[x]_\gamma^{\alpha, \beta}(t)}
-{\gamma_2 \, {_TI_t^{1-\b}\partial_3F[x]_\gamma^{\alpha, \beta}(t)}}
+\partial_2 \phi(t,x(t))\right]_{t=T}\\
&+\Delta T \left[ F[x]_\gamma^{\alpha, \beta}(t)+\partial_1\phi(t,x(t))
+\partial_2\phi(t,x(t))x'(t)+\lambda\psi'(t)  \right]_{t=T}\\
&+ h_{1}(b) \gamma_2  \left[ _TI_t^{1-\b}\partial_3 F[x]_\gamma^{\alpha, \beta}(t)
-{_aI_t^{1-\b}\partial_3F[x]_\gamma^{\alpha, \beta}(t)} \right]_{t=b}.
\end{split}
\end{equation}
As $h_{1}$ and $\triangle T$ are arbitrary, we can choose $\triangle T=0$ and
$h_{1}(t)=0$ for all $t\in[T,b]$. But $h_{1}$ is arbitrary in $t\in[a,T)$.
Then, we obtain the first necessary condition \eqref{ELeq_1}:
$$
\partial_2 F[x]_\gamma^{\alpha, \beta}(t)
+D{_{\overline{\gamma},T}^{\b,\a}}\partial_3 F[x]_\gamma^{\alpha, \beta}(t)=0
\quad \forall t \in [a,T].
$$
Analogously, considering $\triangle T=0$ and $h_{1}(t)=0$
for all $t\in[a,T]\cup\{b\}$, and $h_{1}$ arbitrary on $(T,b)$,
we obtain the second necessary condition \eqref{ELeq_2}:
$$
\gamma_2\left({\LDb}\partial_3F[x]_\gamma^{\alpha, \beta}(t)
-{ _TD{_t^{\b}}\partial_3F[x]_\gamma^{\alpha, \beta}(t)}\right)=0
\quad \forall t \in [T,b].
$$
As $(x,T)$ is a solution to the necessary conditions \eqref{ELeq_1}
and \eqref{ELeq_2}, then equation \eqref{funct8} takes the form
\begin{equation}
\label{eq_derj3}
\begin{split}
&h_{1}(T)\left[\gamma_1 \, {_tI_T^{1-\a}\partial_3F[x]_\gamma^{\alpha, \beta}(t)}
-{\gamma_2 \, {_TI_t^{1-\b}\partial_3F[x]_\gamma^{\alpha, \beta}(t)}}
+\partial_2 \phi(t,x(t))\right]_{t=T}\\
&+\Delta T \left[ F[x]_\gamma^{\alpha, \beta}(t)+\partial_1\phi(t,x(t))
+\partial_2\phi(t,x(t))x'(t)+\lambda\psi'(t)  \right]_{t=T}\\
&+ h_{1}(b)\left[\gamma_2  \left( _TI_t^{1-\b}\partial_3 F[x]_\gamma^{\alpha, \beta}(t)
-{_aI_t^{1-\b}\partial_3F[x]_\gamma^{\alpha, \beta}(t)}\right) \right]_{t=b}=0.
\end{split}
\end{equation}
Transversality conditions \eqref{CT1} are obtained
for appropriate choices of variations.
\end{proof}

In the next theorem, considering the same Problem~\ref{Problem_iso1},
we rewrite the transversality conditions \eqref{CT1} in terms of the increment
on time $\Delta T$ and on the increment of space $\Delta x_T$ given by
\begin{equation}
\label{Dxt}
\Delta x_T = (x+h_1)(T+\Delta T)-x(T).
\end{equation}

\begin{Theorem}
\label{teo2_iso1}
Let $(x,T)$ be a local minimizer to the functional \eqref{funct1_iso} on $D$ subject
to the  boundary condition \eqref{bcxa_iso} and the isoperimetric constraint
\eqref{IsoConst_1}. Then $(x,T)$ satisfies the transversality conditions
\index{transversality conditions}
\begin{equation}
\label{CT2}
\begin{cases}
F[x]_\gamma^{\alpha, \beta}(T)+\partial_1\phi(T,x(T))+\lambda \psi'(T)\\
\qquad + x'(T) \left[ \gamma_2 {_TI}_t^{1-\b} \partial_3F[x]_\gamma^{\alpha, \beta}(t)
- \gamma_1 {_tI_T^{1-\a} \partial_3F[x]_\gamma^{\alpha, \beta}(t)} \right]_{t=T} =0,\\
\left[\gamma_1 \, {_tI_T^{1-\a}\partial_3F[x]_\gamma^{\alpha, \beta}(t)}
-{\gamma_2 \, {_TI_t^{1-\b}\partial_3F[x]_\gamma^{\alpha, \beta}(t)}}\right]_{t=T}
+\partial_2 \phi(T,x(T))=0,\\
\gamma_2 \left[ {_TI_t^{1-\b}}\partial_3 F[x]_\gamma^{\alpha, \beta}(t)
-{_aI_t^{1-\b}\partial_3F[x]_\gamma^{\alpha, \beta}(t)}\right]_{t=b}=0.
\end{cases}
\end{equation}
\end{Theorem}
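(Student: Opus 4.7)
The plan is to mimic the passage from Theorem~\ref{FP_teo1} to Theorem~\ref{FP_teo2} in Section~\ref{sec:FP}, adapted to accommodate the extra term $\lambda\psi'(T)$ produced by the isoperimetric constraint. First I would observe that, since the arguments leading to the Euler--Lagrange equations \eqref{ELeq_1}--\eqref{ELeq_2} are local in nature (they use variations $h_1$ compactly supported in $(a,T)$ and in $(T,b)$, with $\Delta T=0$ and $h_1(b)=0$), they remain valid here without change. Consequently, the stationarity condition for the auxiliary function $j-\lambda i$ collapses exactly to equation \eqref{eq_derj3}, and this is the identity I would take as the starting point.

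Next I would translate the boundary value $h_1(T)$ into the geometric increments $\Delta T$ and $\Delta x_T$ defined in \eqref{Dxt}. Restricting, as in the proof of Theorem~\ref{FP_teo2}, to the subfamily of admissible variations with $h_1'(T)=0$, a first-order Taylor expansion gives
\[
(x+h_1)(T+\Delta T)=(x+h_1)(T)+x'(T)\,\Delta T+O(\Delta T)^2,
\]
so that
\[
h_1(T)=\Delta x_T-x'(T)\,\Delta T+O(\Delta T)^2.
\]
Inserting this expression for $h_1(T)$ into \eqref{eq_derj3} and grouping the coefficients of $\Delta x_T$, $\Delta T$ and $h_1(b)$ yields an identity of the form
\[
0=\Delta x_T\cdot A(T)+\Delta T\cdot B(T)+h_1(b)\cdot C(b)+O(\Delta T)^2,
\]
where $A(T)$ is the bracket multiplying $h_1(T)$ in \eqref{eq_derj3}, $C(b)$ is the bracket already evaluated at $t=b$, and
\[
B(T)=F[x]_\gamma^{\alpha,\beta}(T)+\partial_1\phi(T,x(T))+\lambda\psi'(T)-x'(T)\,A(T).
\]

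Finally, since the triple $(\Delta x_T,\Delta T,h_1(b))$ can be chosen independently (fixing any two and letting the third vary, exactly as in the three particular cases discussed in Section~\ref{subsec:part:cases}), the fundamental lemma of the calculus of variations forces $A(T)=0$, $C(b)=0$ and $B(T)=0$ separately, which are precisely the three transversality conditions in \eqref{CT2}. The only subtle point, and the one I would treat most carefully, is justifying that these three parameters are genuinely free: one must check that the restriction $h_1'(T)=0$ imposed for the Taylor expansion, together with the earlier constraint $i(\varepsilon_1,\varepsilon_2(\varepsilon_1))=0$ from the Lagrange multiplier step, still leaves enough freedom to vary $\Delta x_T$, $\Delta T$ and $h_1(b)$ independently. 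Apart from this bookkeeping, the argument is a direct substitution and reorganization, and no new integration by parts or additional estimates are required.
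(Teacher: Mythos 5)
Your route is exactly the paper's: its proof of Theorem~\ref{teo2_iso1} also starts from \eqref{eq_derj3}, restricts to variations with $h_1'(T)=0$, uses \eqref{Dxt} and a first-order Taylor expansion to write $h_1(T)=\Delta x_T-x'(T)\Delta T+O(\Delta T)^2$, substitutes, and concludes by appropriate choices of variations; no new integration by parts is needed, and the independence of $\Delta x_T$, $\Delta T$ and $h_1(b)$ is handled there in the same (implicit) way you describe.

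The one concrete slip is your displayed formula for $B(T)$. The coefficient of $\Delta T$ in \eqref{eq_derj3} is $\left[F[x]_\gamma^{\alpha, \beta}(t)+\partial_1\phi(t,x(t))+\partial_2\phi(t,x(t))x'(t)+\lambda\psi'(t)\right]_{t=T}$, so after the substitution the $\Delta T$-coefficient is this quantity minus $x'(T)A(T)$; the term $\partial_2\phi(T,x(T))x'(T)$ then cancels against the $\partial_2\phi$ contained in $x'(T)A(T)$, and what remains is precisely the first line of \eqref{CT2}. Your $B(T)=F[x]_\gamma^{\alpha, \beta}(T)+\partial_1\phi(T,x(T))+\lambda\psi'(T)-x'(T)A(T)$ omits the $\partial_2\phi(T,x(T))x'(T)$ term; read literally, together with $A(T)=0$ it would give $F[x]_\gamma^{\alpha, \beta}(T)+\partial_1\phi(T,x(T))+\lambda\psi'(T)=0$, which is not the first transversality condition in \eqref{CT2}. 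With that term restored, your argument is correct and coincides with the paper's proof.
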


\begin{proof}
Suppose $(x^{*},T^{*})$ is an admissible variation of the form \eqref{av}
with $\epsilon_1=1$ and $\epsilon_2=0$. Using Taylor's expansion up to first
order for a small $\Delta T$, and restricting the set of variations to those
for which $h_{1}'(T)=0$, we obtain the increment $\Delta x_T$ on $x$:
$$
(x+h_{1})\left( T + \Delta T \right)=(x+h_{1})(T)+x'(T) \Delta T + O(\Delta T)^{2}.
$$
Relation \eqref{Dxt} allows us to express $h_{1}(T)$
in terms of $\Delta T$ and $\Delta x_T$:
$$
h_{1}(T)= \Delta x_T - x'(T) \Delta T + O(\Delta T)^{2}.
$$
Substituting this expression into \eqref{eq_derj3}, and using appropriate
choices of variations, we obtain the new transversality conditions \eqref{CT2}.
\end{proof}

\begin{Theorem}
\label{teo3_iso1}
Suppose that $(x,T)$ gives a local minimum  for functional \eqref{funct1_iso} on $D$
subject to the  boundary condition \eqref{bcxa_iso} and the isoperimetric constraint
\eqref{IsoConst_1}. Then, there exists $(\lambda_{0}, \lambda)\neq(0,0)$ such
that, if we define the function $F:[a,b]\times \bR^2 \to\bR$ by
$F=\lambda_{0} L-\lambda g$, $(x,T)$ satisfies the following fractional
Euler--Lagrange equations:
\begin{equation*}
\partial_2 F[x]_\gamma^{\alpha, \beta}(t)
+D{_{\overline{\gamma},T}^{\b,\a}}\partial_3 F[x]_\gamma^{\alpha, \beta}(t)=0
\end{equation*}
on the interval $[a,T]$, and
\begin{equation*}
\gamma_2\left({\LDb}\partial_3F[x]_\gamma^{\alpha, \beta}(t)
-{ _TD{_t^{\b}}\partial_3F[x]_\gamma^{\alpha, \beta}(t)}\right)=0
\end{equation*}
on the interval $[T,b]$.
\end{Theorem}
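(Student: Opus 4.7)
The plan is to reduce Theorem~\ref{teo3_iso1} to the normal case already established in Theorem~\ref{teo1_iso1} by splitting into two cases, according to whether $(x,T)$ is or is not an extremal of the isoperimetric constraint functional.

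First, I would consider the case in which $(x,T)$ does not satisfy both Euler--Lagrange-type equations \eqref{eq:eq1}--\eqref{eq:eq2} associated with $g$, that is, at least one of
\[
\partial_2 g[x]_\gamma^{\alpha,\beta}(t)+D_{\overline{\gamma},T}^{\b,\a}\partial_3 g[x]_\gamma^{\alpha,\beta}(t)=0,\quad t\in[a,T],
\]
or
\[
\gamma_2\left[{_aD_t^{\b}}\partial_3 g[x]_\gamma^{\alpha,\beta}(t)-{_TD_t^{\b}}\partial_3 g[x]_\gamma^{\alpha,\beta}(t)\right]=0,\quad t\in[T,b],
\]
fails. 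In this situation Theorem~\ref{teo1_iso1} applies directly: there exists a real constant $\lambda$ such that, with $F=L-\lambda g$, the Euler--Lagrange equations \eqref{ELeq_1}--\eqref{ELeq_2} hold on $[a,T]$ and $[T,b]$, respectively. Setting $\lambda_0=1$, the pair $(\lambda_0,\lambda)=(1,\lambda)\neq(0,0)$ does the job.

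In the complementary case, $(x,T)$ satisfies both \eqref{eq:eq1} and \eqref{eq:eq2}, i.e., $(x,T)$ is itself an extremal of the constraint functional. I would then simply pick $\lambda_0=0$ and $\lambda=1$, so that $(\lambda_0,\lambda)=(0,1)\neq(0,0)$ and $F=-g$. With this choice the required Euler--Lagrange equations
\[
\partial_2 F[x]_\gamma^{\alpha,\beta}(t)+D_{\overline{\gamma},T}^{\b,\a}\partial_3 F[x]_\gamma^{\alpha,\beta}(t)=0\quad\text{on }[a,T]
\]
and
\[
\gamma_2\left({\LDb}\partial_3 F[x]_\gamma^{\alpha,\beta}(t)-{_TD_t^{\b}}\partial_3 F[x]_\gamma^{\alpha,\beta}(t)\right)=0\quad\text{on }[T,b]
\]
are, by linearity of $\partial_i$ and of the fractional operators, exactly the conditions \eqref{eq:eq1}--\eqref{eq:eq2} multiplied by $-1$, and therefore hold by assumption of this case.

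Combining both cases yields a pair $(\lambda_0,\lambda)\neq(0,0)$ with the desired property, completing the argument. The only delicate point, already handled inside Theorem~\ref{teo1_iso1}, is the application of the implicit function theorem and the Lagrange multiplier rule in the normal case; here no further obstacle appears, since the dichotomy normal/abnormal is logically exhaustive and the abnormal branch is resolved by the trivial choice $\lambda_0=0$.
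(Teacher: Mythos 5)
Your proposal is correct and follows essentially the same argument as the paper: split into the normal case (where one of \eqref{eq:eq1}, \eqref{eq:eq2} fails, apply Theorem~\ref{teo1_iso1} and take $\lambda_0=1$) and the abnormal case (where both hold, take $\lambda_0=0$, $\lambda=1$ so that $F=-g$ and the required equations are just the constraint's Euler--Lagrange equations). Your extra remark making the abnormal case explicit via linearity is a small clarification of what the paper leaves implicit, but the route is the same.
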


\begin{proof}
If $(x,T)$ does not verifies \eqref{eq:eq1} or \eqref{eq:eq2},
then the hypothesis of Theorem~\ref{teo1_iso1} is satisfied and we
prove Theorem~\ref{teo3_iso1} considering $\lambda_0=1$.
If $(x,T)$ verifies \eqref{eq:eq1} and \eqref{eq:eq2},
then we prove the result by considering $\lambda=1$ and $\lambda_{0}=0$.
\end{proof}

% ----------------------------------------

\subsection{Necessary optimality conditions II}
\label{subsec:problemII}
We now consider a new isoperimetric type problem
with the isoperimetric constraint of form
\begin{equation}
\label{IsoConst_2}
\int_a^b g[x]_\gamma^{\alpha, \beta}(t)dt=C,
\end{equation}
where $C$ is a given real number.

\begin{Problem}
\label{Problem_iso2}
Determine the local minimizers of $\mathcal{J}$ \eqref{funct1_iso},
over all $(x,T)\in D$ satisfying a boundary condition \index{boundary conditions}
\begin{equation}
\label{bcxa}
x(a)=x_a
\end{equation}
for a fixed $x_a\in \bR$ and an integral constraint of the form \eqref{IsoConst_2}.
\end{Problem}

In the follow theorem, we give fractional necessary optimality conditions for Problem~\ref{Problem_iso2}.

\begin{Theorem}
\label{teo4}
Suppose that $(x,T)$ gives a local minimum for functional \eqref{funct1_iso} on $D$
subject to the  boundary condition \eqref{bcxa} and the isoperimetric constraint
\eqref{IsoConst_2}. If $(x,T)$ does not satisfy the Euler--Lagrange equation\index{Euler--Lagrange equations}
with respect to the isoperimetric constraint, that is, the condition
\begin{equation*}
\partial_2 g[x]_\gamma^{\alpha, \beta}(t) + D_{\overline{\gamma},b}^{\b,\a}
\partial_3g[x]_\gamma^{\alpha, \beta}(t)=0, \quad t\in[a,b],
\end{equation*}
is not satisfied, then there exists $\lambda\neq0$ such that,
if we define the function $F:[a,b]\times \bR^2 \to\bR$ by $F=L-\lambda g$,
$(x,T)$ satisfies the fractional Euler--Lagrange equations
\begin{equation}
\label{ELeq_5}
\partial_2 F[x]_\gamma^{\alpha, \beta}(t)
+ D_{\overline{\gamma},T}^{\b,\a}\partial_3 L[x]_\gamma^{\alpha, \beta}(t)
-\lambda D_{\overline{\gamma},b}^{\b,\a}\partial_3 g[x]_\gamma^{\alpha, \beta}(t)=0
\end{equation}
on the interval $[a,T]$, and
\begin{multline}
\label{ELeq_6}
\gamma_2 \left(_aD_t^{\b}\partial_3 F[x]_\gamma^{\alpha, \beta}(t)
-{_TD_t^{\b}\partial_3 L[x]_\gamma^{\alpha, \beta}(t)} \right)\\
-\lambda \left(\partial_2 g[x]_\gamma^{\alpha, \beta}(t)
+\gamma_1 {_tD_{b}^{\a}}\partial_3 g[x]_\gamma^{\alpha, \beta}(t)\right)=0
\end{multline}
on the interval $[T,b]$. Moreover, $(x,T)$ satisfies
the transversality conditions
\index{transversality conditions}
\begin{equation}
\label{CT3}
\begin{cases}
L[x]_\gamma^{\alpha, \beta}(T)+\partial_1\phi(T,x(T))+\partial_2\phi(T,x(T))x'(T) =0,\\
\left[\gamma_1 \, {_tI_T^{1-\a}\partial_3L[x]_\gamma^{\alpha, \beta}(t)}
-{\gamma_2 \, {_TI_t^{1-\b}\partial_3L[x]_\gamma^{\alpha, \beta}(t)}}
+\partial_2 \phi(t,x(t)) \right]_{t=T}=0\\
\left[-\lambda \gamma_1 {_tI_b^{1-\a}\partial_3 g[x]_\gamma^{\alpha, \beta}(t)}\right.\\
\left.\quad+\gamma_2 \left({_TI_t^{1-\b}\partial_3 L[x]_\gamma^{\alpha, \beta}(t)}
-{_aI_t^{1-\b}\partial_3F[x]_\gamma^{\alpha, \beta}(t)} \right) \right]_{t=b}=0.
\end{cases}
\end{equation}
\end{Theorem}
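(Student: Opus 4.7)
The plan is to mirror the proof of Theorem~\ref{teo1_iso1}, with the crucial modification that the isoperimetric constraint now runs over the full interval $[a,b]$ rather than $[a,T]$. I would take two-parameter admissible variations of the form
$$(x^{*},T^{*})=\bigl(x+\epsilon_{1}h_{1}+\epsilon_{2}h_{2},\,T+\epsilon_{1}\Delta T\bigr),$$
with $h_{1},h_{2}\in C^{1}([a,b];\mathbb{R})$ satisfying $h_{i}(a)=0$, and define
$$i(\epsilon_{1},\epsilon_{2})=\int_{a}^{b} g[x^{*}]_{\gamma}^{\alpha,\beta}(t)\,dt - C.$$
Clearly $i(0,0)=0$, and computing $\partial i/\partial\epsilon_{2}$ at $(0,0)$ followed by fractional integration by parts (Theorem~\ref{thm:FIP}) yields a bulk term on $[a,b]$ with integrand $\partial_{2}g+D_{\overline{\gamma},b}^{\beta,\alpha}\partial_{3}g$, plus a boundary contribution at $t=b$ (the $t=a$ boundary drops out via $h_{2}(a)=0$). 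By hypothesis this Euler--Lagrange expression for $g$ does not vanish identically, so some $h_{2}$ makes $\partial i/\partial\epsilon_{2}|_{(0,0)}\neq 0$, and the implicit function theorem supplies $\epsilon_{2}(\cdot)$ with $i(\epsilon_{1},\epsilon_{2}(\epsilon_{1}))=0$ near zero.

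The Lagrange multiplier rule applied to $j(\epsilon_{1},\epsilon_{2})=\mathcal{J}(x^{*},T^{*})$ constrained by $i=0$ then produces $\lambda\in\mathbb{R}$ with $\partial(j-\lambda i)/\partial\epsilon_{1}|_{(0,0)}=0$. I would compute $\partial j/\partial\epsilon_{1}$ exactly as in Theorem~\ref{FP_teo1}: the right Caputo contribution is extended to $[a,b]$ and subtracted over $[T,b]$, so after integrating by parts one gets an integrand $\partial_{2}L+D_{\overline{\gamma},T}^{\beta,\alpha}\partial_{3}L$ on $[a,T]$, an integrand $\gamma_{2}\bigl[{_{a}D_{t}^{\beta}}\partial_{3}L-{_{T}D_{t}^{\beta}}\partial_{3}L\bigr]$ on $[T,b]$, boundary terms at $t=T$ and $t=b$, and the $\Delta T$ contribution involving $L$ and $\phi$. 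By contrast, $\partial i/\partial\epsilon_{1}$ is simpler since the upper limit of the constraint integral is fixed at $b$: it gives a single bulk integrand $\partial_{2}g+D_{\overline{\gamma},b}^{\beta,\alpha}\partial_{3}g$ on the whole interval $[a,b]$ plus a boundary at $t=b$.

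Setting $F=L-\lambda g$ and subtracting, arbitrariness of $h_{1}$ on $(a,T)$ gives the Euler--Lagrange equation~\eqref{ELeq_5}, while arbitrariness on $(T,b)$ yields~\eqref{ELeq_6}. Note that on $[T,b]$ the contribution coming from $\lambda\,\partial i/\partial\epsilon_{1}$ keeps both the $\partial_{2}g$ term and the $\gamma_{1}\,{_{t}D_{b}^{\alpha}}\partial_{3}g$ piece unchanged (they are not cancelled by anything from $\partial j/\partial\epsilon_{1}$), which explains why~\eqref{ELeq_6} is not written purely in terms of $F$. Once the two Euler--Lagrange equations are in force, the remaining boundary and $\Delta T$ terms must vanish separately; choosing $\Delta T$ free with $h_{1}(T)=h_{1}(b)=0$ gives the first transversality, $h_{1}(T)$ free with $\Delta T=h_{1}(b)=0$ gives the second, and $h_{1}(b)$ free with $\Delta T=h_{1}(T)=0$ combines the $L$-boundary from $j$ and the $g$-boundary from $\lambda i$ at $t=b$ to produce the stated $-\lambda\gamma_{1}\,{_{t}I_{b}^{1-\alpha}}\partial_{3}g+\gamma_{2}\bigl({_{T}I_{t}^{1-\beta}}\partial_{3}L-{_{a}I_{t}^{1-\beta}}\partial_{3}F\bigr)$ expression.

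The main obstacle is bookkeeping rather than analysis: I have to carefully track how the moving endpoint $T$ splits the $L$-part of the calculation while leaving the $g$-part undivided over $[a,b]$, and then correctly assemble the mixed expressions that arise in~\eqref{ELeq_6} and in the third transversality condition. Once this split is handled cleanly, every remaining step -- the fractional integration by parts, the implicit function theorem, the Lagrange multiplier rule, and the fundamental lemma of the calculus of variations -- is a direct citation of machinery already established earlier in the chapter.
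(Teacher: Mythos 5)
Your proposal matches the paper's proof essentially step for step: the same two-parameter variation, the same constraint functional $i(\epsilon_1,\epsilon_2)$ with the implicit function theorem and Lagrange multiplier rule, the same splitting of the $L$-part at $T$ while the $g$-part stays on $[a,b]$, and the same assembly of \eqref{ELeq_5}, \eqref{ELeq_6} and \eqref{CT3} from the bulk and boundary terms. The only cosmetic difference is that the paper restricts to variations with $h_2(b)=0$ so the $t=b$ boundary term drops from $\partial i/\partial\epsilon_2$, whereas you carry it along; your conclusion that a suitable $h_2$ makes this derivative nonzero is still valid, so this is not a gap.
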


\begin{proof}
Similarly as done to prove Theorem~\ref{teo1_iso1}, let
$$
(x^{*},T^{*})=\left(x+\e_{1}{h_{1}}+\e_{2}{h_{2}},T+\e_{1}\Delta{T}\right)
$$
be a variation of the solution, and define
$$
i(\e_{1},\e_{2})=\int_a^b g[x^{*}]_\gamma^{\alpha, \beta}(t)dt - C.
$$
The derivative $\dfrac{\partial i}{\partial \e_{2}}$, when $\e_{1}=\e_{2}=0$, is
\begin{equation*}
\left. \dfrac{\partial i}{\partial \e_{2}} \right|_{(0,0)}=\int_a^b \left(
\partial_2 g[x]_\gamma^{\alpha, \beta}(t) h_{2}(t) \
+ \partial_3 g[x]_\gamma^{\alpha, \beta}(t) \DC h_{2}(t) \right)dt.
\end{equation*}
Integrating by parts and choosing variations such that $h_2(b)=0$, we have
$$
\left.\dfrac{\partial i}{\partial \e_{2}} \right|{(0,0)}
=\int_a^b h_{2}(t) \left[ \partial_2 g[x]_\gamma^{\alpha, \beta}(t)
+ D_{\overline{\gamma},b}^{\b,\a}\partial_3g[x]_\gamma^{\alpha, \beta}(t)
\right] dt.
$$
Thus, there exists a function $h_{2}$ such that
$$
\left.\dfrac{\partial i}{\partial \e_{2}} \right|{(0,0)}\not=0.
$$
We may apply the implicit function theorem to conclude that there exists
a subfamily of variations satisfying the integral constraint.
Consider the new function $j(\e_1,\e_2)=\mathcal{J}(x^{*},T^{*})$.
Since $j$ has a local minimum at $(0,0)$ when subject to the constraint
$i(\cdot,\cdot)=0$ and $\nabla i(0,0)\not= (0,0)$, there exists a number $\lambda$
such that
\begin{equation}
\label{eqMLgrg}
\dfrac{\partial }{\partial \e_{1}} \left(j-\lambda i\right)(0,0)=0.
\end{equation}
Let $F=L-\lambda g$. Relation \eqref{eqMLgrg} can be written as
\begin{equation*}
\begin{split}
& \int_a^T h_{1}(t)\left[\partial_2 F[x]_\gamma^{\alpha, \beta}(t)
+ D_{\overline{\gamma},T}^{\b,\a}\partial_3 L[x]_\gamma^{\alpha, \beta}(t)
-\lambda D_{\overline{\gamma},b}^{\b,\a}\partial_3
g[x]_\gamma^{\alpha, \beta}(t) \right]dt\\
&+ \int_T^b h_{1}(t) \left[\gamma_2 \left(_aD_t^{\b}\partial_3
F[x]_\gamma^{\alpha, \beta}(t)-{_TD_t^{\b}\partial_3
L[x]_\gamma^{\alpha, \beta}(t)} \right) \right.\\
&\qquad \qquad-\left. \lambda \left(\partial_2 g[x]_\gamma^{\alpha, \beta}(t)
+\gamma_1{_tD_b^{\a}} \partial_3 g[x]_\gamma^{\alpha, \beta}(t)\right)\right]dt\\
&+ h_{1}(T)\left[\gamma_1 \, {_tI_T^{1-\a}\partial_3L[x]_\gamma^{\alpha, \beta}(t)}
-{\gamma_2 \, {_TI_t^{1-\b}\partial_3 L[x]_\gamma^{\alpha, \beta}(t)}}
+\partial_2 \phi(t,x(t)) \right]_{t=T}\\
&+\Delta T \left[ L[x]_\gamma^{\alpha, \beta}(t)+\partial_1\phi(t,x(t))
+\partial_2\phi(t,x(t))x'(t)  \right]_{t=T}\\
&+ h_{1}(b) \left[-\lambda \gamma_1 {_tI_b^{1-\a}\partial_3
g[x]_\gamma^{\alpha, \beta}(t)}\right.\\
&\left. \quad \quad\quad \quad +\gamma_2 \left({_TI_t^{1-\b}
\partial_3 L[x]_\gamma^{\alpha, \beta}(t)}-{_aI_t^{1-\b}\partial_3
F[x]_\gamma^{\alpha, \beta}(t)} \right) \right]_{t=b}=0.
\end{split}
\end{equation*}
Considering appropriate choices of variations, we obtain the first \eqref{ELeq_5}
and the second \eqref{ELeq_6} necessary optimality conditions,
and also the transversality conditions \eqref{CT3}.
\end{proof}

Similarly to Theorem~\ref{teo3_iso1}, the following result holds.

\begin{Theorem}
\label{teo5}
Suppose that $(x,T)$ gives a local minimum  for functional \eqref{funct1_iso} on
$D$ subject to the  boundary condition \eqref{bcxa} and the isoperimetric
constraint \eqref{IsoConst_2}. Then there exists $(\lambda_{0}, \lambda)\neq(0,0)$
such that, if we define the function $F:[a,b]\times \bR^2 \to\bR$
by $F=\lambda_0L-\lambda g$, $(x,T)$ satisfies the fractional Euler--Lagrange equations
\begin{equation*}
\partial_2 F[x]_\gamma^{\alpha, \beta}(t)
+ D_{\overline{\gamma},T}^{\b,\a}\partial_3 L[x]_\gamma^{\alpha, \beta}(t)
-\lambda D_{\overline{\gamma},b}^{\b,\a}\partial_3 g[x]_\gamma^{\alpha, \beta}(t)=0
\end{equation*}
on the interval $[a,T]$, and
\begin{multline*}
\gamma_2 \left(_aD_t^{\b}\partial_3 F[x]_\gamma^{\alpha, \beta}(t)
-{_TD_t^{\b}\partial_3 L[x]_\gamma^{\alpha, \beta}(t)} \right)\\
-\lambda \left(\partial_2 g[x]_\gamma^{\alpha, \beta}(t)
+\gamma_1 {_tD_{b}^{\a}}\partial_3 g[x]_\gamma^{\alpha, \beta}(t)\right)=0
\end{multline*}
on the interval $[T,b]$.
\end{Theorem}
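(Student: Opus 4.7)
The plan is to derive Theorem~\ref{teo5} as a direct corollary of Theorem~\ref{teo4} by a standard abnormal-multiplier argument, entirely parallel to the way Theorem~\ref{teo3_iso1} was obtained from Theorem~\ref{teo1_iso1}. The guiding idea is that Theorem~\ref{teo4} requires the extra nondegeneracy hypothesis that $(x,T)$ fails the Euler--Lagrange equation associated with the isoperimetric functional $g$; the present statement removes that hypothesis at the cost of allowing a possibly vanishing multiplier in front of $L$.

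First, I would dichotomize on the status of $(x,T)$ relative to the constraint. Let $(x,T)$ be a local minimizer of $\mathcal J$ in \eqref{funct1_iso} on $D$ subject to \eqref{bcxa} and \eqref{IsoConst_2}. Consider the condition
\begin{equation*}
\partial_2 g[x]_\gamma^{\alpha,\beta}(t)+D_{\overline{\gamma},b}^{\beta,\alpha}\partial_3 g[x]_\gamma^{\alpha,\beta}(t)=0, \qquad t\in[a,b].
\end{equation*}
If this relation \emph{does not} hold identically on $[a,b]$, then the hypothesis of Theorem~\ref{teo4} is satisfied. Applying Theorem~\ref{teo4} produces a real number $\lambda$ together with $F=L-\lambda g$ satisfying \eqref{ELeq_5}--\eqref{ELeq_6} on $[a,T]$ and $[T,b]$, respectively. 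Setting $\lambda_0:=1$ and keeping this $\lambda$, we obtain the pair $(\lambda_0,\lambda)=(1,\lambda)\neq(0,0)$ with $F=\lambda_0 L-\lambda g$, exactly as asserted.

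Second, I would handle the abnormal case. Suppose instead that the condition above does hold on the whole interval $[a,b]$. Then choose $\lambda_0:=0$ and $\lambda:=1$, so that $F=-g$ and $(\lambda_0,\lambda)=(0,1)\neq(0,0)$. The Euler--Lagrange equations of Theorem~\ref{teo5} are then inherited from the assumed stationarity of $g$ itself; substituting $\lambda_0=0$, $\lambda=1$ into the two stated relations reduces them (after using the abnormal condition to rewrite $-\partial_2 g$ as $D_{\overline{\gamma},b}^{\beta,\alpha}\partial_3 g$) to identities that hold throughout $[a,b]$, hence in particular on the two subintervals $[a,T]$ and $[T,b]$. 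Both cases together yield the required pair $(\lambda_0,\lambda)$, completing the proof.

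The argument is essentially bookkeeping, so I do not anticipate a genuine obstacle; the only point requiring some care is matching the operators $D_{\overline{\gamma},T}^{\beta,\alpha}$ and $D_{\overline{\gamma},b}^{\beta,\alpha}$ correctly when verifying the degenerate case, since the upper limit $b$ of the integral constraint is fixed while the upper limit $T$ of the cost integral is free. Apart from that, everything is a direct consequence of the proof of Theorem~\ref{teo4} (for the normal case) and a substitution check (for the abnormal case), so no new estimates or integration-by-parts computations are required.
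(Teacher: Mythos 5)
Your proposal follows exactly the paper's own route: the paper proves Theorem~\ref{teo5} with the same normal/abnormal dichotomy it used for Theorem~\ref{teo3_iso1} --- apply Theorem~\ref{teo4} and take $\lambda_0=1$ when the constraint's Euler--Lagrange equation fails on $[a,b]$, and take $(\lambda_0,\lambda)=(0,1)$ when it holds. One caveat on your abnormal-case ``substitution check'': since the stated equations carry the terms $D_{\overline{\gamma},T}^{\b,\a}\partial_3 L[x]_\gamma^{\alpha,\beta}(t)$ and $-\gamma_2\,{_TD_t^{\b}}\partial_3 L[x]_\gamma^{\alpha,\beta}(t)$ without a $\lambda_0$ factor, plugging in $\lambda_0=0$, $\lambda=1$ leaves exactly these terms rather than a pure identity, so the reduction works only under the natural reading in which the cost-functional terms are weighted by $\lambda_0$ (i.e.\ written through $F$, as in Theorem~\ref{teo3_iso1}) --- the same reading implicit in the paper's one-line argument.
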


% -------------------------------------------------

\subsection{Example}
\label{subsec:exeiso}

Let $\alpha(t,\t)=\alpha(t)$ and $\beta(t,\t)=\beta(\t)$. Define the function
$$
\psi(T)=\int_0^T \left(\dfrac{t^{1-\alpha (t)}}{2 \Gamma(2-\alpha(t))}
+\dfrac{(b-t)^{1-\beta (t)}}{2 \Gamma(2-\beta(t))} \right)^2dt
$$
on the interval $[0,b]$ with $b>0$. Consider the functional $J$ defined by
\begin{multline*}
J(x,t)= \int_0^T \Bigg[\alpha(t)+\left(\DC x(t) \right)^2\\
+ \left(\dfrac{t^{1-\alpha (t)}}{2 \Gamma(2-\alpha(t))}
+\dfrac{(b-t)^{1-\beta (t)}}{2 \Gamma(2-\beta(t))}\right)^2 \Bigg]dt
\end{multline*}
for $t \in [0,b]$ and $\gamma =(1/2,1/2)$, subject to the initial condition
$$
x(0)=0
$$
and the isoperimetric constraint
$$
\int_0^T \DC x(t)  \left(\dfrac{t^{1-\alpha (t)}}{2 \Gamma(2-\alpha(t))}
+\dfrac{(b-t)^{1-\beta (t)}}{2 \Gamma(2-\beta(t))}\right)^2 dt=\psi(T).
$$
Define $F=L-\lambda g$ with $\lambda=2$, that is,
$$
F=\alpha(t)+\left({^CD_\gamma^{\alpha(\cdot),\beta(\cdot)}} x(t)
-\frac{t^{1-\alpha(t)}}{2\Gamma(2-\alpha(t))}
-\frac{(b-t)^{1-\beta(t)}}{2\Gamma(2-\beta(t))}\right)^2.
$$
Consider the function $\overline{x}(t)=t$ with $t\in[0,b]$. Because
$$
\DC \overline{x}(t)=\frac{t^{1-\alpha(t)}}{2\Gamma(2-\alpha(t))}
+\frac{(b-t)^{1-\beta(t)}}{2\Gamma(2-\beta(t))},
$$
we have that $\overline x$ satisfies conditions \eqref{ELeq_1}, \eqref{ELeq_2}
and the two last of \eqref{CT1}. Using the first condition of \eqref{CT1}, that is,
$$
\alpha(t)+2\left(\dfrac{T^{1-\alpha (T)}}{2 \Gamma(2-\alpha(T))}
+\dfrac{(b-T)^{1-\beta (T)}}{2 \Gamma(2-\beta(T))} \right)^2=0,
$$
we obtain the optimal time $T$.

% -------------------------------------------

\section{Variational problems with holonomic constraints}
\label{sec:Hol}
\index{holonomic constraint}
\index{variational fractional problem! with a holonomic constraint}

In this section, we present a new variational problem subject to a new
type of constraints. A holonomic constraint is a condition of the form
$$g(t,x)=0,$$
where $x=(x_1,x_2,...,x_n)$, $n\geq2$, and $g$ is a given function
(see, e.g., \cite{Cap4:Brunt}).

Consider the space
\begin{equation}
\label{space_HL1}
U=\lbrace(x_1,x_2,T) \in C^1([a,b]) \times C^1([a,b]) \times[a,b]
: x_1(a)=x_{1a} \wedge x_2(a)=x_{2a}\rbrace
\end{equation}
for fixed reals $x_{1a},x_{2a} \in \bR$.
In this section, we consider the following variational problem:

\begin{Problem}
\label{Problem_Holon}
Find functions $x_1$ and $x_2$ that maximize or minimize the functional
$\mathcal{J}$ defined in $U$ by
\begin{equation}
\label{functHL}
\begin{split}
\mathcal{J}(x_1,x_2,T)&=\int_a^T L(t,x_1(t),x_2(t), \DC x_1(t), \DC x_2(t))dt\\
&+ \phi(T,x_1(T),x_2(T)),
\end{split}
\end{equation}
where the admissible functions satisfy the constraint
\index{holonomic constraint}
\begin{equation}
\label{HLConst}
g(t,x_1(t),x_2(t))=0, \quad t\in [a,b],
\end{equation}
called a holonomic constraint, where $g: [a,b]\times\bR^2 \rightarrow \bR$
is a continuous function and continuously differentiable with respect to second
and third arguments.\\
The terminal time $T$ and terminal states $x_1(T)$ and $x_2(T)$ are free and
the Lagrangian $L:[a,b]\times \bR^4 \rightarrow \bR$ is a continuous function
and continuously differentiable with respect to its $i$th argument,
$i \in \lbrace 2,3,4,5\rbrace$. The  terminal cost function
\index{terminal cost function} $\phi:[a,b]\times \bR^2 \to\bR$ is of class $C^1$.
\end{Problem}

% -------------------

\subsection{Necessary optimality conditions}
\label{subsec:neccondHolo}

The next theorem gives fractional necessary optimality conditions to the variational problem with
a holonomic constraint. To simplify the notation, we denote by $x$ the vector
$(x_1,x_2)$; by $\DC x$ we mean the two-dimensional vector $(\DC x_1,\DC x_2)$; and we use the operator
$$
[x]_\gamma^{\alpha, \beta}(t):=\left(t, x(t), \DC x(t)\right).
$$
The next result given necessary optimality conditions for Problem~\ref{Problem_Holon}.

\begin{Theorem}
\label{teo7}
Suppose that $(x,T)$ gives a local minimum to functional $\mathcal{J}$ as in
\eqref{functHL}, under the constraint \eqref{HLConst} and the
boundary conditions defined in \eqref{space_HL1} \index{boundary conditions}. If
$$
\partial_3g(t,x(t))\neq0 \quad \forall t\in [a,b],
$$
then there exists a piecewise continuous function
$\lambda : [a,b]\rightarrow \bR$ such that $(x,T)$
satisfies the following fractional Euler--Lagrange equations:\index{Euler--Lagrange equations}
\begin{equation}
\label{ELeqh_1}
\partial_2 L[x]_\gamma^{\alpha, \beta}(t) + D_{\overline{\gamma},T}^{\b,\a}
\partial_4L[x]_\gamma^{\alpha, \beta}(t) + \lambda(t) \partial_2 g(t,x(t))=0
\end{equation}
and
\begin{equation}
\label{ELeqhPP_1}
\partial_3 L[x]_\gamma^{\alpha, \beta}(t) + D_{\overline{\gamma},T}^{\b,\a}
\partial_5L[x]_\gamma^{\alpha, \beta}(t) + \lambda(t)\partial_3 g(t,x(t))=0
\end{equation}
on the interval $[a,T]$, and
\begin{equation}
\label{ELeqh_2}
\gamma_2\left(_aD_t^{\b} \partial_4 L[x]_\gamma^{\alpha, \beta}(t)
- {_TD_t}^{\b} \partial_4 L[x]_\gamma^{\alpha, \beta}(t)
+ \lambda(t)\partial_2 g(t,x(t))\right)=0
\end{equation}
and
\begin{equation}
\label{ELeqhPP_2}
_aD_t^{\b} \partial_5 L[x]_\gamma^{\alpha, \beta}(t)
- {_TD_t}^{\b} \partial_5 L[x]_\gamma^{\alpha, \beta}(t)
+\lambda(t)\partial_3 g(t,x(t))=0
\end{equation}
on the interval $[T,b]$. Moreover, $(x,T)$ satisfies the transversality conditions
\index{transversality conditions}
\begin{equation}
\label{CTh1}
\begin{cases}
L[x]_\gamma^{\alpha, \beta}(T)  + \partial_1 \phi(T,x(T))
+ \partial_2 \phi(T,x(T))x'_1(T)+ \partial_3 \phi(T,x(T))x'_2(T) =0,\\
\left[\gamma_1 \, {_tI_T^{1-\a}\partial_4L[x]_\gamma^{\alpha, \beta}(t)}
-{\gamma_2 \, {_TI_t^{1-\b}\partial_4L[x]_\gamma^{\alpha, \beta}(t)}}\right]_{t=T}
+\partial_2 \phi(T,x(T))=0,\\
\left[\gamma_1 {{_tI_T}}^{1-\a} \partial_5 L[x]_\gamma^{\alpha, \beta}(t)
- \gamma_2 {_TI_t}^{1-\b} \partial_5 L[x]_\gamma^{\alpha, \beta}(t)\right]_{t=T}
+\partial_3 \phi(T,x(T))=0,\\
\gamma_2 \left[ {_TI_t^{1-\b}}\partial_4 L[x]_\gamma^{\alpha, \beta}(t)
-{_aI_t^{1-\b}\partial_4 L[x]_\gamma^{\alpha, \beta}(t)}\right]_{t=b}=0,\\
\gamma_2 \left[ {_TI_t^{1-\b}}\partial_5 L[x]_\gamma^{\alpha, \beta}(t)
-{_aI_t^{1-\b}\partial_5 L[x]_\gamma^{\alpha, \beta}(t)}\right]_{t=b}=0.
\end{cases}
\end{equation}
\end{Theorem}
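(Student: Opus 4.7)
My plan is to adapt the Lagrange multiplier technique used for Problem~\ref{P_fund} in Theorem~\ref{FP_teo1}, modifying it to accommodate the pointwise constraint $g(t,x(t))=0$. First, I would consider admissible variations of the form $(x_1+\varepsilon h_1, x_2+\varepsilon h_2, T+\varepsilon\Delta T)$, where $h_1,h_2\in C^1([a,b];\mathbb{R})$ satisfy $h_1(a)=h_2(a)=0$. Since the varied curves must still satisfy $g(t,x_1+\varepsilon h_1,x_2+\varepsilon h_2)=0$ for all $t\in[a,b]$, differentiating in $\varepsilon$ at $\varepsilon=0$ gives the pointwise relation
\begin{equation*}
\partial_2 g(t,x(t))\, h_1(t) + \partial_3 g(t,x(t))\, h_2(t) = 0.
\end{equation*}
Because $\partial_3 g(t,x(t))\neq 0$ on $[a,b]$, I can solve this for $h_2$ in terms of $h_1$, namely $h_2(t)=-\tfrac{\partial_2 g(t,x(t))}{\partial_3 g(t,x(t))}h_1(t)$, which makes $h_1$ essentially arbitrary while $h_2$ is determined.

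Next I would set $j(\varepsilon)=\mathcal{J}(x_1+\varepsilon h_1,x_2+\varepsilon h_2,T+\varepsilon\Delta T)$, compute $j'(0)=0$, and apply the fractional integration by parts formula (Theorem~\ref{thm:FIP}) to each of the four terms involving $\partial_4 L\,{^C D_\gamma^{\alpha,\beta}} h_1$ and $\partial_5 L\,{^C D_\gamma^{\alpha,\beta}} h_2$, splitting the right-Caputo contributions over $[a,T]\cup[T,b]$ exactly as in the proof of Theorem~\ref{FP_teo1}. After collecting terms, the condition $j'(0)=0$ becomes
\begin{equation*}
\int_a^T \Bigl(A(t) h_1(t) + B(t) h_2(t)\Bigr) dt
+ \int_T^b \Bigl(C(t) h_1(t) + D(t) h_2(t)\Bigr) dt + \text{boundary terms} = 0,
\end{equation*}
where $A,B,C,D$ are the bracketed expressions appearing in \eqref{ELeqh_1}--\eqref{ELeqhPP_2} \emph{without} the $\lambda$-terms, and the boundary terms produce exactly the right-hand sides of \eqref{CTh1}.

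The key step is to introduce the multiplier function by defining
\begin{equation*}
\lambda(t) := -\frac{\partial_3 L[x]_\gamma^{\alpha,\beta}(t)+D_{\overline{\gamma},T}^{\beta,\alpha}\partial_5 L[x]_\gamma^{\alpha,\beta}(t)}{\partial_3 g(t,x(t))}\quad\text{on }[a,T],
\end{equation*}
and analogously on $[T,b]$ using the operator appearing in \eqref{ELeqhPP_2}; this is well-defined thanks to the nonvanishing hypothesis on $\partial_3 g$, and yields \eqref{ELeqhPP_1} and \eqref{ELeqhPP_2} by construction. Substituting $h_2=-\tfrac{\partial_2 g}{\partial_3 g}h_1$ into the remaining terms and using the defining relation for $\lambda$ recasts the identity as an integral against $h_1$ alone. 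By the fundamental lemma of the calculus of variations, varying $h_1$ supported first in $(a,T)$ and then in $(T,b)$ produces \eqref{ELeqh_1} and \eqref{ELeqh_2}, while appropriate choices of $h_1(T)$, $h_1(b)$, and $\Delta T$ (together with $h_2$ at these points determined via the constraint) yield the five transversality conditions in \eqref{CTh1}.

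The main obstacle I anticipate is bookkeeping rather than conceptual: one must carefully verify that the multiplier $\lambda(t)$ defined piecewise on $[a,T]$ and $[T,b]$ is piecewise continuous and that the dual operator $D_{\overline{\gamma},T}^{\beta,\alpha}$ used on $[a,T]$ correctly matches the operator that emerges from integration by parts on that subinterval, since the right Caputo derivative ${^C_t D_b^{\beta}}x$ contributes on the entire interval $[a,b]$ and must be split into $[a,T]$ and $[T,b]$ pieces. A secondary subtlety is that the transversality condition involving $\phi$ and $\Delta T$ depends on the free variation of the endpoint $T$, and one has to check that the $\lambda$-terms contribute nothing to the boundary part at $t=T$ (they cancel because the constraint $g(t,x(t))=0$ produces no boundary contribution in integration by parts).
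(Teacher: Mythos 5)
Your proposal follows essentially the same route as the paper's proof: an admissible variation $(x+\e h, T+\e\Delta T)$ constrained via the implicit function theorem so that $\partial_2 g\,h_1+\partial_3 g\,h_2=0$, fractional integration by parts with the right-Caputo terms split over $[a,T]\cup[T,b]$, and the piecewise multiplier $\lambda$ defined exactly as in \eqref{lambdah1} so that \eqref{ELeqhPP_1}--\eqref{ELeqhPP_2} hold by construction while the substitution $h_2=-\tfrac{\partial_2 g}{\partial_3 g}h_1$ reduces everything to an identity in $h_1$ and $\Delta T$, from which \eqref{ELeqh_1}, \eqref{ELeqh_2} and \eqref{CTh1} follow by appropriate choices of variations. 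This matches the paper's argument, including the bookkeeping points you flag.
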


\begin{proof}
Consider admissible variations of the optimal solution $(x,T)$ of the type
\begin{equation*}
(x^{*},T^{*})=\left(x+\e h,T+\e \Delta{T}\right),
\end{equation*}
where $\e \in\bR$ is a small parameter, $h=(h_1,h_2)\in C^1([a,b])
\times C^1([a,b])$ satisfies $h_{i}(a)=0$, $i=1,2$, and $\triangle T \in\bR$.
Because
$$
\partial_3g(t,x(t))\neq0 \quad \forall t\in [a,b],
$$
by the implicit function theorem there exists a subfamily of variations of
$(x,T)$ that satisfy \eqref{HLConst}, that is, there exists a unique function
$h_2(\e ,h_1)$ such that the admissible variation ($x^{*}, T^*$) satisfies
the holonomic constraint \eqref{HLConst}:
$$
g(t,x_1(t)+\e h_1(t),x_2(t)+\e h_2(t))=0 \quad \forall t\in [a,b].
$$
Differentiating this condition with respect to $\e$
and considering $\e =0$, we obtain that
\begin{equation*}
\partial_2 g(t,x(t)) h_{1}(t) + \partial_3 g(t,x(t)) h_{2}(t) = 0,
\end{equation*}
which is equivalent to
\begin{equation}
\label{eqh1}
\dfrac{\partial_2 g(t,x(t)) h_{1}(t)}{\partial_3 g(t,x(t))}=-h_{2}(t).
\end{equation}
Define $j$ on a neighbourhood of zero by
$$
j(\e )=\int_a^{T+\e  \triangle T} L[x^{*}]_\gamma^{\alpha, \beta}(t)dt
+\phi(T+\e \triangle T, x^{*}(T+\e \triangle T)).
$$
The derivative $\dfrac{\partial j}{\partial \e }$ for $\e =0$ is
\begin{equation}
\label{eqh2}
\begin{split}
&\left. \dfrac{\partial j}{\partial \e } \right|_{\e=0}
= \int_a^{T}  \left(\partial_2 L[x]_\gamma^{\alpha, \beta}(t) h_{1}(t)
+\partial_3 L[x]_\gamma^{\alpha, \beta}(t) h_{2}(t) \right. \\
& \left.+ \partial_4 L[x]_\gamma^{\alpha, \beta}(t) \DC h_{1}(t)
+ \partial_5 L[x]_\gamma^{\alpha, \beta}(t) \DC h_{2}(t)\right)dt\\
& +L[x]_\gamma^{\alpha, \beta}(T) \triangle T + \partial_1 \phi(T,x(T))\triangle T
+ \partial_2 \phi(T,x(T))\left[h_1(T)+x'_1(T)\triangle T\right]\\
& + \partial_3 \phi(T,x(T))\left[h_2(T)+x'_2(T)\triangle T\right].
\end{split}
\end{equation}
The third term in \eqref{eqh2} can be written as
\begin{equation}
\label{term3}
\begin{split}
\int_a^T  &\partial_4 L[x]_\gamma^{\alpha, \beta}(t) \DC h_{1}(t) dt\\
&=\int_a^T \partial_4 L[x]_\gamma^{\alpha, \beta}(t)\left[\gamma_1
\, \LC h_{1}(t)+\gamma_2 \, \RCb h_{1}(t)\right]dt\\
&=\gamma_1 \int_a^T \partial_4 L[x]_\gamma^{\alpha, \beta}(t)\LC h_{1}(t)dt\\
& + \gamma_2 \left[ \int_a^b
\partial_4 L[x]_\gamma^{\alpha, \beta}(t)\RCb h_{1}(t)dt
- \int_T^b \partial_4 L[x]_\gamma^{\alpha, \beta}(t) \RCb h_{1}(t)dt \right].
\end{split}
\end{equation}
Integrating by parts, \eqref{term3} can be written as
\begin{equation*}
\begin{split}
\int_a^T h_{1}(t) & \left[ \gamma_1 {_tD_T}^{\a} \partial_4
L[x]_\gamma^{\alpha, \beta}(t) + \gamma_2 {_aD_t}^{\b} \partial_4
L[x]_\gamma^{\alpha, \beta}(t) \right] dt\\
+ &  \int_T^b \gamma_2 h_{1}(t) \left[ _aD_t^{\b} \partial_4
L[x]_\gamma^{\alpha, \beta}(t) - {_TD_t}^{\b} \partial_4
L[x]_\gamma^{\alpha, \beta}(t) \right] dt\\
+ & \Biggl[ h_{1}(t) \left( \gamma_1 {{_tI_T}}^{1-\a} \partial_4
L[x]_\gamma^{\alpha, \beta}(t) - \gamma_2 {_TI_t}^{1-\b} \partial_4
L[x]_\gamma^{\alpha, \beta}(t) \Biggr) \right]_{t=T}\\
+ & \Biggl[ \gamma_2 h_{1}(t)  \left( {{_TI_t}}^{1-\b} \partial_4
L[x]_\gamma^{\alpha, \beta}(t) - {_aI_t}^{1-\b} \partial_4
L[x]_\gamma^{\alpha, \beta}(t) \Biggr) \right]_{t=b}.
\end{split}
\end{equation*}
By proceeding similarly to the $4$th term in \eqref{eqh2},
we obtain an equivalent expression. Substituting these relations into
\eqref{eqh2} and considering the fractional operator
$D_{\overline{\gamma},c}^{\b,\a}$ as defined in \eqref{aux:FD}, we obtain that
\begin{equation}
\label{eqh3}
\begin{split}
0= \int_a^T &\Biggl[ h_{1}(t) \left[ \partial_2
L[x]_\gamma^{\alpha, \beta}(t) + D_{\overline{\gamma},T}^{\b,\a}\partial_4
L[x]_\gamma^{\alpha, \beta}(t) \right]\biggr. \\
& \qquad + \biggl. h_{2}(t) \left[ \partial_3 L[x]_\gamma^{\alpha, \beta}(t)
+ D_{\overline{\gamma},T}^{\b,\a}\partial_5
L[x]_\gamma^{\alpha, \beta}(t) \right] \biggr]dt\\
+ &  \gamma_2\int_T^b  \Biggl[h_{1}(t) \left[ _aD_t^{\b} \partial_4
L[x]_\gamma^{\alpha, \beta}(t) - {_TD_t}^{\b} \partial_4
L[x]_\gamma^{\alpha, \beta}(t) \right] \biggr.\\
& \qquad +\Biggl.h_{2}(t) \left[ _aD_t^{\b} \partial_5
L[x]_\gamma^{\alpha, \beta}(t) - {_TD_t}^{\b} \partial_5
L[x]_\gamma^{\alpha, \beta}(t) \right]dt \biggr]\\
+ & h_{1}(T)\Biggl[  \gamma_1 {{_tI_T}}^{1-\a} \partial_4
L[x]_\gamma^{\alpha, \beta}(t)\\
&\quad \quad \quad \quad - \gamma_2 {_TI_t}^{1-\b} \partial_4
L[x]_\gamma^{\alpha, \beta}(t) +\partial_2 \phi(t,x(t))\Biggr] _{t=T}
\end{split}
\end{equation}
\begin{equation*}
\begin{split}
+ & h_{2}(T)\Biggl[   \gamma_1 {{_tI_T}}^{1-\a} \partial_5
L[x]_\gamma^{\alpha, \beta}(t) - \gamma_2 {_TI_t}^{1-\b} \partial_5
L[x]_\gamma^{\alpha, \beta}(t) + \partial_3 \phi(t,x(t))\Biggr]_{t=T}\\
+ &\triangle T \bigg[ L[x]_\gamma^{\alpha, \beta}(t) + \partial_1 \phi(t,x(t))
+ \partial_2 \phi(t,x(t))x'_1(t)
+ \partial_3 \phi(t,x(t))x'_2(t) \Biggr]_{t=T}\\
+ & h_{1}(b)\Biggl[ \gamma_2   \left( {{_TI_t}}^{1-\b} \partial_4
L[x]_\gamma^{\alpha, \beta}(t) - {_aI_t}^{1-\b} \partial_4
L[x]_\gamma^{\alpha, \beta}(t) \Biggr) \right]_{t=b}\\
+ & h_{2}(b)\Biggl[ \gamma_2   \left( {{_TI_t}}^{1-\b} \partial_5
L[x]_\gamma^{\alpha, \beta}(t) - {_aI_t}^{1-\b} \partial_5
L[x]_\gamma^{\alpha, \beta}(t) \Biggr) \right]_{t=b}.
\end{split}
\end{equation*}
Define the piecewise continuous function $\lambda$ by
\begin{equation}
\label{lambdah1}
\lambda (t)
=
\begin{cases}
-\dfrac{\partial_3L[x]_\gamma^{\alpha, \beta}(t)
+ D_{\overline{\gamma},T}^{\b,\a}\partial_5L[x]_\gamma^{\alpha,
\beta}(t)}{\partial_3 g(t,x(t))}, & t\in [a,T] \\
-\dfrac{_aD_t^{\b}\partial_5L[x]_\gamma^{\alpha, \beta}(t)
-_TD_t^{\b}\partial_5L[x]_\gamma^{\alpha, \beta}(t)}{\partial_3 g(t,x(t))},
& t\in [T,b].
\end{cases}
\end{equation}
Using equations \eqref{eqh1} and \eqref{lambdah1}, we obtain that
\begin{multline*}
\lambda(t)\partial_2 g(t,x(t)) h_{1}(t)
\\
=\begin{cases}
(\partial_3L[x]_\gamma^{\alpha, \beta}(t) +D_{\overline{\gamma},T}^{\b,\a}
\partial_5L[x]_\gamma^{\alpha, \beta}(t)) h_2(t), & t\in [a,T] \\
(_aD_t^{\b}\partial_5L[x]_\gamma^{\alpha, \beta}(t)
-_TD_t^{\b}\partial_5L[x]_\gamma^{\alpha, \beta}(t)) h_2(t),  & t\in [T,b].
\end{cases}
\end{multline*}
Substituting in \eqref{eqh3}, we have
\begin{equation*}
\begin{split}
&0= \int_a^T h_{1}(t) \left[ \partial_2 L[x]_\gamma^{\alpha, \beta}(t)
+ D_{\overline{\gamma},T}^{\b,\a}\partial_4L[x]_\gamma^{\alpha, \beta}(t)
+ \lambda(t)\partial_2 g(t,x(t))\right] dt\\
+ &  \gamma_2\int_T^b  h_{1}(t) \left[ _aD_t^{\b} \partial_4
L[x]_\gamma^{\alpha, \beta}(t) - {_TD_t}^{\b} \partial_4
L[x]_\gamma^{\alpha, \beta}(t) + \lambda(t)\partial_2 g(t,x(t))\right] dt \\
+ & h_{1}(T)\Biggl[  \gamma_1 {{_tI_T}}^{1-\a} \partial_4
L[x]_\gamma^{\alpha, \beta}(t) - \gamma_2 {_TI_t}^{1-\b} \partial_4
L[x]_\gamma^{\alpha, \beta}(t) +\partial_2 \phi(t,x(t))\Biggr] _{t=T}\\
+ & h_{2}(T)\Biggl[   \gamma_1 {{_tI_T}}^{1-\a} \partial_5
L[x]_\gamma^{\alpha, \beta}(t) - \gamma_2 {_TI_t}^{1-\b} \partial_5
L[x]_\gamma^{\alpha, \beta}(t) + \partial_3 \phi(t,x(t))\Biggr]_{t=T}\\
+ &\triangle T \bigg[ L[x]_\gamma^{\alpha, \beta}(t)
+ \partial_1 \phi(t,x(t))+ \partial_2 \phi(t,x(t))x'_1(t)
+ \partial_3 \phi(t,x(t))x'_2(t) \Biggr]_{t=T}\\
+ & h_{1}(b)\Biggl[ \gamma_2   \left( {{_TI_t}}^{1-\b} \partial_4
L[x]_\gamma^{\alpha, \beta}(t) - {_aI_t}^{1-\b} \partial_4
L[x]_\gamma^{\alpha, \beta}(t) \Biggr) \right]_{t=b}\\
+ & h_{2}(b)\Biggl[ \gamma_2   \left( {{_TI_t}}^{1-\b} \partial_5
L[x]_\gamma^{\alpha, \beta}(t) - {_aI_t}^{1-\b} \partial_5
L[x]_\gamma^{\alpha, \beta}(t) \Biggr) \right]_{t=b}.
\end{split}
\end{equation*}
Considering appropriate choices of variations, we obtained the first \eqref{ELeqh_1}
and the third \eqref{ELeqh_2} necessary conditions, and also the transversality
conditions \eqref{CTh1}. The remaining conditions \eqref{ELeqhPP_1}
and \eqref{ELeqhPP_2} follow directly from \eqref{lambdah1}.
\end{proof}

% ---------------

\subsection{Example}
\label{subsec:exeHolo}

We end this section with a simple illustrative example.
Consider the following problem:
\begin{equation*}
\begin{gathered}
J(x,t)= \int_0^T \Biggl[\alpha(t)+\left(\DC x_1(t)
-\dfrac{t^{1-\alpha(t)}}{2 \Gamma(2-\alpha(t))}
-\dfrac{(b-t)^{1-\beta (t)}}{2 \Gamma(2-\beta(t))}\right)^2 \\
+ \left(\DC x_2(t)\right)^2\Biggr]dt \longrightarrow\min,\\
x_1(t) + x_2(t) = t+1,\\
x_1(0)=0, \quad x_2(0) = 1.
\end{gathered}
\end{equation*}
It is a simple exercise to check that $x_1(t) = t$,
$x_2(t) \equiv 1$ and $\lambda(t) \equiv 0$
satisfy our Theorem~\ref{teo7}.

% -------------------------------------------

\section{Fractional variational Herglotz problem}
\label{sec:Herg}

In this section, we study fractional variational problems of Herglotz type,
depending on the combined Caputo fractional derivatives $\DC$. Two different
cases are considered.\\
The variational problem of Herglotz is a generalization of the classical
variational problem. It  allows us to describe nonconservative processes,
even in case the Lagrange function is autonomous (that is, when the Lagrangian
does not depend explicitly on time). In opposite to calculus of variations,
where the cost functional is given by an integral depending only on time, space
and on the dynamics, in the Herglotz variational problem the model is given by a
differential equation involving the derivative of the objective function $z$,
and the Lagrange function depends on time, trajectories $x$ and $z$, and on the
derivative of $x$. The problem of Herglotz was posed by
\cite{Cap4:Herglotz}, but only in 1996, with the works 
\cite{Cap4:Guenther,Cap4:Guenther:book},
it has gained the attention of the mathematical community. Since then, several
papers were devoted to this subject. For example, see references
\cite{Cap4:Alm:Malin2014,Cap4:Georgieva,Cap4:Georgieva:sev,Cap4:Santos:Viet,Cap4:Santos:Disc,Cap4:Santos:Spri}.

In Section~\ref{subsec:neccondHer}, we obtain fractional Euler--Lagrange
conditions for the fractional variational problem of Herglotz, with one
variable, and the general case, for several independent variables is
discussed in Section~\ref{subsec:several}. Finally, three illustrative
examples are presented in detail (Section~\ref{subsec:exeHerg}).

%-----------------------------

\subsection{Fundamental problem of Herglotz}
\label{subsec:neccondHer}
\index{variational fractional problem! of Herglotz}
Let $\alpha,\beta: [a,b]^2\rightarrow (0,1)$ be two functions. 
The fractional Herglotz variational problem that we study is as follows.

\begin{Problem}
\label{ProblemHerg}
Determine the trajectories $x \in C^{1}\left([a,b];\bR\right)$ satisfying a
given boundary condition $x(a)=x_{a}$,\index{boundary conditions} for a
fixed $x_{a} \in \bR$, and a real $T \in (a,b]$, that extremize the value
of $z(T)$, where $z$ satisfies the differential equation
\begin{equation}
\label{funct1H}
\dot{z}(x,t)=L\left(t,x(t), \DC x(t), z(t) \right),  \quad  t \in [a,b],
\end{equation}
with dependence on a combined Caputo fractional derivative operator, subject
to the initial condition
\begin{equation}
\label{ICond}
z(a)=z_{a},
\end{equation}
where $z_a$ is a fixed real number.
\end{Problem}

In the sequel, we use the auxiliary notation
$$[x,z]_\gamma^{\alpha, \beta}(t)=\left(t,x(t), \DC x(t), z(t) \right).$$
The Lagrangian $L$ is assumed to satisfy the following hypothesis:

\begin{enumerate}
\item $L:[a,b] \times \bR^{3}\rightarrow \bR$ is differentiable,
\item $t\rightarrow \lambda(t)\partial_3L[x,z]_\gamma^{\alpha, \beta}(t)$ is such that $_TD_t^{\b} \left( \lambda(t) \partial_3L[x,z]_\gamma^{\alpha, \beta}(t)\right)$, \\
$\LDb  \left( \lambda(t) \partial_{3} L[x,z]_\gamma^{\alpha, \beta}(t) \right)$, and
$D_{\overline{\gamma}}^{\b,\a} \left( \lambda(t) \partial_{3} L[x,z]_\gamma^{\alpha, \beta}(t) \right)$  
exist and are continuous on $[a,b]$, where
$$\lambda(t)=\exp \left(-\int_a^t \partial_4 L\left[x,z \right]_\gamma^{\alpha, \beta} (\tau)d\tau \right).$$
\end{enumerate}
The following result gives necessary conditions of Euler--Lagrange type for a solution of Problem~\ref{ProblemHerg}.\index{Euler--Lagrange equations}

\begin{Theorem}
\label{mainteo_Herg}
Let $x \in C^{1}\left([a,b];\bR\right)$ be such that $z$ defined 
by Eq. \eqref{funct1H}, subject to the initial condition 
\eqref{ICond}, has an extremum at $T \in ]a,b]$.
Then, $(x,z)$ satisfies the fractional differential equations
\begin{equation}
\label{CEL1_Herg}
\partial_2 L[x,z]_\gamma^{\alpha, \beta}(t)\lambda(t)
+D{_{\overline{\gamma}}^{\b,\a}}\left(\lambda(t)
\partial_3 L[x,z]_\gamma^{\alpha, \beta}(t)\right)=0,
\end{equation}
on $[a,T]$, and
\begin{equation}
\label{CEL2_Herg}
\gamma_2\left({\LDb} \left(\lambda(t)\partial_3 
L[x,z]_\gamma^{\alpha, \beta}(t)\right)-{ _TD{_t^{\b}}\left(\lambda (t)
\partial_3 L[x,z]_\gamma^{\alpha, \beta}(t)\right)}\right)=0,
\end{equation}
on $[T,b]$.
Moreover, the following transversality conditions are satisfied:
\index{transversality conditions}
\begin{equation}
\label{CTransH}
\left\{
\begin{array}{l}
\left[\gamma_1 {_tI_T^{1-\a}} \left(\lambda (t) \partial_3
L[x,z]_\gamma^{\alpha, \beta}(t)\right)-\gamma_2 {_TI_t^{1-\b}} 
\left(\lambda (t) \partial_3L[x,z]_\gamma^{\alpha, \beta}(t)\right)\right]_{t=T}=0,\\
\gamma_2 \left[ {_TI_t^{1-\b}} \left( \lambda (t) \partial_3
L[x,z]_\gamma^{\alpha, \beta}(t)\right) 
-{_aI_t^{1-\b} \left( \lambda (t) \partial_3
L[x,z]_\gamma^{\alpha, \beta}(t)\right)}\right]_{t=b}=0.
\end{array}\right.
\end{equation}
If $T<b$, then $L[x,z]_\gamma^{\alpha, \beta}(T)=0$.
\end{Theorem}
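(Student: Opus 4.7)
The plan is to mimic the strategy used in the proof of Theorem~\ref{FP_teo1}, but with an auxiliary multiplier $\lambda(t)$ that converts the first-order ODE defining $z$ into an exact differential, which is the device that makes the method of Guenther--Herglotz work for non-autonomous Lagrangians of the form \eqref{funct1H}.

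First I would introduce a variation $(x+\varepsilon h,\, T+\varepsilon\Delta T)$ with $h\in C^{1}([a,b];\bR)$, $h(a)=0$, and a real number $\Delta T$, and let $z(t,\varepsilon)$ denote the corresponding solution of \eqref{funct1H}--\eqref{ICond}. Setting
\[
\theta(t):=\left.\frac{\partial z(t,\varepsilon)}{\partial\varepsilon}\right|_{\varepsilon=0},
\]
differentiation of \eqref{funct1H} in $\varepsilon$ at $\varepsilon=0$ gives the linear variational ODE
\[
\dot\theta(t)=\partial_2 L[x,z]_\gamma^{\alpha,\beta}(t)\,h(t)+\partial_3 L[x,z]_\gamma^{\alpha,\beta}(t)\,{^CD_\gamma^{\alpha,\beta}} h(t)+\partial_4 L[x,z]_\gamma^{\alpha,\beta}(t)\,\theta(t),
\]
with $\theta(a)=0$. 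Multiplying through by the integrating factor $\lambda(t)$ from the statement, the left-hand side becomes $\frac{d}{dt}(\lambda\theta)$, and integration from $a$ to $T$ yields the closed-form expression
\[
\lambda(T)\theta(T)=\int_a^T\lambda(t)\Big[\partial_2 L[x,z]_\gamma^{\alpha,\beta}(t)\,h(t)+\partial_3 L[x,z]_\gamma^{\alpha,\beta}(t)\,{^CD_\gamma^{\alpha,\beta}} h(t)\Big]dt.
\]

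Next I would encode the extremum requirement. Since the quantity being extremized is $z(T+\varepsilon\Delta T,\varepsilon)$, Fermat's rule at $\varepsilon=0$ gives $\theta(T)+\dot z(T)\Delta T=0$, that is, $\theta(T)=-L[x,z]_\gamma^{\alpha,\beta}(T)\Delta T$, which plugged into the above identity produces
\[
\int_a^T\lambda(t)\Big[\partial_2 L\,h(t)+\partial_3 L\,{^CD_\gamma^{\alpha,\beta}} h(t)\Big]dt+\lambda(T)L[x,z]_\gamma^{\alpha,\beta}(T)\,\Delta T=0.
\]
This is the Herglotz-analogue of equation \eqref{FP_eq_derj}. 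I would then treat the fractional term exactly as in Theorem~\ref{FP_teo1}: split ${^CD_\gamma^{\alpha,\beta}} h=\gamma_1\,{^C_aD_t^\alpha}h+\gamma_2\,{^C_tD_b^\beta}h$, write $\int_a^T=\int_a^b-\int_T^b$ for the right-Caputo part so that Theorem~\ref{thm:FIP} applies on full intervals, and use the hypothesis~(2) on $L$ to move derivatives onto $\lambda(t)\partial_3 L[x,z]_\gamma^{\alpha,\beta}(t)$. After collecting terms, the identity acquires the form
\begin{align*}
0&=\int_a^T h(t)\Big[\lambda(t)\partial_2 L[x,z]_\gamma^{\alpha,\beta}(t)+D_{\overline\gamma}^{\beta,\alpha}\!\left(\lambda(t)\partial_3 L[x,z]_\gamma^{\alpha,\beta}(t)\right)\Big]dt\\
&\quad+\int_T^b\gamma_2 h(t)\Big[{_aD_t^\beta}\!\left(\lambda(t)\partial_3 L[x,z]_\gamma^{\alpha,\beta}(t)\right)-{_TD_t^\beta}\!\left(\lambda(t)\partial_3 L[x,z]_\gamma^{\alpha,\beta}(t)\right)\Big]dt\\
&\quad+h(T)\Big[\gamma_1\,{_tI_T^{1-\alpha}}\!-\gamma_2\,{_TI_t^{1-\beta}}\!\Big]\!\left(\lambda\partial_3 L\right)\!(T)\\
&\quad+h(b)\,\gamma_2\Big[{_TI_t^{1-\beta}}-{_aI_t^{1-\beta}}\Big]\!\left(\lambda\partial_3 L\right)\!(b)+\lambda(T)L[x,z]_\gamma^{\alpha,\beta}(T)\,\Delta T.
\end{align*}

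Finally I would invoke the fundamental lemma of the calculus of variations through appropriate specializations of $(h,\Delta T)$, exactly as at the end of the proof of Theorem~\ref{FP_teo1}: choosing $\Delta T=0$ and $h$ supported in $(a,T)$ yields \eqref{CEL1_Herg}; $h$ supported in $(T,b)$ yields \eqref{CEL2_Herg}; isolating the $h(T)$ and $h(b)$ boundary contributions yields the two transversality conditions in \eqref{CTransH}; and if $T<b$, freeing $\Delta T$ together with $\lambda(T)>0$ gives $L[x,z]_\gamma^{\alpha,\beta}(T)=0$. The genuinely new piece relative to Theorem~\ref{FP_teo1} is the introduction of $\lambda$ and the verification that all fractional operators can indeed be transferred onto $\lambda\,\partial_3 L$ — this is the main technical obstacle, and it is handled precisely by hypothesis~(2), which is tailored so that the fractional integration-by-parts formula of Theorem~\ref{thm:FIP} may be applied with $y(t):=\lambda(t)\partial_3 L[x,z]_\gamma^{\alpha,\beta}(t)$ in place of $y(t):=\partial_3 L[x]_\gamma^{\alpha,\beta}(t)$.
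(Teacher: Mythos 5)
Your proposal is correct, and its core is the same as the paper's: the linearized (variational) differential equation for $\theta$, the integrating factor $\lambda$, fractional integration by parts applied with $y(t)=\lambda(t)\partial_3 L[x,z]_\gamma^{\alpha, \beta}(t)$ (exactly what hypothesis~(2) is for), splitting $\int_a^T=\int_a^b-\int_T^b$ as in Theorem~\ref{FP_teo1}, and then the fundamental lemma with suitable choices of variations to read off \eqref{CEL1_Herg}, \eqref{CEL2_Herg} and \eqref{CTransH}. Where you genuinely diverge is the treatment of the free terminal time: the paper keeps $T$ fixed in the variation, argues from the two-sidedness of $\e$ that $\theta(T)=0$, and so works from the identity $\int_a^T A(t)\lambda(t)\,dt=0$ with no $\Delta T$ term; the final claim that $T<b$ implies $L[x,z]_\gamma^{\alpha, \beta}(T)=0$ is then immediate (though not spelled out in the printed proof) from $\dot z(T)=0$ at an interior extremum of $t\mapsto z(t)$, since $\dot z(T)=L[x,z]_\gamma^{\alpha, \beta}(T)$ by \eqref{funct1H}. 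You instead carry $\Delta T$ through the computation in the style of Theorem~\ref{FP_teo1}, replacing $\theta(T)=0$ by $\theta(T)=-L[x,z]_\gamma^{\alpha, \beta}(T)\Delta T$, and recover the endpoint condition as the coefficient of $\Delta T$, correctly restricting it to $T<b$ because at $T=b$ only one-sided increments of $T$ are admissible. Both routes are valid: yours makes the condition $L[x,z]_\gamma^{\alpha, \beta}(T)=0$ an explicit output of the variational identity (arguably filling a gap the paper leaves implicit), while the paper's choice of fixing $T$ keeps the master identity slightly simpler.
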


\begin{proof}
Let $x$ be a solution of the problem. Consider an admissible variation of $x$, $\overline {x}= x+\e{h}$, where $h\in C^1([a,b];\bR)$ is an arbitrary perturbation curve and $\e \in\bR$ represents a small number $\left(\vert\epsilon\vert\ll1 \right)$. The constraint  $x(a)=x_a$ implies that all admissible variations must fulfill the condition $h(a)=0$.
On the other hand, consider an admissible variation of $z$, $\overline {z}= z+\e\theta$, where $\theta$ is a perturbation curve (not arbitrary) such that
\begin{enumerate}
\item $\theta(a)=0$, so that $z(a)=z_{a}$,
\item $\theta (T)=0$ because $z(T)$ is a maximum (or a minimum),\\
i.e., $\overline{z}(T)-z(T)\leq 0 \quad \left( \overline{z}(T)-z(T)\geq0 \right)$,
\item $\theta (t)= \dfrac {d}{d \varepsilon} z(\overline {x},t) \biggr\rvert_{\varepsilon=0}$, so that the variation satisfies the differential equation \eqref{funct1H}.
\end{enumerate}
Differentiating $\theta$ with respect to $t$, we obtain that
\begin{equation*}
\begin{split}
\dfrac{d}{dt}\theta (t)& =\dfrac{d}{dt} \dfrac{d}{d\varepsilon} z(\overline {x},t) \biggr\rvert_{\varepsilon=0}\\
&= \dfrac{d}{d\varepsilon}\dfrac{d}{dt} z(\overline {x},t) \biggr\rvert_{\varepsilon=0}\\
&= \dfrac{d}{d\varepsilon} L\left(t,x(t)+\e{h(t)}, \DC x(t)+\e\DC {h(t)}, \overline z(\overline {x},t) \right) \biggr\rvert_{\varepsilon=0},
\end{split}
\end{equation*}
and rewriting this relation, we obtain the following differential equation for $\theta$:
\begin{equation*}
\dot{\theta}(t) - \partial_{4}L[x,z]_\gamma^{\alpha, \beta}(t) \theta(t) =\partial_{2} L[x,z]_\gamma^{\alpha, \beta}(t) h(t) + \partial_{3}L[x,z]_\gamma^{\alpha, \beta}(t) \DC h(t).
\end{equation*}
Considering $\lambda(t)=\exp \left(- \displaystyle \int_a^{t} \partial_{4}L[x,z]_\gamma^{\alpha, \beta}  (\tau)d\tau \right)$, we obtain the solution for the last differential equation
\begin{multline*}
\theta(T)\lambda(T)-\theta(a)\\ 
= \int_a^{T} \left(\partial_{2} L[x,z]_\gamma^{\alpha, \beta}(t) h(t) 
+ \partial_{3}L[x,z]_\gamma^{\alpha, \beta}(t) \DC h(t) \right) \lambda(t) dt.
\end{multline*}
By hypothesis, $\theta(a)=0$. If $x$ is such that $z(x,t)$ defined 
by \eqref{funct1H} attains an extremum at $t=T$, 
then $\theta(T)$ is identically zero. Hence, we get
\begin{equation}
\label{solutionPH}
\int_a^{T} \left(\partial_{2} L[x,z]_\gamma^{\alpha, \beta}(t)h(t) 
+ \partial_{3}L[x,z]_\gamma^{\alpha, \beta}(t) \DC h(t) \right) \lambda(t) dt = 0.
\end{equation}
Considering only the second term in Eq. \eqref{solutionPH} and the definition 
of combined Caputo derivative operator, we obtain that
\begin{equation*}
\begin{split}
&\int_a^{T} \lambda(t) \partial_{3} L[x,z]_\gamma^{\alpha, \beta}(t) 
\left( \gamma_{1} \LC h(t) + \gamma_{2} \RCb h(t) \right)  dt\\
&=\gamma_{1} \int_a^{T} \lambda(t) \partial_{3} L[x,z]_\gamma^{\alpha, \beta}(t) \LC h(t)dt \\
&\quad+\gamma_{2}  \Bigg[ \int_a^{b} \lambda(t) \partial_{3} L[x,z]_\gamma^{\alpha, \beta}(t) \RCb h(t)dt\\
&\quad\quad \quad \quad \quad  
- \int_T^{b} \lambda(t) \partial_{3} L[x,z]_\gamma^{\alpha, \beta}(t) \RCb h(t)dt \Bigg]=\star.
\end{split}
\end{equation*}
Using the fractional integration by parts formula and considering 
$\overline {\gamma} =(\gamma_{_{2}}, \gamma_{1})$, we get
\begin{equation*}
\begin{split}
&\star=\int_a^{T} h(t) D_{\overline{\gamma}}^{\b,\a} \left( \lambda(t) 
\partial_{3} L[x,z]_\gamma^{\alpha, \beta}(t) \right) dt \\
& + \int_T^{b} \gamma_{2} h(t) \left[ \LDb  \left( \lambda(t) \partial_{3} 
L[x,z]_\gamma^{\alpha, \beta}(t) \right)  - _TD_t^{\b} \left( \lambda(t) \partial_{3} 
L[x,z]_\gamma^{\alpha, \beta}(t) \right) \right] dt\\
& + h(T) \left[ \gamma_{1} {}_tI_{T}^{1-\a} \left( \lambda(t) \partial_{3} 
L[x,z]_\gamma^{\alpha, \beta}(t) \right) - \gamma_{2} {}_TI_{t}^{1-\b} 
\left( \lambda(t) \partial_{3} L[x,z]_\gamma^{\alpha, \beta}(t) \right) \right]_{t=T}\\
& + h(b) \gamma_{2} \left[ _TI_{t}^{1-\b} \left( \lambda(t) \partial_{3} 
L[x,z]_\gamma^{\alpha, \beta}(t) \right) - _aI_{t}^{1-\b} \left( \lambda(t) \partial_{3} 
L[x,z]_\gamma^{\alpha, \beta}(t) \right) \right]_{t=b}.
\end{split}
\end{equation*}
Substituting this relation into expression \eqref{solutionPH}, we obtain
\begin{equation*}
\begin{split}
&0= \int_a^{T} h(t) \left[ \partial_{2} L[x,z]_\gamma^{\alpha, \beta}(t) \lambda(t) 
+  D_{\overline{\gamma}}^{\b,\a} \left( \lambda(t) \partial_{3} 
L[x,z]_\gamma^{\alpha, \beta}(t) \right)\right] dt \\
&+ \int_T^{b} \gamma_{2} h(t) \left[ \LDb  \left( \lambda(t) \partial_{3} 
L[x,z]_\gamma^{\alpha, \beta}(t) \right)  - _TD_t^{\b} \left( \lambda(t) 
\partial_{3} L[x,z]_\gamma^{\alpha, \beta}(t) \right) \right] dt\\
&+ h(T) \left[ \gamma_{1} {}_tI_{T}^{1-\a} \left( \lambda(t) \partial_{3} 
L[x,z]_\gamma^{\alpha, \beta}(t) \right) - \gamma_{2} {}_TI_{t}^{1-\b} 
\left( \lambda(t) \partial_{3} L[x,z]_\gamma^{\alpha, \beta}(t) \right) \right]_{t=T}\\
& + h(b) \gamma_{2} \left[ _TI_{t}^{1-\b} \left( \lambda(t) \partial_{3} 
L[x,z]_\gamma^{\alpha, \beta}(t) \right) - _aI_{t}^{1-\b} \left( \lambda(t) 
\partial_{3} L[x,z]_\gamma^{\alpha, \beta}(t) \right) \right]_{t=b}.
\end{split}
\end{equation*}
With appropriate choices for the variations $h(\cdot)$, we get 
the Euler--Lagrange equations \eqref{CEL1_Herg}--\eqref{CEL2_Herg} 
and the transversality conditions \eqref{CTransH}.
\end{proof}

\begin{Remark}
If $\a$ and $\b$ tend to 1, and if the Lagrangian $L$ is of class $C^2$, 
then the first Euler--Lagrange equation \eqref{CEL1_Herg} becomes
$$
\partial_2 L[x,z]_\gamma^{\alpha, \beta}(t)\lambda(t)
+(\gamma_2- \gamma_1)\frac{d}{dt}\left[\lambda(t)
\partial_{3} L[x,z]_\gamma^{\alpha, \beta}(t)\right]=0.
$$
Differentiating and considering the derivative of the lambda function, we obtain
\begin{multline*}
\lambda(t) \Bigg[\partial_2 L[x,z]_\gamma^{\alpha, \beta}(t)\\
+(\gamma_2- \gamma_1)\left[-\partial_{4} L[x,z]_\gamma^{\alpha, \beta}(t)
\partial_{3} L[x,z]_\gamma^{\alpha, \beta}(t)+\frac{d}{dt}
\partial_{3} L[x,z]_\gamma^{\alpha, \beta}(t)\right]\Bigg]=0.
\end{multline*}
As $\lambda(t)>0$, for all t, we deduce that
$$
\partial_2 L[x,z]_\gamma^{\alpha, \beta}(t)+(\gamma_2- \gamma_1)\left[
\frac{d}{dt}\partial_{3} L[x,z]_\gamma^{\alpha, \beta}(t)
-\partial_{4} L[x,z]_\gamma^{\alpha, \beta}(t)\partial_{3} 
L[x,z]_\gamma^{\alpha, \beta}(t)\right]=0.
$$
\end{Remark}

% -------------------------------------------

\subsection{Several independent variables}
\label{subsec:several}

Consider the following generalization of the problem of Herglotz involving $n+1$ independent variables.
Define $\Omega=\prod_{i=1}^{n} [a_i,b_i]$, with $n \in \mathbb{N}$, 
$P=[a,b]\times \Omega$ and consider the vector $s=(s_1, s_2, \ldots, s_n)\in \Omega$. 
The new problem consists in determining the trajectories 
$x \in C^{1}\left(P\right)$ that give an extremum to $z[x,T]$, 
where the functional $z$ satisfies the differential equation
\begin{multline}
\label{funct_siv1}
\dot{z}(t)=\int_{\Omega}L\left(t,s, x(t,s), \DC x(t,s),\right.\\
\left.^CD_{\gamma^1}^{\alpha_1(\cdot,\cdot),\beta_1(\cdot,\cdot)}x(t,s),\ldots, ^CD_{\gamma^n}^{\alpha_n(\cdot,\cdot),\beta_n(\cdot,\cdot)}x(t,s), z(t) \right)d^{n}s
\end{multline}
subject to the constraint
\begin{equation}
\label{herg:bound}
x(t,s)=g(t,s),\quad \mbox{ for all } \quad (t,s) \in \partial P,
\end{equation}
where $\partial P$ is the boundary of $P$ and $g$ is a given function $g:\partial P \rightarrow \bR$. We assume that
\begin{enumerate}

\item $\alpha, \alpha_i, \beta, \beta_i: [a,b]^2 \rightarrow (0,1)$ with $i=1,\ldots, n$,

\item $\gamma,\gamma^1,\ldots, \gamma^n \in [0,1]^2$,

\item $d^{n}s=ds_1 \ldots ds_n$,

\item $\DC x(t,s)$, $^CD_{\gamma^1}^{\alpha_1(\cdot,\cdot),\beta_1(\cdot,\cdot)}x(t,s),
\ldots, ^CD_{\gamma^n}^{\alpha_n(\cdot,\cdot),\beta_n(\cdot,\cdot)}x(t,s)$ exist and are continuous functions,

\item the Lagrangian $L:P\times\bR^{n+3} \rightarrow \bR$ is of class $C^1$.
\end{enumerate}

\begin{Remark}
By $\DC x(t,s)$ we mean the Caputo fractional derivative with respect to the independent variable $t$, and by $^CD_{\gamma^i}^{\alpha_i(\cdot,\cdot),\beta_i(\cdot,\cdot)}x(t,s)$ we mean the Caputo fractional derivative with respect to the independent variable $s_i$, for $i=1,\ldots,n$.
\end{Remark}

In the sequel, we use the auxiliary notation $[x,z]_{n, \gamma}^{\alpha, \beta}(t,s)$ to represent the following vector
\begin{multline*}\Big(t,s,x(t,s), \DC x(t,s), ^CD_{\gamma^1}^{\alpha_1(\cdot,\cdot),
\beta_1(\cdot,\cdot)}x(t,s),\\ \ldots, ^CD_{\gamma^n}^{\alpha_n(\cdot,\cdot),
\beta_n(\cdot,\cdot)}x(t,s), z(t) \Big).\end{multline*}
Consider the function
$$\lambda(t)=\exp \left(-\int_a^t \int_{\Omega} \partial_{2n+4}\left[x,z\right]_{n,\gamma}^{\alpha, \beta} (\tau,s) d^ns d\tau \right).$$

\begin{Theorem}
If $(x,z,T)$ is an extremizer of the functional defined by Eq. \eqref{funct_siv1},
then $(x,z,T)$ satisfies the fractional differential equations
\begin{multline}
\label{CEL1_Herg2}
\partial_{n+2} L[x,z]_{n, \gamma}^{\alpha, \beta}(t,s)\lambda(t)
+D{_{\overline{\gamma}}^{\b,\a}}\left(\lambda(t)\partial_{n+3}
L[x,z]_{n, \gamma}^{\alpha, \beta}(t,s)\right)\\
+ \sum_{i=1}^{n} D_{\overline{\gamma}^{i}}^{\beta_i(\cdot,\cdot),
\alpha_i(\cdot,\cdot)}\left(\lambda(t) \partial_{n+3+i}
L[x,z]_{n, \gamma}^{\alpha, \beta}(t,s)\right)=0
\end{multline}
on $[a,T] \times \Omega$, and
\begin{equation}
\label{CEL2_Herg2}
\gamma_2\left({\LDb} \left(\lambda(t)\partial_{n+3}
L[x,z]_{n, \gamma}^{\alpha, \beta}(t,s)\right)
-{ _TD{_t^{\b}}\left(\lambda (t) \partial_{n+3}
L[x,z]_{n, \gamma}^{\alpha, \beta}(t,s)\right)}\right)=0
\end{equation}
on $[T,b]\times \Omega$.\\
Moreover, $(x,z)$ satisfies the transversality condition
\index{transversality conditions}
\begin{multline}
\label{CTransH2}
\Bigl[\gamma_1 {_tI_T^{1-\a}} \left(\lambda (t) \partial_{n+3}
L[x,z]_{n, \gamma}^{\alpha, \beta}(t,s)\right) \\
-\gamma_2 {_TI_t^{1-\b}} \left(\lambda(t)\partial_{n+3}
L[x,z]_{n, \gamma}^{\alpha, \beta}(t,s)\right)\Bigr]_{t=T}=0,
\quad s \in\Omega.
\end{multline}
If $T<b$, then $\displaystyle \int_{\Omega}
L[x,z]_{n, \gamma}^{\alpha, \beta}(T,s)d^{n}s=0$.
\end{Theorem}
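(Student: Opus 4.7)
The plan is to mirror the argument of Theorem~\ref{mainteo_Herg}, adapting it to the multi-variable setting. I would start by taking an admissible variation $\overline{x}(t,s) = x(t,s) + \varepsilon h(t,s)$, where $h \in C^1(P;\mathbb{R})$ is an arbitrary perturbation that vanishes on $\partial P$ (so that the constraint \eqref{herg:bound} is preserved). Correspondingly, consider $\overline{z}(t) = z(t) + \varepsilon \theta(t)$ with $\theta(a)=0$, $\theta(T)=0$ (since $z(T)$ is an extremum), and $\theta(t) = \frac{d}{d\varepsilon}z(\overline{x},t)\big|_{\varepsilon=0}$, so that \eqref{funct_siv1} is satisfied.

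Differentiating the defining relation $\dot{\overline z}(t)=\int_\Omega L[\overline x,\overline z]_{n,\gamma}^{\alpha,\beta}(t,s)\,d^ns$ with respect to $\varepsilon$ at $\varepsilon=0$, I obtain a linear first-order ODE for $\theta$:
\begin{equation*}
\dot{\theta}(t) - \theta(t) \int_{\Omega}\partial_{2n+4}L[x,z]_{n,\gamma}^{\alpha,\beta}(t,s)\,d^ns
= \int_{\Omega}\bigl(\partial_{n+2}L\, h + \partial_{n+3}L\,\DC h + \textstyle\sum_{i=1}^{n}\partial_{n+3+i}L\, ^CD_{\gamma^i}^{\alpha_i,\beta_i}h\bigr)\,d^ns,
\end{equation*}
where all arguments of $L$ are evaluated at $[x,z]_{n,\gamma}^{\alpha,\beta}(t,s)$. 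Using the integrating factor $\lambda(t)$ and the conditions $\theta(a)=\theta(T)=0$ gives
\begin{equation*}
\int_a^{T}\!\!\int_{\Omega}\lambda(t)\Bigl(\partial_{n+2}L\, h(t,s) + \partial_{n+3}L\,\DC h(t,s) + \sum_{i=1}^{n}\partial_{n+3+i}L\, ^CD_{\gamma^i}^{\alpha_i,\beta_i}h(t,s)\Bigr)\,d^ns\,dt = 0.
\end{equation*}

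Next I would apply the fractional integration by parts formula (Theorem~\ref{thm:FIP}) separately to each fractional derivative term. For the $t$-derivative $\DC h$, I split the integration as in the proof of Theorem~\ref{mainteo_Herg}, writing $\int_a^T = \gamma_1\int_a^T + \gamma_2(\int_a^b-\int_T^b)$, which produces the dual operator $D_{\overline{\gamma}}^{\beta,\alpha}$ on $[a,T]$, the term on $[T,b]$ giving \eqref{CEL2_Herg2}, plus the boundary evaluations at $t=T$ and $t=b$. For each $s_i$-derivative $^CD_{\gamma^i}^{\alpha_i,\beta_i}h$, integration by parts in $s_i$ over $[a_i,b_i]$ produces the dual operator $D_{\overline{\gamma}^i}^{\beta_i,\alpha_i}$ acting on $\lambda(t)\partial_{n+3+i}L$; the associated boundary terms at $s_i=a_i$ and $s_i=b_i$ vanish because $h\equiv 0$ on $\partial P$.

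After collecting all contributions, the vanishing condition becomes a sum of bulk integrals over $[a,T]\times\Omega$ and $[T,b]\times\Omega$ plus a boundary integral over $\{T\}\times\Omega$ (and one over $\{b\}\times\Omega$ that automatically vanishes since $h$ vanishes on $\partial P$, which includes $t=b$). Applying the fundamental lemma of the calculus of variations with $h$ supported successively in the interior of $(a,T)\times\Omega$, in the interior of $(T,b)\times\Omega$, and with traces on $\{T\}\times\Omega$ yields \eqref{CEL1_Herg2}, \eqref{CEL2_Herg2}, and \eqref{CTransH2} respectively. Finally, to obtain the condition $\int_\Omega L[x,z]_{n,\gamma}^{\alpha,\beta}(T,s)\,d^ns = 0$ when $T<b$, I would additionally allow the terminal time to vary by $\varepsilon\Delta T$, compute the corresponding first-order term $\lambda(T)\Delta T\int_\Omega L\,d^ns$, and use the arbitrariness of $\Delta T$ together with $\lambda(T)>0$. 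The main technical obstacle is bookkeeping: keeping track of the split of the time-fractional term at $t=T$ while simultaneously integrating by parts in $n$ spatial variables and verifying that all $s$-boundary contributions are killed by $h|_{\partial P}=0$; once this is organized carefully, the result follows by exactly the same reasoning as in the single-variable case.
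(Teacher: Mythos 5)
Your proposal follows essentially the same route as the paper's proof: the same variations $\overline{x}=x+\varepsilon h$ (with $h$ vanishing on $\partial P$) and $\overline{z}=z+\varepsilon\theta$, the same linear ODE for $\theta$ solved with the integrating factor $\lambda$, the same splitting of the time-fractional term combined with fractional integration by parts in $t$ and in each $s_i$ (with spatial boundary terms killed by $h|_{\partial P}=0$), and the same use of the fundamental lemma to extract \eqref{CEL1_Herg2}, \eqref{CEL2_Herg2} and \eqref{CTransH2}. Your treatment of the case $T<b$ via a free variation of the terminal time is consistent with the interpretation that an interior extremum of $z(T)$ forces $\dot z(T)=\int_\Omega L[x,z]_{n,\gamma}^{\alpha,\beta}(T,s)\,d^n s=0$, so no gap remains.
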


\begin{proof}
Let $x$ be a solution of the problem. Consider an admissible variation of $x$, $\overline {x}(t,s)= x(t,s)+\e{h(t,s)}$, where $h\in C^1(P)$ is an arbitrary perturbing curve and $\e \in\bR$ is such that $|\e|\ll1$. Consequently, $h(t,s)=0$ for all $(t,s)\in \partial P$ by the boundary condition \eqref{herg:bound}. \\
On the other hand, consider an admissible variation of $z$, $\overline {z}= z+\e\theta$, where $\theta$ is a perturbing curve such that
$\theta (a)=0$ and
$$\theta (t)= \dfrac {d}{d \varepsilon} z(\overline {x},t) \biggr\rvert_{\varepsilon=0}.$$
Differentiating $\theta(t)$ with respect to $t$, we obtain that
\begin{equation*}
\begin{split}
\dfrac{d}{dt}\theta (t)& =\dfrac{d}{dt} \dfrac{d}{d\varepsilon} z(\overline {x},t) \biggr\rvert_{\varepsilon=0}\\
&= \dfrac{d}{d\varepsilon}\dfrac{d}{dt} z\left(\overline {x},t\right) \biggr\rvert_{\varepsilon=0}\\
&= \dfrac{d}{d\varepsilon} \int_{\Omega}
L[\overline{x},z]_{n, \gamma}^{\alpha, \beta}(t,s) d^{n}s
\biggr\rvert_{\varepsilon=0}.
\end{split}
\end{equation*}
We conclude that
\begin{multline*}
\dot{\theta}(t)
=\int_{\Omega} \Biggl( \partial_{n+2}
L[x,z]_{n, \gamma}^{\alpha, \beta}(t,s) h(t,s)
+\partial_{n+3}
L[x,z]_{n, \gamma}^{\alpha, \beta}(t,s) \DC h(t,s)\\
+ \sum_{i=1}^{n}\partial_{n+3+i}
L[x,z]_{n, \gamma}^{\alpha, \beta}(t,s) ^CD_{\gamma^{i}}^{\alpha_i(\cdot,\cdot),
\beta_i(\cdot,\cdot)}h(t,s)+\partial_{2n+4}
L[x,z]_{n, \gamma}^{\alpha, \beta}(t,s) \theta(t)\Biggr) d^{n}s.
\end{multline*}
To simplify the notation, we define
$$
B(t)=\int_{\Omega}\partial_{2n+4}
L[x,z]_{n, \gamma}^{\alpha, \beta}(t,s)d^{n}s
$$
and
\begin{multline*}
A(t)
=\int_{\Omega}\Bigl( \partial_{n+2}
L[x,z]_{n, \gamma}^{\alpha, \beta}(t,s) h(t,s)
+\partial_{n+3} L[x,z]_{n, \gamma}^{\alpha, \beta}(t,s)
\DC h(t,s)\\
+\sum_{i=1}^{n}\partial_{n+3+i}
L[x,z]_{n, \gamma}^{\alpha, \beta}(t,s)
^CD_{\gamma^{i}}^{\alpha_i(\cdot,\cdot),
\beta_i(\cdot,\cdot)}h(t,s) \Bigr) d^{n}s.
\end{multline*}
Then, we obtain the linear differential equation
$$
\dot{\theta}(t)-B(t)\theta(t)=A(t),
$$
whose solution is
\begin{equation*}
\theta(T)\lambda(T) - \theta(a) = \int_a^{T} A(t) \lambda(t) dt.
\end{equation*}
Since $\theta(a)=\theta(T)=0$, we get
\begin{equation}
\label{solutionPH2}
\int_a^{T} A(t)\lambda(t)dt = 0.
\end{equation}
Considering only the second term in \eqref{solutionPH2}, we can write
\begin{equation*}
\begin{split}
\int_a^{T}
&\int_{\Omega}\lambda(t) \partial_{n+3}
L[x,z]_{n, \gamma}^{\alpha, \beta}(t,s)
\left( \gamma_{1} \LC h(t,s) + \gamma_{2} \RCb h(t,s) \right) d^{n}s dt\\
&=\gamma_{1} \int_a^{T} \int_{\Omega}\lambda(t) \partial_{n+3}
L[x,z]_{n, \gamma}^{\alpha, \beta}(t,s) \LC h(t,s)d^{n}s dt\\
&\quad +\gamma_{2}  \left[ \int_a^{b} \int_{\Omega}\lambda(t) \partial_{n+3}
L[x,z]_{n, \gamma}^{\alpha, \beta}(t,s) \RCb h(t,s) d^{n}s dt\right.\\
& \qquad\qquad - \left.\int_T^{b} \int_{\Omega}\lambda(t)
\partial_{n+3} L[x,z]_{n, \gamma}^{\alpha, \beta}(t,s) \RCb h(t,s) d^{n}sdt \right].
\end{split}
\end{equation*}
Let $\overline {\gamma} =(\gamma_{_{2}}, \gamma_{1})$. 
Integrating by parts (cf. Theorem~\ref{thm:FIP}), 
and since $h(a,s)=h(b,s)=0$ for all $s \in \Omega$, we obtain the following expression:
\begin{equation*}
\begin{split}
&\int_a^{T} \int_{\Omega} h(t,s) D_{ \overline{\gamma}}^{\b,\a}
\left( \lambda(t) \partial_{n+3}
L[x,z]_{n, \gamma}^{\alpha, \beta}(t,s) \right) d^{n}sdt \\
&+ \gamma_{2}\int_T^{b} \int_{\Omega} h(t,s) \left[
\LDb  \left( \lambda(t) \partial_{n+3}
L[x,z]_{n, \gamma}^{\alpha, \beta}(t,s) \right)\right.\\
&\qquad\qquad\qquad\qquad\qquad
-\left. _TD_t^{\b} \left( \lambda(t) \partial_{n+3}
L[x,z]_{n, \gamma}^{\alpha, \beta}(t,s) \right) \right] d^{n}sdt\\
& + \int_{\Omega} h(T,s) \left[ \gamma_{1} {}_tI_{T}^{1-\a} \left( \lambda(t)
\partial_{n+3} L[x,z]_{n, \gamma}^{\alpha, \beta}(t,s) \right)\right.\\
&\qquad\qquad\qquad\qquad  \left. - \gamma_{2} \, _TI_{t}^{1-\b}
\left( \lambda(t) \partial_{n+3}
L[x,z]_{n, \gamma}^{\alpha, \beta}(t,s) \right) d^n s\right]_{t=T}.
\end{split}
\end{equation*}
Doing similarly for the $(i+2)$th term in \eqref{solutionPH2}, with $i=1,\ldots, n$, 
letting $\overline {\gamma}^i =(\gamma_{_{2}}^i, \gamma_{1}^{i})$, 
and since $h(t,a_i)=h(t,b_i)=0$ for all $t \in [a,b]$, we obtain
\begin{multline*}
\int_a^{T} \int_{\Omega}\lambda(t) \partial_{n+3+i}
L[x,z]_{n, \gamma}^{\alpha, \beta}(t,s) \left( \gamma_{1}^{i}
{_{a_i}^CD_{s_i}^{\alpha_{i}(\cdot,\cdot)} }h(t,s)
+ \gamma_{2}^{i} {_{s_i}^CD_{b_i}^{\beta_{i}(\cdot,\cdot)}} h(t,s)\right) d^{n}s dt\\
=\int_a^{T} \int_{\Omega} h(t,s) D_{\overline {\gamma}^i}^{~\beta_{i}(\cdot,\cdot),
\alpha_{i}(\cdot,\cdot)} \left(\lambda(t)\partial_{n+3+i}
L[x,z]_{n, \gamma}^{\alpha, \beta}(t,s) \right) d^{n}s dt.
\end{multline*}
Substituting these relations into \eqref{solutionPH2}, we deduce that
\begin{equation*}
\begin{split}
\int_a^{T} &\int_{\Omega} h(t,s)\left[ \partial_{n+2}
L[x,z]_{n, \gamma}^{\alpha, \beta}(t,s)\lambda(t)
+ D_{ \overline{\gamma}}^{\b,\a}
\left( \lambda(t) \partial_{n+3}
L[x,z]_{n, \gamma}^{\alpha, \beta}(t,s) \right) \right.\\
&+\sum_{i=1}^{n} D_{\overline {\gamma}^i}^{~\beta_{i}(\cdot,\cdot),
\alpha_{i}(\cdot,\cdot)} \left(\lambda(t)
\partial_{n+3+i} L[x,z]_{n, \gamma}^{\alpha, \beta}(t,s) \right) d^{n}sdt \\
&+ \gamma_{2}\int_T^{b} \int_{\Omega} h(t,s) \left[
\LDb\left( \lambda(t) \partial_{n+3}
L[x,z]_{n, \gamma}^{\alpha, \beta}(t,s) \right)\right.
\end{split}
\end{equation*}
\begin{equation*}
\begin{split}
& \qquad\qquad\qquad\qquad\quad \left.
- _TD_t^{\b} \left( \lambda(t) \partial_{n+3}
L[x,z]_{n, \gamma}^{\alpha, \beta}(t,s) \right) \right] d^{n}sdt\\
&+ \int_{\Omega} h(T,s) \left[ \gamma_{1} {}_tI_{T}^{1-\a}
\left( \lambda(t) \partial_{n+3}
L[x,z]_{n, \gamma}^{\alpha, \beta}(t,s) \right)\right.\\
&\qquad\qquad\qquad\qquad\left.
-\gamma_{2} \, _TI_{t}^{1-\b} \left( \lambda(t)
\partial_{n+3} L[x,z]_{n, \gamma}^{\alpha, \beta}(t,s)
\right)d^ns \right]_{t=T}.
\end{split}
\end{equation*}
For appropriate choices with respect to $h$, we get the 
Euler--Lagrange equations \eqref{CEL1_Herg2}--\eqref{CEL2_Herg2} 
and the transversality condition \eqref{CTransH2}.
\end{proof}

% -------------------------------------------------

\subsection{Examples}
\label{subsec:exeHerg}
We present three examples, with and without the dependence on $z$.

\begin{example}
\label{exH:1}
Consider
\begin{equation}
\label{exemp_Her1}
\begin{gathered}
\dot{z}(t)=\left(\DC x(t)\right)^2+z(t)+t^2-1, \quad t\in [0,3],\\
x(0)=1, \quad z(0)=0.
\end{gathered}
\end{equation}
In this case, $\lambda(t)=\exp(-t)$.
The necessary optimality conditions \eqref{CEL1_Herg}--\eqref{CEL2_Herg}
of Theorem~\ref{mainteo_Herg} hold for $\overline{x}(t) \equiv 1$.
If we replace $x$ by $\overline x$ in \eqref{exemp_Her1}, we obtain
\begin{gather*}
\dot{z}(t)-z(t)=t^2-1, \quad t\in [0,3],\\
z(0)=0,
\end{gather*}
whose solution is
\begin{equation}
\label{z:sol}
z(t)=\exp(t)-(t+1)^2.
\end{equation}
The last transversality condition of Theorem \ref{mainteo_Herg} asserts that
$$
L[\overline x,z]_\gamma^{\alpha, \beta}(T)=0 \Leftrightarrow \exp(T)-2T-2=0,
$$
whose solution is approximately
$$
T\approx 1.67835.
$$
We remark that $z$ \eqref{z:sol} actually attains a minimum value at this point
(see Figure~\ref{heg1},~(a)):
$$
z(1.67835)\approx -1.81685.
$$
\end{example}

\begin{example}
\label{exH:2}
Consider now
\begin{equation}
\label{exemp_Her2}
\begin{gathered}
\dot{z}(t)=(t-1)\left(x^2(t)+z^2(t)+1\right), \quad t\in [0,3],\\
x(0)=0, \quad z(0)=0.
\end{gathered}
\end{equation}
Since the first Euler--Lagrange equation \eqref{CEL1_Herg} reads
$$
(t-1)x(t)=0, \quad \forall t\in[0,T],
$$
we see that $\overline x(t)\equiv 0$ is a solution of this equation. 
The second transversality condition of \eqref{CTransH} asserts that, at $t=T$, we must have
$$
L[\overline x,z]_\gamma^{\alpha, \beta}(t)=0,
$$
that is,
$$
(t-1)(z^2(t)+1)=0,
$$
and so $T=1$ is a solution for this equation.
Substituting $x$ by $\overline x$ in  \eqref{exemp_Her2}, we get
\begin{gather*}
\dot{z}(t)=(t-1)(z^2(t)+1), \quad t\in [0,3],\\
z(0)=0.
\end{gather*}
The solution to this Cauchy problem is the function
$$
z(t)=\tan\left(\frac{t^2}{2}-t\right),
$$
(see Figure~\ref{heg1},~(b)) and the minimum value is
$$
z(1)=\tan\left(-\frac{1}{2}\right).
$$
\end{example}

\begin{example}
\label{exH:3}
For our last example, consider
\begin{equation}
\label{exemp_Her3}
\begin{gathered}
\dot{z}(t)=\left(\DC x(t) - f(t)\right)^2+t^2-1, \quad t\in [0,3],\\
x(0)=0, \quad z(0)=0,
\end{gathered}
\end{equation}
where
$$
f(t) := \frac{t^{1-\alpha(t)}}{2\Gamma(2-\alpha(t))}
-\frac{(3-t)^{1-\beta(t)}}{2\Gamma(2-\beta(t))}.
$$
In this case, $\lambda(t)\equiv 1$. We intend to find a pair $(x,z)$,
satisfying all the conditions in \eqref{exemp_Her3}, for which $z(T)$
attains a minimum value.
It is easy to verify that $\overline x (t)=t$ and $T=1$ satisfy
the necessary conditions given by Theorem \ref{mainteo_Herg}.
Replacing $x$ by $\overline x$ in \eqref{exemp_Her3},
we get a Cauchy problem of form
\begin{gather*}
\dot{z}(t)=t^2-1, \quad t\in [0,3],\\
z(0)=0,
\end{gather*}
whose solution is
$$
z(t)=\frac{t^3}{3}-t.
$$
Observe that this function attains a minimum value at $T=1$, which is $z(1)=-2/3$ (Figure~\ref{heg1},~(c)).
\end{example}

\begin{figure}[ht!]
\begin{center}
\subfigure[Extremal $z$ of Example~\ref{exH:1}.]{\includegraphics[scale=0.25]{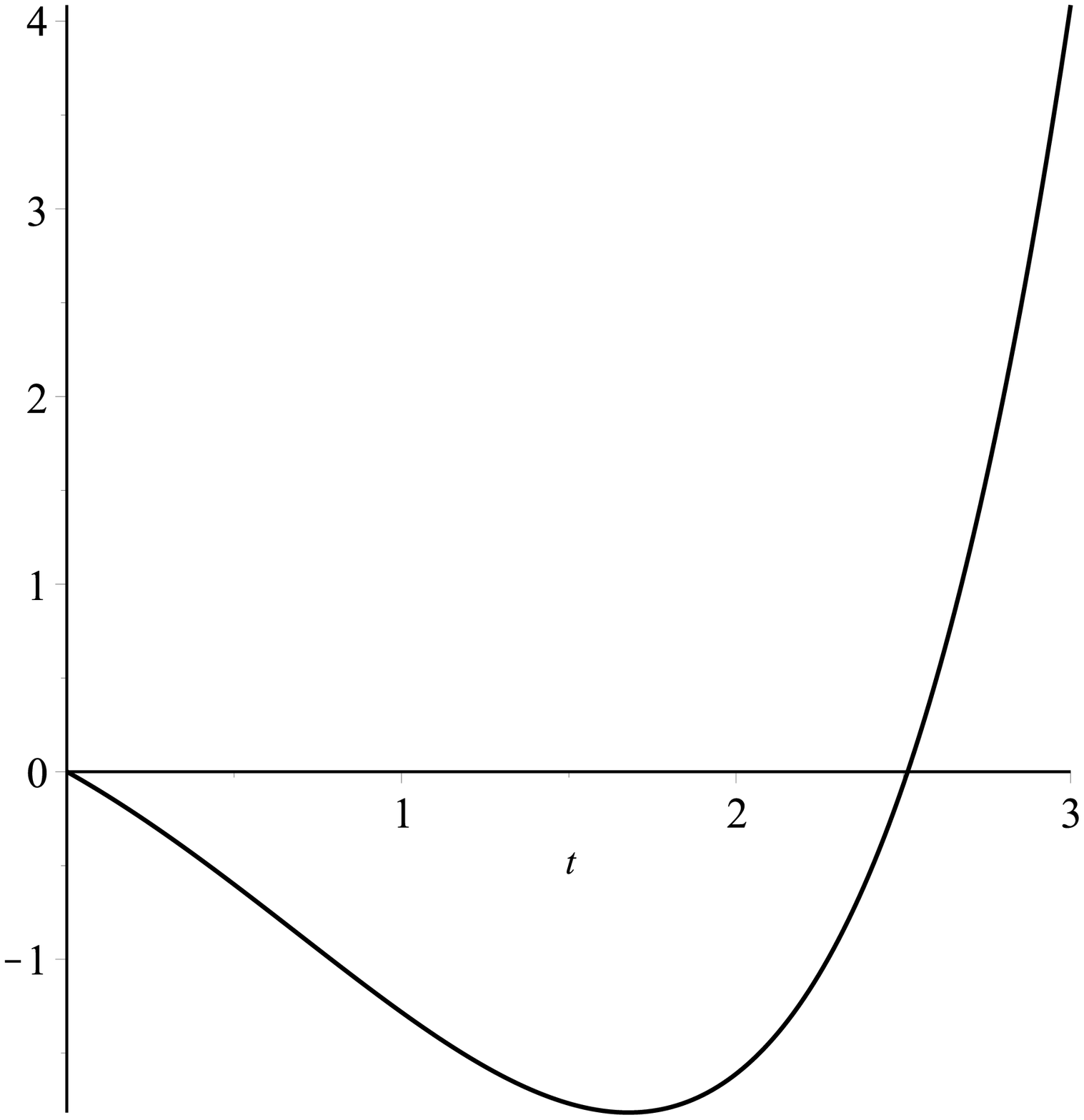}}
\subfigure[Extremal $z$ of Example~\ref{exH:2}.]{\includegraphics[scale=0.25]{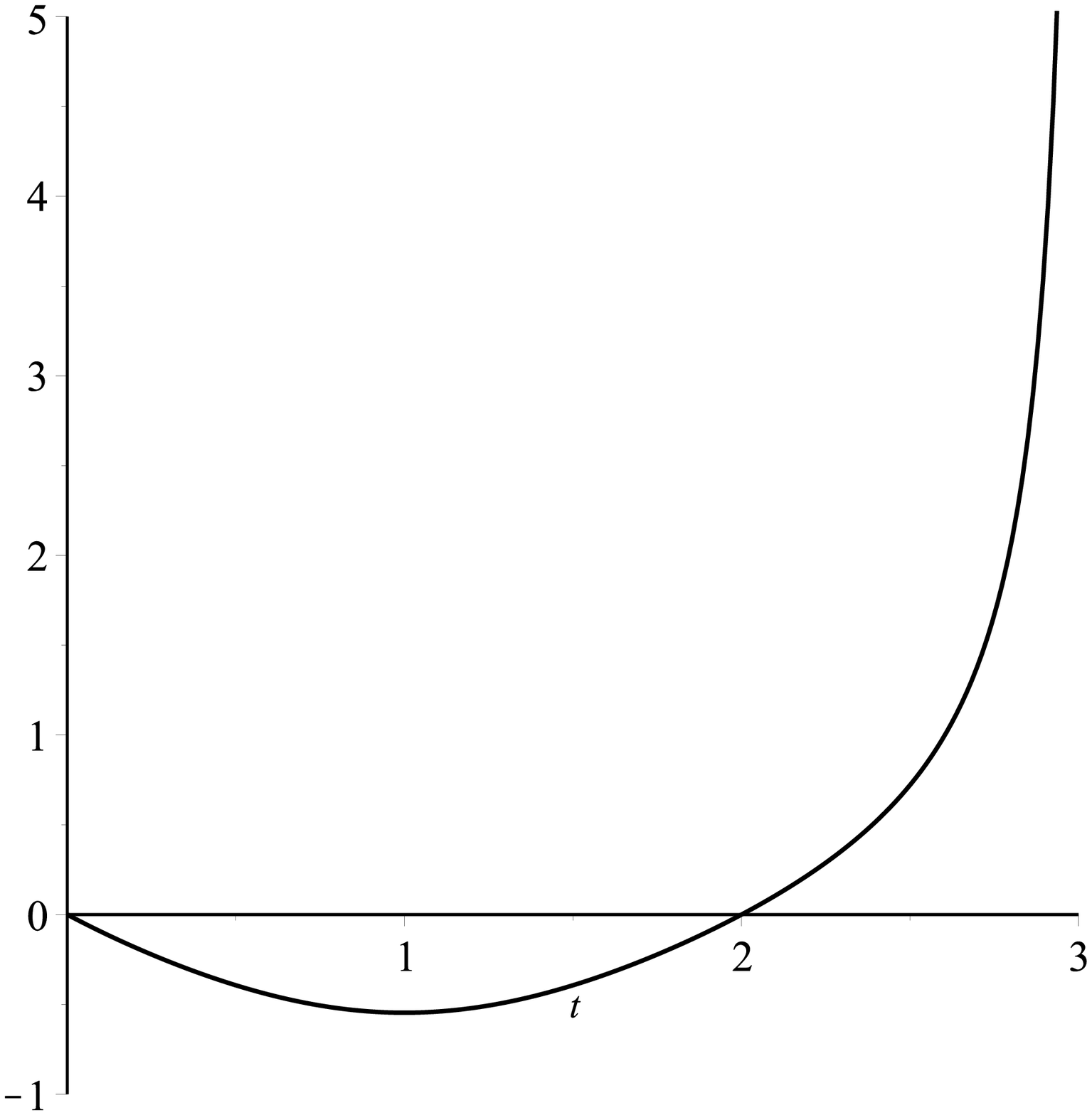}}
\subfigure[Extremal $z$ of Example~\ref{exH:3}.]{\includegraphics[scale=0.25]{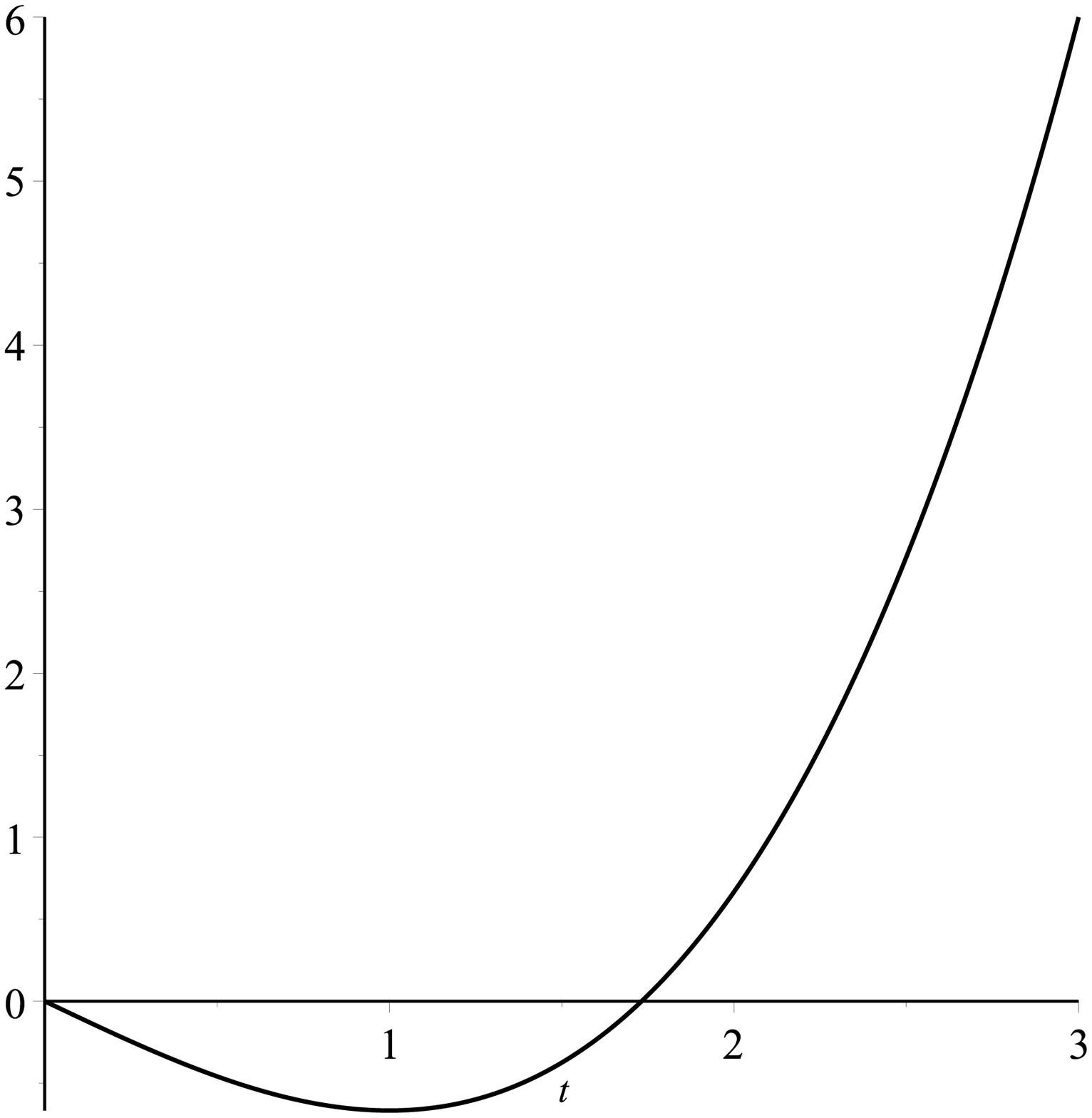}}
\caption{Graphics of function $z(\overline x,t)$.}\label{heg1}
\end{center}
\end{figure}

\begingroup
\renewcommand{\addcontentsline}[3]{}

\endgroup

\CP
%================================================================
\chapter*{Appendix}
\markboth{\MakeUppercase{Appendix}}{}
\addcontentsline{toc}{part}{Appendix}

In this appendix, we use a specific \textsf{MATLAB} software, the package
\textsf{Chebfun}, to obtain a few computational approximations for the main
fractional operators in this book.

\textsf{Chebfun} is an open source software package that ``aims
to provide numerical computing with functions'' in \textsf{MATLAB}
\cite{App:Matlab}.
\textsf{Chebfun} overloads \textsf{MATLAB}'s discrete operations for
matrices to analogous continuous operations for functions and operators 
\cite{App:chebfun:book}. For the mathematical underpinnings of \textsf{Chebfun},
we refer the reader to \cite{App:chebfun:book}. For the algorithmic 
backstory of \textsf{Chebfun}, we refer to \cite{App:ChebfunGuide}.

In what follows, we study some computational approximations
of Riemann--Liouville fractional integrals, of Caputo fractional derivatives
and consequently of the combined Caputo fractional derivative, all of them with
variable-order. We provide, also, the necessary \textsf{Chebfun} code for the
variable-order fractional calculus.

To implement these operators, we need two auxiliary functions: the gamma
function $\Gamma$ (Definition \ref{Gammaf}) \index{gamma function}and
the beta function $B$ (Definition \ref{Betaf})\index{beta function}.
Both functions  are available in \textsf{MATLAB} through the commands
\texttt{gamma(t)} and \texttt{beta(t,u)}, respectively.

%----------

\section*{A.1 Higher-order Riemann--Liouville fractional integrals}
\label{Chebfun:RL:int}

In this section, we discuss computational aspects to the higher-order
Riemann--Liouville fractional integrals of variable-order $\LIan x(t)$ and $\RIan x(t)$.

Considering the Definition~\ref{def:RL:fi} of higher-order Riemann--Liouville 
fractional integrals, we implemented in \textsf{Chebfun} two functions 
\texttt{leftFi(x,alpha,a)} and \texttt{rightFI(x,alpha,b)}
that approximate, respectively, the Riemann--Liouville fractional integrals
$\LIan x(t)$ and $\RIan x(t)$, through the following \textsf{Chebfun}/\textsf{MATLAB} code.
{\small
\begin{verbatim}
function r = leftFI(x,alpha,a)
g = @(t,tau) x(tau)./(gamma(alpha(t,tau)).*(t-tau).^(1-alpha(t,tau)));
r = @(t) sum(chebfun(@(tau) g(t,tau),[a t],'splitting','on'),[a t]);
end
\end{verbatim}}
\noindent and
{\small
\begin{verbatim}
function r = rightFI(x,alpha,b)
g = @(t,tau) x(tau)./(gamma(alpha(tau,t)).*(tau-t).^(1-alpha(tau,t)));
r = @(t) sum(chebfun(@(tau) g(t,tau),[t b],'splitting','on'),[t b]);
end
\end{verbatim}}

With these two functions, we illustrate their use in the following example, 
where we determine computacional approximations for Riemann--Liouville 
fractional integrals of a special power function.

\begin{example}
\label{ex:cheb02}
Let $\alpha(t,\tau) = \frac{t^2+\tau^2}{4}$ and $x(t) = t^2$ with $t \in [0,1]$.
In this case, $a = 0$, $b = 1$ and $n = 1$. We have
${_aI_{0.6}^{\alpha(\cdot,\cdot)}} x(0.6) \approx 0.2661$
and ${_{0.6}I_b^{\alpha(\cdot,\cdot)}} x(0.6) \approx 0.4619$, obtained
in \textsf{MATLAB} with our \textsf{Chebfun} functions as follows:
{\small \begin{verbatim}
a = 0; b = 1; n = 1;
alpha = @(t,tau) (t.^2+tau.^2)/4;
x = chebfun(@(t) t.^2, [0,1]);
LFI = leftFI(x,alpha,a);
RFI = rightFI(x,alpha,b);
LFI(0.6)
ans = 0.2661
RFI(0.6)
ans = 0.4619
\end{verbatim}}
\noindent Other values for ${_aI_{t}^{\alpha(\cdot,\cdot)}} x(t)$
and ${_{t}I_b^{\alpha(\cdot,\cdot)}} x(t)$
are plotted in Figure~\ref{fig:ex2}.
% -----------------------
\begin{figure}[htb]
\centering
\includegraphics[scale=0.7]{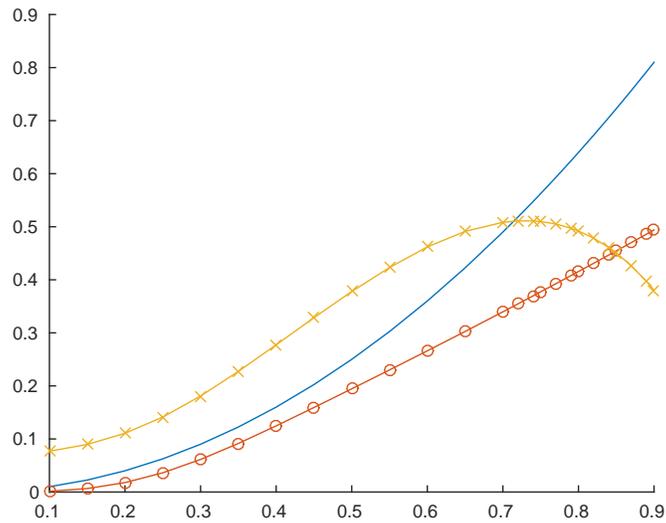}
\caption{Riemann--Liouville fractional integrals of Example~\ref{ex:cheb02}:
$x(t) = t^2$ in continuous line, left integral ${_aI_t^{\alpha(\cdot,\cdot)}} x(t)$
with ``$\circ -$'' style, and right integral ${_tI_b^{\alpha(\cdot,\cdot)}} x(t)$
with ``$\times -$'' style.}
\label{fig:ex2}
\end{figure}
% -----------------------
\end{example}

%--------------------------------

\section*{A.2 Higher-order Caputo fractional derivatives}
\label{Chebfun:Caputo:der}

In this section, considering the Definition~\ref{HOCFD}, 
we implement in \textsf{Chebfun} two new functions 
\texttt{leftCaputo(x,alpha,a,n)} and \texttt{rightCaputo(x,alpha,b,n)}
that approximate, respectively, the higher-order Caputo 
fractional derivatives of variable-order $\LCan x(t)$ and $\RCan x(t)$.

The following code implements the left operator \eqref{eq:left:Cap:der}:
{\small
\begin{verbatim}
function r = leftCaputo(x,alpha,a,n)
dx = diff(x,n);
g = @(t,tau) dx(tau)./(gamma(n-alpha(t,tau)).
                     *(t-tau).^(1+alpha(t,tau)-n));
r = @(t) sum(chebfun(@(tau) g(t,tau),[a t],'splitting','on'),[a t]);
end
\end{verbatim}}
\noindent Similarly, we define the right operator \eqref{eq:right:Cap:der} 
with \textsf{Chebfun} in \textsf{MATLAB} as follows:
{\small
\begin{verbatim}
function r = rightCaputo(x,alpha,b,n)
dx = diff(x,n);
g = @(t,tau) dx(tau)./(gamma(n-alpha(tau,t)). 
                     *(tau-t).^(1+alpha(tau,t)-n));
r = @(t)(-1).^n.* sum(chebfun(@(tau) g(t,tau),[t b],
                     'splitting','on'),[t b]);
end
\end{verbatim}}

We use the two functions \texttt{leftCaputo} and \texttt{rightCaputo} 
to determine apro\-ximations for the Caputo fractional derivatives 
of a power function of the form $x(t)=t^\gamma$.

\begin{example}
\label{ex:cheb01}
Let $\alpha(t,\tau) = \frac{t^2}{2}$ and $x(t) = t^4$ with $t \in [0,1]$.
In this case, $a = 0$, $b = 1$ and $n = 1$. We have
${^C_aD_{0.6}^{\alpha(\cdot,\cdot)}} x(0.6) \approx 0.1857$
and ${^C_{0.6}D_b^{\alpha(\cdot,\cdot)}} x(0.6) \approx -1.0385$, obtained
in \textsf{MATLAB} with our \textsf{Chebfun} functions as follows:
{\small \begin{verbatim}
a = 0; b = 1; n = 1;
alpha = @(t,tau) t.^2/2;
x = chebfun(@(t) t.^4, [a b]);
LC = leftCaputo(x,alpha,a,n);
RC = rightCaputo(x,alpha,b,n);
LC(0.6)
ans = 0.1857
RC(0.6)
ans = -1.0385
\end{verbatim}}
\noindent See Figure~\ref{fig:ex1} for a plot
with other values of ${^C_aD_{t}^{\alpha(\cdot,\cdot)}} x(t)$
and ${^C_{t}D_b^{\alpha(\cdot,\cdot)}} x(t)$.
% -----------------------
\begin{figure}[htb]
\centering
\includegraphics[scale=0.7]{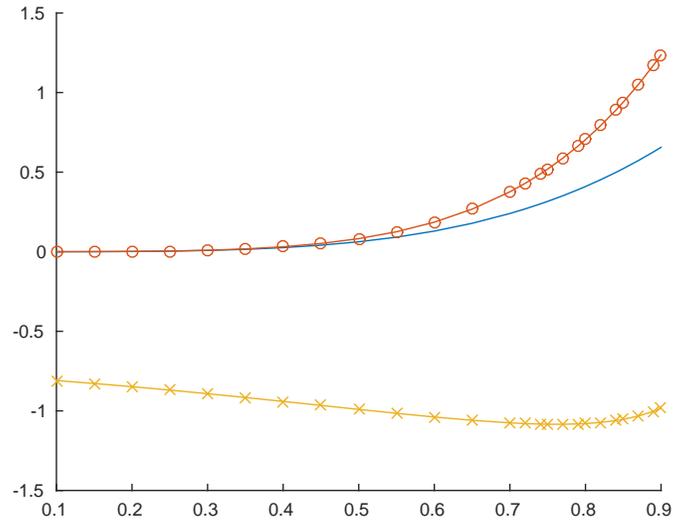}
\caption{Caputo fractional derivatives of Example~\ref{ex:cheb01}:
$x(t) = t^4$ in continuous line, left derivative ${^C_aD_t^{\alpha(\cdot,\cdot)}} x(t)$
with ``$\circ -$'' style, and right derivative ${^C_tD_b^{\alpha(\cdot,\cdot)}} x(t)$
with ``$\times -$'' style.}
\label{fig:ex1}
\end{figure}
% -----------------------
\end{example}

\begin{example}
\label{ex:cheb01b}
In Example~\ref{ex:cheb01}, we have used the polynomial $x(t) = t^4$.
It is worth mentioning that our \textsf{Chebfun} implementation
works well for functions that are not a polynomial. For example,
let $x(t) = e^t$. In this case, we just need to change
{\small \begin{verbatim}
x = chebfun(@(t) t.^4, [a b]);
\end{verbatim}}
\noindent in Example~\ref{ex:cheb01} by
{\small \begin{verbatim}
x = chebfun(@(t) exp(t), [a b]);
\end{verbatim}}
\noindent to obtain
{\small \begin{verbatim}
LC(0.6)
ans = 0.9917
RC(0.6)
ans = -1.1398
\end{verbatim}}
\noindent See Figure~\ref{fig:ex1b} for a plot
with other values of ${^C_aD_{t}^{\alpha(\cdot,\cdot)}} x(t)$
and ${^C_{t}D_b^{\alpha(\cdot,\cdot)}} x(t)$.
% -----------------------
\begin{figure}[htb]
\centering
\includegraphics[scale=0.7]{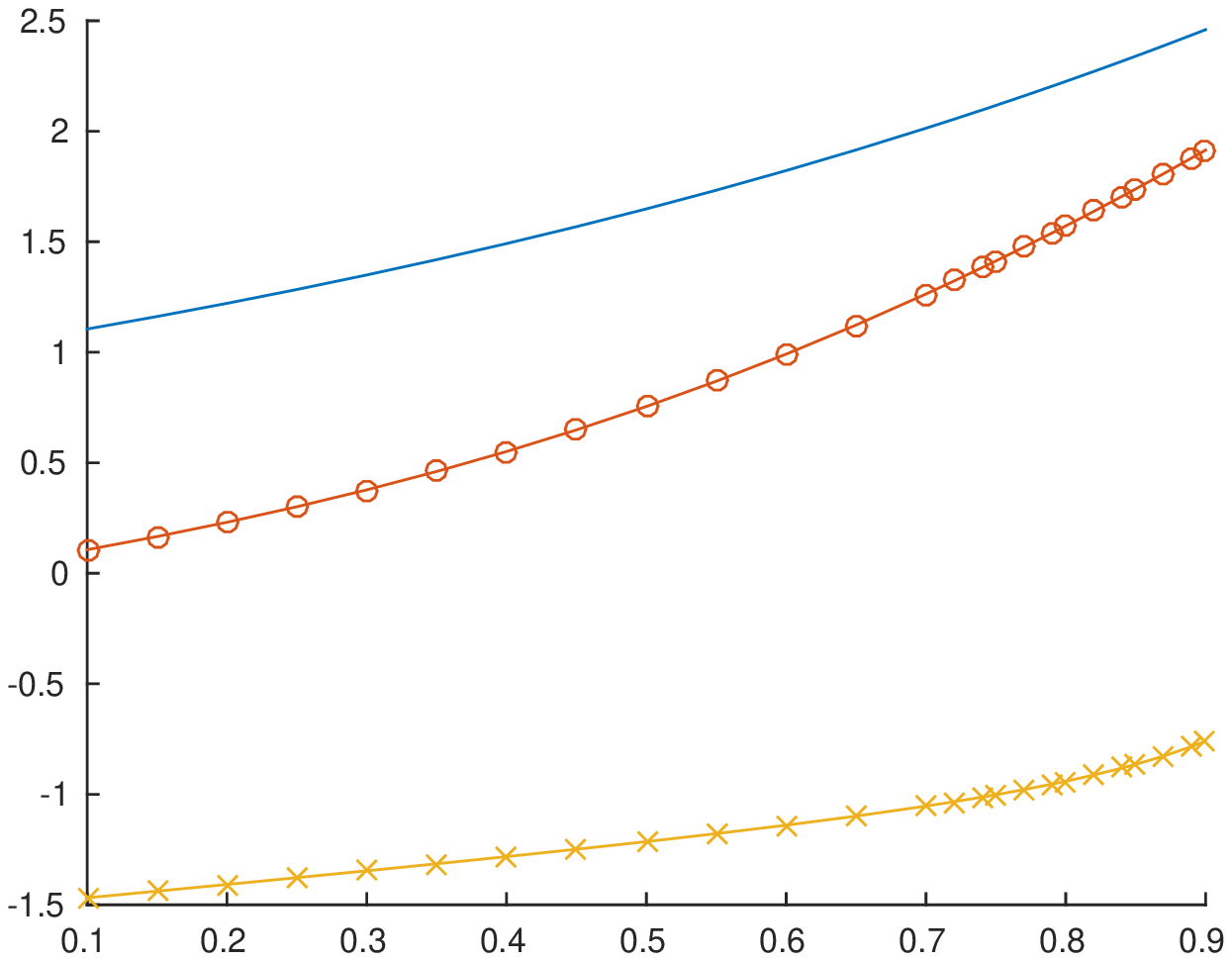}
\caption{Caputo fractional derivatives of Example~\ref{ex:cheb01b}:
$x(t) = e^t$ in continuous line, left derivative ${^C_aD_t^{\alpha(\cdot,\cdot)}} x(t)$
with ``$\circ -$'' style, and right derivative ${^C_tD_b^{\alpha(\cdot,\cdot)}} x(t)$
with ``$\times -$'' style.}
\label{fig:ex1b}
\end{figure}
% -----------------------
\end{example}

With Lemma~\ref{lemma:power} in Section~\ref{sec:introd} we can obtain, analytically, 
the higher-order left Caputo fractional derivative of a power function of the form $x(t)=(t-a)^\gamma$.
This allows us to show the effectiveness of our computational approach, that is,
the usefulness of polynomial interpolation in Chebyshev points in fractional 
calculus of variable-order. In Lemma~\ref{lemma:power}, we assume that the fractional 
order depends only on the first variable: $\alpha_n(t,\t) := \overline{\alpha}_n(t)$, where
$\overline{\alpha}_n:[a,b]\to (n-1,n)$ is a given function.

\begin{example}
\label{ex:lemma:power}
Let us revisit Example~\ref{ex:cheb01} by choosing
$\alpha(t,\tau) = \frac{t^2}{2}$ and $x(t) = t^4$ with $t \in [0,1]$.
Table~\ref{table01} shows the approximated values obtained
by our \textsf{Chebfun} function \texttt{leftCaputo(x,alpha,a,n)}
and the exact values computed with the formula given by Le\-mma~\ref{lemma:power}.
Table~\ref{table01} was obtained using the following \texttt{MATLAB} code:
{\small \begin{verbatim}
format long
a = 0; b = 1; n = 1;
alpha = @(t,tau) t.^2/2;
x = chebfun(@(t) t.^4, [a b]);
exact = @(t) (gamma(5)./gamma(5-alpha(t))).*t.^(4-alpha(t));
approximation = leftCaputo(x,alpha,a,n);
for i = 1:9
t = 0.1*i;
E = exact(t);
A = approximation(t);
error = E - A;
[t E A error]
end
\end{verbatim}}

% -----------------------
\begin{table}
\begin{center}
\begin{tabular}{|c|c|c|c|} \hline
$\mathbf{t}$ & \textbf{Exact Value} & \textbf{Approximation} & \textbf{Error} \\ \hline
0.1 & 1.019223177296953e-04 &  1.019223177296974e-04 & -2.046431600566390e-18 \\ \hline
0.2 & 0.001702793965464 &  0.001702793965464 & -2.168404344971009e-18 \\ \hline
0.3 & 0.009148530806348 &  0.009148530806348 &  3.469446951953614e-18 \\ \hline
0.4 & 0.031052290994593 &  0.031052290994592 &  9.089951014118469e-16 \\ \hline
0.5 & 0.082132144921157 &  0.082132144921157 &  6.522560269672795e-16 \\ \hline
0.6 & 0.185651036003120 &  0.185651036003112 &  7.938094626069869e-15 \\ \hline
0.7 & 0.376408251363662 &  0.376408251357416 &  6.246059225389899e-12 \\ \hline
0.8 & 0.704111480975332 &  0.704111480816562 &  1.587694420379648e-10 \\ \hline
0.9 & 1.236753486749357 &  1.236753486514274 &  2.350835082154390e-10 \\ \hline
\end{tabular}
\end{center}
\caption{Exact values obtained by Lemma~\ref{lemma:power}
for functions of Example~\ref{ex:lemma:power}
versus computational approximations obtained using
the \textsf{Chebfun} code.}
\label{table01}
\end{table}
% -----------------------
\end{example}

Computational experiments similar to those of Example~\ref{ex:lemma:power},
obtained by substituting Lemma~\ref{lemma:power} by Lemma~\ref{lemma:power:2}
and our \texttt{leftCaputo} routine by the \texttt{rightCaputo} one,
reinforce the validity of our computational methods. In this case, 
we assume that the fractional order depends only on the second variable:
$\alpha_n(\t,t) := \overline{\alpha}_n(t)$, 
where $\overline{\alpha}_n : [a,b] \to (n-1,n)$ is a given function.

%---------------------------------

\section*{A.3 Higher-order combined fractional Caputo derivative}
\label{Chebfun:combined}

The higher-order combined Caputo fractional derivative combines 
both left and right Caputo fractional derivatives, that is, 
we make use of functions \texttt{leftCaputo(x,alpha,a,n)} 
and \texttt{rightCaputo(x,alpha,b,n)} provided in Section~A.1 
to define \textsf{Chebfun} computational code for the higher-order 
combined fractional Caputo derivative of variable-order:
{\small
\begin{verbatim}
function r = combinedCaputo(x,alpha,beta,gamma1,gamma2,a,b,n)
lc = leftCaputo(x,alpha,a,n);
rc = rightCaputo(x,beta,b,n);
r = @(t) gamma1 .* lc(t) + gamma2 .* rc(t);
end
\end{verbatim}}

% -------------------------------------------

Then we illustrate the behavior of the combined Caputo fractional 
derivative of variable-order for different values 
of $t \in (0,1)$, using \textsf{MATLAB}.

\begin{example}
\label{ex:matlab:comb}
Let $\alpha(t,\tau) = \frac{t^2 + \tau^2}{4}$,
$\beta(t,\tau) = \frac{t + \tau}{3}$ and $x(t) = t$,
$t\in [0,1]$. We have $a = 0$, $b = 1$ and $n = 1$.
For $\gamma = (\gamma_1, \gamma_2) = (0.8, 0.2)$, we have
$^{C}D_{\gamma}^{\alpha(\cdot,\cdot),\beta(\cdot,\cdot)}x(0.4) \approx 0.7144$:
{\small \begin{verbatim}
a = 0; b = 1; n = 1;
alpha = @(t,tau) (t.^2 + tau.^2)/.4;
beta = @(t,tau) (t + tau)/3;
x = chebfun(@(t) t, [0 1]);
gamma1 = 0.8;
gamma2 = 0.2;
CC = combinedCaputo(x,alpha,beta,gamma1,gamma2,a,b,n);
CC(0.4)
ans = 0.7144
\end{verbatim}}
\noindent For other values of $^{C}D_{\gamma^n}^{\alpha(\cdot,\cdot),\beta(\cdot,\cdot)}x(t)$,
for different values of $t \in (0,1)$ and
$\gamma = (\gamma_1, \gamma_2)$,
see Figure~\ref{fig:ex3} and Table~\ref{table02}.
% -----------------------
\begin{figure}[ht!]
\centering
\includegraphics[scale=0.7]{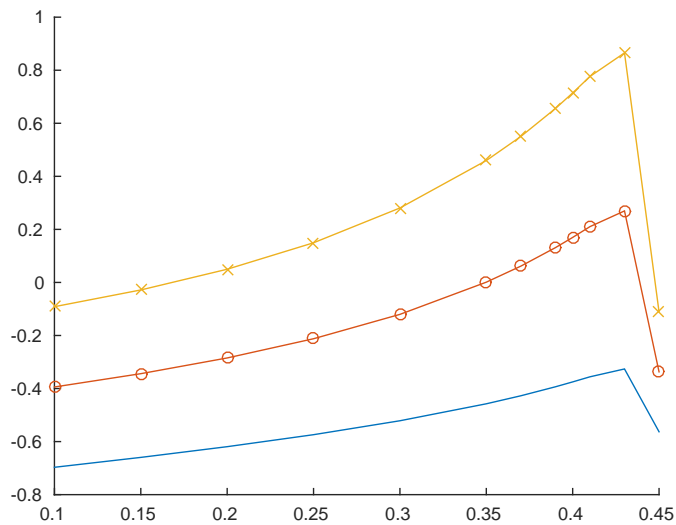}
\caption{Combined Caputo fractional derivative
$^{C}D_{\gamma}^{\alpha(\cdot,\cdot),\beta(\cdot,\cdot)}x(t)$
for $\alpha(\cdot,\cdot)$, $\beta(\cdot,\cdot)$ and $x(t)$ of Example~\ref{ex:matlab:comb}:
continuous line for $\gamma = (\gamma_1, \gamma_2) = (0.2, 0.8)$,
``$\circ -$'' style for $\gamma = (\gamma_1, \gamma_2) = (0.5, 0.5)$,
and ``$\times -$'' style for $\gamma = (\gamma_1, \gamma_2) = (0.8, 0.2)$.}
\label{fig:ex3}
\end{figure}
% -----------------------
\begin{table}[ht!]
\begin{center}

\small{ \begin{tabular}{|c|c|c|c|} \hline
$\mathbf{t}$ & \textbf{Case~1} & \textbf{Case~2} & \textbf{Case~3}\\ \hline
0.4500 &  -0.5630    & -0.3371     &     -0.1112 \\ \hline
0.5000 & -790.4972   & -1.9752e+03 & -3.1599e+03 \\ \hline
0.5500 & -3.5738e+06 & -8.9345e+06 & -1.4295e+07 \\ \hline
0.6000 & -2.0081e+10 & -5.0201e+10 & -8.0322e+10 \\ \hline
0.6500 &  2.8464e+14 &  7.1160e+14 & 1.1386e+15  \\ \hline
0.7000 &  4.8494e+19 &  1.2124e+20 & 1.9398e+20  \\ \hline
0.7500 &  3.8006e+24 &  9.5015e+24 & 1.5202e+25  \\ \hline
0.8000 & -1.3648e+30 & -3.4119e+30 & -5.4591e+30 \\ \hline
0.8500 & -1.6912e+36 & -4.2280e+36 & -6.7648e+36 \\ \hline
0.9000 &  5.5578e+41 &  1.3895e+42 &  2.2231e+42 \\ \hline
0.9500 &  1.5258e+49 &  3.8145e+49 &  6.1033e+49 \\ \hline
0.9900 &  1.8158e+54 &  4.5394e+54 &  7.2631e+54 \\ \hline
\end{tabular}
}
\end{center}
\caption{Combined Caputo fractional derivative
$^{C}D_{\gamma}^{\alpha(\cdot,\cdot),\beta(\cdot,\cdot)}x(t)$
for $\alpha(\cdot,\cdot)$, $\beta(\cdot,\cdot)$ and $x(t)$
of Example~\ref{ex:matlab:comb}.
Case~1: $\gamma = (\gamma_1, \gamma_2) = (0.2, 0.8)$;
Case~2: $\gamma = (\gamma_1, \gamma_2) = (0.5, 0.5)$;
Case~3: $\gamma = (\gamma_1, \gamma_2) = (0.8, 0.2)$.}
\label{table02}
\end{table}
% -----------------------

\end{example}

\begingroup
\renewcommand{\addcontentsline}[3]{}

\endgroup

\CP
%================================================================

\backmatter

\printindex
\addcontentsline{toc}{part}{Index}

% ----------------------------------------------------


\begin{thebibliography}{10}

\bibitem[{Agrawal(2002)}]{Pref:Agrawal2002}
Agrawal OP (2002)
Formulation of Euler--Lagrange equations for fractional variational problems.
J. Math. Anal. Appl. 272:368--379

\bibitem[{Almeida(2016)}]{Pref:Almeidadelay}
Almeida R (2016)
Fractional variational problems depending on indefinite integrals and with delay.
Bull. Malays. Math. Sci. Soc. 39(4):1515--1528
{\tt arXiv:1512.06752}

\bibitem[{Almeida, Pooseh and Torres(2015)}]{Pref:book2:FCV}
Almeida R, Pooseh S, Torres DFM (2015)
Computational Methods in the Fractional Calculus of Variations.
Imperial College Press, London

\bibitem[{Askari and Ansari(2016)}]{Pref:Askari}
Askari H, Ansari A (2016)
Fractional calculus of variations with a generalized fractional derivative.
Fract. Differ. Calc. 6:57--72

\bibitem[{Atanackovi\'c {\it et al.}(2013)}]{Pref:Atana:Janev}
Atanackovi\'c TM, Janev M, Pilipovi\'c S, Zorica D (2013)
An expansion formula for fractional derivatives of variable order.
Cent. Eur. J. Phys. 11(10):1350--1360

\bibitem[{Atanackovi\'c, Konjik and Pilipovi\'c(2008)}]{Pref:Atanackovic}
Atanackovi\'c TM, Konjik S, Pilipovi\'c S (2008)
Variational problems with fractional derivatives: Euler--Lagrange equations.
J. Phys. A 41(9):095201, 12~pp

\bibitem[{Atangana and Cloot(2013)}]{Pref:Atangana}
Atangana A, Cloot AH (2013)
Stability and convergence of the space fractional variable-order Schr{\"o}dinger equation.
Adv. Difference Equ. 80(1):10~pp

\bibitem[{Baleanu(2008)}]{Pref:Baleanu1}
Baleanu D (2008)
New applications of fractional variational principles.
Rep. Math. Phys. 61(2):199--206

\bibitem[{Chen, Liu and Burrage(2014)}]{Pref:Chen:Liu}
Chen S, Liu F, Burrage K (2014)
Numerical simulation of a new two--dimensional variable-order 
fractional percolation equation in non-homogeneous porous media.
Comput. Math. Appl. 67(9):1673--1681

\bibitem[{Coimbra, Soon and Kobayashi(2005)}]{Pref:Coimbra2}
Coimbra CFM, Soon CM, Kobayashi MH (2005)
The variable viscoelasticity operator.
Annalen der Physik 14(6):378--389

\bibitem[{Fraser(1992)}]{Pref:Fraser}
Fraser C (1992)
Isoperimetric problems in variatonal calculus of Euler and Lagrange.
Historia Mathematica 19:4--23

\bibitem[{Georgieva and Guenther(2002)}]{Pref:Georgieva}
Georgieva B, Guenther RB (2002)
First Noether-type theorem for the generalized variational principle of Herglotz.
Topol. Methods Nonlinear Anal. 20(2):261--273

\bibitem[{Herrmann(2013)}]{Pref:Herr}
Herrmann R (2013)
Folded potentials in cluster physics--a comparison of Yukawa and Coulomb potentials with Riesz fractional integrals.
J. Phys. A 46(40):405203, 12~pp

\bibitem[{Hilfer(2000)}]{Pref:Hilfer}
Hilfer R (2000)
Applications of fractional calculus in physics.
World Sci. Publishing, River Edge, NJ

\bibitem[{Ingman and Suzdalnitsky(2004)}]{Pref:Ingman}
Ingman D, Suzdalnitsky J (2004)
Control of damping oscillations by fractional differential operator with time--dependent order.
Comput. Meth. Appl. Mech. Eng. 193(52):5585--5595

\bibitem[{Jarad, Abdeljawad and Baleanu(2010)}]{Pref:Jaraddelay}
Jarad F, Abdeljawad T, Baleanu D (2010)
Fractional variational principles with delay within Caputo derivatives.
Rep. Math. Phys. 65(1):17--28

\bibitem[{Kilbas, Srivastava and Trujillo(2006)}]{Pref:Kilbas}
Kilbas AA, Srivastava HM, Trujillo JJ (2006)
Theory and Applications of Fractional Differential Equations.
Elsevier, Amsterdam

\bibitem[{Mainardi(2010)}]{Pref:Mainardi}
Mainardi F (2010)
Fractional Calculus and Waves in Linear Viscoelasticity.
Imp. Coll. Press, London

\bibitem[{Malinowska, Odzijewicz and Torres(2015)}]{Pref:book:MOT}
Malinowska AB, Odzijewicz T, Torres DFM (2015)
Advanced Methods in the Fractional Calculus of Variations.
Springer Briefs in Applied Sciences and Technology, Springer, Cham

\bibitem[{Malinowska and Torres(2010)}]{Pref:Malin:Tor2010}
Malinowska AB, Torres DFM (2010)
Fractional variational calculus in terms of a combined Caputo derivative.
Proceedings of FDA'10, The 4th IFAC Workshop on Fractional Differentiation 
and its Applications, Badajoz, Spain, October 18--20, 2010 
(Eds: I. Podlubny, B. M. Vinagre Jara, YQ. Chen, 
V. Feliu Batlle, I. Tejado Balsera):Article no. FDA10-084, 6~pp.
{\tt arXiv:1007.0743}

\bibitem[{Malinowska and Torres(2012)}]{Pref:book:FCV}
Malinowska AB, Torres DFM (2012)
Introduction to the Fractional Calculus of Variations.
Imp. Coll. Press, London

\bibitem[{Odzijewicz, Malinowska and Torres(2012a)}]{Pref:Od2012a}
Odzijewicz T, Malinowska AB, Torres DFM (2012a)
Fractional calculus of variations in terms of 
a generalized fractional integral with applications to physics.
Abstr. Appl. Anal. 2012:Art. ID 871912, 24~pp
{\tt arXiv:1203.1961}

\bibitem[{Odzijewicz, Malinowska and Torres(2012b)}]{Pref:Od2012c}
Odzijewicz T, Malinowska AB, Torres DFM (2012b)
Variable order fractional variational calculus for double integrals.
Proceedings of the 51st IEEE Conference on Decision and Control,
December 10--13, 2012, Maui, Hawaii:Art. no. 6426489, 6873--6878.
{\tt arXiv:1209.1345}

\bibitem[{Odzijewicz, Malinowska and Torres(2013a)}]{Pref:Od2013}
Odzijewicz T, Malinowska AB, Torres DFM (2013a)
Fractional variational calculus of variable order.
in Advances in harmonic analysis and operator theory:291--301,
Oper. Theory Adv. Appl., Birkh\"{a}user/Springer Basel AG, Basel
{\tt arXiv:1110.4141}

\bibitem[{Odzijewicz, Malinowska and Torres(2013b)}]{Pref:Od2013a}
Odzijewicz T, Malinowska AB, Torres DFM (2013b)
Noether's theorem for fractional variational problems of variable order.
Cent. Eur. J. Phys. 11(6):691--701
{\tt arXiv:1303.4075}

\bibitem[{Ostalczyk {\it et al.}(2015)}]{Pref:Ostal}
Ostalczyk PW, Duch P, Brzezi\'n ski DW, Sankowski D (2015)
Order functions selection in the variable-, fractional-order PID controller. 
Advances in Modelling and Control of Non--integer--Order Systems, 
Lecture Notes in Electrical Engineering 320:159--170.

\bibitem[{Podlubny(1999)}]{Pref:Podlubny}
Podlubny I (1999)
Fractional Differential Equations.
Academic Press, San Diego, CA

\bibitem[{Pooseh, Almeida and Torres(2012)}]{Pref:Pooseh2012}
Pooseh S, Almeida R, Torres DFM (2012)
Approximation of fractional integrals by means of derivatives.
Comput. Math. Appl. 64(10):3090--3100
{\tt arXiv:1201.5224}

\bibitem[{Pooseh, Almeida and Torres(2013)}]{Pref:Pooseh2013a}
Pooseh S, Almeida R, Torres DFM (2013)
Numerical approximations of fractional derivatives with applications.
Asian J. Control 15(3):698--712
{\tt arXiv:1208.2588}

\bibitem[{Ramirez and Coimbra(2011)}]{Pref:Ramirez}
Ramirez LES,  Coimbra CFM (2011)
On the variable order dynamics of the nonlinear wake caused by a sedimenting particle.
Phys. D 240(13):1111--1118

\bibitem[{Rapai\'c and Pisano(2014)}]{Pref:Rapaic}
Rapai\'c MR, Pisano A (2014)
Variable-order fractional operators for adaptive order and parameter estimation.
IEEE Trans. Automat. Control 59(3):798--803

\bibitem[{Riewe(1996)}]{Pref:Riewe96}
Riewe F (1996)
Nonconservative Lagrangian and Hamiltonian mechanics.
Phys. Rev. E (3) 53(2):1890--1899

\bibitem[{Riewe(1997)}]{Pref:Riewe97}
Riewe F (1997)
Mechanics with fractional derivatives.
Phys. Rev. E (3) 55(3):3581--3592

\bibitem[{Samko, Kilbas and Marichev(1993)}]{Pref:Samko:Kilbas}
Samko SG, Kilbas AA, Marichev OI (1993)
Fractional Integrals and Derivatives. translated from the 1987 Russian original,
Gordon and Breach, Yverdon

\bibitem[{Samko and Ross(1993)}]{Pref:Samko:Ross}
Samko SG, Ross B (1993)
Integration and differentiation to a variable fractional order.
Integral Transform. Spec. Funct. 1(4):277--300

\bibitem[{Tavares, Almeida and Torres(2015)}]{Pref:Tavares2015}
Tavares D, Almeida R, Torres DFM (2015)
Optimality conditions for fractional variational problems 
with dependence on a combined Caputo derivative of variable order.
Optimization 64(6):1381--1391
{\tt arXiv:1501.02082}

\bibitem[{Tavares, Almeida and Torres(2016)}]{Pref:Tavares2016}
Tavares D, Almeida R, Torres DFM (2016)
Caputo derivatives of fractional variable order: numerical approximations.
Commun. Nonlinear Sci. Numer. Simul. 35:69--87
{\tt arXiv:1511.02017}

\bibitem[{Tavares, Almeida and Torres(2017)}]{Pref:Tavares2017}
Tavares D, Almeida R, Torres DFM (2017)
Constrained fractional variational problems of variable order.
IEEE/CAA Journal Automatica Sinica 4(1):80--88
{\tt arXiv:1606.07512}

\bibitem[{Tavares, Almeida and Torres(2018a)}]{Pref:Tavares_4}
Tavares D, Almeida R, Torres DFM (2018a)
Fractional Herglotz variational problem of variable order.
Disc. Contin. Dyn. Syst. Ser. S 11(1):143--154.
{\tt arXiv:1703.09104}

\bibitem[{Tavares, Almeida and Torres(2018b)}]{Pref:Tavares_5}
Tavares D, Almeida R, Torres DFM (2018b)
Combined fractional variational problems of variable order and some computational aspects.
J. Comput. Appl. Math. 339:374--388.
{\tt arXiv:1704.06486}

\bibitem[{Val\'erio and Costa(2013)}]{Pref:Valerio}
Val\'erio D, Costa JS (2013)
Variable order fractional controllers.
Asian J. Control 15(3):648--657

\bibitem[{van Brunt(2004)}]{Pref:Brunt}
van Brunt B (2004)
The Calculus of Variations.
Universitext, Springer, New York

\bibitem[{Sheng(2012)}]{Pref:Zheng}
Zheng B (2012)
$(G'/G)$-expansion method for solving fractional partial 
differential equations in the theory of mathematical physics.
Commun. Theor. Phys. 58(5):623--630

\bibitem[{Zhuang {\it et al.}(2009)}]{Pref:Zhuang}
Zhuang P, Liu F,  Anh V, Turner I (2009)
Numerical methods for the variable-order fractional advection-diffusion equation with a nonlinear source term.
SIAM J. Numer. Anal. 47(3):1760--1781.

\end{thebibliography}

\begin{thebibliography}{10}

\bibitem[{Abel(1923)}]{Cap1:Abel}
Abel NH (1823)
Solution de quelques probl\`emes \`a l'aide d'int\'egrales d\'efinies.
Mag. Naturv. 1(2):1--127

\bibitem[{Agrawal(2010)}]{Cap1:Agrawal2010}
Agrawal OP (2010)
Generalized variational problems and Euler-Lagrange equations.
Comput. Math. Appl. 59(5):1852--1864

\bibitem[{Almeida and Malinowska(2013)}]{Cap1:Alm:Malin}
Almeida R, Malinowska AB (2013)
Generalized transversality conditions in fractional calculus of variations.
Commun. Nonlinear Sci. Numer. Simul. 18(3):443--452

\bibitem[{Almeida, Pooseh and Torres(2015)}]{Cap1:book2:FCV}
Almeida R, Pooseh S, Torres DFM (2015)
Computational Methods in the Fractional Calculus of Variations.
Imperial College Press, London

\bibitem[{Almeida and Torres(2013)}]{Cap1:Alm:Torres2013}
Almeida R, Torres DFM (2013)
An expansion formula with higher-order derivatives for fractional operators of variable order.
The Scientific World Journal 2013 Art. ID 915437, 11~pp
{\tt arXiv:1309.4899}

\bibitem[{Atanackovi\'c and Pilipovi\'c(2011)}]{Cap1:Atana}
Atanackovi\'c TM,  Pilipovic S (2011)
Hamilton's principle with variable order fractional derivatives.
Fract. Calc. Appl. Anal. 14:94--109

\bibitem[{Caputo(1967)}]{Cap1:Caputo}
Caputo M (1967)
Linear model of dissipation whose $Q$ is almost frequency independent--II.
Geophys. J. R. Astr. Soc. 13:529--539

\bibitem[{Coimbra(2003)}]{Cap1:Coimbra1}
Coimbra CFM (2003)
Mechanics with variable-order differential operators.
Ann. Phys. 12(11--12):692--703

\bibitem[{Fu, Chen and Yang(2013)}]{Cap1:Fu}
Fu Z-J, Chen W, Yang H-T (2013)
Boundary particle method for Laplace transformed time fractional diffusion equations.
J. Comput. Phys. 235:52--66

\bibitem[{Herrmann(2013)}]{Cap1:Herr}
Herrmann R (2013)
Folded potentials in cluster physics--a comparison of Yukawa 
and Coulomb potentials with Riesz fractional integrals.
J. Phys. A 46(40):405203, 12~pp

\bibitem[{Hilfer(2000)}]{Cap1:Hilfer}
Hilfer R (2000)
Applications of fractional calculus in physics.
World Sci. Publishing, River Edge, NJ

\bibitem[{Kilbas, Srivastava and Trujillo(2006)}]{Cap1:Kilbas}
Kilbas AA, Srivastava HM, Trujillo JJ (2006)
Theory and Applications of Fractional Differential Equations.
Elsevier, Amsterdam

\bibitem[{Klimek(2001)}]{Cap1:Klimek}
Klimek M (2001)
Fractional sequential mechanics -- models with symmetric fractional derivative.
Czechoslovak J. Phys. 51(12):1348--1354

\bibitem[{Kumar, Pandey and Sharma(2017)}]{Cap1:Kumar}
Kumar K, Pandey R, Sharma S (2017)
Comparative study of three numerical schemes for fractional integro--differential equations.
J. Comput. Appl. Math. 315:287--302

\bibitem[{Li, Chen and Ye(2011)}]{Cap1:Li_Chen}
Li CP, Chen A, Ye J (2011)
Numerical approaches to fractional calculus and fractional ordinary differential equation.
J. Comput. Phys. 230(9):3352--3368

\bibitem[{Li and Liu(2016)}]{Cap1:Li}
Li G, Liu H (2016)
Stability analysis and synchronization for a class of fractional-order neural networks.
Entropy 18(55):13~pp

\bibitem[{Mainardi(2010)}]{Cap1:Mainardi}
Mainardi F (2010)
Fractional Calculus and Waves in Linear Viscoelasticity.
Imp. Coll. Press, London

\bibitem[{Malinowska, Odzijewicz and Torres(2015)}]{Cap1:book:MOT}
Malinowska AB, Odzijewicz T, Torres DFM (2015)
Advanced Methods in the Fractional Calculus of Variations.
Springer Briefs in Applied Sciences and Technology, Springer, Cham

\bibitem[{Malinowska and Torres(2010)}]{Cap1:Malin:Tor2010}
Malinowska AB, Torres DFM (2010)
Fractional variational calculus in terms of a combined Caputo derivative.
Proceedings of FDA'10, The 4th IFAC Workshop on Fractional Differentiation 
and its Applications, Badajoz, Spain, October 18--20, 2010
(Eds: I. Podlubny, B. M. Vinagre Jara, YQ. Chen, V. Feliu Batlle, 
I. Tejado Balsera):Article no. FDA10-084, 6~pp.
{\tt arXiv:1007.0743}

\bibitem[{Malinowska and Torres(2011)}]{Cap1:Malin:Tor2011}
Malinowska AB, Torres DFM (2011)
Fractional calculus of variations for a combined Caputo derivative.
Fract. Calc. Appl. Anal. 14(4):523--537
{\tt arXiv:1109.4664}

\bibitem[{Malinowska and Torres(2012a)}]{Cap1:book:FCV}
Malinowska AB, Torres DFM (2012a)
Introduction to the Fractional Calculus of Variations.
Imp. Coll. Press, London

\bibitem[{Malinowska and Torres(2012b)}]{Cap1:Malin:Tor2012}
Malinowska AB, Torres DFM (2012b)
Multiobjective fractional variational calculus in terms of a combined Caputo derivative.
Appl. Math. Comput. 218(9):5099--5111
{\tt arXiv:1110.6666}

\bibitem[{Malinowska and Torres(2012c)}]{Cap1:Malin:Tor2012b}
Malinowska AB, Torres DFM (2012c)
Towards a combined fractional mechanics and quantization.
Fract. Calc. Appl. Anal. 15(3):407--417
{\tt arXiv:1206.0864}

\bibitem[{Odzijewicz, Malinowska and Torres(2012a)}]{Cap1:Od2012a}
Odzijewicz T, Malinowska AB, Torres DFM (2012a)
Fractional calculus of variations in terms 
of a generalized fractional integral with applications to physics.
Abstr. Appl. Anal. 2012:Art. ID 871912, 24~pp
{\tt arXiv:1203.1961}

\bibitem[{Odzijewicz, Malinowska and Torres(2012b)}]{Cap1:Od2012b}
Odzijewicz T, Malinowska AB, Torres DFM (2012b)
Fractional variational calculus with classical and combined Caputo derivatives.
Nonlinear Anal. 75(3):1507--1515
{\tt arXiv:1101.2932}

\bibitem[{Odzijewicz, Malinowska and Torres(2013a)}]{Cap1:Od2013}
Odzijewicz T, Malinowska AB, Torres DFM (2013a)
Fractional variational calculus of variable order.
in Advances in harmonic analysis and operator theory:291--301,
Oper. Theory Adv. Appl., Birkh\"{a}user/Springer Basel AG, Basel
{\tt arXiv:1110.4141}

\bibitem[{Odzijewicz, Malinowska and Torres(2013b)}]{Cap1:Od2013a}
Odzijewicz T, Malinowska AB, Torres DFM (2013b)
Noether's theorem for fractional variational problems of variable order.
Cent. Eur. J. Phys. 11(6):691--701
{\tt arXiv:1303.4075}

\bibitem[{Odzijewicz, Malinowska and Torres(2013c)}]{Cap1:Od_2013b}
Odzijewicz T, Malinowska AB, Torres DFM (2013c)
A generalized fractional calculus of variations.
Control Cybernet. 42(2):443--458
{\tt arXiv:1304.5282}

\bibitem[{Oldham and Spanier(1974)}]{Cap1:Oldham}
Oldham KB, Spanier J (1974)
The Fractional Calculus.
Academic Press, New York

\bibitem[{Oliveira and Machado(2014)}]{Cap1:Oliveira}
Oliveira EC, Machado JAT (2014)
A Review of definitions for fractional derivatives and integral.
Math. Probl. in Eng. 2014:238459, 6~pp

\bibitem[{Pinto and Carvalho(2014)}]{Cap1:Pinto}
Pinto C, Carvalho ARM (2014)
New findings on the dynamics of HIV and TB coinfection models.
Appl. Math. Comput. 242:36--46

\bibitem[{Podlubny(1999)}]{Cap1:Podlubny}
Podlubny I (1999)
Fractional Differential Equations.
Academic Press, San Diego, CA

\bibitem[{Ramirez and Coimbra(2011)}]{Cap1:Ramirez}
Ramirez LES,  Coimbra CFM (2011)
On the variable order dynamics of the nonlinear wake caused by a sedimenting particle.
Phys. D 240(13):1111--1118

\bibitem[{Ross(1977)}]{Cap1:Ross}
Ross B (1977)
The development  of fractional calculus 1695--1900.
Historia Mathematica 4:75--89

\bibitem[{Samko(1995)}]{Cap1:Samko:1995}
Samko SG (1995)
Fractional integration and differentiation of variable order.
Anal. Math. 21(3):213--236

\bibitem[{Samko, Kilbas and Marichev(1993)}]{Cap1:Samko:Kilbas}
Samko SG, Kilbas AA, Marichev OI (1993)
Fractional Integrals and Derivatives. translated from the 1987 Russian original,
Gordon and Breach, Yverdon

\bibitem[{Samko and Ross(1993)}]{Cap1:Samko:Ross}
Samko SG, Ross B (1993)
Integration and differentiation to a variable fractional order.
Integral Transform. Spec. Funct. 1(4):277--300

\bibitem[{Sheng {\it et al.}(2011)}]{Cap1:Sheng}
Sheng H, Sun HG, Coopmans C, Chen YQ, Bohannan GW (2011)
A physical experimental study of variable-order fractional integrator and differentiator.
Eur. Phys. J. 193(1):93--104

\bibitem[{Sierociuk {\it et al.}(2015)}]{Cap1:Sie}
Sierociuk D, Skovranek T, Macias M, Podlubny I, Petras I, Dzielinski A, Ziubinski P (2015)
Diffusion process modeling by using fractional--order models.
Appl. Math. Comput. 257(15):2--11

\bibitem[{Sun, Chen and Chen(2009)}]{Cap1:Sun}
Sun HG, Chen W, Chen YQ (2009)
Variable order fractional differential operators in anomalous diffusion modeling.
Physica A. 388(21):4586--4592

\bibitem[{Sun {\it et al.}(2012)}]{Cap1:Sun:Chen:Li}
Sun H, Chen W, Li C, Chen Y (2012)
Finite difference schemes for variable-order time fractional diffusion equation.
Internat. J. Bifur. Chaos Appl. Sci. Engrg. 22(4):1250085, 16~pp

\bibitem[{Sun {\it et al.}(2013)}]{Cap1:Sun:Hu}
Sun H, Hu S, Chen Y, Chen W, Yu Z (2013)
A dynamic-order fractional dynamic system.
Chinese Phys. Lett. 30(4):046601, 4~pp

\end{thebibliography}

\begin{thebibliography}{10}

\bibitem[{Abel(1923)}]{Cap2:Abel}
Abel NH (1823)
Solution de quelques probl\`emes \`a l'aide d'int\'egrales d\'efinies.
Mag. Naturv. 1(2):1--127

\bibitem[{Agrawal(2002)}]{Cap2:Agrawal2002}
Agrawal OP (2002)
Formulation of Euler--Lagrange equations for fractional variational problems.
J. Math. Anal. Appl. 272:368--379

\bibitem[{Agrawal(2006)}]{Cap2:Agrawal2006}
Agrawal OP (2006)
Fractional variational calculus and the transversality conditions.
J. Phys. A: Math. Gen. 39(33):10375--10384

\bibitem[{Agrawal(2007)}]{Cap2:Agrawal2007}
Agrawal OP (2007)
Generalized Euler--Lagrange equations and transversality 
conditions for FVPs in terms of the Caputo derivative.
J. Vib. Control 13(9--10):1217--1237

\bibitem[{Almeida and Malinowska(2013)}]{Cap2:Alm:Malin}
Almeida R, Malinowska AB (2013)
Generalized transversality conditions in fractional calculus of variations.
Commun. Nonlinear Sci. Numer. Simul. 18(3):443--452

\bibitem[{Almeida, Pooseh and Torres(2015)}]{Cap2:book2:FCV}
Almeida R, Pooseh S, Torres DFM (2015)
Computational Methods in the Fractional Calculus of Variations.
Imperial College Press, London

\bibitem[{Almeida and Torres(2009)}]{Cap2:Alm:Torres2009}
Almeida R, Torres DFM (2009)
Calculus of variations with fractional derivatives and fractional integrals.
Appl. Math. Lett. 22(12):1816--1820
{\tt arXiv:0907.1024}

\bibitem[{Askari and Ansari(2016)}]{Cap2:Askari}
Askari H, Ansari A (2016)
Fractional calculus of variations with a generalized fractional derivative.
Fract. Differ. Calc. 6:57--72

\bibitem[{Atanackovi\'c, Konjik and Pilipovi\'c(2008)}]{Cap2:Atanackovic}
Atanackovi\'c TM, Konjik S, Pilipovi\'c S (2008)
Variational problems with fractional derivatives: Euler--Lagrange equations.
J. Phys. A 41(9):095201, 12~pp

\bibitem[{Atanackovi\'c and Pilipovi\'c(2011)}]{Cap2:Atana}
Atanackovi\'c TM,  Pilipovic S (2011)
Hamilton's principle with variable order fractional derivatives.
Fract. Calc. Appl. Anal. 14:94--109

\bibitem[{Atangana and Kilicman(2014)}]{Cap2:Atangana:Kili}
Atangana A, Kilicman A (2014)
On the generalized mass transport equation to the concept of variable fractional derivative.
Math. Probl. Eng. 2014:Art. ID 542809, 9~pp

\bibitem[{Baleanu(2008)}]{Cap2:Baleanu1}
Baleanu D (2008)
New applications of fractional variational principles.
Rep. Math. Phys. 61(2):199--206

\bibitem[{Baleanu {\it et al.}(2010)}]{Cap2:Baleanu2}
Baleanu D, Golmankhaneh AK, Nigmatullin R, Golmankhaneh AK (2010)
Fractional Newtonian mechanics.
Cent. Eur. J. Phys. 8(1):120--125

\bibitem[{Blaszczyk and Ciesielski(2014)}]{Cap2:Blasz:Cie}
Blaszczyk T, Ciesielski M (2014)
Numerical solution of fractional Sturm-Liouville equation in integral form.
Fract. Calc. Appl. Anal. 17(2):307--320

\bibitem[{Chiang(1992)}]{Cap2:Chiang}
Chiang AC (1992)
Elements of Dynamic Optimization.
McGraw-Hill, Inc., Singapore

\bibitem[{Coimbra, Soon and Kobayashi(2005)}]{Cap2:Coimbra2}
Coimbra CFM, Soon CM, Kobayashi MH (2005)
The variable viscoelasticity operator.
Annalen der Physik 14(6):378--389

\bibitem[{Cresson(2007)}]{Cap2:Cresson}
Cresson J (2007)
Fractional embedding of differential operators and Lagrangian systems.
J. Math. Phys. 48(3):033504, 34~pp

\bibitem[{Fraser(1992)}]{Cap2:Fraser}
Fraser C (1992)
Isoperimetric problems in variatonal calculus of Euler and Lagrange.
Historia Mathematica 19:4--23

\bibitem[{Lotfi and Yousefi(2013)}]{Cap2:Lotfi}
Lotfi A, Yousefi SA (2013)
A numerical technique for solving a class of fractional variational problems.
J. Comput. Appl. Math. 237(1):633--643

\bibitem[{Malinowska, Odzijewicz and Torres(2015)}]{Cap2:book:MOT}
Malinowska AB, Odzijewicz T, Torres DFM (2015)
Advanced Methods in the Fractional Calculus of Variations.
Springer Briefs in Applied Sciences and Technology, Springer, Cham

\bibitem[{Malinowska and Torres(2010)}]{Cap2:Malin:Tor2010}
Malinowska AB, Torres DFM (2010)
Fractional variational calculus in terms of a combined Caputo derivative.
Proceedings of FDA'10, The 4th IFAC Workshop on Fractional Differentiation and its Applications, Badajoz, Spain, October 18--20, 2010
(Eds: I. Podlubny, B. M. Vinagre Jara, YQ. Chen, V. Feliu Batlle, I. Tejado Balsera):Article no. FDA10-084, 6~pp.
{\tt arXiv:1007.0743}

\bibitem[{Malinowska and Torres(2011)}]{Cap2:Malin:Tor2011}
Malinowska AB, Torres DFM (2011)
Fractional calculus of variations for a combined Caputo derivative.
Fract. Calc. Appl. Anal. 14(4):523--537
{\tt arXiv:1109.4664}

\bibitem[{Malinowska and Torres(2012)}]{Cap2:book:FCV}
Malinowska AB, Torres DFM (2012)
Introduction to the Fractional Calculus of Variations.
Imp. Coll. Press, London

\bibitem[{Odzijewicz, Malinowska and Torres(2012a)}]{Cap2:Od2012b}
Odzijewicz T, Malinowska AB, Torres DFM (2012a)
Fractional variational calculus with classical and combined Caputo derivatives.
Nonlinear Anal. 75(3):1507--1515
{\tt arXiv:1101.2932}

\bibitem[{Odzijewicz, Malinowska and Torres(2012b)}]{Cap2:Od2012c}
Odzijewicz T, Malinowska AB, Torres DFM (2012b)
Variable order fractional variational calculus for double integrals.
Proceedings of the 51st IEEE Conference on Decision and Control,
December 10--13, 2012, Maui, Hawaii:Art. no. 6426489, 6873--6878.
{\tt arXiv:1209.1345}

\bibitem[{Pooseh, Almeida and Torres(2013)}]{Cap2:Pooseh2013b}
Pooseh S, Almeida R, Torres DFM (2013)
Discrete direct methods in the fractional calculus of variations.
Comput. Math. Appl. 66(5):668--676
{\tt arXiv:1205.4843}

\bibitem[{Rabei {\it et al.}(2007)}]{Cap2:Rabei}
Rabei EM, Nawafleh KI, Hijjawi RS, Muslih SI, Baleanu D (2007)
The Hamilton formalism with fractional derivatives.
J. Math. Anal. Appl. 327(2):891--897

\bibitem[{Riewe(1996)}]{Cap2:Riewe96}
Riewe F (1996)
Nonconservative Lagrangian and Hamiltonian mechanics.
Phys. Rev. E (3) 53(2):1890--1899

\bibitem[{Riewe(1997)}]{Cap2:Riewe97}
Riewe F (1997)
Mechanics with fractional derivatives.
Phys. Rev. E (3) 55(3):3581--3592

\bibitem[{Samko(1995)}]{Cap2:Samko:1995}
Samko SG (1995)
Fractional integration and differentiation of variable order.
Anal. Math. 21(3):213--236

\bibitem[{Samko and Ross(1993)}]{Cap2:Samko:Ross}
Samko SG, Ross B (1993)
Integration and differentiation to a variable fractional order.
Integral Transform. Spec. Funct. 1(4):277--300

\bibitem[{Sheng {\it et al.}(2011)}]{Cap2:Sheng}
Sheng H, Sun HG, Coopmans C, Chen YQ, Bohannan GW (2011)
A physical experimental study of variable-order fractional integrator and differentiator.
Eur. Phys. J. 193(1):93--104

\bibitem[{Sumelka and Blaszczyk(2014)}]{Cap2:Sumelka}
Sumelka W, Blaszczyk T (2014)
Fractional continua for linear elasticity.
Arch. Mech. 66(3):147--172

\bibitem[{Tarasov(2006)}]{Cap2:Tarasov}
Tarasov VE (2006)
Fractional variations for dynamical systems: Hamilton and Lagrange approaches.
J. Phys. A 39(26):8409--8425

\bibitem[{Val\'erio {\it et al.}(2009)}]{Cap2:Valerio:Vinagre}
Val\'erio D, Vinagre G, Domingues J, Costa JS (2009)
Variable-order fractional derivatives and their numerical approximations I -- real orders.
Symposium on Fractional Signals and Systems Lisbon 09, M. Ortigueira et al. (eds.) Lisbon, Portugal

\bibitem[{van Brunt(2004)}]{Cap2:Brunt}
van Brunt B (2004)
The Calculus of Variations.
Universitext, Springer, New York

\bibitem[{Xu and Agrawal(2014)}]{Cap2:Xu}
Xu Y, Agrawal OP (2014)
Models and numerical solutions of generalized oscillator equations.
J. Vib. Acoust. 136(5):050903, 7~pp

\end{thebibliography}

\begin{thebibliography}{10}

\bibitem[{Caputo(1967)}]{Cap3:Caputo}
Caputo M (1967)
Linear model of dissipation whose $Q$ is almost frequency independent--II.
Geophys. J. R. Astr. Soc. 13:529--539

\bibitem[{Chechkin, Gorenflo and Sokolov(2005)}]{Cap3:Chechkin}
Chechkin AV, Gorenflo R, Sokolov IM (2005)
Fractional diffusion in inhomogeneous media.
J. Phys. A: Math. Theor. 38(42):L679--L684

\bibitem[{Dalir and Bashour(2010)}]{Cap3:Dalir}
Dalir M, Bashour M (2010)
Applications of fractional calculus.
Appl. Math. Sci. 4(21--24):1021--1032

\bibitem[{Diethelm(2004)}]{Cap3:Diethelm}
Diethelm K (2004)
The Analysis of Fractional Differential Equations.
Lecture Notes in Mathematics 2004, Springer, Berlin

\bibitem[{D\v zrba\v sjan and Nersesjan(1968)}]{Cap3:Dzherbashyan}
D\v zrba\v sjan\ MM, Nersesjan AB (1968)
Fractional derivatives and the Cauchy problem for differential equations of fractional order.
Izv. Akad. Nauk Armjan. SSR Ser. Mat. 3(1):3--29

\bibitem[{Lin and Xu(2007)}]{Cap3:Lin}
Lin Y, Xu C (2007)
Finite difference/spectral approximations for the time-fractional diffusion equation.
J. Comput. Phys. 225(2):1533--1552

\bibitem[{Machado {\it et al.}(2010)}]{Cap3:Machado}
Machado JAT, Silva MF, Barbosa RS, Jesus IS, Reis CM, Marcos MG, Galhano AF (2010)
Some applications of fractional calculus in engineering.
Math. Probl. Eng. 2010:Art. ID 639801, 34~pp

\bibitem[{Murio and  Mej\'{\i}a(2008)}]{Cap3:Murio}
Murio DA, Mej\'{\i}a CE (2008)
Generalized time fractional IHCP with Caputo fractional derivatives.
J. Phys. Conf. Ser. 135:Art. ID 012074, 8~pp

\bibitem[{Odibat and Momani(2009)}]{Cap3:Odibat}
Odibat Z, Momani S (2009)
The variational iteration method: an efficient scheme 
for handling fractional partial differential equations in fluid mechanics.
Comput. Math. Appl. 58(11-12):2199--2208

\bibitem[{Rabotnov(1969)}]{Cap3:Rabotnov}
Rabotnov YN (1969)
Creep Problems in Structural Members.
North-Holland Series in Applied Mathematics and Mechanics, Amsterdam/London

\bibitem[{Samko, Kilbas and Marichev(1993)}]{Cap3:Samko:Kilbas}
Samko SG, Kilbas AA, Marichev OI (1993)
Fractional Integrals and Derivatives. translated from the 1987 Russian original,
Gordon and Breach, Yverdon

\bibitem[{Samko and Ross(1993)}]{Cap3:Samko:Ross}
Samko SG, Ross B (1993)
Integration and differentiation to a variable fractional order.
Integral Transform. Spec. Funct. 1(4):277--300

\bibitem[{Santamaria {\it et al.}(2006)}]{Cap3:Santamaria}
Santamaria F, Wils S, Schutter E, Augustine GJ (2006)
Anomalous diffusion in Purkinje cell dendrites caused by spines.
Neuron. 52:635--648

\bibitem[{Singh, Saxena and Kumar(2013)}]{Cap3:Singh}
Singh K, Saxena R, Kumar S (2013)
Caputo-based fractional derivative in fractional Fourier transform domain.
IEEE Journal on Emerging and Selected Topics in Circuits and Systems 3:330--337

\bibitem[{Sun, Chen and Chen(2009)}]{Cap3:Sun}
Sun HG, Chen W, Chen YQ (2009)
Variable order fractional differential operators in anomalous diffusion modeling.
Physica A. 388(21):4586--4592

\bibitem[{Sweilam and AL-Mrawm(2011)}]{Cap3:Sweilam}
Sweilam NH, AL-Mrawm HM (2011)
On the numerical solutions of the variable order fractional heat equation.
Studies in Nonlinear Sciences 2:31--36

\bibitem[{Tavares, Almeida and Torres(2016)}]{Cap3:Tavares2016}
Tavares D, Almeida R, Torres DFM (2016)
Caputo derivatives of fractional variable order: numerical approximations.
Commun. Nonlinear Sci. Numer. Simul. 35:69--87
{\tt arXiv:1511.02017}

\bibitem[{Yajima and Yamasaki(2012)}]{Cap3:Yajima}
Yajima T, Yamasaki K (2012)
Geometry of surfaces with Caputo fractional derivatives 
and applications to incompressible two-dimensional flows.
J. Phys. A 45(6):065201, 15~pp

\end{thebibliography}

\begin{thebibliography}{10}

\bibitem[{Almeida(2016)}]{Cap4:Almeidadelay}
Almeida R (2016)
Fractional variational problems depending on indefinite integrals and with delay.
Bull. Malays. Math. Sci. Soc. 39(4):1515--1528
{\tt arXiv:1512.06752}

\bibitem[{Almeida and Malinowska(2014)}]{Cap4:Alm:Malin2014}
Almeida R, Malinowska AB (2014)
Fractional variational principle of Herglotz.
Discrete Contin. Dyn. Syst. Ser. B 19(8):2367--2381

\bibitem[{Baleanu, Maaraba and Jarad(2008)}]{Cap4:Baleanudelay}
Baleanu D, Maaraba T, Jarad F (2008)
Fractional variational principles with delay.
J. Phys. A 41(31):315403, 8~pp

\bibitem[{Caputo and Torres(2015)}]{Cap4:Cap:Tor}
Caputo MC, Torres DFM (2015)
Duality for the left and right fractional derivatives.
Signal Process. 107:265--271
{\tt arXiv:1409.5319}

\bibitem[{Daftardar--Gejji, Sukale and Bhalekar(2015)}]{Cap4:Daftardar}
Daftardar--Gejji V, Sukale Y, Bhalekar S (2015)
Solving fractional delay differential equations: A new approach.
Fract. Calc. Appl. Anal. 16:400--418

\bibitem[{Deng, Li and L\"{u}(2007)}]{Cap4:deng}
Deng W, Li C, L{\"u} J (2007)
Stability analysis of linear fractional differential system with multiple time delays.
Nonlinear Dynam. 48(4):409--416

\bibitem[{Georgieva and Guenther(2002)}]{Cap4:Georgieva}
Georgieva B, Guenther RB (2002)
First Noether-type theorem for the generalized variational principle of Herglotz.
Topol. Methods Nonlinear Anal. 20(2):261--273

\bibitem[{Georgieva, Guenther and Bodurov(2003)}]{Cap4:Georgieva:sev}
Georgieva B, Guenther RB, Bodurov T (2003)
Generalized variational principle of Herglotz for several independent variables.
J. Math. Phys. 44(9):3911--3927

\bibitem[{Guenther, Gottsch and Kramer(1996)}]{Cap4:Guenther}
Guenther RB, Gottsch JA, Kramer DB (1996)
The Herglotz algorithm for constructing canonical transformations.
SIAM Rev. 38(2):287--293

\bibitem[{Guenther, Guenther and Gottsch(1996)}]{Cap4:Guenther:book}
Guenther RB, Guenther CM,  Gottsch JA (1996)
The Herglotz Lectures on Contact Transformations and Hamiltonian Systems.
Lecture Notes in Nonlinear Analysis, Vol. 1, Juliusz Schauder Center 
for Nonlinear Studies, Nicholas Copernicus University, Tor\'un

\bibitem[{Herglotz(1930)}]{Cap4:Herglotz}
Herglotz G (1930)
Ber\"uhrungstransformationen, 
Lectures  at  the  University  of G\"ottingen, G\"ottingen

\bibitem[{Jarad, Abdeljawad and Baleanu(2010)}]{Cap4:Jaraddelay}
Jarad F, Abdeljawad T, Baleanu D (2010)
Fractional variational principles with delay within Caputo derivatives.
Rep. Math. Phys. 65(1):17--28

\bibitem[{Lazarevi\'{c} and Spasi\'{c}(2009)}]{Cap4:Laza}
Lazarevi\'{c} MP, Spasi\'{c} AM (2009)
Finite-time stability analysis of fractional order time-delay systems: Gronwall's approach.
Math. Comput. Model. 49(3--4):475--481

\bibitem[{Linge and Langtangen(2016)}]{Cap4:Matlab}
Linge S, Langtangen HP (2016)
Programming for Computations---MATLAB/Octave. 
A Gentle Introduction to Numerical Simulations with MATLAB/Octave.
Springer, Cham

\bibitem[{Machado(2011)}]{Cap4:machado_delay}
Machado JAT (2011)
Time-delay and fractional derivatives.
Adv. Difference Equ. Vol 2011:934094, 12~pp

\bibitem[{Malinowska, Odzijewicz and Torres(2015)}]{Cap4:book:MOT}
Malinowska AB, Odzijewicz T, Torres DFM (2015)
Advanced Methods in the Fractional Calculus of Variations.
Springer Briefs in Applied Sciences and Technology, Springer, Cham

\bibitem[{Santos, Martins and Torres(2014)}]{Cap4:Santos:Viet}
Santos SPS, Martins N, Torres DFM (2014)
Higher-order variational problems of Herglotz type.
Vietnam J. Math. 42(4):409--419
{\tt arXiv:1309.6518}

\bibitem[{Santos, Martins and Torres(2015a)}]{Cap4:Santos:Disc}
Santos SPS, Martins N, Torres DFM (2015a)
Variational problems of Herglotz type with  time delay:  
DuBois-Reymond  condition  and Noether's  first  theorem.
Discrete Contin. Dyn. Syst. 35(9):4593--4610
{\tt arXiv:1501.04873}

\bibitem[{Santos, Martins and Torres(2015b)}]{Cap4:Santos:Spri}
Santos SPS, Martins N, Torres DFM (2015b)
An optimal control approach to Herglotz variational problems.
In Optimization in the Natural Sciences 
(eds. A. Plakhov, T. Tchemisova and A. Freitas), 
Communications in Computer and Information Science, Vol. 499, Springer:107--117
{\tt arXiv:1412.0433}

\bibitem[{Tavares, Almeida and Torres(2015)}]{Cap4:Tavares2015}
Tavares D, Almeida R, Torres DFM (2015)
Optimality conditions for fractional variational problems 
with dependence on a combined Caputo derivative of variable order.
Optimization 64(6):1381--1391
{\tt arXiv:1501.02082}

\bibitem[{Tavares, Almeida and Torres(2017)}]{Cap4:Tavares2017}
Tavares D, Almeida R, Torres DFM (2017)
Constrained fractional variational problems of variable order.
IEEE/CAA Journal Automatica Sinica 4(1):80--88
{\tt arXiv:1606.07512}

\bibitem[{Tavares, Almeida and Torres(2018a)}]{Cap4:Tavares_4}
Tavares D, Almeida R, Torres DFM (2018a)
Fractional Herglotz variational problem of variable order.
Discrete Contin. Dyn. Syst. Ser. S 11(1):143--154.
{\tt arXiv:1703.09104}

\bibitem[{Tavares, Almeida and Torres(2018b)}]{Cap4:Tavares_5}
Tavares D, Almeida R, Torres DFM (2018b)
Combined fractional variational problems of variable order and some computational aspects.
J. Comput. Appl. Math. 339:374--388.
{\tt arXiv:1704.06486}

\bibitem[{Trefethen(2013)}]{Cap4:chebfun:book}
Trefethen LN (2013)
Approximation Theory and Approximation Practice.
Society for Industrial and Applied Mathematics

\bibitem[{van Brunt(2004)}]{Cap4:Brunt}
van Brunt B (2004)
The Calculus of Variations.
Universitext, Springer, New York

\bibitem[{Wang et al.(2015)}]{Cap4:wangdelay}
Wang H, Yu Y, Wen G, Zhang S (2015)
Stability analysis of fractional order neural networks with time delay.
Neural Process. Lett. 42(2):479--500

\end{thebibliography}

\begin{thebibliography}{10}

\bibitem[{Driscoll, Hale and Trefethen(2014)}]{App:ChebfunGuide}
Driscoll TA, Hale N, Trefethen LN (2014)
Chebfun Guide.
Pafnuty Publications, Oxford

\bibitem[{Linge and Langtangen(2016)}]{App:Matlab}
Linge S, Langtangen HP (2016)
Programming for Computations---MATLAB/Octave. 
A Gentle Introduction to Numerical Simulations with MATLAB/Octave.
Springer, Cham

\bibitem[{Trefethen(2013)}]{App:chebfun:book}
Trefethen LN (2013)
Approximation Theory and Approximation Practice.
Society for Industrial and Applied Mathematics

\end{thebibliography}
\end{document}